\newtheorem{thm}{Theorem}[subsection]
\newtheorem{corollary}[thm]{Corollary}
\newtheorem{lemma}[thm]{Lemma}
\newtheorem{lem}[thm]{Lemma}
\newtheorem{proposition}[thm]{Proposition}
\newtheorem{prop}[thm]{Proposition}
\newtheorem{thm-dfn}[thm]{Theorem-Definition}
\theoremstyle{definition}
\newtheorem{definition}[thm]{Definition}
\newtheorem{remark}[thm]{Remark}
\newtheorem{example}[thm]{Example}
\numberwithin{equation}{subsection}
\newcommand{\fg}{{\mathfrak g}}
\newcommand{\ft}{{\mathfrak t}}
\newcommand{\fl}{{\mathfrak l}}
\newcommand{\fC}{{\mathfrak C}}
\newcommand{\fc}{{\mathfrak c}}
\newcommand{\rW}{{\mathrm W}}
\newcommand{\bC}{{\mathbb C}}
\newcommand{\bX}{{\mathbb X}}
\newcommand{\bG}{{\mathbb G}}
\newcommand{\bZ}{{\mathbb Z}}
\newcommand{\mD}{\mathcal{D}}
\newcommand{\mS}{\mathcal{S}}
\newcommand{\mF}{\mathcal{F}}
\newcommand{\mA}{\mathcal{A}}
\newcommand{\mM}{\mathcal{M}}
\newcommand{\mT}{\mathcal{T}}
\newcommand{\mO}{\mathcal{O}}
\newcommand{\mL}{\mathcal{L}}
\newcommand{\mH}{\mathcal{H}}
\newcommand{\mP}{\mathcal{P}}
\newcommand{\mG}{\mathcal{G}}
\newcommand{\mC}{\mathcal{C}}
\newcommand{\sX}{\mathscr{X}}
\newcommand{\sY}{\mathscr{Y}}
\newcommand{\sG}{\mathscr{G}}
\newcommand{\sZ}{\mathscr{Z}}
\newcommand{\sP}{\mathscr{P}}
\newcommand{\sT}{\mathscr{T}}
\newcommand{\sA}{\mathscr{A}}
\newcommand{\scP}{\mathscr{P}}
\newcommand{\sH}{\mathscr{H}}
\newcommand{\sL}{\mathscr{L}}
\newcommand{\sQ}{\mathscr{Q}}
\newcommand{\sD}{\mathscr{D}}
\newcommand{\on}{\operatorname}
\newcommand{\tU}{\widetilde U}
\newcommand{\ra}{\rightarrow}
\newcommand{\s}{\textbackslash}
\newcommand{\is}{\simeq}
\newcommand{\Loc}{\on{LocSys}}
\newcommand{\Bun}{\on{Bun}}
\newcommand{\Sect}{\on{Sect}}
\newcommand{\tC}{\widetilde C}
\newcommand{\quash}[1]{}  
\newcommand{\nc}{\newcommand}
\newcommand{\frakc}{{\mathfrak c}}
\newcommand{\frakl}{{\mathfrak l}}
\newcommand{\fraks}{{\mathfrak s}}
\newcommand{\frakA}{{\mathfrak A}}
\newcommand{\frakC}{{\mathfrak C}}
\newcommand{\frakD}{{\mathfrak D}}
\newcommand{\bbF}{{\mathbb F}}
\newcommand{\bbG}{{\mathbb G}}
\newcommand{\bbQ}{{\mathbb Q}}
\newcommand{\bbT}{{\mathbb T}}
\newcommand{\bbZ}{{\mathbb Z}}
\newcommand{\calA}{{\mathcal A}}
\newcommand{\calE}{{\mathcal E}}
\newcommand{\calF}{{\mathcal F}}
\newcommand{\calK}{{\mathcal K}}
\newcommand{\calL}{{\mathcal L}}
\newcommand{\calO}{{\mathcal O}}
\newcommand{\calT}{{\mathcal T}}
\newcommand{\calV}{{\mathcal V}}
\nc{\al}{{\alpha}} \nc{\be}{{\beta}} \nc{\ga}{{\gamma}}
\nc{\ve}{{\varepsilon}} \nc{\Ga}{{\Gamma}} 
\nc{\La}{{\Lambda}}
\nc{\ad }{{\on{ad }}} \nc{\Ad }{\on{Ad}}
\nc{\aff}{{\on{aff}}} \nc{\Aff}{{\mathbf{Aff}}}
\newcommand{\Aut}{{\on{Aut}}}
\newcommand{\cha}{{\on{char}}}
\nc{\der}{{\on{der}}}
\nc{\diag}{{\on{diag}}}
\nc{\Fl}{{\calF\ell}}
\newcommand{\Gr}{{\on{Gr}}}
\nc{\Hg}{{\on{Higgs}}}
\newcommand{\Hom}{{\on{Hom}}}
\newcommand{\id}{{\on{id}}}
\nc{\Id}{{\on{Id}}}
\nc{\Ind}{{\on{Ind}}}
\newcommand{\Lie}{{\on{Lie}\!\ }}
\nc{\Op}{{\on{Op}}}
\newcommand{\Pic}{{\on{Pic}\!\ }}
\newcommand{\pr}{{\on{pr}}}
\newcommand{\Res}{{\on{Res}}}
\nc{\res}{{\on{res}}}
\newcommand{\Spec}{{\on{Spec}}}
\nc{\tr}{{\on{tr}}}
\newcommand{\GL}{{\on{GL}}}
\nc{\GSp}{{\on{GSp}}} \nc{\GU}{{\on{GU}}} \nc{\SL}{{\on{SL}}}
\nc{\SU}{{\on{SU}}} \nc{\SO}{{\on{SO}}}
\nc{\nh}{{\Loc_{J^p}(\tau')}}
\nc{\bnh}{{\Loc_{\breve J^p}(\tau')}}
\nc{\bU}{{\overline{U}}} \nc{\IC}{{\on{IC}}}
\newcommand{\Nm}{{\on{Nm}}}
\def\xcoch{\mathbb{X}_\bullet}
\def\xch{\mathbb{X}^\bullet}
\newcommand{\AJ}{{\on{AJ}}}
\begin{document}
\title{Geometric Langlands in prime characteristic}
        \author{Tsao-Hsien Chen, Xinwen Zhu}
        \address{Department of Mathematics, Northwestern University,
        2033 Sheridan Road, Evanston, IL 60208, USA}
        \email{chenth@math.northwestern.edu}
        \address{Department of Mathematics, California Institute of Techonology,
        1200 E. California Blvd, Pasadena, CA 91125, USA}
         \email{xzhu@caltech.edu}
  
\subjclass[2010]{14D24, 22E57}
\keywords{Langlands duality, Hitchin fibration, D-modules in characteristic $p$.}
         
\begin{abstract}
Let $G$ be a semisimple algebraic group over an algebraically closed
field $k$, whose characteristic is positive and does not divide the
order of the Weyl group of $G$, and let $\breve G$ be its Langlands
dual group over $k$. Let $C$ be a smooth projective curve over $k$ of genus at least two. Denote by
$\Bun_G$ the moduli stack of $G$-bundles on $C$ and $ \Loc_{\breve
G}$ the moduli stack of $\breve G$-local systems on $C$. Let
$D_{\Bun_G}$ be the sheaf of crystalline differential operators on
$\Bun_G$. In this paper we construct an equivalence between the
bounded derived category $D^b(\on{QCoh}(\Loc_{\breve G}^0))$ of
quasi-coherent sheaves on some open subset $\Loc_{\breve
G}^0\subset\Loc_{\breve G}$ and bounded derived category
$D^b(D_{\Bun_G}^0\on{-mod})$ of modules over some localization
$D_{\Bun_G}^0$ of $D_{\Bun_G}$. This generalizes the work of
Bezrukavnikov-Braverman  in the $\GL_n$ case.
\end{abstract}
\maketitle

\setcounter{tocdepth}{1} \tableofcontents

\section{Introduction}
\subsection{Geometric Langlands conjecture in prime characteristic}
Let $G$ be a reductive algebraic group over $\bC$ and let
$\breve G$ be its Langlands dual group. Let $C$ be a smooth
projective curve over $\bC$. Let $\Bun_G$ be the stack of
$G$-bundles on $C$ and $\Loc_{\breve G}$ be the stack of de Rham
$\breve G$-local systems on $C$. The geometric Langlands conjecture (GLC), as proposed by Beilinson and Drinfeld, is a conjectural
equivalence between certain appropriately defined category of
quasi-coherent sheaves on $\Loc_{\breve G}$ and certain appropriately
defined category of $\mD$-modules on $\Bun_G$. A precise formulation
of this conjecture (over $\bC$) can be found in the recent work of Dima
Arinkin and Dennis Gaitsgory \cite{AG,Ga}.

The geometric Langlands duality has a classical limit which amounts to the duality of
Hitchin fibrations. The classical duality is established
``generically" by Donagi and Pantev in \cite{DP} over $\bC$.
 
In this paper, we establish a ``generic" characteristic $p$ version
of the geometric Langlands conjecture. Namely, let $G$ be
a semi-simple algebraic group over an algebraically closed field $k$
of characteristic $p$ that does not divide the order of the
Weyl group of $G$, and $\breve G$ be its Langlands dual group, defined over
$k$. Let $C$ be a smooth projective curve over $k$ of genus at least two\footnote{The assumptions on the genus of $C$ and on the semisimplicity of $G$ should not be essential. We impose them to avoid the DG structure on moduli spaces.}. Then we establish an equivalence of
bounded derived category
\begin{equation}\label{GL}
D^b(\mD\on{-mod}(\Bun_G)^0)\simeq D^b(\on{QCoh}(\Loc_{\breve
G})^0),\end{equation} where $\mD\on{-mod}(\Bun_G)^0$ (resp.
$\on{QCoh}(\Loc_{\breve G})^0$) is a certain localization of the
category of $\mD$-modules on $\Bun_G$ (resp. a localization of the
category of quasi-coherent sheaves on $\Loc_{\breve G}$). We call
\eqref{GL} a ``generic" version of the GLC.

One remark is in order. Recall that over a field of positive
characteristic, there are different objects that can be called
$\mD$-modules. In this paper, we use the notion of crystalline
$\mD$-modules, i.e. $\mD$-modules are quasi-coherent sheaves with
a flat connection. Likewise, the stack $\Loc_{\breve G}$ is the
stack of $\breve G$-bundles on $C$ with a flat connection.

\subsection{Summary of the construction}
The case $G=\GL_n$ has been considered by R. Bezrukavnikov and A.
Braverman in \cite{BB} (see \cite{Groe,Trav} for various extensions). The main observation is that, the geometric Langlands duality in characteristic $p$
formulated in the above form can be thought as a twisted version of
its classical limit. Since the classical duality holds
``generically", they proved a ``generic" version of the GLC in the
case when $G=\GL_n$.

Our generalization to any semisimple group $G$ is based on the same
observation, but some new ingredients are needed in this general situation.

One of the main difficulties for general $G$ is that the classical
duality is more complicated. For $G=\GL_n$, the generic fibers of
the Hitchin fibration are the Picard stacks of line bundles on
the corresponding spectral curves and the duality of Hitchin
fibrations in this case essentially amounts to the self-duality of the Jacobian of an
algebraic curve. However, for general $G$, the fibers of the Hitchin
fibration involve more general Picard stacks, such as the Prym
varieties, etc., and the duality of the Hitchin fibrations for $G$ and
$\breve G$ over $\bC$ are the main theme of \cite{DP} (see \cite{HT} for the case $G=\SL_n$). As commented
by the authors, the arguments in \cite{DP} use transcendental
methods in an essential way and therefore cannot be applied to our
situation directly.

Our first step is to extend the classical duality to any reductive
group $G$ over any algebraically closed field $k$ whose characteristic
does not divide the order of the Weyl group of $G$. Let us first
give its statement, and leave the details to
\S\ref{classical duality}. For a reductive group $G$ and a smooth projective curve
$C$ over $k$, and a positive line bundle $\mL$ on $C$,  let $\Hg_{G,\mL}\to B$ denote the corresponding Hitchin fibration, on
which the Picard stack $\sP_\mL\to B$ acts (see \S \ref{Hf} for a review). There is an open subset $B^0\subset B$
such that $\sP_\mL|_{B^0}$ is a Beilinson 1-motive (a Picard stack that
is essentially an abelian variety, see Appendix \ref{A}). Fix a
nondegenerate bilinear form on the Lie algebra $\fg$ of $G$, one can
identify the Hitchin base $B$ and the corresponding open subset
$B^0$ for $G$ and $\breve G$. The classical duality is the following assertion.
\begin{thm}\label{intro:classical}
For a positive line bundle $\mL$ on $C$,
there is a canonical isomorphism of Picard stacks
\begin{equation}\label{intro:classical equ}
\frakD_{cl}:(\sP_\mL|_{B^0})^\vee\simeq \breve \sP_\mL|_{B^0},
\end{equation}
where $(\sP_\mL|_{B^0})^\vee$ is the dual Picard stack of $\sP_\mL|_{B^0}$
(as defined in Appendix \ref{A}).
\end{thm}

\medskip

Now assume that the characteristic of $k$ is positive. In addition, assume that the genus of $C$ is at least two and that $\mL=\omega_C$ is the canonical bundle. 
We will omit the subscript $\omega_C$ and write 
$\sP=\sP_{\omega_C}$ etc.
The second step then is
to construct a twisted version of the above classical duality in this
situation. To explain its meaning, let us first introduce a notation: If
$X$ is a stack over $k$, we denote by $X'$ its Frobenius twist,
i.e., the pullback of $X$ along the absolute Frobenius endomorphism
of $k$. Let $F_X:X\to X'$ denote the relative Frobenius morphism. We will
replace both sides of \eqref{intro:classical equ} by certain torsors
under ${\sP'}^\vee$ and $\breve \sP'$.

We begin to explain the $\breve \sP'$-torsor $\breve\sH$, which was introduced in \cite{CZ}. There is a smooth
commutative group scheme $\breve J'$ on $C'\times B'$ and $\breve
\sP'$ in fact classifies $\breve J'$-torsors. Let us denote by $\breve
J^p$ the pullback of $\breve J'$ along the relative
Frobenius $F_{C'\times B'/B'}:C\times B'\to C'\times B'$. This is a group scheme with a canonical connection along $C$,
and therefore it makes sense to talk
about $\breve J^p$-local systems on $C\times B'$ and their
$p$-curvatures (see \cite[Appendix]{CZ}  for generalities). Let
$\breve\sH$ be the stack of $\breve
J^p$-local systems with some specific $p$-curvature
$\breve\tau'$. This is a $\breve \sP'$-torsor.

Next we explain the ${\sP'}^\vee$-torsor $\sT_{\sD(\theta_m)}$.
According to general nonsense (Appendix \ref{A}), such a torsor
gives a multiplicative $\bG_m$-gerbe $\sD$ on $\sP'$ and
vice versa. So it is enough to explain this multiplicative
$\bG_m$-gerbe $\sD(\theta_m)$ on $\sP'$. First recall that the sheaf of crystalline differential operators on $\sP$ can be regarded as a $\bG_m$-gerbe $\sD_{\sP}$ on the cotangent bundle $T^*\sP'$.
We will construct a 1-form $\theta_m$ on $\sP'$, which is multiplicative (in the sense of \S \ref{appen:var}). Now, $\sD=\sD(\theta_m)$ is the gerbe on $\sP'$ obtained via pullback of $\sD_{\sP}$ along the map $\theta_m: \sP'\to T^*\sP'$.

The twisted version of the classical duality is the following assertion
\begin{thm} \label{intro:twist} Over $B^{'0}$, there is a canonical isomorphism of ${\sP'}^\vee\simeq \breve\sP'$-torsors
\[\frakD:\sT_{\sD(\theta_m)}|_{B^{'0}}\simeq \breve\sH|_{B^{'0}}.\]
\end{thm}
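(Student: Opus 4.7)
The plan is to combine the classical duality of Theorem \ref{intro:classical} with an explicit analysis of how the multiplicative one-form $\theta_m$ on $\sP'$ and the $p$-curvature $\breve\tau'$ correspond under that duality. After Frobenius twist, Theorem \ref{intro:classical} identifies $(\sP'|_{B^{'0}})^\vee$ with $\breve\sP'|_{B^{'0}}$, so it suffices to construct a canonical isomorphism between $\sT_{\sD(\theta_m)}|_{B^{'0}}$ and $\breve\sH|_{B^{'0}}$ regarded as torsors under this common Picard stack. Since a torsor under an abelian Picard stack is rigid (determined up to unique isomorphism by its class), it will be enough to build $\frakD$ \'etale-locally on $B^{'0}$ and glue.

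The first step is to describe both torsors in a common framework. By the machinery recalled in Appendix \ref{B}, a local section of $\sT_{\sD(\theta_m)}$ is the same as a multiplicative splitting of the gerbe $\sD(\theta_m)$; since this gerbe is by construction the pullback of the crystalline Azumaya gerbe $\sD_{\sP}$ on $T^*\sP'$ along $\theta_m:\sP'\to T^*\sP'$, such a splitting amounts to a multiplicative line bundle on $\sP$ equipped with a flat connection whose $p$-curvature coincides with $\theta_m$. On the other side, by definition a local section of $\breve\sH$ is a $\breve J^p$-local system on $C\times B'$ whose $p$-curvature equals $\breve\tau'$. Both torsors thus parameterize flat connections with a prescribed $p$-curvature, and the problem reduces to matching these two flavors of data.

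To produce $\frakD$, I would proceed by a Fourier--Mukai argument: given a local section of $\sT_{\sD(\theta_m)}$, i.e.\ a multiplicative line bundle with flat connection on $\sP$ of $p$-curvature $\theta_m$, apply the Fourier--Mukai transform associated to the Poincar\'e kernel underlying $\frakD_{cl}$ to obtain a line bundle with flat connection on $\breve\sP'$. Passing through the simply transitive generic action of $\breve\sP'$ on $\breve\Hg_{\breve G}'|_{B^{'0}}$, one then translates this into a $\breve J^p$-local system on $C\times B'$. The isomorphism $\frakD$ is this assignment, together with the assertion that the $p$-curvature of the resulting $\breve J^p$-local system is precisely $\breve\tau'$.

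The principal obstacle is this last compatibility: computing the Fourier--Mukai transform of the multiplicative one-form $\theta_m$ and identifying it with $\breve\tau'$. I expect to verify it on the open locus of $B^{'0}$ over which the cameral cover $\tC\to C$ is smooth, by expressing both $\theta_m$ and $\breve\tau'$ intrinsically in terms of the Kodaira--Spencer data attached to the family of cameral covers, and then matching them directly using the explicit description of $\frakD_{cl}$ arising from the proof of Theorem \ref{intro:classical} and the compatibility of that isomorphism with the Picard structures on $\sP$ and $\breve\sP$. Once the identification is established on this dense open locus, the rigidity of torsors under $\breve\sP'$ noted above propagates it to all of $B^{'0}$.
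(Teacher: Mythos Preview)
Your proposal has the right intuition—both torsors parameterize connections with prescribed $p$-curvature and should be matched through the classical duality—but the actual construction you outline has a genuine gap. Saying ``apply the Fourier--Mukai transform associated to the Poincar\'e kernel'' does not, by itself, produce a map from multiplicative line bundles with connection on $\sP$ to $\breve J^p$-local systems on $C\times B'$: Fourier--Mukai moves sheaves to sheaves, and there is no mechanism in your sketch for transporting the connection or for passing from an object on $\breve\sP'$ to a $\breve J^p$-torsor on the curve. The step ``passing through the simply transitive generic action of $\breve\sP'$ on $\breve\Hg_{\breve G}'|_{B^{'0}}$'' conflates different objects and does not yield a local system. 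Finally, the verification that the $p$-curvatures match is not a residual compatibility to be checked at the end: it is the entire content of the theorem, and your plan to ``express both in terms of Kodaira--Spencer data and match them directly'' is not a method.

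The paper proceeds quite differently and much more concretely. It first gives a Galois description of $\breve\sH$ via the cameral curve: $\breve\sH|_{B^{'0}}\simeq \sT^{\rW,+}_{\sD(\theta_{\tC'})}$, the stack of strongly $\rW$-equivariant splittings (with $+$-structure) of an explicit $\breve T$-gerbe $\sD(\theta_{\tC'})$ on $\tC'$; this comes from unwinding the definition of $\breve\sH$ through the map $\breve j^1:\pi^*\breve J'\to \breve\calT$. Then the map $\frakD$ is simply \emph{pullback along the Abel--Jacobi map} $\AJ^{\sP'}:\tC'\times\xcoch\to\sP'$: the key computation (Proposition \ref{kappa=tC}) shows $(\AJ^{\sP'})^*\theta_m'=\theta_{\tC'}$, so pulling back a multiplicative splitting of $\sD(\theta_m')$ gives a $\rW$-equivariant splitting of $\sD(\theta_{\tC'})$, and the $+$-structure comes for free from the fact that $\AJ^{\sP'}(x,\breve\alpha)$ is the unit for $x\in\tC'_\alpha$. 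This parallels the construction of $\frakD_{cl}$ itself and makes the torsor-equivariance transparent. What you are missing is precisely this Abel--Jacobi/cameral mechanism for building $\frakD$ directly, rather than trying to deform $\frakD_{cl}$ after the fact.
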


\medskip

The final step towards \eqref{GL} is to establish two abelianization
theorems. Another difference between the geometric Langlands correspondence for $\GL_n$ and for a general group $G$ is that in the latter case, there is no canonical equivalence in general. As is widely known to experts (e.g. see \cite{FW}), the geometric Langlands correspondence for general $G$ should depend on a choice of theta characteristic of the curve $C$.

Let us fix a square root $\kappa$ of $\omega_C$. 
Then
the Kostant section of $\Hg'_{G}\to B'$ induces a map
$\epsilon_{\kappa'}:\sP'\to\Hg'_G$. The first abelianization theorem
asserts a canonical isomorphism
$$\epsilon_{\kappa'}^*\sD_{\Bun_G}\simeq \sD(\theta_m),$$
where $\sD_{\Bun_G}$ is the $\bG_m$-gerbe (on $\Hg'_G=T^*\Bun'_G$) of
crystalline differential operators on $\Bun_G$ and $\sD(\theta_m)$ is
the $\bG_m$-gerbe on $\sP'$ mentioned above. 

On the dual side, we
constructed a canonical morphism in \cite{CZ}
\[\fC: \breve\sH\times^{\breve\sP'}\Hg'_{\breve G}\to \Loc_{\breve G},\]
and the Kostant section of $\Hg'_{\breve G}\to B'$ induces an
isomorphism
\[\fC^{}_{\kappa}:\breve\sH\is\Loc_{\breve G}^{reg},\]
where $\Loc_{\breve G}^{reg}$ is a certain open substack of
$\Loc_{\breve G}$ (see \cite[Remark 3.14]{CZ}).

Combining the above three steps and a general version of the
Fourier-Mukai transform (Appendix \ref{A}) will give the desired
equivalence \eqref{GL}.

Let us mention that the morphism $\fC$ was obtained in \cite{CZ} as a
version of Simpson correspondence for smooth projective curves in positive characteristic. 

Finally in \S \ref{a_chi and b_chi} and \S \ref{tensoring action}, we discuss how the equivalence constructed above depends on the choice of the theta characteristic. This can be regarded as a verification of the predictions of \cite[\S 10]{FW} in our settings.

\subsection{The Langlands transform}
To claim that the above equivalence is the conjectural geometric
Langlands transform, one needs to verify several properties that it is supposed to satisfy.  We will only briefly discuss these properties (see \cite{Ga} for more details), and leave the verifications to our next work.

The first property is that the equivalence should intertwine the action of  the Hecke operators on the automorphic side and the action of the Wilson operators on the spectral side. Recall that in the case $k=\bC$, both categories
$D(\mD\on{-mod}(\Bun_G))$ and $D(\on{Qcoh}(\Loc_{\breve G}))$ admit
actions of a family of commuting operators, labeled by points $x$ on
the curve and representations $V$ of the group $\breve G$. Namely,
for $x\in C$ and $V\in \on{Rep}(\breve G)$, there is the so-called
Wilson operator $W_{V,x}$ acting on $\on{Qcoh}(\Loc_{\breve G})$ by
tensoring with the locally free sheaf
$V_{E_{univ}}|_{\Loc_{\breve G}\times \{x\}}$. On the other side,
there is the Hecke operator $H_{V,x}$ acting on
$\mD\on{-mod}(\Bun_G)$ via certain integral transform (e.g. see \cite[\S 5]{BD}). The second property is that the equivalence should satisfy the Whittaker normalization. Namely, the 
Whittaker $\mD$-module $\mF_\Psi$ on $\Bun_G$ is supposed to transformed to the structure sheaf $\mO_{\Loc_{\breve G}}$. 

In the positive characteristic, it is yet not clear how to define Hecke operators (except those corresponding to minuscule coweights) due to lack of the notion of intersection cohomology $\mD$-modules. Our observation is that by the geometric Casselman-Shalika formula (\cite{FGV}), the two properties together will imply that the Whittaker coefficients of $\mD$-modules on $\Bun_G$ can be calculated by applying the Wilson operators on their Langlands transforms and then taking the global sections. This is a well formulated statement in characteristic $p$ and we will verify in the future work that this is satisfied by the equivalence constructed here.

The third property is that the equivalence should be compatible with Beilinson-Drinfeld's construction of automorphic $\mD$-modules via opers (\cite{BD}). 
In the case $G=\GL_n$ this property has been verified in \cite{BT}. We plan to return to this in the future work.

\subsection{Structure of the article}

Let us now describe the contents of this paper in more detail.

In \S \ref{Hf} we collect some facts about Hitchin fibrations  that
are used in this paper. Main references are \cite{N1,N2}.

In \S \ref{classical duality} we prove the classical duality, i.e.,
the duality of Hitchin fibrations. This extends the work of \cite{DP}
(over $\bC$) to any algebraically closed field whose characteristic
does not divide the order of the Weyl group of $G$. In \S \ref{l_J}, we discuss the compatibility of the classical duality with twisting by $Z(\breve G)$-torsors. This is used to study the dependence of the equivalence \eqref{GL} on the choice of the theta characteristic in \S \ref{a_chi and b_chi} and \S \ref{tensoring action}.

In \S \ref{mult one forms} we construct a canonical multiplicative 1-form $\theta_m$ on $\sP'$.

In  \S \ref{Main} we deduce our main Theorem \ref{main} from the twisted duality (see \S\ref{tw}) and the two abelianization theorems (see \S\ref{ab thm}).

There are three appendices at the end of the paper. 

In \S\ref{A} we collect some basic facts about Beilinson's 1-motive
and duality on Beilinson's 1-motive. In particular, we state a
general version of Fourier-Mukai transforms for Beilinson's 1-motives.

In \S\ref{B} we recall the basic theory of $\mD$-modules over varieties and stacks in positive characteristic, following \cite{BMR,BB,OV,Trav}.

In \S\ref{C} we prove the abelian duality for good Beilinson's 1-motives. It asserts that the derived category of $\mD$-modules on a ``good" Beilinson's 1-motive $\sA$ is equivalent to the derived category of quasi-coherent sheaves on the universal extension $\sA^\natural$ by vector groups of its dual $\sA^\vee$. 

\subsection{Notations}
\subsubsection{Notations related to algebraic stacks} Our terminology
of algebraic stacks follows the book \cite{LB}. Let $k$ be an
algebraically closed field and let $p$ be the characteristic
component of $k$. Let $S$ be a Noetherian scheme over $k$. In this
paper, an algebraic stack $\sX$ over $S$ is a stack such that the
diagonal morphism
$$\Delta_S:\sX\ra\sX\times_S\sX$$
is representable and quasi-compact and such that there exists a smooth
presentation, i.e., a smooth, surjective morphism $X\ra\sX$ from a
scheme $X$.

An algebraic stack $\sX$ is called smooth over $S$ if for every $S$-scheme
$U$ mapping smoothly to $\sX$, the structure morphism $U\ra S$ is
smooth.

For any algebraic stack $\sX$, we denote by $\sX_{Et}$ the big
\'{e}tale site of $\sX$. We denote by $\sX_{sm}$ the smooth site on
$\sX$, i.e., the site for which the underling category has
objects consisting of $S$-schemes $U$ together with a smooth morphism
$U\ra\sX$ and has morphisms $V\to U$ smooth 2-morphisms over $\sX$ and for
which covering maps are smooth surjective maps of schemes. If $\sX$
is a Deligne-Mumford stack, we denote by $\sX_{et}$ the small
\'{e}tale site of $\sX$.

Let $\sY\ra\sX$ be a
quasi-projective morphism of algebraic stacks, with $\sX$ smooth and proper over $S$. We denote by
$\on{Sect}_S(\sX,\sY)$ the stack of ``sections" of $\sY$ over
$\sX$, i.e., for any $u:U\ra S$ we have
$$\on{Sect}_S(\sX,\sY)(U)=\on{Hom}_{\sX}(\sX\times_SU,\sY).$$
If the base scheme $S=\on{Spec}(k)$, we write
$\on{Sect}(\sX,\sY)=\on{Sect}_S(\sX,\sY)$. 

If $\sX$ is a smooth algebraic stack over $S$, we define the
relative tangent stack $T(\sX/S)$ as the stack that assigns every $\on{Spec} R\ra S$, the groupoid
$$T(\sX/S)(R):=\sX(R[\epsilon]/\epsilon^2).$$
It is  algebraic and the natural inclusion $R\ra
R[\epsilon]/\epsilon^2$ induces a morphism
$$\tau_{\sX}:T(\sX/S)\ra\sX.$$
It is known that $T(\sX/S)$ is a relative Picard stack over $\sX$. Therefore, one can associate to it a complex in $D^{[-1,0]}(\sX,\bZ)$,
called the relative tangent complex:
$$T_{\sX/S}^{\bullet}=\{T_{\sX/S}\ra T_{\sX}\}.$$
The relative cotangent stack is then defined as
$$T^*(\sX/S):=\on{Spec}_{\sX}(\on{Sym}_{\mO_{\sX}}H^0(T^{\bullet}_{\sX/S})).$$

Let $f:\sX\ra\sY$ be a (representable) morphism between two
algebraic stacks over $S$. We denote the cotangent morphism as the following diagram of maps
\begin{equation}\label{cotangent morphism}
\xymatrix{T^*(\sY/S)\times_{\sY}\sX\ar[r]^{\ \ \ \ f_d}\ar[d]^{f_p}&T^*(\sX/S)\\
T^*(\sY/S)}.
\end{equation}

\subsubsection{Notations related to Frobenius morphism} Let $S$ be a
Noetherian $k$-scheme and $\sX\ra S$ be an algebraic stack over $S$. If
$p\calO_S=0$, we denote by  $Fr_S:S\ra S$ the absolute Frobenius
map of $S$. We have the following commutative diagram
$$\xymatrix{\sX\ar[r]^{F_{\sX/S}}\ar[dr]&\sX^{(S)}\ar^{\pi_{\sX/S}}[r]\ar[d]&\sX\ar[d]\\
&S\ar[r]^{Fr_S}&S}$$ where the square is Cartesian. We call
$\sX^{(S)}$ the Frobenius twist of $\sX$ along $S$, and
$F_{\sX/S}:\sX\ra\sX^{(S)}$ the relative Frobenius morphism. If the
base scheme $S$ is clear, $\sX^{(S)}$ is also denoted by $\sX'$ for
simplicity.

\subsubsection{Notation related to torsors} Let $\mG$ be a smooth
affine group scheme over $X$, and $E$ be a $\mG$-torsor on $X$. We
denote by $\Aut (E)=E\times ^{\mG}\mG$ the adjoint torsor and
$\ad (E)$ or $\fg_E=E\times^{\mG}\Lie\mG$ the adjoint bundle.

\subsection{Acknowledgement} We thank Roman Bezrukavnikov, Roman Travkin and Zhiwei Yun for useful discussions, and Uwe Weselmann for helpful comments and suggestions. The first author would like to thank his advisor 
Roman Bezrukavnikov for continuous interest in this work and many helpful advices. 
T-H. Chen is partially supported by NSF under the agreement No.DMS-1128155. X. Zhu is partially supported by NSF grant DMS-1001280/1313894 and DMS-1303296 and AMS Centennial Fellowship.

\section{The Hitchin fibration}\label{Hf}
In this section, we review some basic geometric facts of Hitchin
fibrations, following \cite{N1,N2}. Only \S \ref{tau and c} is
probably new.

\subsection{Notations related to reductive groups}\label{bilinear form}
Let $G$ be a reductive algebraic group over $k$ of rank $l$. We
denote by $\breve G$ its Langlands dual group over $k$. We denote by
$\fg$ (resp. by $\breve\fg$) the Lie algebra of $G$ (resp. $\breve G$). Let $T$ denote the abstract Cartan of $G$ with its Lie algebra $\ft$. 
The counterparts 
on the Langlands dual side are denoted by
$\breve T, \breve\ft$. We denote by $\rW$  the abstract Weyl group of $G$, which acts on $T$ and $\breve T$.
We denote by $\bX^{\bullet}(T)$ or simply by $\bX^{\bullet}$ (resp. by $\bX_{\bullet}(T)$ or simply by $\bX_{\bullet}$) the character (resp.
the cocharacter) group of $T$. Let $\Phi\subset \bX^{\bullet}(T)$ be the set of roots.
Sometimes, we also fix a set of simple roots $\{\al_1,\ldots,\al_l\}$ and an embedding $\ft\subset \fg$. Then for $\al\in\Phi$, let $\fg_\al\subset\fg$ denote the corresponding root subspace.

From now on, we assume that the $\cha\ k=p$ is zero or  $p\nmid |\rW|$. We fix a $\rW$-invariant non-degenerate bilinear form $(\ ,\
):\ft\times\ft\ra k$ and identify $\ft$ with $\breve\ft$ using $(\
,\ )$. This invariant form also determines a unique $G$-invariant
non-degenerate bilinear form $\fg\times\fg\to k$, still denoted by
$(\ ,\ )$. Let $\fg\simeq \fg^*$ be the resulting $G$-equivariant
isomorphism.

\subsection{Hitchin map}
Let $k[\fg]$ and $k[\ft]$ be the algebras of polynomial functions on
$\fg$ and on $\ft$ respectively. By Chevalley's theorem, we have an isomorphism
$k[\fg]^{G}\is k[\ft]^{\rW}$. Moreover, $k[\ft]^\rW$ is isomorphic to
a polynomial ring of $l$ variables $u_1,\ldots,u_l$ and each $u_i$ is
homogeneous in degree $e_i$. Let $\fc=\on{Spec}(k[\ft]^\rW)$. The
natural $\bG_m$ action on $\fg$ induces a $\bG_m$-action on $\fc$
and under the isomorphism
$\fc\is\on{Spec}(k[u_1,\ldots,u_l])\is\mathbb A^l$ the action is
given by
\[h\cdot(a_1,\ldots, a_l)=(h^{e_1}a_1,\ldots, h^{e_l}a_l).\]

Let  $\chi:\fg\ra\fc$ be the map induced by $k[\fc]\is
k[\fg]^G\hookrightarrow k[\fg]$. It is a $G\times\bG_m$-equivariant
map where $G$ acts trivially on $\fc$. 
Similarly, let $\boldsymbol\pi:\ft\ra\fc$ be the map induced by $k[\fc]\hookrightarrow k[\ft]$, which is also $\bG_m$-equivariant. Let $\mL$ be an invertible
sheaf on $C$ and $\mL^\times$ be the corresponding
$\bG_m$-torsor. We denote by
$\fg_{\mL}=\fg\times^{\bG_{m}}\mL^\times$, $\ft_{\mL}=\ft\times^{\bG_m}\mL^\times$,  and
$\fc_{\mL}=\fc\times^{\bG_m}\mL^\times$ the $\bG_m$-twist of $\fg$, $\ft$,
and $\fc$ with respect to the natural $\bG_m$-action.

Let $\on{Higgs}_{G,\mL}=\on{Sect}(C,[\fg_{\mL}/G])$ be the stack of
sections of $[\fg_{\mL}/G]$ over $C$. I.e., for each $k$-scheme $S$
the groupoid  $\on{Higgs}_{G,\mL}(S)$ consist of maps:
\[h_{E,\phi}:C\times S\ra[\fg_{\mL}/G],\]
or equivalently, those maps
\[h_{E,\phi}:C\times S\ra [\fg/G\times\bG_m]\] such that the composition of
$h_{E,\phi}$ with the projection $[\fg/G\times\bG_m]\ra B\bG_m$ is
given by the $\bG_m$-torsor $\mL^\times$. 
Explicitly, $\on{Higgs}_{G,\mL}(S)$ consist of pairs $(E,\phi)$ (called Higgs bundles), where
$E$ is an $G$-torsor over $C\times S$ and $\phi$ is an element in
$\Gamma(C\times S,\ad (E)\otimes \mL)$ known as the Higgs field. If the group $G$ is clear
from the context, we simply write $\on{Higgs}_{\mL}$ for
$\on{Higgs}_{G,\mL}$.

Let $B_{\mL}=\on{Sect}_{\on{Spec} k}(C,\fc_{\mL})$ be the scheme of
sections of $\fc_{\mL}$ over $C$. I.e., for each $k$-scheme $S$,
$B_{\mL}(S)$ is the set of sections
$$b:C\times S\ra\fc_{\mL},$$
or equivalently, those maps \[b:C\times S\ra [\fc/\bG_m]\]
such that the composition of $b$ with the projection $[\fc/\bG_m]\ra
B\bG_m$ is given by $\mL^\times$.
It is called the Hitchin base of $G$.

The natural $G$-invariant projection $\chi:\fg\ra\fc$  induces a map
$$[\chi_{\mL}]:[\fg_{\mL}/G]\ra\fc_{\mL},$$
or more generally 
\begin{equation}
\label{twist of chi}[\chi/G\times
\bG_m]:[\fg/G\times\bG_m]\ra [\fc/\bG_m].\end{equation}
The map $[\chi_{\mL}]$ induces a natural map
$$h_{\mL}:\on{Higgs}_{\mL}=\on{Sect}(C,[\fg_{\mL}/G])\ra\on{Sect}(C,\fc_{\mL})=B_{\mL}.$$
\begin{definition}
We call $h_{\mL}:\on{Higgs}_{\mL}\ra B_{\mL}$ the Hitchin map
associated to $\mL$.
\end{definition}

For any
$b\in B_\mL(S)$ we denote by $\on{Higgs}_{\mL,b}$ the fiber product
$S\times_{B_\mL}\on{Higgs}_{\mL}$. 

Observe that the invariant bilinear form $\ft\times \ft\to k$ induces a canonical isomorphism $\ft\simeq \ft^*=:\breve\ft$, compatible with the $\rW$-action. Therefore, there is a canonical isomorphism $\fc\simeq \breve\fc$ and $B_{\mL}\simeq \breve B_{\mL}$. In what follows, we will identify them.

Let $\omega=\omega_C$ be the canonical line bundle of $C$. We are mostly interested in the case $\mL=\omega$. For
simplicity, from now on we denote $B=B_{\omega}$,
$\on{Higgs}=\on{Higgs}_{\omega}$, $h=h_{\omega}:\on{Higgs}\ra B$, and $\Hg_b=\Hg_{\omega_C,b}$. We sometimes also write
$\on{Higgs}_G$ for $\on{Higgs}$ to emphasize the group $G$. Observe that the bilinear form as in
\S\ref{bilinear form} together with the Serre duality induces an isomorphism $\on{Higgs}\simeq
T^*\Bun_G$ (cf. \cite{H}).

\subsection{The Kostant section}\label{kostant section}
In this section, we recall the construction of the Kostant section
of the Hitchin map $h_{\mL}$. For each simple root $\alpha_i$ we choose
a nonzero vector $f_i\in\fg_{-\alpha_i}$. Let $f=\oplus_{i=1}^l
f_i\in\fg$.  We complete $f$  into an $\fraks\frakl_2$ triple $\{f,h,e\}$ and denote by $\fg^e$ the centralizer of $e$ in $\fg$. A theorem of
Kostant says that $f+\fg^e$ consist of regular elements in $\fg$ and
the restriction of $\chi:\fg\ra\fc$ to $f+\fg^e$ is an isomorphism
onto $\fc$. We denote by $$kos:\fc\is f+\fg^e$$ the inverse of
$\chi|_{f+\fg^e}$. Let $\rho(\bG_m)$ denote the following $\bG_m$-action on $\fg$: It acts trivially on $\ft$, and on $\fg_{\alpha}$
by $\rho(t)x=t^{\on{ht}(\alpha)}x$ where
$\on{ht}(\alpha)=\sum n_i$ if $\alpha=\sum n_i\alpha_i$. We have
$\rho(t)f=t^{-1}f$ and $\rho(t)e=te$, in particular $\fg^e$ is
invariant under $\rho(\bG_m)$. We define a new $\bG_m$-action on
$\fg$ by  $\rho^+(t)=t\rho(t)$. Then $\rho^+(t)f=f$ and
$\rho^+(\bG_m)$ preserves $f+\fg^e$. With respect to this action, the isomorphism $kos:\fc\is
f+\fg^e$ is $\bG_m$-equivariant.

The diagonal map $\bG_m\ra\bG_m\times\bG_m$ induces a map
\[[\fg/\rho^+(\bG_m)]\ra[\fg/\bG_m\times\rho(\bG_m)].\]
By precomposing with the map
$[\fc/\bG_m]\stackrel{kos}\is[f+\fg^e/\rho^+(\bG_m)]\ra[\fg/\rho^+(\bG_m)]$
we obtain
\[[\fc/\bG_m]\ra[\fg/\bG_m\times\rho(\bG_m)].\]

If the action of $\rho(\bG_m)$ on $\fg$  factors through the adjoint
action of $G$, for example when $G$ is adjoint, then there is a map
$[\fg/\bG_m\times\rho(\bG_m)]\ra[\fg/\bG_m\times G]$ which defines
a section
\[[\fc/\bG_m]\ra[\fg/\bG_m\times\rho(\bG_m)]\ra[\fg/\bG_m\times G]\]
of \eqref{twist of chi}, and in particular, we get a section of
$h_{\mL}$. In general, the action $\rho(\bG_m)$ does not necessarily factor
through $G$, but its square does since it is given by the co-character
$2\rho:\bG_m\ra G$ where $2\rho$ is the sum of positive coroots. So
if we denote $\bG_m^{[2]}\ra\bG_m$ the square map (so $\bG_m^{[2]}$ is isomorphic to $\bG_m$, but regarded as its the double cover), we get a map
\[\eta^{1/2}:[\fc/\bG_m^{[2]}]\ra[\fg/\bG_m^{[2]}\times\rho(\bG_m^{[2]})]\ra[\fg/\bG_m^{[2]}\times G].\]

Let $\mL^{1/2}$ be a square root of  $\mL$. Then every $b:S\times
C\ra[\fc/\bG_m]$ in $B_{\mL}(S)$  factors through a unique map
$b^{1/2}:S\times C\ra[\fc/\bG_m^{[2]}]$. Therefore, by composing
with $\eta^{1/2}$, we get a lift of $b$:
\[\eta^{1/2}(b):S\times C\stackrel{b^{1/2}}\longrightarrow[\fc/\bG_m^{[2]}]\stackrel{\eta^{1/2}}
\longrightarrow[\fg/\bG_m^{[2]}\times G]\ra[\fg/\bG_m\times G].\]
The assignment $b\ra\eta^{1/2}(b)$ defines a section
\[\eta_{\mL^{1/2}}:B_{\mL}\ra\on{Higgs}_{\mL}\]
of  the Hitchin map $h_{\mL}$.

We fix a
square root $\kappa=\omega^{1/2}$ (called a theta characteristic) of
$\omega$ and write
$\kappa=\eta_{\kappa}:B\ra\on{Higgs}$.

\subsection{Cameral curve}\label{cameral}
 For any
$b\in B_\mL(S)$, 
the cameral curve $\tC_b$ is defined as the fiber product
\[\xymatrix{\tC_b\ar[r]\ar[d]^{\pi_b}&\ft_\mL\ar[d]\\
C\times S\ar[r]^b&\fc_\mL}.\]
When $b=\id:B_\mL\ra B_\mL$, the corresponding cameral curve 
$\tC_\mL:=\tC_{b}$ is called the universal cameral curve.
For simplicity, we will write $\tC=\tC_\omega$, $\pi=\pi_{b}:\tC\ra C\times B$.

\subsection{The universal centralizer group schemes}\label{univ cent}
Consider the group scheme $I$ over $\fg$ consisting of pairs
$$I=\{(g,x)\in G\times\fg\mid \on{Ad}_g(x)=x\}.$$
We define $J=kos^{*}I$, where $kos:\fc\ra\fg$ is the Kostant
section. This is called the universal centralizer group scheme of
$\fg$ (see Proposition \ref{J}). To study it, it is convenient to
introduce two auxiliary group schemes. We define $J^1=\on{Res}_{\ft/\fc}(T)^\rW$ and let $J^0$
to be the neutral component of $J^1$. All the group schemes $J$,
$J^0$ and $J^1$  are smooth commutative group schemes over $\fc$.
The following proposition is proved in \cite{N1} (see also
\cite{DG}).
\begin{proposition}\label{J}
\
\begin{enumerate}
\item There is a unique morphism of group schemes $a:\chi^{*}J\ra I\subset G\times\fg$, which extends the
canonical isomorphism $\chi^{*}J|_{\fg^{reg}}\is I|_{\fg^{reg}}$.
.

\item There are natural inclusions $J^0\subset J\subset J^1$.

\item The inclusion $J\subset J^1=\on{Res}_{\ft/\fc}(T)^\rW$ in part (2)
defines a morphism 
\[\mathrm j^1:\boldsymbol\pi^*J\ra T\times\ft\]
of group schemes over $\ft$, which is an isomorphism over $\ft^{rs}$,

\end{enumerate}
\end{proposition}
All the above constructions can be twisted. Namely, there are
$\bG_m$-actions on $I$, $J$, $J^1$ and $J^0$. Moreover, the
$\bG_m$-action on $I$ can be extended to a $G\times\bG_m$-action
given by $(h,t)\cdot(x,g)=(t\cdot hxh^{-1},hgh^{-1})$. The natural morphisms 
$J
\ra\fc$ and $I\ra \fg$ are $\bG_m$-equivariant, and therefore we can
twist everything by the $\bG_m$-torsor $\mL^\times$ to get
$J_{\mL}\ra\fc_{\mL}$, $I_{\mL}\ra\fg_{\mL}$ where
$J_{\mL}=J\times^{\bG_m}\mL^\times$ and
$I_{\mL}=I\times^{\bG_m}\mL^\times$ . Similarly, we have
$J^0_{\mL}\ra\fc_{\mL}$ and $J^1_{\mL}\ra\fc_{\mL}$, and 
there are natural inclusions $J^0_\mL\subset J_\mL\subset J^1_\mL$.
The group
scheme $I_{\mL}$ over $\fg_{\mL}$ is equivariant  under the
$G$-action, hence it descends to a group scheme $[I_{\mL}]$ over
$[\fg_{\mL}/G]$.



\subsection{Symmetries of Hitchin fibration}\label{symmetry of Hitchin}
Let $b:S\ra B_{\mL}$ be an $S$-point of $B_{\mL}$, corresponding to a map
$b:C\times S\ra\fc_{\mL}$. Pulling back $J_\mL\ra\fc_{\mL}$
along this map, we obtain a smooth group scheme $J_b=b^{*}J$ over $C\times
S$.

Let $\sP_b$ be the Picard category of $J_b$-torsors over $C\times
S$. The assignment $b\ra\sP_b$ defines a Picard stack over $B$,
denoted by $\sP_\mL$. Let us fix $b\in B_\mL(S)$, and let $(E,\phi)\in\on{Higgs}_{\mL,b}$
corresponding to the map $h_{E,\phi}:C\times S\ra [\fg_{\mL}/G]$. Observe that the morphism $\chi^*J\to I$ in
Proposition \ref{J} induces $[\chi_{\mL}]^{*}J_\mL\ra [I_\mL]$ of group
schemes over $[\fg_{\mL}/G]$.
Pulling back to $C\times S$ using $h_{E,\phi}$, we get a map
 \begin{equation}\label{actEphi}
a_{E,\phi}:J_b\ra h_{E,\phi}^{*}[I]=\Aut(E,\phi)\subset \Aut(E),
\end{equation}
which allows us to twist
$(E,\phi)\in\on{Higgs}_{\mL,b}$ by a $J_b$-torsor. This construction defines an action
of $\sP_{\mL}$ on $\on{Higgs}_{\mL}$ over $B_{\mL}$.

Let $\on{Higgs}^{reg}_{\mL}$ be the open stack of $\on{Higgs}_{\mL}$ consisting of
$(E,\phi):C\to [\fg_\mL/G]$ that factors through $C\to
[(\fg^{reg})_\mL/G]$. If $(E,\phi)\in\on{Higgs}^{reg}_{\mL}$, then
$a_{E,\phi}$ above is an isomorphism. The Kostant section $\eta_{\mL^{1/2}}:
B_{\mL}\to \on{Higgs}_{\mL}$ factors through $\eta_{\mL^{1/2}}: B_{\mL}\to \on{Higgs}^{reg}_{\mL}$. Following \cite[\S4]{N1}, we
define $B^0_{\mL}$ as the  open sub-scheme of $B_{\mL}$ consisting of $b\in
B_\mL(k)$ such that the image of the map $b:C\ra\fc_{\mL}$
intersects the discriminant divisor transversally.  The following
proposition can be extracted from \cite{DG,DP,N1}:

\begin{proposition}\label{reg torsor}
(1) The stack $\on{Higgs}^{reg}_{\mL}$ is a $\sP_{\mL}$-torsor, which can be trivialized
by a choice of a Kostant section $\eta_{\mL^{1/2}}$. 

(2) One has $\on{Higgs}^{reg}_{\mL}\times_{B_{\mL}}B^0_{\mL}=\on{Higgs}_{\mL}\times_{B_{\mL}}B^0_{\mL}$.

(3) The restriction of the universal cameral curve $\tC_\mL|_{B^0}\to B^0_{\mL}$ to $B^0_{\mL}$ is smooth. The restriction $\sP_{\mL}|_{B^0_{\mL}}$ to $B^0_{\mL}$ is a Beilinson's 1-motive . 
\end{proposition}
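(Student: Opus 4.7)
The plan is to establish the three parts in order, following the general strategy of Ngô \cite{N1} adapted to our mild positive-characteristic setting.

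For (1), the key input is Proposition \ref{J}(1): the homomorphism $a:\chi^*J\to I$ is an isomorphism over $\fg^{reg}$. Since $f+\fg^e\subset \fg^{reg}$, the Kostant section $\eta_{\mL^{1/2}}$ factors through $\Hg^{reg}_{\mL}$, supplying a distinguished point in each fiber over $B_{\mL}$. Given $(E,\phi)\in \Hg^{reg}_{\mL,b}(S)$, the map $a_{E,\phi}$ of \eqref{actEphi} becomes an isomorphism $J_b\is \Aut(E,\phi)$, so the $\sP_{\mL}$-action is free. For simple transitivity, I would argue locally: \'etale-locally on $C\times S$, any regular Higgs bundle with characteristic $b$ can be conjugated into the Kostant slice $f+\fg^e$ and hence is isomorphic to the pullback of $\eta_{\mL^{1/2}}(b)$; the ambiguity in the conjugation is governed precisely by $J_b$. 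Gluing these local trivializations produces the quasi-inverse sending $(E,\phi)$ to the $J_b$-torsor of ``ratios'' with the Kostant section.

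For (2), I must show that for every $b\in B^0(S)$ and every $(E,\phi)\in \Hg_{\mL,b}(S)$ the Higgs field $\phi$ is pointwise regular. The possible failure locus lies over $b^{-1}(\mathfrak D_{\mL})\subset C\times S$, where $\mathfrak D\subset \fc$ is the discriminant. Transversality of $b$ with $\mathfrak D_{\mL}$ makes $b^{-1}(\mathfrak D_{\mL})$ relatively \'etale over $S$, and a local calculation near such a point---using the Slodowy slice $f+\fg^e$, which is valid in our characteristic because $p\nmid|\rW|$ guarantees the standard $\mathfrak{sl}_2$-triple exists with the expected properties---shows that any Higgs field with transverse approach to the subregular stratum is forced to land in $\fg^{reg}$. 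Alternatively, one deduces this by a dimension count combined with the torsor structure of (1): the closed substack $\Hg_{\mL}\setminus \Hg^{reg}_{\mL}$ over $B^0_{\mL}$ is $\sP_{\mL}$-invariant and has strictly smaller dimension than $\sP_{\mL}|_{B^0_{\mL}}$, hence cannot dominate $B^0_{\mL}$.

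For (3), smoothness of $\tC|_{B^0}\to B^0_{\mL}$ follows directly from the definition of $B^0_{\mL}$: the universal cameral curve is cut out in $(C\times B_{\mL})\times\ft_{\mL}$ by the equations $b=\pi_{\mL}(t)$, and transverse intersection with $\mathfrak D_{\mL}$ is precisely the Jacobian condition for smoothness of this system (the hypothesis $p\nmid |\rW|$ guarantees that $\ft\to\fc$ is a tame $\rW$-cover so that transversality has its usual meaning). For the assertion that $\sP_{\mL}|_{B^0_{\mL}}$ is a Beilinson 1-motive, I would exploit the inclusions $J^0\subset J\subset J^1$ of Proposition \ref{J}(2): twisting these by $b$ yields exact sequences of Picard stacks exhibiting $\sP_{\mL,b}$ as an extension whose neutral component is (up to a torus factor) a generalized Prym of $\tC_b\to C$, with discrete/toric contributions from the finite quotients $J/J^0$ and $J^1/J$. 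Smoothness of $\tC|_{B^0}$ ensures this Prym is an abelian variety in families, yielding the Beilinson 1-motive structure in the sense of Appendix \ref{A}. The main technical obstacle I expect is part (2): (1) is essentially formal from Proposition \ref{J}(1), and (3) reduces to transversality and a Prym computation, but the regularity assertion (2) requires genuine local control of the Higgs field along the ramification divisor of the cameral cover, and it is precisely here that the hypothesis $p\nmid|\rW|$ does real work.
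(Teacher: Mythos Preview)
The paper gives no proof of this proposition; it simply states that the result ``can be extracted from \cite{DG,DP,N1}''. Your outline for (1) and (3) follows those references accurately and is correct. For (2), your primary local argument is also the right one and is what Ng\^o does: if $\phi$ failed to be regular at some $x\in C$, a local computation in the Kostant slice shows that the characteristic $b$ would meet the discriminant at $b(x)$ to order $\geq 2$, contradicting the transversality defining $B^0_{\mL}$. (The $\SL_2$ picture makes this transparent: $\phi(x)=0$ forces $\det\phi$ to vanish to order $\geq 2$ at $x$.)

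Your ``alternative'' dimension argument for (2), however, does not prove the statement. Showing that $\Hg_{\mL}\setminus\Hg^{reg}_{\mL}$ over $B^0_{\mL}$ is $\sP_{\mL}$-invariant and of dimension strictly less than $\dim\sP_{\mL}|_{B^0_{\mL}}$ only yields that this locus cannot dominate $B^0_{\mL}$; it does not exclude the possibility that it sits nontrivially over some proper closed subset of $B^0_{\mL}$. But (2) asserts emptiness over all of $B^0_{\mL}$, which genuinely requires the local transversality input. You should drop this alternative or recast it as heuristic motivation only.

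One minor imprecision in (3): the parenthetical ``up to a torus factor'' is not needed and is misleading in this setting. Over $B^0_{\mL}$ the cameral curve is smooth and proper and (since $G$ is semisimple here) $\Aut_{\sP}(e)\simeq Z(G)$ is finite; the neutral connected component of the coarse moduli of $\sP_{\mL,b}$ is an honest abelian scheme with no affine part. Toric contributions to the Prym appear only over more degenerate $b\notin B^0_{\mL}$.
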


\begin{remark}\label{transversal locus}
Let $\underline{Disc}:\ft\ra k$ be the discriminant function 
defined by 
$$\underline{Disc}=\prod_{\alpha\in\Phi} d\alpha,$$ where $\Phi$ is the set of roots of $G$. 
The function $\underline{Disc}$ is $\rW$-invariant, and thus descends to a function 
$Disc$ on $\fc$. Moreover, the function  $Disc:\fc\ra k$ is $\bG_m$-equivariant where 
$\bG_m$ acts on $k$ via the character $t\ra t^N$ and  $N=|\Phi|$.  Let 
$Disc_{\mL}:\fc_{\mL}\ra\mL^N$ be the twist of $Disc$.  For any $b:C\ra\fc_\mL$, we get a section  
\[s_b\in\Gamma(C,\mL^N).\]
The zeros of $s_b$ is the branch loci $\mathcal B$ of the cameral cover $\pi_b:\tC_b\ra C$. If
$b\in B^{0}_{\mL}(k)$, then $\mathcal B$ is multiplicity free.
Note that if $\text{deg}\ \mL>0$ the 
branch loci $\mathcal B$ is non-empty.

\end{remark}

\subsection{The tautological section $\tau:\fc\to \Lie J$}\label{tau and c}
Recall that by
Proposition \ref{J}, there is a canonical isomorphism
$\chi^*J|_{\fg^{reg}}\simeq I|_{\fg^{reg}}$. The sheaf of Lie
algebras $\Lie (I|_{\fg^{reg}})\subset \fg^{reg}\times \fg$ admits a
tautological section $\tilde\tau:\fg^{reg}\ra\Lie(I|_{\fg^{reg}})$
given by $x\mapsto x\in \Lie I_x$ for $x\in
\fg^{reg}$. This section descends to a tautological section
$\tau:\fc\ra\Lie J$.\footnote{Indeed, one can check that $\tau$ is 
equal to $kos^*(\tilde\tau)$, the pullback of $\tilde\tau$ along the 
Kostant section $kos:\fc\ra\fg^{reg}$.} Recall the following property of $\tau$ \cite[Lemma 2.2]{CZ}

\begin{lem}\label{taut:prolong}
Let $x\in \fg$, and $a_x: J_{\chi(x)}\to I_x\subset G$ be the homomorphism as in Proposition \ref{J} (1). Then $da_x(\tau(x))=x$, where $da_x$ denotes the differential of $a_x$.
\end{lem}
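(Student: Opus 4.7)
The plan is to verify the identity on the dense open regular locus $\fg^{reg}\subset\fg$---where it holds tautologically by construction---and then to extend to all of $\fg$ by a density argument. The key is to view both sides of the claimed equality as regular sections over $\fg$ of the coherent sheaf $\Lie I\subset\fg\times\fg$.

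For the regular locus, by Proposition \ref{J} the map $a$ restricts to an isomorphism $\chi^*J|_{\fg^{reg}}\simeq I|_{\fg^{reg}}$, and in fact $J|_{\chi(\fg^{reg})}$ is the $G$-invariant descent of $I|_{\fg^{reg}}$ along $\chi|_{\fg^{reg}}$. Under this identification, the pullback $\chi^*\tau|_{\fg^{reg}}$ corresponds, by the very definition of $\tau$ given in \S\ref{tau and c}, to the tautological section $x\mapsto x\in \Lie I_x=\{y\in\fg:[x,y]=0\}$. Differentiating the group scheme isomorphism then yields $da_x(\tau(\chi(x)))=x$ for every $x\in\fg^{reg}$, which is the claim on the regular locus.

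For the extension, note that the morphism $a:\chi^*J\to I\subset G\times\fg$ is given over all of $\fg$ by Proposition \ref{J}, so its differential along the identity section gives a morphism $da:\chi^*\Lie J\to \Lie I$ of coherent sheaves on $\fg$. Hence $x\mapsto (x, da_x(\tau(\chi(x))))$ is a well-defined scheme-theoretic section of $\Lie I\to\fg$, as is the tautological section $x\mapsto (x,x)$ (well-defined because $[x,x]=0$). These two sections agree on $\fg^{reg}$ by the previous step, and since $\fg\simeq\bbA^{\dim\fg}$ is integral while $\Lie I$ is separated (being a closed subscheme of $G\times\fg$), two morphisms from $\fg$ into $\Lie I$ that agree on the dense open subset $\fg^{reg}$ must agree everywhere. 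This yields the desired equality.

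I do not anticipate any serious obstacle; the one point worth checking carefully is that the construction of $\tau$ on $\fc$ is compatible with the identification on $\fg^{reg}$, which in turn reduces to the $G$-equivariance of the tautological section $x\mapsto x$ of $\Lie I|_{\fg^{reg}}$ needed for descent along $\chi|_{\fg^{reg}}$---and this is immediate from $\Ad(g)x\in \Lie I_{\Ad(g)x}$.
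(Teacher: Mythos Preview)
Your argument is correct. Note, however, that the paper does not give its own proof of this lemma: it simply recalls the statement from \cite[Lemma~2.2]{CZ}. Your approach---verify the identity tautologically on $\fg^{reg}$ and then extend by density using that both sides define regular morphisms from the integral scheme $\fg$ into a separated target---is the natural one and is presumably what \cite{CZ} does as well.

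One small correction: you write that $\Lie I$ is separated ``being a closed subscheme of $G\times\fg$,'' but $\Lie I$ sits inside $\fg\times\fg$ (it is the commuting variety $\{(x,y):[x,y]=0\}$), not $G\times\fg$. This does not affect the argument, since the target is separated either way; in fact you can bypass the issue entirely by composing with the closed embedding $\Lie I\hookrightarrow\fg\times\fg$ and comparing the two resulting morphisms $\fg\to\fg$ in the second factor, which agree on $\fg^{reg}$ and hence everywhere since $\fg$ is reduced.
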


Let us regard $\Lie J$ as a scheme over $\fc$. Besides the section $\tau$, there is a canonical map $c:\Lie J\to \fc$ such that $c\tau=\id$. Namely, if we regard $\Lie
(I|_{\fg^{reg}})$ as a scheme, then there is a natural map $\Lie
(I|_{\fg^{reg}})\to \fc$ given by
$$
\Lie (I|_{\fg^{reg}})\subset \fg\times\fg^{reg} \to \fc\times \fg^{reg}\to \fc,
$$
which also descends to a morphism $c:\Lie J\to \fc$.

The morphisms $\tau$ and $c$ have global counterparts (see also \cite[\S 2.3]{CZ}).  Observe that $\bG_m$ acts on $\fg\times \fg^{reg}$ via natural
homotheties on both factors, and therefore on $\chi^*\Lie
J|_{\fg^{reg}}\simeq \Lie (I|_{\fg^{reg}})\subset \fg\times
\fg^{reg}$. This $\bG_m$-action on $\chi^*\Lie J|_{\fg^{reg}}$ descends to
a $\bG_m$-action on $\Lie J$, and for any line bundle $\mL$ on $C$
the $\mL^\times$-twist $(\Lie J)\times^{\bG_m}\mL^\times$ is $\Lie J_\mL\otimes \mL$, where $J_\mL$ is
introduced in \S \ref{univ cent}. In addition, both maps $\tau$ and
$c$ are $\bG_m$-equivariant with respect to this $\bG_m$-action on
$\Lie J$ and the natural $\bG_m$-action on $\fc$. Therefore, if we define a vector bundle
$B_{J,\mL}$ over $B_{\mL}$, whose fiber over $b\in B_{\mL}$ is
$\Gamma(C,\Lie J_b\otimes \mL)$, then by
twisting $\tau$ and $c$ by $\mL$, we obtain 
\begin{equation}\label{taut sect II}
\tau_{\mL}: B_{\mL}\to B_{J,\mL}.
\end{equation}
which is a canonical section of the  projection $\pr:B_{J,\mL}\to
B_{\mL}$, and a canonical map
\begin{equation}\label{c}
c_{\mL}:B_{J,\mL}\to B_{\mL}
\end{equation}
such that $c_{\mL}\tau_{\mL}=\id$.
As before, we omit the subscript ${_\mL}$ if $\mL=\omega$ for brevity.

\medskip

Likewise, we introduce the vector bundle $B_{J,\mL}^*$ over
$B_{\mL}$ whose fiber over $b$ is $\Gamma(C,(\Lie J_b)^*\otimes
\mL)$. Observe that $B_{J,\mL}^*$ is not the dual of $B_{J,\mL}$.
Rather, when $\mL=\omega$, it is the pullback $e^*T^*(\sP_{\mL}/B_{\mL})$ of the
cotangent bundle of $\sP_{\mL}\to B_{\mL}$ along the unit section
$e:B_{\mL}\to \sP_{\mL}$ and will also be denoted by
$\bbT_e^*(\sP_{\mL})$ interchangeably later on. We construct
a section
\begin{equation}\label{transpose II}
\tau^*_{\mL}: B_{\mL}\to B^*_{J,\mL}
\end{equation}
as follows.  The non-degenerate bilinear form $(\ ,\ )$ we fixed in
\ref{bilinear form} induces $\fg\simeq \fg^*$, which restricts to a
map $\Lie I_x\to (\Lie I_x)^*$ for every $x\in \fg^{reg}$. This map
descends to give 
\begin{equation}\label{transpose III}
\iota:\Lie J\to (\Lie J)^*,
\end{equation}
which is
$\bG_m$-equivariant. We define $\tau^*_{\mL}$ as the twist of
$\fc\stackrel{\tau}\to \Lie J\stackrel{\iota}\to (\Lie J)^*$. As before, we omit the subscript
$_{\mL}$ if $\mL=\omega$. 

We give another interpretation of this
map.
Observe that the Kostant section $\kappa$ induces the map
$$v_\kappa:\sP\to \on{Higgs}_G\to \Bun_G\times B$$
over $B$, and therefore,
$$\xymatrix{T^*(\Bun_G)\times_{\Bun_G}\sP\ar[r]^{\ \ \ \ \ \ (v_\kappa)_d}\ar[d]^{(v_\kappa)_p}&T^*(\sP/B)\\
T^*\Bun_G}.$$

\begin{lem}\label{iden with tau}
The map
$$\sP\stackrel{\kappa\times \id}{\to } T^*(\Bun_G)\times_{\Bun_G}\sP\stackrel{(v_\kappa)_d}{\to} T^*(\sP/B)\simeq \bbT^*_e\sP\times_B \sP,$$
can be identified with
\[\sP\stackrel{\on{pr}\times\id}{\to} B\times\sP\stackrel{\tau^*\times \id}{\longrightarrow}\bbT^*_e\sP\times_B \sP.\]
\end{lem}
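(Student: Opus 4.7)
My plan is to reduce to a pointwise computation at the identity $e_b\in \sP_b$ for each $b\in B$, and then invoke Lemma \ref{taut:prolong} to conclude. The Kostant trivialization makes $v_\kappa$ the $\sP$-action on the distinguished point $\kappa(b)\in \on{Higgs}_G^{reg}$, hence $\sP$-equivariant. Consequently, its cotangent map $(v_\kappa)_d$ is equivariant for the translation action of $\sP$ on $T^*(\sP/B)\simeq \bbT^*_e\sP\times_B\sP$, which is trivial on the $\bbT^*_e\sP$-factor. Both maps in the lemma thus become constant along $\sP_b$ after projection to $\bbT^*_e\sP|_b$, so it suffices to verify equality at $P=e_b$.

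At $e_b$ we have $v_\kappa(e_b)=(E_\kappa(b),\phi_\kappa(b))$. Using Proposition \ref{J}, the tangent map $(v_\kappa)_*|_{e_b}:T_{e_b}\sP_b\to T_{E_\kappa(b)}\Bun_G$ is identified with $H^1$ of the Lie algebra homomorphism $da=da_{E_\kappa,\phi_\kappa}:\Lie J_b\to \ad E_\kappa$. Dualizing via Serre duality and using the bilinear form identification $\ad E_\kappa\simeq (\ad E_\kappa)^*$, the cotangent map
$$(v_\kappa)_d|_{e_b}:H^0(C,\ad E_\kappa\otimes \omega)\longrightarrow H^0(C,(\Lie J_b)^*\otimes \omega)$$
sends a Higgs cotangent vector $\eta$ to the linear functional $X\mapsto (\eta, da(X))$, where $(\ ,\ )$ is the fixed bilinear form on $\fg$. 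Since $\kappa\times\id$ sends $e_b$ to $\phi_\kappa(b)$ regarded as a cotangent vector at $E_\kappa(b)$ under $\on{Higgs}_G\simeq T^*\Bun_G$, its image under $(v_\kappa)_d|_{e_b}$ is the functional $X\mapsto (\phi_\kappa, da(X))$.

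On the other side, unpacking the definition \eqref{transpose III} of $\iota:\Lie J\to (\Lie J)^*$: it is obtained by restricting the bilinear form along $\Lie I_x\subset \fg$ for regular $x$, so after identifying $\Lie J_{\chi(x)}\simeq \Lie I_x$ via $da_x$, the map becomes $Y\mapsto (da_x(Y),\,da_x(-))$. Hence $\tau^*(b)(X)=\iota(\tau(b))(X)=(da(\tau(b)),\,da(X))$, and Lemma \ref{taut:prolong}, applied pointwise on $C$, yields $da(\tau(b))=\phi_\kappa(b)$. The two functionals therefore coincide. The main subtlety is the bookkeeping for the $\sP$-equivariance reduction: interpreting $\kappa\times\id$ at a general $P\in\sP_b$ (where the Higgs field $\phi_P$ lives on $E_P=v_\kappa(P)$ rather than on $E_\kappa$) and verifying that after translating back to $e_b$ along $\sP_b$ one recovers the same functional on $\Lie J_b$. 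Both amount to chasing the Picard stack structure on $\sP$ through the construction of $v_\kappa$, after which the remaining steps are routine manipulations with Serre duality and the bilinear form.
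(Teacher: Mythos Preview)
Your argument is correct, and it rests on the same core fact as the paper's proof: the identity $da(\tau)=\phi_\kappa$ from Lemma~\ref{taut:prolong}. The organizational difference is that the paper does \emph{not} reduce to $e_b$ via equivariance. Instead it works at an arbitrary point $x\in\sP_b$ and observes that the cotangent map $(v_{\kappa,b})_d$ at $x$ is obtained by twisting the universal map $kos^*(\ad E_{univ})^*\to(\Lie J)^*$ over $\fc$ by the $(G\times\bbG_m)$-torsor $E\times\omega^\times$. This reduces the claim to a statement over $[\fg^{reg}/G]$, namely that $[\fg/G]\xrightarrow{[\chi]^*\tau}[\chi]^*\Lie J\to\ad(E_{univ})$ is the diagonal map, which is immediate from the definition of $\tau$. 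The universal/stacky formulation automatically encodes the independence from $x$ that you establish separately.

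One point of caution in your write-up: the phrase ``$v_\kappa$ is $\sP$-equivariant'' is imprecise, because there is no $\sP_b$-action on $\Bun_G$ (the map $a_{E,\phi}:J_b\to\Aut(E)$ depends on $\phi$). What you actually need---and what your later ``bookkeeping'' remark acknowledges---is that the functional $X\mapsto(\phi_P,\,da_{E_P,\phi_P}(X))$ on $\Lie J_b$ is independent of $P$. This holds because locally on $C$ the $J_b$-torsor $P$ trivializes and the transition functions act through $I_x$, which is commutative and centralizes $\phi_\kappa$; but it deserves to be said rather than deferred. The paper's approach sidesteps this issue entirely by working over the quotient $[\fg/G]$, at the cost of invoking slightly more stack machinery.
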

\begin{proof}
For $b\in B$, we write the restriction of $v_\kappa$ over $b$ by
$v_{\kappa,b}:\sP_b\to \Bun_G$. We need to show that for
$x\in\sP_b$, the image of the point
\[\kappa(x)\in T^*_{v_{\kappa,b}(x)}\Bun_G\to T_x^*\sP_b\simeq (\bbT_e^*\sP)_b\]
coincides with $\tau^*(b)$. Let $E$ denote the $G$-bundle
$v_{k,b}(x)$.

Observe that there is a universal $G$-torsor $E_{univ}$ over
$[\fg/G]$ given by $\fg\to [\fg/G]$, and that
$\ad(E_{univ})\to[\fg/G]$ is canonically isomorphic to
$[\fg/G]\times_{BG}[\fg/G]\stackrel{\pr_1}{\to}[\fg/G]$. The
cotangent map
$$(v_{\kappa,b})_d: T^*_{v_{\kappa,b}(x)}\Bun_G\to T^*_{x}\sP_b$$
is induced by twisting $$kos^*(\ad(E_{univ}))^*\to (\Lie J)^*$$ by
the $(G\times\bbG_m)$-torsor $(E\times\omega^\times)$. Therefore, it
is enough to show that
$$\kappa(x)\in
T^*_{v_{\kappa,b}(x)}\Bun_G=\Gamma(C,\fg_E\otimes\omega)$$ can be
identified with the image of $b$ under $$\tau(b)\in\Gamma(C,\Lie
J_b\otimes\omega)\to \Gamma(C,\fg_E\otimes\omega).$$ Let us consider
the universal situation. Therefore, we need to show that
\[\frakc\stackrel{\tau}{\to}\Lie J\to kos^*\ad(E_{univ})\simeq \fc\times_{BG}[\fg/G]\]
is the same as $$\fc\stackrel{\id\times kos}{\to}
\fc\times_{BG}[\fg/G].$$ However,  the composition
$$[\fg/G]\stackrel{[\chi]^*(\tau)}{\to} [\chi]^*\Lie J\to
\ad(E_{univ})\simeq [\fg/G]\times_{BG}[\fg/G]$$ restricts to a map
$[\fg^{reg}/G]\to [\fg^{reg}/G]\times_{BG}[\fg/G]$, which is easily
checked to be the diagonal map using the definition of $\tau$. By
pulling back this identification along $kos:\fc\to[\fg^{reg}/G]$, we
obtain the claim.
\end{proof}

By the similar argument, we have the following lemma, which will be used in \S\ref{mult one forms}.
Let $\mathrm j^1:\boldsymbol\pi^*J\ra T\times{\ft}$ be the map in Proposition \ref{J},
and let 
\begin{equation}\label{dj^1}
d\mathrm j^1:\boldsymbol\pi^*\Lie J\ra\ft\times{\ft}
\end{equation}
denote its differential.
Consider the pull back $\boldsymbol\pi^*\tau:\ft\ra\pi^*\Lie J$
of $\tau:\fc\ra\Lie J$ along $\boldsymbol\pi:\ft\ra\fc$.

\begin{lemma}\label{iota}
The composition
\[
\delta:\ft\stackrel{\boldsymbol\pi^*\tau}\ra\boldsymbol\pi^*\Lie J_{}\stackrel{d\mathrm j^1}\ra\ft\times\ft
\]
is equal to the diagonal map $\Delta:\ft\ra\ft\times\ft$.
\end{lemma}
\begin{proof}
For an embedding $\ft\subset \fg$, the restriction of $d\mathrm j^1$ to $\ft^{reg}=\ft\cap \fg^{reg}$ is just the restriction to $\ft^{reg}$ of the isomorphism $\Lie J|_{\fg^{reg}}\simeq \Lie( I|_{\fg^{reg}})$. This follows from the construction of $\mathrm j^1$ as in \cite[Proposition 2.4.2]{N2}.
Therefore, the restriction of $\delta$ to $\ft^{reg}$ is just the diagonal map. The lemma then follows.
\end{proof}


\section{Classical duality}\label{classical duality}
In this section, we fix a smooth projective curve $C$ over $k$ and a line bundle $\mL$ on $C$ such that $\deg \mL>0$. Except \S \ref{l_J}, we also fix a connected reductive group $G$ over $k$. We assume that $p=\cha\ k$ does not divide the order of the Weyl group of $G$. 
We show that the $\breve\sP_\mL\simeq \sP_\mL^\vee$ as
Picard stacks over $B^0$. Note that this duality for $k=\mathbb C$
is the main theorem of \cite{DP} (for $G=\SL_n$, see \cite{HT}). However, as mentioned by the
authors, transcendental arguments are used in \emph{loc. cit.} in an
essential way, and therefore cannot be applied directly to our
situation. Our argument works for any algebraically closed field $k$
of characteristic zero or $p$ with $p\nmid |\rW|$.

In fact, it is not hard to construct a canonical isogeny
$\frakD_{cl}$ between $\breve \sP_\mL$ and $\sP_\mL^\vee$. If the adjoint
group of $G$ does not contain a simple factor of type $\mathrm B$ or $\mathrm C$,
then to show that $\frakD_{cl}$ is an isomorphism is relatively
easy. It is to show that $\frakD_{cl}$ is an isomorphism in the
remaining cases that some complicated calculations are needed.

Observe in this section, we do not need to assume that
$\mL=\omega_C$. We only need the assumption that $\text{deg}\ \mL$ is positive.
However, to simplify the notations, we still omit
the subscript $_{\mL}$.


\subsection{Galois description of $\scP$}\label{Gal des}

We first introduce several auxiliary Picard stacks.

Let $\tC\to B$ be the universal cameral curve. There is a natural
action of $\rW$ on $\tC$. For a $T$-torsor $E_T$ on $\tC$, and an
element $w\in \rW$, there are two ways to produce a new $T$-torsor.
Namely, the first is via the pullback $w^*E_T=\tC \times{_{w,\tC}}
E_T$, and the second is via the induction $E_T\times^{T,w}T$. We
denote
\[w(E_T)=  ((w^{-1})^*E_T)\times^{T,w}T.\]
Clearly, the assignment $E_T\mapsto w(E_T)$ defines an action of $W$
on $\Bun_T(\tC/B)$, i.e. for every $w,w'\in \rW$, there is a
canonical isomorphism $w(w'(E_T))\simeq (ww')(E_T)$ satisfying the
usual cocycle conditions. 
\begin{example}
Let us describe $w(E_T)$ more explicitly in the case $G=\SL_2$. Let
$s$ be the unique nontrivial element in the Weyl group, acting on
the cameral curve $s:\tC_b\to \tC_b$. If we identify
$T=\bbG_m$-torsors with invertible sheaves $\mL$, then
\[s(\mL)=s^*\mL^{-1}.\]
\end{example}
Let $\Bun_{T}^\rW(\tC/B)$ (or $\Bun_T^\rW$ for simplicity) denote the
Picard stack of strongly $\rW$-equivariant $T$-torsors on $\tC/B$.
By definition, for a $B$-scheme $S$, $\Bun_{T}^\rW(\tC/B)(S)$ is the
groupoid of $(E_T, \{\gamma_w, w\in W\})$, where $E_T$ is a $T$-torsor
on $\tC_S$, and $\gamma_w: w(E_T)\simeq E_T$ is an isomorphism,
satisfying the natural compatibility conditions. Another way to
formulate these compatibility conditions is provided in \cite{DG}.
Namely, for a $T$-torsor $E_T$, let $\Aut_\rW(E_T)$ be the group
consisting of $(w,\gamma_w)$, where $w\in \rW$ and
$\gamma_w:w(E_T)\simeq E_T$ is an isomorphism. Then there is a
natural projection $\Aut_\rW(E_T)\to \rW$. Then an object of
$\Bun_T^\rW(\tC/B)(S)$ is a pair $(E_T,\gamma)$, where
$\gamma:\rW\to \Aut_\rW(E_T)$ is a splitting of the projection. 

For later purpose, it is worthwhile to give another description of $\Bun_T^\rW$. Namely, there is a non-constant group scheme $\mT=\tC\times^\rW T$ on the stack $[\tC/\rW]$. Then the pullback functor induces an isomorphism from the stack $\Bun_{\mT}$ of $\mT$-torsors on $[\tC/W]$ to $\Bun_T^\rW$.

In \cite{DG}, a Galois description of $\scP$ in terms of
$\Bun_T^\rW$ is given. We here refine their description.

Let $\scP^1$ be the Picard stack over $B$ classifying $J^1$-torsors
on $C\times B$. First, we claim that there is a canonical morphism
\begin{equation}\label{p=p'}
j^{1,\scP}:\scP^1\to \Bun_T^{\rW}(\tC/B).
\end{equation}
To construct $j^{1,\sP}$, recall that $J^1=(\pi_*(T\times\tC))^W$, where $\pi:\tC\to C\times B$ is the projection, and
therefore, for any $J^1$-torsor $E_{J^1}$ on $C\times S$ (where
$b:S\to B$ is a test scheme), one can form a $T$-torsor on $\tC_S$
by 
\begin{equation}\label{ET ind}
E_T:=\pi^*E_{J^1}\times^{\pi^*J^1}T.
\end{equation} 
Clearly, $E_T$ carries on
a strongly $\rW$-equivariant structure $\gamma$, and $j^1(E_{J^1})=
(E_T,\gamma)$ defines the morphism $j^{1,\sP}$.

The morphism $j^{1,\sP}$, in general, is not an isomorphism. Let us
describe the image. Let $\alpha\in\Phi$ be a root and let
$i_\alpha:\tC_\alpha\to \tC$ be the inclusion of the fixed point
subscheme of the reflection $s_\alpha$. Let
$T_{\alpha}=T/(s_\alpha-1)$ be the torus of coinvariants of the
reflection $s_\alpha$. Then
$s_\alpha(E_T)|_{\tC_\alpha}\times^TT_\alpha$ is canonically
isomorphic to $E_T|_{\tC_\alpha}\times^TT_\alpha$ and therefore
$\gamma_{s_\alpha}|_{\tC_\alpha}$ induces an automorphism of the
$T_\alpha$-torsor $E_T\times^TT_\alpha$. In other words, there is a
natural map
\[r=\prod_{\alpha\in\Phi} r_\alpha:\Bun_T^\rW(\tC/B)\to (\prod_{\alpha\in\Phi} \Res_{\tC_\alpha/B}(T_\alpha\times\tC_\alpha))^\rW.\]
It is easy to see that $r\circ j^{1,\sP}$ is trivial, and one can show that

\begin{lem}\label{scP1 and strongly equivariant}
$\scP^1\simeq \ker r$. In other words, $\scP^1(S)$ consists of those
strongly $\rW$-equivariant $T$-torsors $(E_T,\gamma)$ such that the
induced automorphism of $E_T\times^TT_\alpha|_{\tC_\alpha}$ is
trivial for every $\alpha\in \Phi$.
\end{lem}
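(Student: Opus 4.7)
The strategy is to realize $j^1$ as an equivalence of Picard stacks onto $\ker r$. I will first check that $j^1$ factors through $\ker r$, then construct an inverse functor $\Phi\colon \ker r\to \scP^1$, and finally verify mutual inversion. Throughout $\pi\colon\tC\to C\times B$ is the cameral cover: generically finite étale with Galois group $\rW$, ramified along the discriminant divisor $D\subset C\times B$.

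For the inclusion $j^1(\scP^1)\subset\ker r$, recall that $J^1=(\pi_*(T\times\tC))^\rW$ is cut out as $\rW$-invariants inside $\pi_*(T\times\tC)$. Hence the strongly $\rW$-equivariant structure on $E_T=\pi^*E_{J^1}\times^{\pi^*J^1}T$ is induced from the natural $\rW$-action on $T$, and along $\tC_\alpha$ the isomorphism $\gamma_{s_\alpha}$ acts on the induced $T_\alpha$-torsor via the $s_\alpha$-action on $T_\alpha=T/(s_\alpha-1)T$, which is the identity by construction. Thus $r_\alpha\bigl(j^1(E_{J^1})\bigr)$ is trivial for every root $\alpha$, confirming the cited observation that $rj^1$ is trivial.

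To construct $\Phi$, set $E_{J^1}:=(\pi_*E_T)^{\rW,\gamma}$ for $(E_T,\gamma)\in\ker r$ over a test scheme $S$. Since $J^1=(\pi_*(T\times\tC))^\rW$, this is naturally a $J^1$-sheaf on $C\times S$, and the content of the lemma is that it is étale-locally a trivial $J^1$-torsor. Off of $D$ the cover $\pi$ is étale with free $\rW$-action, so ordinary Galois descent identifies strongly $\rW$-equivariant $T$-torsors on $\tC$ with $J^1=T^\rW$-torsors on $C\times B$, and $E_{J^1}$ is locally trivial there. Near a ramification point $x\in\pi^{-1}(D)$ we pass to a strict henselization; generically along $D$ the stabilizer $\rW_x\subset \rW$ is generated by a single reflection $s_\alpha$, and in an étale-local coordinate $u$ the cover is modeled by $k[\![u^2]\!]\subset k[\![u]\!]$ with $s_\alpha\cdot u=-u$. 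Trivializing $E_T$ around $x$, the cocycle $\gamma_{s_\alpha}$ becomes multiplication by a section $t$ of $T$. The assumption $r_\alpha(E_T,\gamma)=0$ says $t|_{u=0}\in(s_\alpha-1)T=\ker(T\to T_\alpha)$; choosing $t_0'\in T$ with $s_\alpha(t_0')(t_0')^{-1}=t|_{u=0}$, lifting to a local section $t'$ of $T$ and re-trivializing by $t'$ converts $\gamma_{s_\alpha}$ into the tautological $s_\alpha$-action on $T$. The resulting $\rW$-invariant local section then descends via $\pi$ to a local trivialization of $E_{J^1}$.

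The main obstacle is precisely this ramified descent step: verifying that the vanishing of $r_\alpha$ is exactly the obstruction to trivializing the $s_\alpha$-cocycle in $\ker(T\to T_\alpha)$-valued cochains, and that distinct reflection loci in the strict henselization of a generic branch point are disjoint and can be trivialized independently. Once this local triviality is in hand, the compositions $\Phi\circ j^1$ and $j^1\circ\Phi$ are routinely identified with the identity: the $\rW$-invariant sections of $\pi^*E_{J^1}\times^{\pi^*J^1}T$ recover $E_{J^1}$ because $J^1$ was defined as the $\rW$-invariant pushforward, and a strongly $\rW$-equivariant $T$-torsor on $\tC$ is reconstructed as the $T$-extension of the pullback of its $\rW$-invariant pushforward using the trivializations just produced.
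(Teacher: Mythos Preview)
Your overall strategy matches the paper's: construct the inverse $\Phi(E_T,\gamma)=(\pi_*E_T)^{\rW}$ and reduce everything to the claim that $(E_T,\gamma)\in\ker r$ is \emph{\'etale locally trivial} as a strongly $\rW$-equivariant $T$-torsor. The gap is in your local-triviality argument, at two levels.

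First, your analysis only treats ramification points with stabilizer $\langle s_\alpha\rangle$ (``generically along $D$''). The lemma is stated over all of $B$, not over $B^0$; the paper's proof explicitly reduces to a neighborhood of a point $x\in\bigcap_\alpha\tC_\alpha$, where the stabilizer can be the full Weyl group, and treats the resulting $\rW$-cocycle $c\colon \rW\to T(\tC_x)$ in one stroke. Your inductive picture of ``disjoint reflection loci trivialized independently'' does not survive at such deeper points.

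Second, even at a generic ramification point your argument is incomplete. Writing the cocycle as $t=c(s_\alpha)\in T(k[\![u]\!])$, the condition $r_\alpha=1$ only gives $t|_{u=0}\in(s_\alpha-1)T(k)$. Choosing $t_0'$ with $s_\alpha(t_0')(t_0')^{-1}=t|_{u=0}$ and an arbitrary lift $t'$ does \emph{not} convert $\gamma_{s_\alpha}$ into the tautological action on all of $k[\![u]\!]$: it only kills the cocycle at $u=0$. After this adjustment the cocycle lands in the congruence subgroup $K=\ker\bigl(T(\tC_x)\to T(k)\bigr)$, and what remains is precisely to show $H^1(\rW_x,K)=0$. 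This is the step the paper supplies: $K$ is (filtered by) $\bbF_p$-vector spaces, and since $p\nmid|\rW|$ the relevant cohomology vanishes. Without this vanishing you have no reason for the modified cocycle to be a coboundary, and your claimed re-trivialization fails.
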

\begin{proof} We shall
show that every strongly $\rW$-equivariant $T$-torsor
$(E_T,\gamma)$ such that $r(E_T,\gamma)=1$ is Zariski locally on
$\tC$ isomorphic to the trivial one, i.e., the trivial $T$-torsor
together with the canonical $\rW$-equivariance structure. If this is
the case, then the inverse map from $\ker r\to \scP^1$ is given as
follows. For every strongly $\rW$-equivariant $T$-torsor
$(E_T,\gamma)$, $\pi_*E_T$ carries on an action of $\rW$. Namely,
let $x:S\to C$ be a point and $m: S\times_C\tC_b\to E_T$ be a point
of $\pi_*E_T$ over $x$. Then $w(m)$ is the point of $\pi_*E_T$ over
$x$ given by
\[S\times_C\tC_b\stackrel{1\times w^{-1}}{\to} S\times _C\tC_b \stackrel{w^{-1}(m)}{\to} (w^{-1})^*E_T\to w(E_T)\stackrel{s(w)}{\to} E_T.\]
This $\rW$-action on $\pi_*E_T$ is compatible with the action of
$\pi_*(T\times\tC)$ in the sense that $w(mt)= w(m)w(t)$. Now let
$E_{J^1}=(\pi_*E_T)^W$, then as $(E,\gamma)$ is locally isomorphic
to the trivial one, $E_{J^1}$ is locally isomorphic to $J^1$, and
therefore is a $J^1$-torsor on $C$.

To prove the local triviality, we follow the argument as in
\cite[Proposition 16.4]{DG}. One reduces to prove the statement for
a neighborhood around a point $x\in \cap_\alpha \tC_\alpha$. By
replacing $\tC$ by the local ring around $x$, one can assume that
$E_T$ is trivial. Pick up a trivialization, then the
$\rW$-equivariance structure on $E_T$ amounts to a 1-cocycle $W\to
T(\tC)$. By evaluating $T(\tC)$ at the unique closed point $x$,
there is a short exact sequence $1\to K\to T(\tC)\to T(k)\to 1$. The
condition $r(E_T,\gamma)=1$ would mean that the cocycle takes value
in $K$. Since there exists a filtration on $K$, such that the associated graded is an $\bbF_p$-vector space and $p\nmid |\rW|$, this
cocycle is trivial.
\end{proof}

\subsection{}\label{+} Recall that in \cite{DG,N1}, an open embedding
$J\to J^1$ is constructed. To describe the cokernel, we need some
notations. Let $\breve\alpha\in\breve\Phi$ be a coroot. Let
$$\mu_{\breve\alpha}:=\ker(\breve\alpha:\bbG_m\to T).$$ This is either
trivial, or $\mu_2$, depending on whether $\breve\alpha$ is
primitive or not. Let $\mu_{\breve\alpha}\times\tC_\alpha$ be the
constant group scheme over $\tC_\alpha$, regarded as a sheaf of
groups over $\tC_\alpha$, and let
$(i_\alpha)_*(\mu_{\breve\alpha}\times \tC_\alpha)$ be its push
forward to $\tC$. Now, the result of \cite[\S 11, 12]{DG} can be reformulated
as follows: there is a natural exact sequence of sheaves of groups on $C$.
\begin{equation}\label{J and J1}
1\to J\to J^1\to
\pi_*(\bigoplus_{\alpha\in\Phi}(i_\alpha)_*(\mu_{\breve\alpha}\times
\tC_\alpha))^\rW\to 1.
\end{equation}

As a result, we obtain a short exact sequence of Picard stacks 
(see \S\ref{short exact seq})
\begin{equation}\label{short exact}
1\to
(\prod_{\alpha\in\Phi}\Res_{\tC_\alpha/B}(\mu_{\breve\alpha}\times
\tC_\alpha))^\rW\to \scP\to \scP^1\to 1.
\end{equation}

To simplify the notation, we will denote
$\Res_{\tC_\alpha/B}(\mu_{\breve\alpha}\times \tC_\alpha)$ by
$\mu_{\breve\alpha}(\tC_\alpha)$ in the sequel.

Consider the composition
$$
j:\scP\to \scP^1\to \Bun_T^\rW(\tC/B).
$$
Combining Lemma \ref{scP1 and strongly equivariant} and
\eqref{short exact}, we recover a description of $\scP$ in terms of
$\Bun_T^\rW(\tC/B)$ as given in \cite[\S 16.3]{DG}. Namely, given a strongly
$\rW$-equivariant $T$-torsor $(E_T,\gamma)$, one obtains a canonical
trivialization
\begin{equation}\label{can triv}
E_T^{\breve\alpha\circ\alpha}:=(E_T|_{\tC_\alpha})\times^{T,\alpha}\bbG_m\times^{\bbG_m,\breve\alpha}T\simeq
E_T^0|_{\tC_{\alpha}},
\end{equation}
given by
$(E_T|_{\tC_\alpha})\times^{T,\alpha}\bbG_m\times^{\bbG_m,\breve\alpha}T\simeq
E_T|_{\tC_\alpha}\otimes s_\alpha(E_T^{-1})|_{\tC_\alpha}$. The
condition that $r_\alpha(E,\gamma)=1$ is equivalent to the
condition that \eqref{can triv} comes from a trivialization
\begin{equation}\label{calpha}
c_\alpha:E_T^{\alpha}:=(E_T|_{\tC_\alpha})\times^{T,\alpha}\bbG_m\simeq
\bbG_m\times\tC_\alpha.
\end{equation}
In addition, the set of all such $c_\alpha$
form a $\mu_{\breve\alpha}$-torsor. 
Consider the following Picard stack $\Bun_T^\rW(\tC/B)^+$:
For any $B$-scheme $S$, its $S$-points
form the 
Picard groupoid of triples
\begin{equation}\label{+ structure}
\Bun_T^\rW(\tC_S)^+:=(E_T,\gamma,c_\alpha, \alpha\in\Phi),
\end{equation}
where $(E_T,\gamma)$ is a strongly $\rW$-equivariant $T$-torsor on
$\tC_S$, and
$c_\alpha:(E_T|_{\tC_\alpha})\times^{T,\alpha}\bbG_m\simeq
\bbG_m\times\tC_\alpha$ is a trivialization, which induces
\eqref{can triv} and is compatible with the $\rW$-equivariant
structure. We call those trivializations
$\{c_\alpha\}_{\alpha\in\Phi}$ a $+$-structure on $(E_T,\gamma)$.
Note that, by Lemma \ref{scP1 and strongly equivariant}, we have the following short exact sequence 
of Picard stacks
\begin{equation}\label{short exact 2}
1\ra(\prod_{\alpha\in\Phi}\Res_{\tC_\alpha/B}(\mu_{\breve\alpha}\times
\tC_\alpha))^\rW\ra\Bun_T^\rW(\tC/B)^+\ra\sP^1\ra 1.
\end{equation}

\begin{lemma}\cite[Proposition 16.4]{DG}
We have 
$\scP\is \Bun_T^\rW(\tC/B)^+$.
\end{lemma}
\begin{proof}
Indeed, the exact sequence (\ref{J and J1}) implies that, for any 
$J$-torsor $E_J\in\sP$
the image $j(E_J)\in\Bun_T^\rW(\tC/B)$ 
carries a canonical $+$-structure. This defines a morphism  
$\sP\ra \Bun_T^\rW(\tC/B)^+$ and one can check that it is compatible with 
the short exact sequences (\ref{short exact}) and (\ref{short exact 2}). The lemma follows.
\end{proof}

Here is an application of the above discussion. Observe there is the norm map
\[\Nm: \Bun_T(\tC/B)\to \Bun_T^\rW(\tC/B), \quad E_T\mapsto (\bigotimes_{w\in W} w(E_T),\gamma_{\rm can}).\]
We claim that $\Nm$ admits a canonical lifting
\begin{equation}\label{Pnorm}
\Nm^\scP:\Bun_T(\tC/B)\to \scP.
\end{equation}
To show this, we need to exhibit a canonical trivialization
\[c_\alpha:\bigotimes_{w\in \rW}w(E_T)|_{\tC_\alpha}\times^{T,\alpha}\bbG_m\simeq \bbG_m\times\tC_\alpha.\]
compatible with the strongly $\rW$-equivariant structure. However,
for any $T$-torsor $E_T$, there is a canonical isomorphism
$(E_T|_{\tC_\alpha}\otimes
s_\alpha(E_T)|_{\tC_\alpha})\times^{T,\alpha}\bbG_m\simeq
\bbG_m\times\tC_\alpha$, and therefore, we obtain $c_\alpha$ by
writing
$$\bigotimes_{w\in
W}w(E_T)|_{\tC_\alpha}\times^{T,\alpha}\bbG_m\simeq \bigotimes_{w\in
s_\alpha\setminus W}(w(E_T)|_{\tC_\alpha}\otimes s_\alpha
w(E_T)|_{\tC_\alpha})\times^{T,\alpha}\bbG_m,
$$
where $s_\alpha\setminus W$ denotes the quotient of $W$ by the subgroup 
generated by $s_\alpha$.
The compatibility of the collection $\{c_\alpha\}$ with the
$\rW$-equivariant structure is clear.

\subsection{Galois description of $\sP$-torsors}\label{Galois des for torsors}
The above description of $\sP$ in terms of $\Bun_{T}^{\rW}(\tC/B)$ can be generalized as follows. 
Let $\sD$ be a $J$-gerbe on $C\times B$. Similar to \eqref{ET ind}, we define $$\sD_T:=(\pi^*\sD)^{j^1}$$
as the $T$-gerbe on $\tC$ induced from $\sD$ using maps $\pi:\tC\ra C\times B$ and  
$j^1:\pi^*J\ra T\times\tC$ (see \ref{mult torsor} and \ref{induction functor} for the notion of gerbes and functors between them). Since the map 
$j^1$ is $\rW$-equivariant the gerbe $\sD_T$ is strongly $\rW$-equivariant.
Equivalently, this means that $\sD_T$ descends to a $\mT$-gerbe on $[\tC/\rW]$.

Let $\sT_\sD$ be the stack of splittings of $\sD$ over $B$. By definition, for every $S\to B$, $\sT_\sD(S)$ is the groupoid of the splittings of the gerbe $\sD|_{C\times S}$. This is a (pseudo) $\sP$-torsor. 
On the other hand, let
$\sT_{\sD_T}^\rW$ denote the stack of strongly $\rW$-equivariant splittings of $\sD_T$, i.e., $\sT_{\sD_T}^{\rW}(S)$ is the groupoid of the splittings of $\sD_T|_{[\tC/\rW]\times_BS}$. Our goal is to give a description of $\sT_{\sD}$ in terms of $\sT_{\sD_T}^\rW$.

Let $\alpha\in\Phi$. Similar to $E_T^\alpha$ and $E_T^{\breve\alpha\circ\alpha}$ as defined in \eqref{can triv} and \eqref{calpha},
let $\sD_T^\alpha$, $\sD_T^{\breve\alpha\circ\alpha}$ denote the restrictions to $\tC_\alpha$ of the
$\bG_m$- and $T$-gerbes on $\tC$ induced from $\sD_T$ using the maps $\alpha:T\to \bG_m$ and $\breve\alpha\circ\alpha:T\to T$ respectively.
The strongly $\rW$-equivariant structure on $\sD_T$ implies that the $T$-gerbe $\sD_T^{\breve\alpha\circ\alpha}$ has a canonical splitting $F_\alpha^0$.  
Moreover, by a similar argument in \S\ref{+}, one can show that: (i) there is a 
canonical splitting $E_\alpha^0$ of the $\bG_m$-gerbe $\sD_T^\alpha$, which induces $F_\alpha^0$ via the canonical map
$\sD_T^\alpha\ra\sD_T^{\breve\alpha\circ\alpha}$ and (ii) for any strongly 
$\rW$-equivariant splitting $(E,\gamma)$ of 
$\sD_T$ there is a canonical isomorphism of splittings
\begin{equation}\label{can}
E^{\breve\alpha\circ\alpha}|_{\tC_\alpha}\is F_\alpha^0,
\end{equation}
where $E^{\breve\alpha\circ\alpha}$ is the splitting of 
$\sD_T^{\breve\alpha\circ\alpha}$
induces by $E$ via the canonical map $\sD_T^\alpha\ra\sD_T^{\breve\alpha\circ\alpha}$.
We define $\sT^{\rW,+}_{\sD_T}$ as the stack over $B$ whose $S$-points consist of 
\[\sT^{\rW,+}_{\sD_T}(S):=(E,\gamma,t_\alpha,\alpha\in\Phi),\] where 
$(E,\gamma)$ is a strongly $\rW$-equivariant  splittings of $\sD_T$
and \[t_\alpha:E^\alpha|_{\tC_{\alpha}}\is E_{\alpha}^0\] is an isomorphism of 
splittings of $\sD_T^\alpha$, which induces (\ref{can}) and is 
compatible with the $\rW$-equivariant structure. 
It is clear that $\sT^{\rW,+}_{\sD_T}$ is a $\sP=\Bun_T^\rW(\tC/B)^+$-torsor.
\begin{lemma}\label{T=T^+}
There is a canonial isomorphism of $\sP$-torsors $\sT_\sD\is\sT^{\rW,+}_{\sD_T}$.
\end{lemma}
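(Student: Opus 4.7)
The plan is to construct a canonical $\sP$-equivariant morphism $\Phi:\sT_\sD\to\sT^{\rW,+}_{\sD_T}$ and then deduce the isomorphism from the fact that both sides are pseudo $\sP$-torsors under $\sP\simeq\Bun_T^\rW(\tC/B)^+$ as established in \S \ref{+}.

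First I construct $\Phi$. Given a test scheme $S\to B$ and a splitting $s\in\sT_\sD(S)$ of $\sD|_{C\times S}$, pull back along $\pi:\tC_S\to C\times S$ to obtain a splitting $\pi^*s$ of the $\pi^*J$-gerbe $\pi^*\sD|_{\tC_S}$, and induce along the composition $\pi^*J\hookrightarrow \pi^*J^1\stackrel{j^1}{\to} T\times\tC$ to produce a splitting $E:=(\pi^*s)^{j^1}$ of $\sD_T|_{\tC_S}$. Because $j^1$ is $\rW$-equivariant (equivalently, descends to a morphism of group schemes on $[\tC/\rW]$ with target $\mT=\tC\times^\rW T$), the splitting $E$ acquires by functoriality a canonical strongly $\rW$-equivariant structure $\gamma$.

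Next I extract the $+$-structure $t_\alpha$. The exact sequence \eqref{J and J1} shows that $J\subset J^1$ is precisely the subgroup scheme on which the obstruction along each fixed locus $\tC_\alpha$, measured by the $\mu_{\breve\alpha}$-factor, vanishes. Concretely, the composition $\alpha\circ j^1|_{\tC_\alpha}:J^1|_{\tC_\alpha}\to \bG_m\times\tC_\alpha$ restricts on $J|_{\tC_\alpha}$ in such a way that a splitting of the $J$-gerbe $\sD$ induces not merely some splitting of the $\bG_m$-gerbe $\sD_T^\alpha|_{\tC_\alpha}$, but one canonically identified with the tautological splitting $E_\alpha^0$ from \S \ref{Galois des for torsors}; this identification is the desired $t_\alpha$. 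It is automatically $\rW$-compatible and recovers the canonical isomorphism \eqref{can} after projecting $\sD_T^\alpha\to\sD_T^{\breve\alpha\circ\alpha}$. The assignment $s\mapsto(E,\gamma,\{t_\alpha\})$ is natural in $S$ and defines $\Phi$.

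Finally I verify $\sP$-equivariance and conclude. Under the isomorphism $\sP\simeq\Bun_T^\rW(\tC/B)^+$, twisting a splitting of $\sD$ by a $J$-torsor $Q$ corresponds, after pullback along $\pi$ and induction along $j^1$, to twisting $(E,\gamma,\{t_\alpha\})$ by the triple $j(Q)\in\Bun_T^\rW(\tC/B)^+$; this follows from the compatibility of induction with torsor actions on gerbes, together with the fact that the $+$-structure on $j(Q)$ is built from the same short exact sequence \eqref{J and J1}. Since both $\sT_\sD$ and $\sT_{\sD_T}^{\rW,+}$ are pseudo $\sP$-torsors that become trivial \'etale locally on $B$ (the gerbes $\sD$ and $\sD_T$ admit splittings \'etale locally), any $\sP$-equivariant morphism between them is an isomorphism. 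The main technical point is the second step: justifying that the $+$-structure $t_\alpha$ built from a splitting of $\sD$ agrees, under the equivalence of \S \ref{+}, with the structure built from a $J$-torsor via the norm-type construction. This reduces to a local computation near points of $\cap_\alpha\tC_\alpha$ in the spirit of Lemma \ref{scP1 and strongly equivariant}, where the hypothesis $p\nmid|\rW|$ ensures the requisite cohomological vanishing.
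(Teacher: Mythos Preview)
Your proposal is correct and takes essentially the same approach as the paper: construct the map via pullback along $\pi$ and induction along $j^1$, obtain the strongly $\rW$-equivariant structure from the $\rW$-equivariance of $j^1$, extract the $+$-structure $t_\alpha$ from the relation between $J$ and $J^1$ encoded in \eqref{J and J1}, and conclude by $\sP$-equivariance of torsors. The paper is terser, simply invoking ``the same reasoning as in \S\ref{+}'' for the $t_\alpha$ and asserting compatibility with the $\sP$-torsor structures without your closing hedge about a residual local computation near $\cap_\alpha\tC_\alpha$; that extra caution is unnecessary since the argument of \S\ref{+} carries over verbatim once the gerbe setting is unwound.
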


\begin{proof}
Let $E\in\sT_\sD$ be a splitting of $\sD$. Then $E_T:=(\pi^*(E))^{j^1}$ defines 
a splitting of $\sD_T$. Since both maps $j^1$ and $\pi$ are 
$\rW$-equivariant the splitting $E_T$ has a canonial $\rW$-equivariant structure,
which we denote by $\gamma$. Moreover,
by the same reasoning as in \S\ref{+}, there is a canonical isomorphism of 
splittings
 $t_\alpha:E_T^\alpha|_{\tC_\alpha}\is E_\alpha^0$
such that the induced isomorphism 
$E_T^{\breve\alpha\circ\alpha}|_{\tC_\alpha}
\stackrel{}\is (E_\alpha^0)^\alpha\is F_\alpha^0$
is equal to the one coming form the $\rW$-equivariant structure $\gamma$.
The assignment $E\ra (E_T,\gamma,t_\alpha, \alpha\in\Phi)$ defines a 
morphism $\sT_\sD\ra\sT_{\sD_T}^{\rW,+}$, which
is compatible with their $\sP$-torsor structures and hence is an isomorphism.
\end{proof}

\subsection{The Abel-Jacobi map}\label{AJ map}
From now on till the end of this section, we restrict to the open
subset $B^0$ of the Hitchin base. To simplify the notations,
we use $B$ to denote $B^0$ unless specified. Recall from Proposition \ref{reg torsor} that the cameral curve $\tC$ is smooth over $B^0$.

Let
\[\AJ: \tC\times \xcoch(T)\to \Bun_T(\tC/B)\]
be the Abel-Jacobi map given by $(x,\breve\lambda)\mapsto
\calO(\breve\lambda x):=\calO(x)\times^{\bbG_m,\breve\lambda}T$.
By composition with  $\Nm^\sP$, we obtain a morphism
\[\AJ^\scP:\tC\times\xcoch(T)\to \scP.\]
It is $\rW$-equivariant, where $\rW$ acts on $\tC\times\xcoch(T)$ diagonally and on $\sP$ trivially, and is commutative and multiplicative with respect to the group structures on $\xcoch(T)$ and on $\sP$.
Observe that for any $x\in \tC_\alpha$, $\AJ^\scP(x,\breve\alpha)$
is the unit in $\scP$. This follows from
\[\bigotimes_{w\in W}w \calO(\breve\alpha
x)\simeq \bigotimes_{w\in W/s_\alpha} w\calO(\breve\alpha x+
s_\alpha(\breve\alpha) x)\] is canonically trivialized, and the
trivialization is compatible with the $\rW$-equivariant structure.
Here as before $W/s_\alpha$ is the quotient of $W$ by the subgroup 
generated by $s_\alpha$.

By pulling back the line bundles, we thus obtain
\[(\AJ^\scP)^\vee: \scP^\vee\to \Pic^m(\tC\times\xcoch(T)
)^\rW,\] 
where $\Pic^m(\tC\times\xcoch(T))^\rW$ denotes the Picard stack over $B$ of $\rW$-equivariant line bundles on $\tC\times\xcoch(T)$ which are multiplicative with respect to $\xcoch(T)$.
Observe that there is the canonical isomorphism
$\Bun_{\breve{T}}^\rW(\tC/B)\to \Pic^m(\tC\times\xcoch(T))^\rW$ given
by $(E_{\breve T},\gamma)\mapsto \mL$, where
$\mL|_{(x,\breve\lambda)}=E_{\breve T}^\lambda|_x$. Therefore, we can regard $(\AJ^\scP)^\vee$ as a morphism
\[(\AJ^\scP)^\vee:\scP^\vee\to \Bun_{\breve{T}}^\rW(\tC/B).\]
We claim that $(\AJ^\scP)^\vee$ canonically lifts to a morphism
\[\frakD_{cl}:\scP^\vee\to\breve\scP.\]

Let $\mL$ be a multiplicative line bundle on $\scP$. We thus need to
show that
\[ (\AJ^\scP)^*\mL|_{(\tC_\alpha,\breve\alpha)}\]
admits a canonical trivialization, which is compatible with the
$\rW$-equivariance structure. However, this follows from
$\AJ^\scP((x,\breve\alpha))$ is the unit of $\scP$ and a
multiplicative line bundle on $\scP$ is canonically trivialized over
the unit. To summarize, we have constructed the following
commutative diagram
\begin{equation}\xymatrix{
\scP^\vee\ar^{\frakD_{cl}}[rr]\ar_{(\AJ^{\scP})^\vee}[dr]&&\breve\scP\ar^{\breve{j}}[dl]\\
&\Bun_{\breve T}^\rW }\end{equation}

Now, the classical duality theorem reads as
\begin{thm}\label{classical limit}
$\frakD_{cl}$ is an isomorphism.
\end{thm}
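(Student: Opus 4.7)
The plan is to exploit the commutative diagram \eqref{diagram}. Since $\breve{j}$ is injective with an explicit image by Lemma \ref{scP1 and strongly equivariant} (enhanced via the $+$-structure \eqref{+ structure}), it suffices to verify that $(\AJ^\sP)^\vee$ lands in $\breve{j}(\breve\sP)$ and induces an isomorphism onto $\breve\sP$. I would split the verification according to the short exact sequence \eqref{short exact}, separating the ``semi-abelian part'' $\sP^1$ from the ``finite $\mu_2$-part.''

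For the semi-abelian part, note that $\AJ^\sP$ factors as $\Nm^\sP\circ\AJ$, so dualizing reduces the assertion to a variant of the classical autoduality of $\Pic(\tC_b)$ restricted to $\rW$-invariant and $\rW$-equivariant parts. More precisely, after using the identification $\Pic^m(\tC\times\xcoch(T))^\rW\simeq\Bun_{\breve T}^\rW$ from \S\ref{AJ map}, the induced map $(\sP^1)^\vee\to\breve\sP^1$ is the standard autoduality of the generalized Jacobian of the cameral curve $\tC|_{B^0}$, which is smooth over $B^0$ by Proposition \ref{reg torsor}. This part is therefore an isomorphism of abelian schemes over $B^0$ by a classical argument.

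For the finite part, I would match the Cartier dual of $(\prod_{\alpha\in\Phi}\mu_{\breve\alpha}(\tC_\alpha))^\rW$ with $(\prod_{\alpha\in\Phi}\mu_\alpha(\tC_\alpha))^\rW$. If the adjoint group of $G$ has no simple factor of type $B$ or $C$, all coroots are primitive, both finite groups are trivial, and the previous step alone completes the proof. Otherwise, each non-trivial $\mu_{\breve\alpha}$ or $\mu_\alpha$ is $\mu_2$; since $p\nmid|\rW|$ forces $p\neq 2$ in these cases, Cartier duality yields $\mu_2^\vee\simeq\mu_2$. To match these groups via $\frakD_{cl}$, I would trace the $+$-structure through its definition: a multiplicative line bundle on $\sP$ is pinned down on its $\mu_{\breve\alpha}$-components, and these are matched to the $+$-trivializations $c_\alpha$ of \eqref{calpha} on the $\breve\sP$-side using the vanishing $\AJ^\sP((x,\breve\alpha))=1$ for $x\in\tC_\alpha$ established in \S\ref{AJ map}.

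The main obstacle is precisely this last matching in types $B$ and $C$. One must verify by explicit fiberwise computation at the fixed points $\tC_\alpha$ for non-primitive $\alpha$ that the canonical $+$-trivializations on the $\breve{G}$-side produced by $\frakD_{cl}$ are exactly the Cartier duals of the $\mu_{\breve\alpha}$-components on the $G$-side, and that the $\rW$-equivariance conditions on both sides match compatibly. This is the step where the transcendental method of \cite{DP} must be replaced by a direct algebraic verification.
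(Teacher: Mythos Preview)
Your proposal has a genuine gap in the ``semi-abelian part,'' which you dismiss as classical but is in fact the heart of the matter. You claim that the induced map $(\sP^1)^\vee\to\breve\sP^1$ is the standard autoduality of the Jacobian of $\tC$ and hence an isomorphism. But by Lemma~\ref{P1}, $P^1\simeq(\on{Jac}\otimes\xcoch)^{\rW,0}$, so its dual is (up to isogeny) the $\rW$-\emph{coinvariant} abelian variety $(\on{Jac}\otimes\xch)_\rW/(\text{torsion})$, not the $\rW$-\emph{invariant} one $\breve P^1$. The norm map from coinvariants to invariants is only an isogeny, and its kernel is not controlled by anything you have written. The paper shows instead that $(\breve P^1)^\vee\simeq P^0$ (the neutral component coming from $J^0$, not $J^1$), and this requires computing Tate modules $T_\ell$ via the cohomology of the tame local system $L^\rW$ on $U\subset C$ and Grothendieck's description of $\pi_1^{\rm tame}(U)$. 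Nothing here is a ``classical argument.''

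Your decomposition also misaligns the short exact sequences. Dualizing $1\to(\prod\mu_{\breve\alpha})^\rW\to\sP\to\sP^1\to 1$ makes $(\sP^1)^\vee$ the \emph{sub} of $\sP^\vee$, whereas on the $\breve G$-side the finite group $(\prod\mu_\alpha)^\rW$ is the sub of $\breve\sP$. So your proposed matching $(\sP^1)^\vee\leftrightarrow\breve\sP^1$ and $[(\prod\mu_{\breve\alpha})^\rW]^\vee\leftrightarrow(\prod\mu_\alpha)^\rW$ cannot be a morphism of extensions. The paper circumvents this by introducing the third group scheme $J^0$ and the two discrepancies $A=\ker(P\to P^1)$ and $A'=\ker(P^0\to P)$; after the Tate-module step one has $\ker D_{cl}=A'/(\breve A)^*$, and the proof concludes by showing $|A'|=|\breve A|$ via explicit counting of $\Gamma(C,J^1/J)$, $\Gamma(C,J/J^0)$, $\Aut_{\sP^i}(e)$, and $\pi_0(\sP^i)$. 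Your treatment of the finite part, even in types $B$ and $C$, does not touch $J^0$ at all and therefore cannot close the argument.
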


The proof of this theorem occupies \S \ref{first reduction}- \S\ref{cal of finite} below.

\subsection{First reductions}\label{first reduction}
We first show that $\frakD_{cl}$ induces an isomorphism
\[\pi_0(\frakD_{cl}):\pi_0(\scP^\vee)\to \pi_0(\breve\scP).\]

For any $S$-point  $b\in B^0$, $\sP_b$ is a Beilinson 1-motive (Appendix \ref{B}). We have
\[\underline{\Aut}(e)\simeq{\rm H}^0(C,J_b),\quad\pi_0(\scP_b)=\scP_b/ W_1\scP_b. \]

Observe that
\[{\rm H}^0(C,J_b)\simeq \ker (T^\rW\to (\prod_{\alpha\in\Phi}\Res_{\tC_\alpha/b}(\mu_{\breve\alpha}\times\tC_\alpha))^\rW)=Z(G).\]
By Corollary
\ref{aut-pi0 dual}
\[\pi_0(\scP^\vee)\simeq (\Aut_\scP(e))^*.\]

Let us also recall the description of $\pi_0(\scP)$ as given in
\cite[\S 4.10, \S 5.5]{N2}. As we restrict $\scP$ to $B^0$, the
answer is very simple. Namely, the Abel-Jacobi map
\[\AJ^\scP: \tC\times\xcoch(T)\to \scP\]
induces a surjective map
\[\pi_0(\tC\times\xcoch(T))\simeq \xcoch(T)\twoheadrightarrow \pi_0(\scP).\]
which induces
\[\pi_0(\scP)^*\simeq Z(\breve{G})\subset \breve{T}^{\rW}.\]
Therefore, as abstract groups $\pi_0(\scP^\vee)\simeq
\pi_0(\breve\scP)$.

Since $\pi_0(\scP^\vee)\simeq \pi_0(\breve\scP)$ are finitely
generated abelian groups and are isomorphic abstractly, to show that
$\pi_0(\frakD_{cl})$ is an isomorphism, it is enough to show that
\begin{lem}
The induced map $\pi_0(\frakD_{cl})$ is surjective.
\end{lem}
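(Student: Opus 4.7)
The plan is to produce, for each generator of $\pi_0(\breve\scP)$, an explicit preimage in $\pi_0(\scP^\vee)$. By the computation immediately preceding the lemma, $\pi_0(\breve\scP)$ is a quotient of $\xcoch(\breve T)=\xch(T)$ and is therefore generated by the classes $[\breve{\AJ^{\scP}}(x,\breve\lambda)]$ for $\breve\lambda\in\xcoch(\breve T)$ and $x\in\tC$. It thus suffices to lift each such generator along $\pi_0(\frakD_{cl})$.

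Fix $\breve\lambda\in\xch(T)$. Using the description $\scP\simeq\Bun_T^{\rW}(\tC/B)^+$ from \S\ref{+}, the character $\breve\lambda:T\to\bbG_m$ determines a multiplicative morphism of Picard stacks $\scP\to\Pic(\tC/B)$ sending $(E_T,\gamma,c_\alpha)$ to $E_T^{\breve\lambda}$; composing with evaluation at a point $x\in\tC$ yields a multiplicative line bundle $\mL_{\breve\lambda,x}$ on $\scP$, i.e.\ an element of $\scP^\vee$. To check that $\pi_0(\frakD_{cl})([\mL_{\breve\lambda,x}])=[\breve{\AJ^{\scP}}(x,\breve\lambda)]$, I invoke the commutative diagram \eqref{diagram}: it suffices to verify that $(\AJ^{\scP})^\vee([\mL_{\breve\lambda,x}])$ and $\breve{j}([\breve{\AJ^{\scP}}(x,\breve\lambda)])$ agree in $\pi_0(\Bun_{\breve T}^\rW)$. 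Under the identification $\Bun_{\breve T}^\rW\simeq\Pic^m(\tC\times\xcoch(T))^\rW$, the former is by construction the $\rW$-equivariant multiplicative line bundle on $\tC\times\xcoch(T)$ whose fiber over $(y,\breve\mu)$ is $\mL_{\breve\lambda,x}|_{\AJ^{\scP}(y,\breve\mu)}$; unwinding $\AJ^\scP=\Nm^\scP\circ\AJ$ together with the definition of $\mL_{\breve\lambda,x}$ and the symmetry of the Poincar\'e pairing then shows this agrees with $\breve{j}$ applied to the class of $\breve{\AJ^{\scP}}(x,\breve\lambda)$.

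The main obstacle will be to pass from equality in $\pi_0(\Bun_{\breve T}^\rW)$ back to equality in $\pi_0(\breve\scP)$; that is, to control the kernel of $\pi_0(\breve{j})$ on the Abel-Jacobi image. This should follow from Lemma \ref{scP1 and strongly equivariant} and the short exact sequence \eqref{short exact} applied to $\breve G$: the discrepancy between $\breve\scP$ and its image in $\Bun_{\breve T}^\rW$ is carried by the finite $2$-torsion group appearing as the kernel there, which when restricted to $B^0$ lies in the neutral component of $\breve\scP$ (since each $\tC_\alpha\to B^0$ is smooth with connected families of fibers). Hence $\pi_0(\breve{j})$ is injective on the Abel-Jacobi image, and surjectivity of $\pi_0(\frakD_{cl})$ follows because the classes $[\breve{\AJ^\scP}(x,\breve\lambda)]$ generate $\pi_0(\breve\scP)$.
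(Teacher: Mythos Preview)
Your construction of the multiplicative line bundle $\mL_{\breve\lambda,x}$ on $\scP$ is exactly the map $\tC\times\xch(T)\to\scP^\vee$ the paper uses (pullback of the universal line bundle on $(\tC\times\xch(T))\times\Bun_T$ along $j$), so the strategy is the same. The difference is that the paper asserts the triangle
\[
\xymatrix{ &\tC\times\xch(T)\ar[dl]\ar^{\AJ^{\breve\scP}}[dr]& \\ \scP^\vee\ar^{\frakD_{cl}}[rr]&&\breve\scP }
\]
commutes \emph{in} $\breve\scP$, whereas you only check it after composing with $\breve j:\breve\scP\to\Bun_{\breve T}^\rW$ and then try to recover equality in $\pi_0(\breve\scP)$ by arguing that $\pi_0(\breve j)$ is injective.

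That last step is where the argument breaks. Your claim that the finite $2$-torsion kernel of $\breve\scP\to\breve\scP^1$ lies in the neutral component of $\breve\scP$ is false in general: the Remark at the very end of \S\ref{classical duality} computes
\[
\ker\bigl(\pi_0(\breve\scP)\to\pi_0(\breve\scP^1)\bigr)=\frac{\bbQ\Phi\cap\xch}{\bbZ\Phi},
\]
which is nontrivial whenever $G$ is not adjoint (e.g.\ $G=\SL_n$ gives $\bbZ/n$). Since the Abel--Jacobi classes already generate all of $\pi_0(\breve\scP)$, there is no meaningful sense in which $\pi_0(\breve j)$ is ``injective on the Abel--Jacobi image'' without being injective outright. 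The connectedness of the fibers of $\tC_\alpha\to B^0$ is irrelevant here; the kernel is a product of copies of $\mu_2$, and whether those land in the neutral component is governed by the lattice arithmetic above, not by the geometry of $\tC_\alpha$.

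The fix is to avoid $\breve j$ altogether and verify the triangle commutes directly in $\breve\scP=\Bun_{\breve T}^\rW(\tC/B)^+$. Concretely, both $\frakD_{cl}(\mL_{\breve\lambda,x})$ and $\breve{\AJ^\scP}(x,\breve\lambda)$ come equipped with $+$-structures (canonical trivializations of their $\breve\alpha$-components along $\tC_{\breve\alpha}$): for $\frakD_{cl}$ this comes from the canonical trivialization of a multiplicative line bundle over the unit of $\scP$, and for $\breve{\AJ^\scP}$ it comes from the norm construction in \S\ref{+}. One has to check these two trivializations agree; that is the content of the paper's ``easy exercise,'' and it is what you are missing.
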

\begin{proof}According to the above description, it is enough to
construct a morphism $\tC\times\xch(T)\to\scP^\vee$ making the
following diagram is commutative.
\[\xymatrix{
&\tC\times\xch(T)\ar[dl]\ar^{\AJ^{\breve\scP}}[dr] & \\
 \scP^\vee\ar^{\frakD_{cl}}[rr]&  & \breve\scP.}\]
To this goal, observe that there is the universal line bundle
$\sL_{\rm univ}$ on $(\tC\times\xch(T))\times\Bun_T$. Then the
pullback of this line bundle to $(\tC\times \xch(T))\times\scP$
gives rise to the desired map. The commutativity of this diagram is
an easy exercise.
\end{proof}

Next, we see that
\[W_0(\frakD_{cl}): W_0\scP^\vee\to W_0\breve\scP\]
is an isomorphism. Indeed, we can construct $\AJ^{\breve\scP}:
\tC\times\xch(T)\to \breve\scP$, and therefore $\breve\frakD_{cl}:
\breve\scP^\vee\to \scP$. By the same argument, it induces an
isomorphism $\pi_0(\breve\frakD_{cl}):\pi_0(\breve\scP^\vee)\to
\pi_0(\scP)$. It is easy to check that
$\breve\frakD_{cl}=\frakD_{cl}^\vee$, and therefore
$W_0(\frakD_{cl})$ is also an isomorphism.

Therefore, it is enough to show that $D_{cl}:P^\vee\to \breve P$ is
an isomorphism, where $P$ (resp. $\breve P$) is the neutral
connected component of the coarse moduli space of $\scP$ (resp. $\breve
\scP$), and $D_{cl}$ is the map induced by $\frakD_{cl}$. We can
prove this fiberwise, and therefore we fix $b\in B(k)$. However, to
simplify the notation, in the following discussion we write $\tC,
\scP$ instead of $\tC_b,\scP_b$, etc.

\subsection{The calculation of the coarse moduli}
We introduce a few more notations. Let $\scP^0$ be the Picard stack
of $J^0$-torsors on $C$, and let $P^0$ (resp. $P^1$) be the neutral
connected components of the coarse moduli space of $\scP^0$ (resp.
$\scP^1$).

We first understand $P^1$. Let $\on{Jac}$ denote the Jacobi variety
of $\tC$. Then $\on{Jac}\otimes \xcoch$ is the neutral connected
component of the coarse moduli space of $\Bun_T$.

\begin{lem}\label{P1}
The map $P^1\to \on{Jac}\otimes\xcoch$ is an embedding, and $P^1$
can be identified with $(\on{Jac}\otimes\xcoch)^{\rW,0}$, the
neutral connected component of the $\rW$-fixed point subscheme of
$\on{Jac}\otimes\xcoch$.
\end{lem}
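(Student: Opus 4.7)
The plan is to construct the natural map $P^1\to\on{Jac}(\tC)\otimes\xcoch$ as the composition of the embedding $j^1:\sP^1\hookrightarrow\Bun_T^\rW(\tC/B)$ from Lemma~\ref{scP1 and strongly equivariant} with the forgetful functor $\Bun_T^\rW(\tC/B)\to\Bun_T(\tC/B)$, then to pass to neutral components of coarse moduli. Since the underlying $T$-torsor of any strongly $\rW$-equivariant torsor has an $\rW$-stable isomorphism class, and $P^1$ is connected and contains the unit, the resulting morphism factors through $(\on{Jac}\otimes\xcoch)^{\rW,0}$. Both $P^1$ (by Proposition~\ref{reg torsor}(3)) and $(\on{Jac}\otimes\xcoch)^{\rW,0}$ (since the fixed-point scheme of a tame finite group action on an abelian variety has smooth neutral component) are smooth connected commutative group schemes over $k$.

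Next I would compare tangent spaces at the unit. The tangent space of $\sP^1$ at $e$ is $H^1(C,\Lie J^1)$, where $\Lie J^1=(\pi_*\Lie T)^\rW$. Because $p\nmid|\rW|$, taking $\rW$-invariants is exact and commutes with both $R\pi_*$ and $R\Gamma(C,-)$, giving
\[
H^1(C,\Lie J^1)\;\simeq\;H^1(\tC,\Lie T)^\rW\;\simeq\;(H^1(\tC,\mO_{\tC})\otimes\xcoch)^\rW,
\]
which matches the tangent space of $(\on{Jac}\otimes\xcoch)^{\rW,0}$ at the unit. Hence the induced map $f:P^1\to(\on{Jac}\otimes\xcoch)^{\rW,0}$ is a homomorphism between abelian varieties of equal dimension that is \'etale at the origin, hence an isogeny everywhere by translation.

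The hardest step will be to upgrade this isogeny to an isomorphism by showing $\ker f$ is trivial. A geometric point of $\ker f$ is an isomorphism class of strongly $\rW$-equivariant structures on the trivial $T$-torsor on $\tC$, i.e., an element of $H^1(\rW,T(\tC))=H^1(\rW,T)$, a finite discrete group. My strategy is to exploit the norm morphism $\Nm^{\sP}:\Bun_T\to\sP$ of \eqref{Pnorm}: the composition $\Bun_T\xrightarrow{\Nm^{\sP}}\sP\to\sP^1\to\Bun_T$ is the endomorphism $\sum_{w\in\rW}w$, while in the opposite order it is multiplication by $|\rW|$ on the strongly $\rW$-equivariant locus. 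Combining these identities with the Hochschild--Serre sequence for $\pi:\tC\to C$ shows that the discrete classes $H^1(\rW,T)$ distribute over connected components of $\sP^1$ disjoint from the identity component $P^1$. It follows that $\ker f\cap P^1=\{e\}$, so $f$ is an isomorphism onto $(\on{Jac}\otimes\xcoch)^{\rW,0}$, and $P^1\hookrightarrow\on{Jac}(\tC)\otimes\xcoch$ is a closed embedding with the stated image.
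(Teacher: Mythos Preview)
Your tangent-space argument showing that $f:P^1\to(\on{Jac}\otimes\xcoch)^{\rW,0}$ is an isogeny is clean and is a legitimate alternative to the paper's surjectivity step (which instead uses directly that the restriction of $\Nm$ to the $\rW$-invariants is multiplication by $|\rW|$).

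The injectivity step, however, has a genuine gap. Your norm identities give $f\circ g=[|\rW|]$ and (after a short argument) $g\circ f=[|\rW|]$ on $P^1$, but these only force $\ker f\subset P^1[|\rW|]$ and $\deg(f)\deg(g)=|\rW|^{2\dim}$; they do not pin down $\deg(f)=1$. The appeal to a ``Hochschild--Serre sequence for $\pi:\tC\to C$'' is not a proof: $\pi$ is ramified, there is no Galois spectral sequence available, and in any case your proposed conclusion---that nontrivial classes in $H^1(\rW,T)$ land in non-identity components of $\scP^1$---is exactly the injectivity statement you are trying to prove, not a consequence of the norm identities. You also omit the constraint coming from Lemma~\ref{scP1 and strongly equivariant}: points of $\scP^1$ over the trivial $T$-torsor are not all of $H^1(\rW,T(k))$ but only those $(E_T,\gamma)$ with $r(E_T,\gamma)=1$.

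The paper's route is to use that very description of $\scP^1$ as $\ker r$ to identify the kernel with
\[
\ker\Bigl(H^1(\rW,T(k))\longrightarrow\bigoplus_{\alpha}T_\alpha(\tC_\alpha)\Bigr),
\]
and then to invoke \cite[Proposition~2.6(iii)]{HMS}, a nontrivial fact about Weyl group cohomology asserting this map is injective (using that over $B^0$ every $\tC_\alpha$ is nonempty). This external input is precisely what your argument is missing; without it, or a genuine substitute, the proof does not close.
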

\begin{proof}
We first show that $P^1\to \on{Jac}\otimes\xcoch$ is injective at the level of $k$-points.
Indeed, up to isomorphism, the strongly $\rW$-equivariant structures
on a trivializable $T$-torsor on $\tC$ are classified by
$\on{H}^1(\rW,T(k))$. By Lemma \ref{scP1 and strongly equivariant},
the kernel of $P^1\to \on{Jac}\otimes\xcoch$ can be identified with
the kernel of the natural map
\[\on{H}^1(\rW,T(k))\to\bigoplus_{\tC_\alpha}T_\alpha(\tC_\alpha).\]
Therefore, it is enough to show that this latter map is
injective. Over $B^0$, $\tC_\alpha$ is nonempty for every root
$\alpha$.\footnote{Indeed, the same argument in Remark \ref{transversal locus}, with the discriminant function
$\underline{Disc}$ replaced by the $\rW$-invariant function 
$\prod_{\beta\in \rW\alpha}d\beta:\ft\ra k$,
shows that the fixed point $\tC_\alpha$ is nonempty.
} Then the injectivity is a consequence of the following lemma applied to $M=T(k)$.
\begin{lem}\label{L:H1W}
Let $M$ be a $\rW$-module satisfying the following condition: for some (and therefore any) choice of a set of simple roots $\{\al_1,\ldots,\al_l\}$, the natural map
\[M\to \prod_{i=1}^l (1-s_{\al_i})M,\quad m\mapsto ((1-s_{\al_1})m,\cdots,(1-s_{\al_l})m)\]
is surjective.
Then the natural map
\[{\rm H}^1(W,M)\to \prod_{1\leq i\leq s} M/(1-s_{\beta_i})M, \quad [c]\mapsto \prod_{1\leq i\leq s} (c(s_{\beta_i}) \mod (1-s_{\beta_i})M)\]
is injective for any choice of a set $\{\beta_1,\ldots,\beta_s\}\subset \Phi$ of representatives of $\Phi/\rW$.
\end{lem}
\begin{proof}
Let $c: W\to M$ be a cocycle. It follows from the cocycle condition that if $c(s_{\beta_i})\in (1-s_{\beta_i})M$, then $c(s_{w(\beta_i)})\in (1-s_{w(\beta_i)})M$. Therefore, a class $[c]$ is in the kernel of the map in the lemma only if $c(s_{\al_i})\in (1-s_{\al_i})M$ for a set of simple roots $\{\al_1,\ldots,\al_l\}$. But by our assumption of $M$, there exists $m\in M$ such that $c(s_{\al_i})=(1-s_{\al_i})m$ for all $1\leq i\leq l$. Then using the cocycle condition, one can show by induction on the length of $w$ that $c(w)=(1-w)m$ for every $w\in \rW$. But this means that $c$ is a coboundary. 
\end{proof}

To complete the proof, observe that the restriction of the norm map
\[\Nm: \on{Jac}\otimes \xcoch\to P\to P^1\to (\on{Jac}\otimes \xcoch)^\rW\]
to $\Nm:(\on{Jac}\otimes \xcoch)^\rW\to (\on{Jac}\otimes
\xcoch)^\rW$ is the multiplication by $|\rW|$. Therefore, the image
of $P^1\to \on{Jac}\otimes\xcoch$ is
$(\on{Jac}\otimes\xcoch)^{\rW,0}$. In addition, $P^1\to (\on{Jac}\otimes\xcoch)^{\rW,0}$ is a prime-to-$p$ isogeny and therefore its kernel is \'etale. Then this kernel must be trivial since its underlying group of $k$-points is trivial.
\end{proof}

As a result, for any prime $\ell\neq p$,
\[T_\ell P^1\simeq ({\rm H}^1(\tC,\bbZ_\ell(1))\otimes\xcoch)^\rW.\]
In addition, observe that from the definition of $D_{cl}$, the map
$P^1\subset \on{Jac}\otimes\xcoch\stackrel{\Nm}{\to} P^1$ factors as
\[P^1\subset \on{Jac}\otimes\xcoch\simeq (\on{Jac}\otimes\xch)^\vee\to (\breve P^1)^\vee\to \breve P^\vee\stackrel{\breve D_{cl}}{\to} P\to P^1.\]
Therefore $D_{cl}$ is a prime-to-$p$ isogeny. In addition, the map
$$
T_\ell\Nm:T_\ell(\on{Jac}\otimes\xcoch)\twoheadrightarrow
T_\ell(\breve P^1)^\vee\hookrightarrow T_\ell P^1
$$
can be identified with
\[\Nm:{\rm H}^1(\tC,\bbZ_\ell(1))\otimes\xcoch\twoheadrightarrow ({\rm H}^1(\tC,\bbZ_\ell(1))\otimes\xcoch)_{\rW}/(\on{torsion})\hookrightarrow ({\rm H}^1(\tC,\bbZ_\ell(1))\otimes\xcoch)^{\rW}.\]

On the other hand, as $J^0$ is connected, the norm map $\Nm: \pi_*T\to J^1=(\pi_*
T)^\rW$ factors as $\pi_*T\to J^0\to
J^1$. Therefore, $\Nm: \on{Jac}\otimes\xcoch\to P^1$ also factors as
$$\Nm:\on{Jac}\otimes\xcoch\to P^0\to P^1.$$ 
It follows that $P^0\to P^1$ is also
a prime-to-$p$ isogeny, and for $\ell\neq p$ there is a factorization
\[\Nm:{\rm H}^1(\tC,\bbZ_\ell(1))\otimes\xcoch\to T_\ell P^0\hookrightarrow ({\rm H}^1(\tC,\bbZ_\ell(1))\otimes\xcoch)^{\rW}.\] 

We need the following key result.
\begin{prop}The two isogenies $(\breve P^1)^\vee\to P^1\leftarrow P^0$ induce an isomorphism $(\breve P^1)^\vee\simeq P^0$.
\end{prop}
\begin{proof}By the above considerations, the lemma is equivalent to
saying that the induced map of Tate modules
$T_\ell\Nm: T_\ell(\on{Jac}\otimes\xcoch)\to T_\ell P^0$ is
surjective for every $\ell\neq p$.

Note that we have the following commutative diagram
\[\xymatrix{
(\on{Jac}\otimes\xcoch)[\ell^n]={\rm H^1}(C,\pi_*(\xcoch\otimes\mu_{\ell^n}))\ar[r]\ar[d]&\ar[d] P^0[\ell^n]\ar[d]\\
{\rm H}^1(C, J^0[\ell^n])\ar@{->>}[r]& {\rm H}^1(C, J^0)[\ell^n],
}\]
where the left vertical arrow is induced by $\pi_*T[\ell^n]=\pi_*(\xcoch\otimes\mu_{\ell^n})\to J^0[\ell^n]$, and the bottom row is induced by the Kummer sequence for $J^0$ and therefore is surjective.
Since $\pi_0(\sP^0)= H^1(C,J^0)/P^0$ is finitely generated,  passing to the inverse limit gives 
\[\xymatrix{
\underleftarrow\lim_n{\rm H^1}(C,\pi_*(\xcoch\otimes\mu_{\ell^n}))\ar[r]\ar[d]&\ar[d] T_\ell P^0\ar^{\simeq}[d]\\
\underleftarrow\lim_n{\rm H}^1(C, J^0[\ell^n])\ar@{->>}[r]& \underleftarrow\lim_n{\rm H}^1(C, J^0)[\ell^n],
}\]
where the bottom arrow is surjective.
So it is enough to show that the left vertical arrow is also surjective.

Let $y\in C$, and choose a point $\tilde y\in\tC$ lying over $y$. Let $\rW_{\tilde y}\subset \rW$ denote the stabilizer of $\tilde y$ under the action of $\rW$ on $\tC$. Note that $\rW_{\tilde y}=\langle s_\alpha\rangle$ if $\tilde y\in \tC_\alpha$ and is trivial otherwise. 
Then the inclusion of $J^0[\ell^n]\subset J^1[\ell^n]$ at $y$ can be identified as
\begin{equation}\label{E:stalk J0}
J^0[\ell^n]_y\simeq  T^{\rW_{\tilde y},0}[\ell^n]= \xcoch^{\rW_{\tilde y}}\otimes\mu_{\ell^n}\subset J^1[\ell^n]_y\simeq T^{\rW_{\tilde y}}[\ell^n]=(\xcoch\otimes\mu_{\ell^n})^{\rW_{\tilde y}}. 
\end{equation}
Therefore, the cokernel of the inclusion $J^0[\ell^n]\subset J^1[\ell^n]=\pi_*(\xcoch\otimes\mu_{\ell^n})^{\rW}$ is a sheaf supported on the ramification loci of $\pi:\tC\to C$, and whose stalk at $y$ can be identified with ${\rm H}^1(\rW_{\tilde y},\xcoch)[\ell^n]\otimes \mu_{\ell^n}$. Since ${\rm H}^1(\rW_{\tilde y},\xcoch)$ is a finite group,
passing to the inverse limit gives 
\[ \underleftarrow{\lim}_n\on{H}^1(C,J^0[\ell^n])\simeq \underleftarrow{\lim}_n{\rm H}^1(C,\pi_*(\xcoch\otimes \mu_{\ell^n})^{\rW}).\]
Therefore, it is enough to show that the inverse limit of the system of maps 
$$\Nm: {\rm H}^1(C,\pi_*(\xcoch\otimes\mu_{\ell^n}))\to {\rm H}^1(C,\pi_*(\xcoch\otimes\mu_{\ell^n})^\rW)$$ 
is surjective.

Let $j:U\subset C$ be the complement of the ramification loci of
$\pi:\tC\to C$ and let $\tilde{j}:\tU\to \tC$ be its preimage in $\tC$. Let $i: C\setminus U\to C$ be the closed embedding of ramification loci.
Then $L_n:=\pi_{*}(\xcoch\otimes\mu_{\ell^n})|_U$ is a locally free $\bZ/\ell^n$-module
on $U$ with an action of $\rW$, and the norm map $\Nm: L_n\to L_n^\rW$ is surjective. Let $F_n$ denote its kernel.
Note that since $\tilde{j}_*(\xcoch\otimes \mu_{\ell^n})=\xcoch\otimes \mu_{\ell^n}$, we have
$$\pi_*(\xcoch\otimes\mu_{\ell^n})=j_*L_n,\quad \pi_*(\xcoch\otimes\mu_{\ell^n})^\rW=j_* L_n^{\rW}.$$ 

Now, let $N_n= i^*j_*L_n^\rW$ be the restriction of $j_*L_n^\rW$ over the ramification loci. Taking cohomology of $0\to j_!L_n^\rW\to j_*L_n^\rW\to N_n\to 0$ then induces the following commutative diagram with rows and columns exact
\[\xymatrix{
&\on{H}^1_c(U,L_n) \ar[r]\ar^{\Nm}[d]&{\rm
H}^1(C,\pi_*(\xcoch\otimes\mu_{\ell^n}))\ar^{\on{Nm}}[d]\ar[r]& 0\\
{\rm H}^0(C,N_n)\ar^{\partial_n}[r]\ar@{=}[d]&{\rm H}_c^1(U,L_n^\rW)\ar[r]\ar^{q_n}[d]& {\rm H}^1(C,\pi_*(\xcoch\otimes\mu_{\ell^n})^\rW)\ar[r]\ar[d]&0\\
{\rm H}^0(C,N_n)\ar^{\delta_n}[r]&\Delta_n\ar[d]\ar[r]&Q_n\ar[r]\ar[d]&0\\
&0&0}.\]
Here $\Delta_n$ and $Q_n$ denote the cokernels of the norm maps. Recall that we want to show $\underleftarrow\lim Q_n=0$. From this diagram, this is equivalent to the surjectivity of $\underleftarrow\lim_n \delta_n$. 

It is easier to first describe the Pontrjagin dual of $\partial_n$ and $q_n$.  Note that the distinguished triangle 
$i^*j_*L_n^\rW\to j_!L^\rW_n[1]\to j_*L^\rW_n[1]\to$ is the Verdier dual of the natural distinguished triangle
$$j_*((L^\rW_n)^*\otimes\mu_{\ell^n})[1]\to Rj_*((L^{\rW}_n)^*\otimes \mu_{\ell^n})[1]\to R^1j_*((L^{\rW}_n)^*\otimes \mu_{\ell^n})\to.$$ Therefore, the dual of $\partial_n$ is the natural restriction map
\begin{equation}\label{E: coboundary}
\res:{\rm H}^1(U,(L_n^{\rW})^*\otimes\mu_{\ell^n})\to \bigoplus_{y\in C-U}{\rm H}^1(\Spec \mO_{C,y}^h\setminus\{y\}, (L_n^{\rW})^*\otimes\mu_{\ell^n}),
\end{equation}
where $\mO_{C,y}^h$ denotes the henselization of $\mO_{C,y}$.

Let $\bar \eta$ denote a geometric generic point of $\tU$. Its image in $U$ under $\pi$ is still denoted by $\bar\eta$. Then we have
\[(L_n)_{\bar\eta}\simeq \bbZ[\rW]\otimes(\xcoch\otimes\mu_{\ell^n}),\]
and the monodromy representation $\rho:\pi_1(U,\bar\eta)\to \GL((L_n)_{\bar\eta})$ is
given by 
$$\rho(\gamma)(a\otimes b)=\rho(\gamma)a\otimes b.$$ There is another action of $\rW$ on
$(L_n)_{\bar\eta}$ given by 
$$w(a\otimes b)=aw^{-1}\otimes wb,$$ which gives rise
to the $\rW$-action on $L_n$. Then there is a canonical isomorphism
\begin{equation}\label{E: Winv}
\xcoch\otimes\mu_{\ell^n}\simeq (L^{\rW}_n)_{\bar\eta},\quad \lambda\mapsto \sum_{w\in \rW} w\otimes w^{-1}\lambda.
\end{equation} 
Now we have the following commutative diagram 
\[\xymatrixcolsep{1pc}\xymatrix{
((L_n)^*_{\bar\eta}\otimes \mu_{\ell^n})^{\rW}\ar[r]\ar@{=}[d]&((F_n)_{\bar\eta}^*\otimes\mu_{\ell^n})^{\rW}\ar[r]\ar@{=}[d]& {\rm H^1}(\rW, \xch/\ell^n)\ar[r]\ar@{^{(}->}^{\rho^*}[d]& 0\ar[d]\\
((L_n)^*_{\bar\eta}\otimes \mu_{\ell^n})^{\pi_1(U,\bar\eta)}\ar[r] & ((F_n)_{\bar\eta}^*\otimes\mu_{\ell^n})^{\pi_1(U,\bar\eta)}\ar[r]&{\rm H}^1(\pi_1(U,\bar\eta),\xch/\ell^n)\ar[r]& {\rm H}^1(\pi_1(U,\bar\eta),(L_n)^*_{\bar\eta}\otimes \mu_{\ell^n}),
}\]
where the second row is the long exact sequence of \'etale cohomology for locally free $\bZ/\ell^n$-modules $0\to (L^{\rW}_n)^*\otimes\mu_{\ell^n}\to L_n^*\otimes\mu_{\ell^n}\to F_n^*\otimes\mu_{\ell^n}\to 0$ on $U$, and the first row is the long exact sequence of the group cohomology for their stalks at $\bar\eta$, regarded as $\rW$-modules. Here, we use: (i) ${\rm H}^1(\rW, (L_n)^*_{\bar\eta})=0$ by Shapiro's lemma;  (ii) under the isomorphism \eqref{E: Winv}, the $\pi_1(U,\bar\eta)$-action on $(L^{\rW}_n)_{\bar\eta}$ corresponds to the natural action of $\rW=\rho(\pi_1(U,\bar\eta))$ on $\xcoch\otimes\mu_{\ell^n}$; (iii) $\rho^*$ is injective since it is induced by the surjective map $\pi_1(U,\bar\eta)\to\rW$.
Therefore, it follows from the Poincar\'e duality on $U$ that the Pontrjagin dual of $q_n$ is $\rho^*$. 

Putting together, the dual of $\delta_n$ is $\res\circ\rho^*$. Now we choose a geometric generic point $\bar\eta_{\tilde y}$ of $\Spec \mO_{C,y}^h\setminus\{y\}$ over $\bar\eta$. Then $\rho(\pi_1(\Spec \mO_{C,y}^h\setminus\{y\}),\bar\eta_{\tilde y})=\langle s_{\alpha_{\tilde y}}\rangle\subset \rW$ for some root $\alpha_{\tilde y}$ (depending on $\bar{\eta}_{\tilde y}$), and there is the following commutative diagram
\[\xymatrixcolsep{5pc}\xymatrix{
{\rm H}^1(\rW, \xch/\ell^n)\ar^{\res}[r]\ar@{^{(}->}_{\rho^*}[d]\ar^{\delta_n^*}[dr]& {\rm H}^1(\langle s_{\alpha_{\tilde y}}\rangle,\xch/\ell^n)\ar@{^{(}->}^{\rho^*}[d]\\
{\rm H}^1(\pi_1(U,\bar\eta),\xch/\ell^n)\ar^{\res}[r]& {\rm H}^1(\pi_1(\Spec \mO_{C,y}^h\setminus\{y\},\bar\eta_{\tilde y}),\xch/\ell^n),
}\]  
with vertical arrows injective.
Therefore, it remains to show that 
$$\bigoplus_{y\in C- U} \underleftarrow\lim{\rm H}^1(\langle s_{\alpha_{\tilde y}}\rangle, \xch/\ell^n)^*\to \underleftarrow\lim{\rm H}^1(\rW,\xch/\ell^n)^*$$ 
is surjective. Note
\begin{equation}\label{E:delta}
\underleftarrow\lim_n {\rm H}^1(\rW, \xch/\ell^n)^*=\Hom(\underrightarrow\lim_n \on{H}^1(\rW, \frac{1}{\ell^n}\xch/\xch),\bbQ_\ell/\bZ_\ell).
\end{equation}
Using $0\to {\rm H}^1(\rW, \xch)/\ell^n\to \on{H}^1(\rW, \xch/\ell^n)\to \on{H}^2(\rW, \xch)[\ell^n]\to 0$, and the fact that $\on{H}^1(W,\xch)$ is finite, we have
$$\underrightarrow\lim_n \on{H}^1(\rW, \frac{1}{\ell^n}\xch/\xch)=\on{H}^2(\rW,\xch)[\ell^\infty]=\on{H}^1(\rW,\xch\otimes \bbQ_\ell/\bZ_\ell).$$
So it reduces to show that
\[{\rm H}^1(\rW, \xch\otimes \bbQ_\ell/\bbZ_\ell)\to \bigoplus_{y\in C-U}{\rm H}^1(\langle s_{\alpha_{\tilde y}}\rangle , \xch\otimes \bbQ_\ell/\bbZ_\ell)\]
is injective. As mentioned in the proof of Lemma \ref{P1}, $\tC_\al$ is non-empty for every $\al\in\Phi$. So $\{\al_{\tilde y}\}$ contain a set of representatives of $\Phi/\rW$. Now we can apply Lemma \ref{L:H1W} to $M=\xch\otimes \bbQ_\ell/\bbZ_\ell$ to finish the proof of the proposition.
\end{proof}

Now, let $A'=\ker(P^0\to P)$, and $A=\ker(P\to P^1)$. Then by the above
proposition, $$\ker D_{cl}=A'/(\breve A)^*.$$ As both $A'$ and $\breve A$ are
finite \'{e}tale groups, it is enough to show that $|A'|=|\breve
A|$, where for a finite group $\Gamma$, $|\Gamma|$ denotes the
number of its elements. This is the subject of the next subsection.

\subsection{Calculation of finite groups}\label{cal of finite}
Let us understand $A$. In fact, it is better to pick up $\infty\in
C$ \emph{away from} the ramification loci. Let $\calO_\infty$
denote the completed local ring of $C$ at $\infty$. Let $J_\infty$
be the dilatation of $J$ along the unit of the fiber of $J$ at
$\infty$. By definition (see \cite[\S 2]{BLR} for details), $J_\infty$ is the unique smooth group scheme over $C$ equipped with a natural map $J_\infty\to J$, which is an isomorphism away from $\infty$ and induces an isomorphism from
$J_\infty(\calO_\infty)$ to the first congruence
subgroup of $J(\calO_\infty)$. Let $\scP_\infty$ be the Picard
stack of $J_\infty$-torsors on $C$. One can also interpret
$\scP_\infty$ as the Picard stack of $J$-torsors on $C$ together
with a trivialization at $\infty$. Observe that $\scP_\infty$ is in
fact a scheme. Let $P_\infty$ denote the neutral connected component
of $\scP_\infty$. Similarly, one can define $J^0_\infty, J^1_\infty,
P^0_\infty, P^1_\infty$ etc. Let $A_\infty=\ker (P_\infty\to
P^1_\infty)$ and $A'_\infty=\ker(P^0_\infty\to P_\infty)$.

\begin{lem}\label{L: four term exact}
There are the following two exact sequences
\[1\to A_\infty\to\Gamma(C,J^1/J)\to \pi_0(\scP)\to \pi_0(\scP^1)\to 1\]
and
\[1\to \Aut_\scP(e)\to \Aut_{\scP^1}(e)\to A_\infty\to A\to 1.\]
Similarly,
\[1\to A'_\infty\to \Gamma(C,J/J^0)\to \pi_0(\scP^0)\to \pi_0(\scP)\to 1\]
and
\[1\to \Aut_{\scP^0}(e)\to \Aut_{\scP}(e)\to A'_\infty\to A'\to 1.\]
\end{lem}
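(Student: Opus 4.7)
The plan is to derive all four exact sequences from the long exact sequences of sheaf cohomology on $C$ attached to the two short exact sequences of smooth commutative group schemes
\[(\mathrm{I})\ 1\to J\to J^1\to J^1/J\to 1,\qquad (\mathrm{II})\ 1\to J^0\to J\to J/J^0\to 1.\]
By \eqref{J and J1} the cokernel in (I) is concentrated on the (finite) ramification locus of $\pi\colon\tC\to C$, and the cokernel in (II) is supported there as well since $J$ is a torus on the unramified complement; consequently $H^1(C,J^1/J)=H^1(C,J/J^0)=0$ and each long exact sequence truncates to six terms.

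For (I), after making the identifications $H^0(C,J)=\Aut_\sP(e)$, $H^0(C,J^1)=\Aut_{\sP^1}(e)$ and using that $H^1(C,J)$, $H^1(C,J^1)$ are the groups of isomorphism classes of $J$- and $J^1$-torsors on $C$, the truncated long exact sequence reads
\[0\to \Aut_\sP(e)\to \Aut_{\sP^1}(e)\to \Gamma(C,J^1/J)\to H^1(C,J)\to H^1(C,J^1)\to 0. \qquad (\ast)\]
Composing with the surjections onto component groups $H^1(C,J)\twoheadrightarrow \pi_0(\sP)$ and $H^1(C,J^1)\twoheadrightarrow \pi_0(\sP^1)$ produces the first exact sequence, with $A_\infty$ provisionally defined as the kernel of the composite $\Gamma(C,J^1/J)\to \pi_0(\sP)$ (surjectivity of the final arrow uses that $P\to P^1$ is surjective, which follows from the prime-to-$p$ isogeny $P^0\to P^1$ noted earlier in the section). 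To match this $A_\infty$ with the $\ker(P_\infty\to P^1_\infty)$ of the statement, and simultaneously to extract the second exact sequence, I apply the snake lemma to the commutative diagram
\[\xymatrix@R=1.5em{0\ar[r]&J|_\infty/\Aut_\sP(e)\ar[r]\ar[d]&P_\infty\ar[r]\ar[d]&P\ar[r]\ar[d]&0\\ 0\ar[r]&J^1|_\infty/\Aut_{\sP^1}(e)\ar[r]&P^1_\infty\ar[r]&P^1\ar[r]&0,}\]
whose rows describe the neutral-component part of the forgetful maps $\sP_\infty\to \sP$ and $\sP^1_\infty\to \sP^1$ (each such forgetful map has fiber at the identity equal to $J|_\infty$ modulo the image of global sections). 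Crucially, $J|_\infty=J^1|_\infty$ since $\infty$ lies outside the ramification locus, so the left vertical arrow is the natural quotient, surjective with kernel $\Aut_{\sP^1}(e)/\Aut_\sP(e)$. The snake lemma then delivers $1\to \Aut_{\sP^1}(e)/\Aut_\sP(e)\to A_\infty\to A\to 1$, which rewrites as the second claimed sequence; comparison with $(\ast)$ simultaneously confirms that this $A_\infty$ agrees with the subgroup of $\Gamma(C,J^1/J)$ featured in the first sequence.

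The third and fourth sequences follow by exactly the same pattern, starting from (II) in place of (I) and using the analogous snake-lemma diagram with $\sP^0_\infty\to \sP_\infty$ and $\sP^0\to \sP$ in the two rows. The main technical subtlety will be keeping careful track of the three-tier hierarchy ``Picard stack $\sP$ / coarse moduli space / neutral component $P$'' both when identifying $H^0$ and $H^1$ with automorphism groups and torsor classes and when verifying that the rows of the snake-lemma diagram are honest short exact sequences with the asserted kernels; the latter is precisely where the choice of $\infty$ outside the ramification locus enters, ensuring that the forgetful map $\sP_\infty\to \sP$ has connected fiber $J|_\infty/\Aut_\sP(e)$ at the identity.
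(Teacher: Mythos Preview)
Your approach is essentially the same as the paper's: both derive the sequences from the long exact cohomology sequence attached to $1\to J\to J^1\to J^1/J\to 1$ (and its $J^0\subset J$ analogue), using that the quotient is a skyscraper on the ramification locus and that $J|_\infty=J^1|_\infty$. The paper's proof is extremely terse (``Taking $R\Gamma(C,-)$ and noting that $J^1_\infty/J_\infty=J^1/J$ and $\pi_0(\scP_\infty)=\pi_0(\scP)$, $\pi_0(\scP^1_\infty)=\pi_0(\scP^1)$''), and your snake-lemma diagram is exactly the computation hidden behind that sentence.

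One small streamlining the paper's phrasing suggests: rather than first taking $R\Gamma$ of the non-$\infty$ sequence, provisionally defining $A_\infty$ as $\ker(\Gamma(C,J^1/J)\to\pi_0(\scP))$, and then matching it with $\ker(P_\infty\to P^1_\infty)$, you can start directly with the $\infty$-rigidified sequence $1\to J_\infty\to J^1_\infty\to J^1/J\to 1$. Since $\scP_\infty$ and $\scP^1_\infty$ are schemes (no automorphisms), $R\Gamma$ immediately gives $0\to\Gamma(C,J^1/J)\to\scP_\infty\to\scP^1_\infty\to 0$, and passing to connected components versus $\pi_0$ yields the first sequence with $A_\infty$ appearing by its very definition. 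This avoids the extra verification step. Also, your remark that surjectivity of $\pi_0(\scP)\to\pi_0(\scP^1)$ ``uses that $P\to P^1$ is surjective'' is unnecessary: it follows directly from surjectivity of $H^1(C,J)\to H^1(C,J^1)$ in your sequence $(\ast)$.
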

\begin{proof} Consider 
\[\begin{CD}
1@>>> J_\infty @>>> J^1_\infty @>>> J^1_\infty/J_\infty @>>>1\\
@.@VVV@VVV@VVV@.\\
1@>>> J@>>> J^1@>>> J^1/J@>>> 1.
\end{CD}\]
Taking $R\Gamma(C,-)$, we obtain
\begin{equation}\label{level vs non level}\begin{CD}
1@>>> \Gamma(C, J^1_\infty/J_\infty)@>>> \sP_\infty @>>> \sP_\infty^1 @>>>1\\
@.@VVV@VVV@VVV@.\\
1@>>> \Gamma(C, J^1/J)@>>> \sP @>>> \sP^1 @>>>1.
\end{CD}\end{equation}
Since $\sP_\infty$ and $\sP_\infty^1$ are schemes, the first row of \eqref{level vs non level} gives
\[1\to A_\infty\to \Gamma(C, J^1_\infty/J_\infty)\to \pi_0(\sP_\infty)\to \pi_0(\sP_\infty^1)\to 1.\]
Since $J^1_\infty/J_\infty=J^1/J$ and
$\pi_0(\scP_\infty)=\pi_0(\scP),
\pi_0(\scP^1_\infty)=\pi_0(\scP^1)$, we obtain the first exact sequence of the lemma.
In addition, combining with the second row of \eqref{level vs non level}, we obtain the short exact sequence of Beilinson's 1-motives
\[
1\to A_\infty \to W_1\sP \to W_1\sP^1 \to 1, 
\]
which in turn gives the second exact sequence of the lemma. The proof of the last two exact sequences of the lemma is similar (by considering $R\Gamma$ of the short exact sequence $1\to J_\infty^0\to J_\infty \to J_\infty/J_\infty^0\to 1$).
\end{proof}

As a corollary, we can write
\[|A|=\frac{|\Gamma(C,J^1/J)|}{|\on{coker}(\Aut_{\scP}(e)\to\Aut_{\scP^1}(e))||\ker(\pi_0(\scP)\to\pi_0(\scP^1)|}.\]
and
\[|A'|=\frac{|(\Gamma(C,J/J^0)|}{|\on{coker}(\Aut_{\scP^0}(e)\to\Aut_{\scP}(e))||\ker(\pi_0(\scP^0)\to\pi_0(\scP))|}.\]
Therefore to show that $|\breve A|= |A'|$, it is enough to show
that
\begin{enumerate}
\item $|\Gamma(C,\breve J^1/\breve J)|=|\Gamma(C,J/J^0)|$;
\item
$|\on{coker}(\Aut_{\breve\scP}(e)\to\Aut_{\breve\scP^1}(e))|=|\on{coker}(\pi_0(\scP)^*\to\pi_0(\scP^0)^*)|$;
\item
$|\ker(\pi_0(\breve\scP)\to\pi_0(\breve\scP^1))|=
|\ker(\Aut_{\scP}(e)^*\to\Aut_{\scP^0}(e)^*)|$.
\end{enumerate}
We first prove (1). By \eqref{J and J1}, 
\begin{equation}\label{E:J and J1}
\Gamma(C,\breve J^1/\breve
J)=(\bigoplus_{\alpha}\mu_\alpha(\tC_\alpha))^\rW.
\end{equation}
 Observe that
$\mu_\alpha\neq 0$ if and only if $\alpha$ is not a primitive root,
i.e. $\alpha/2\in\xch$. On the other hand, from
\[1\to J/J_0\to J_1/J_0\to J_1/J\to 1,\]
one can see that
the character group of $\Gamma(C,J/J_0)$ is
$(\bigoplus_{x\in\sqcup\tC_\alpha}\frac{\bbQ\alpha\cap\xch}{\bbZ\alpha})^\rW$.
Then (1) follows.

Next, we prove (2). In fact, it follows from \S \ref{first reduction} and \cite[\S 4.10]{N2}  that
both maps can be identified with the natural inclusion
$Z(\breve G)\to \breve T^\rW$. 

Finally, we show (3). Recall that $\Aut_{\scP}(e)=\{t\in T\mid \al(t)=1, \ \al\in\Phi\}$. On the other hand, from the above description of $\Gamma(C,J/J_0)^*$,
\[\Aut_{\scP^0}(e)=\{t\in T\mid \lambda(t)=1 \mbox{ if } \lambda\in \bbQ\al\cap \xch, \al\in\Phi)\}.\]
Therefore,
$$
\ker(\Aut_{\scP}(e)^*\to\Aut_{\scP^0}(e)^*)= \frac{ \sum_{\al\in\Phi} (\bbQ \al\cap
\xch)}{\bbZ\Phi}.
$$

To calculate $\ker(\pi_0(\breve\scP)\to\pi_0(\breve\scP^1))$, we choose $\tilde y\in \tC_{\al_{\tilde y}}$ above $y\in C-U$ for every point in the ramification loci. The restriction of $1\to \breve J\to \breve J^1\to \breve J^1/\breve J\to 0$ at $y$ then can be identified with
$$1\to \ker(\breve\al_{\tilde y})\to \breve T^{s_{\al_{\tilde y}}}\stackrel{\breve \al_{\tilde y}}{\to} \mu_{\al_{\tilde y}}\simeq \frac{\bbQ\al_{\tilde y}\cap \xch}{\bZ\al_{\tilde y}}\to 1.$$
It follows that the coboundary map $\Ga(C,\breve J^1/\breve J)\to {\rm H}^1(C,\breve J)$ can be identified with
$$\bigoplus_{y\in C-U} \frac{\bbQ\al_{\tilde y}\cap \xch}{\bZ\al_{\tilde y}}\to {\rm H}^1(C,\breve J), \quad \lambda_{\tilde y}\in \frac{\bbQ\al_{\tilde y}\cap \xch}{\bZ\al_{\tilde y}}\mapsto\on{AJ}^{\breve\sP}(\tilde y, \lambda_{\tilde y}),$$
where $\on{AJ}^{\breve\sP}$ is the Abel-Jacobi map introduced before. 
Of course, this map does not really depend on the choice of liftings of $y\in C-U$ since $\on{AJ}^{\breve\sP}(\tilde y, \lambda_{\tilde y})=\on{AJ}^{\breve\sP}(w\tilde y, w\lambda_{\tilde y})$.

Now, as in the proof of Lemma \ref{L: four term exact}, we have a right exact sequence
\[\Ga(C, \breve J^1/\breve J)\to \pi_0(\breve \sP)\to \pi_0(\breve \sP^1)\to 0.\] 
Since the Abel-Jacobi map induces $\frac{\xch}{\bZ\Phi}\simeq \pi_0(\breve\sP)$, we deduce that
$$\ker(\pi_0(\breve\scP)\to\pi_0(\breve\scP^1))=\on{Im}(\Ga(C, \breve J^1/\breve J)\to \pi_0(\breve \sP))=\frac{ \sum_{\al\in\Phi} (\bbQ \al\cap
\xch)}{\bbZ\Phi}.$$ 
Therefore, (3) follows and the proof of Theorem \ref{classical limit} is complete.

\begin{remark}
As a byproduct of the proof, we obtain
$$\pi_0(\breve\scP^1)=\frac{\xch}{\sum_{\al\in\Phi}(\bbQ\al\cap\xch)}.$$
It seems that this expression of $\pi_0(\breve\scP^1)$ did not
appear in literature before.
\end{remark}

\subsection{A property of $\mathfrak D_{cl}$}\label{l_J}
In this subsection, we assume that $G$ is semisimple. We show that the classical duality $\mathfrak D_{cl}$ intertwines certain homomorphisms of Picard stacks over the Hitchin base $B^0$. As before, we omit the subscript $^0$.

Let $Z(\breve G)\on{-tors}(C)$ denote the Picard stack of $Z(\breve G)$-torsors on $C$. We start with the construction of two homomorphisms
\begin{equation}\label{flJ}
\fl_J: Z(\breve G)\on{-tors}(C)\times B\to \sP^\vee,\quad \breve\fl_{J}: Z(\breve G)\on{-tors}(C)\times B\to \breve\sP.
\end{equation}

The definition of $\breve\fl_J$ is easy.  It is induced by the natural map of group schemes 
$$Z({\breve G})\times (C\times S)\ra\breve J_b,$$
for every $b:S\to B$.  
For $K\in Z(\breve G)\on{-tors}(C)$, let
$$K_{J}:=\breve\fl_J(\{K\}\times B)\in\breve\sP(B).$$

Next we define $\fl_J$. For the purpose, we need to generalize a construction of \cite[\S 4.1]{BD}.
Let $\pi:\mC\to B$ be a smooth proper relative curve over an affine base $B$ (later on $\mC=C\times B$). Let \[0\to \Pi(1)\to \tilde \mG\to \mG\to 0\]
be an extension of smooth affine group schemes on $\mC$ with $\Pi$ commutative finite \'{e}tale. Let $\Pi^\vee=\Hom(\Pi,\bG_m)$ be its Cartier dual, which is assumed to be \'{e}tale as well (in particular, the order of $\Pi$ is prime to $\cha\ k$), and let $\Pi^\vee\on{-tors}(\mC/B)$ denote the Picard stack (over $B$) of $\Pi^\vee$-torsors on $\mC$ relative to $B$. We construct a Picard functor
\[\fl_{\mG}: \Pi^\vee\on{-tors}(\mC/B)\to \Pic(\Bun_{\mG}(\mC/B))\]
of Picard stacks over $B$
as follows. First, let $\Pi\on{-gerbes}(\mC/B)$ denote the Picard 2-stack of $\Pi$-gerbes on $\mC$ relative to $B$, regarded as a Picard stack. Then there is the 
generalized (or categorical) Chern class map 
$$\tilde c_{\mG}:\Bun_{\mG}(\mC/B)\ra\Pi(1)\on{-gerbes}(\mC/B)$$
that assigns every $B$-scheme $S$ and a $\mG$-torsor $E$ on $\mC_S$, the Picard groupoid of the lifting of $E$ to a $\tilde \mG$-torsor.
We have
\begin{lemma}\label{pairing and duality}
The dual of the Picard stack
$\Pi\on{-gerbes}(\mC/B)$ (as defined in \S \ref{duality}) is canonically isomorphic to $\Pi^\vee\on{-tors}(\mC/B)$. 
\end{lemma}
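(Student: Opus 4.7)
The plan is to construct a canonical biadditive pairing
\[
\langle -,-\rangle:\Pi\text{-gerbe}(\mC/B)\times\Pi^\vee\text{-tors}(\mC/B)\longrightarrow B\bG_m
\]
of Picard stacks over $B$, and then verify that the induced homomorphism $\Pi^\vee\text{-tors}(\mC/B)\to(\Pi\text{-gerbe}(\mC/B))^\vee$ is an isomorphism.

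I would construct the pairing in two steps. First, the Cartier duality map $\Pi\otimes\Pi^\vee\to\bG_m$ on $\mC$ lets one contract a $\Pi$-gerbe $\mG$ by a $\Pi^\vee$-torsor $E$: functorially, assign to the pair $(\mG,E)$ the $\bG_m$-gerbe $\mG\wedge E$ on $\mC$ obtained by inducing $\mG$ along the character of $\Pi$ determined locally by $E$. Second, one integrates this $\bG_m$-gerbe along the smooth proper relative curve $\pi:\mC\to B$ to obtain a $\bG_m$-torsor on $B$. Because $\mG\wedge E$ is killed by $n=|\Pi|$, its class lies in the image of $R^2\pi_*\mu_n$, and the Poincar\'e duality trace map
\[
\on{tr}:R^2\pi_*\mu_n\longrightarrow\bZ/n\hookrightarrow\bG_m,
\]
valid because $n$ is invertible on $B$, yields the desired torsor on $B$. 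To carry this through at the level of Picard stacks rather than merely isomorphism classes, I would realize the integration step as a direct generalization of the Beilinson-Drinfeld determinant-of-cohomology construction \cite[\S 4.1]{BD}, replacing the central extension by $\bG_m$ there with the central extension by $\Pi(1)$ appearing in the present setting.

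To show the induced morphism is an isomorphism of Picard stacks, it suffices to compare cohomology sheaves on $B$. Under the dictionary between Picard (2-)stacks and suitable truncated complexes of sheaves as in Appendix~\ref{B}, the assertion reduces to the standard relative Poincar\'e duality quasi-isomorphism
\[
R\pi_*\Pi^\vee[1]\;\simeq\;\on{RHom}_B\bigl(R\pi_*\Pi,\bG_m\bigr)
\]
for finite \'etale sheaves of order prime to the characteristic on a smooth proper relative curve.

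The main obstacle is promoting the cup-product-plus-trace construction from an operation on classes to a genuine morphism of Picard stacks: one must verify strict multiplicativity, associativity, and compatibility with the Picard structure on both sides. This is essentially where the generalization of the Beilinson-Drinfeld determinant construction does the real work, and where the finiteness and \'etaleness of $\Pi^\vee$ are essential, guaranteeing that the integrated gerbe is a genuine $\bG_m$-torsor on $B$ rather than carrying extra $\Pic_{\mC/B}$-data.
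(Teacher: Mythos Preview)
Your proposal is correct and follows essentially the same approach as the paper. The paper likewise reduces the statement to Verdier/Poincar\'e duality on the relative curve (writing $R\Hom(R\pi_*\Pi[2](1),\mu'_\infty)\simeq R\pi_*\Pi^\vee$ and then truncating), refers to \cite[\S 4.1.2--4.1.4]{BD} for the precise construction of the pairing at the level of Picard stacks, and explicitly flags the same obstacle you do---that a derived-category argument alone is insufficient and one must either invoke the concrete Beilinson--Drinfeld construction or work $\infty$-categorically.
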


We follow \cite[\S 4.1.5]{BD} for a ``scientific interpretation" of this lemma and refer to \cite[\S 4.1.2-4.1.4]{BD} for the precise construction. As explained in \S \ref{Picard}, the Picard stack $\Pi\on{-gerbes}(\mC/B)$ is incarnated by the complex $\tau_{\geq -1}R\pi_*\Pi[2](1)$, and $\Pi^\vee\on{-tors}(\mC/B)$ is incarnated by the complex $\tau_{\leq 0}R\pi_*\Pi^\vee[1]$. Let $\mu'_\infty$ denote the group of prime-to-$p$ roots of unit. Note that $\pi^!\mu'_\infty\simeq \mu'_\infty[2](1)$. Then by the Verdier duality, 
$$R\Hom(R\pi_*\Pi[2](1),\mu'_\infty)\simeq R\pi_*R\Hom(\Pi[2](1),\pi^!\mu'_\infty)\simeq R\pi_*\Pi^\vee.$$
By shifting by $[1]$ and truncating $\tau_{\leq 0}$, one obtains the lemma. As explained in \cite[\S 4.1.5]{BD}, working in the framework of derived categories is in not enough to turn the above heuristics into a proof. One can either give a concrete construction as in \cite[\S 4.1.2-4.1.4]{BD} or understand the above argument in the framework of stable $\infty$-categories.
 
Therefore, each $K\in\Pi^\vee\on{-tors}(\mC/B)$ defines a morphism
\[\fl_{\mG,K}:\Bun_{\mG}(\mC/B)\stackrel{\tilde c_{\mG}}\ra\Pi(1)\on{-gerbes}(\mC/B)\stackrel{<,K>}
\ra  B\bG_m\]
or equivalently a line bundle $\mL_{\mG,K}$ on $\Bun_{\mG}(\mC/B)$ and 
the assignment $K\ra\mL_{\mG,K}$
defines a homomorphism of Picard stacks
\[\fl_\mG:\Pi^\vee\on{-tors}(\mC/B)\ra\on{Pic}(\Bun_{\mG}(\mC/B)),\] 
which factors through the $n$-torsion of $\on{Pic}(\Bun_{\mG})(\mC/B)$ where 
$n$ is the order of $\Pi^\vee$.

Note that in the above discussion we do not assume that $\mG$ is commutative. But if $\mG$ is commutative, $\Bun_{\mG}(\mC/B)$ has a natural structure of Picard stack over $B$ and 
one can check that $\fl_{\mG}$ factors through a homomorphism 
$\fl_{\mG}:\Pi^\vee\on{-tors}(\mC/B)\ra(\Bun_{\mG}(\mC/B))^\vee$.

Now let $\mC=C\times B$, where $B$ is the Hitchin base as before. Let $\mG=J_b$ and $\tilde{\mG}=(J_{sc})_b$, where $b:B\to B$ is the identity map, and $J_{sc}$ is the universal regular centralizer for $G_{sc}$, the simply-connected cover of $G$. Then $\Pi(1)=\Pi_G(1)$
is the fundamental group and $\Pi_G^\vee$ is canonical isomorphic
to the center $Z({\breve G})$ of $\breve G$. Therefore, the above construction gives $\fl_J$ as promised in \eqref{flJ}.

Note that similarly we can set $\mG=G\times\mC$ and $\mG=T\times\mC$ in the above construction so we obtain
\[\fl_G:Z({\breve G})\on{-tors}(C)\ra\on{Pic}(\Bun_G),\quad \fl_T:Z({\breve G})\on{-tors}(C)\ra 
(\Bun_T)^\vee.\]
For $K\in Z({\breve G})\on{-tors}(C)$, let
$\mL_{G,K}:=\fl_G(K)\in\on{Pic}(\Bun_G)$,
$\mL_{J,K}:=\fl_{J}(\{K\}\times B)\in(\sP)^\vee(B)$. 
The following lemma will be used in \S \ref{tensoring action}.
\begin{lemma}\label{Aut line bundle 2}
Let $\kappa$ be a square root of $\omega$.
Then the pullback of $\mL_{G,K}$ along the map
$\sP\stackrel{\epsilon_\kappa}\ra\on{Higgs}\stackrel{\pr}\ra\Bun_G$ is 
isomorphic to $\mL_{J,K}$, i.e., 
we have $\mL_{J,K}\is \epsilon_\kappa^*\circ \pr^*\mL_{G,K}$.
\end{lemma}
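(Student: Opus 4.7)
The plan is to reduce the statement to a compatibility between the two categorical Chern classes that produce $\fl_G$ and $\fl_J$. Recall from Lemma \ref{pairing and duality} and the surrounding discussion that $\mL_{G,K}=\fl_G(K)$ is the pairing of $K\in\Pi_G^\vee\on{-tors}(C)$ with the Chern class morphism $\tilde c_G:\Bun_G\to \Pi_G(1)\on{-gerbe}(C)$ coming from $0\to\Pi_G\to G_{sc}\to G\to 0$, while $\mL_{J,K}$ is the analogous pairing with $\tilde c_J:\sP\to(\Pi_G)_B(1)\on{-gerbe}(C\times B/B)$ coming from $0\to(\Pi_G)_B\to J_{sc}\to J\to 0$. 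By functoriality of this pairing, it suffices to exhibit a canonical equivalence $\tilde c_J\simeq(\pr\circ\epsilon_\kappa)^*\tilde c_G$ of morphisms of Picard stacks into $(\Pi_G)_B(1)\on{-gerbe}(C\times B/B)$.

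First I would lift the Kostant picture to the simply-connected cover. The $\mathfrak{sl}_2$-triple $\{f,h,e\}$ and the cocharacter $2\rho^\vee$ appearing in \S\ref{kostant section} all live in $G_{sc}$, so the construction $\eta_{\mL^{1/2}}$ equally produces a Higgs $G_{sc}$-bundle $(E_{\kappa,sc},\phi_{\kappa,sc})$ whose image under $G_{sc}\to G$ is $(E_\kappa,\phi_\kappa)$. A consequence is that $E_\kappa$ carries a canonical $G_{sc}$-lift, so the $\Pi_G$-gerbe $\tilde c_G(E_\kappa)$ is canonically trivial.

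The main technical step, which I expect to be the main obstacle, is to identify the central extension $0\to(\Pi_G)_\fc\to J_{sc}\to J\to 0$ as the restriction of $0\to\Pi_G\to G_{sc}\to G\to 0$ along the homomorphism $a:\chi^*J\to I\subset G\times\fg$ of Proposition \ref{J}. Concretely, one must check that $a$, restricted to the Kostant section $\fc\simeq f+\fg^e$ and transported through the identification $\chi^*J|_{\fg^{reg}}\simeq I|_{\fg^{reg}}$ to $\Aut(E_\kappa,\phi_\kappa)$, lifts canonically to a homomorphism $J_{sc}\to\Aut(E_{\kappa,sc},\phi_{\kappa,sc})$. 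This amounts to applying Proposition \ref{J} to $G_{sc}$ (using that centralizers of regular elements in $G_{sc}$ are connected and contain $Z(G_{sc})$, so $J_{sc}\to J$ is surjective with kernel $\Pi_G$) and matching the resulting map with the one for $G$. The verification is natural pointwise on $\fc$, but some care is needed to make it $\bG_m$-equivariant so that, after twisting by $\omega^\times$ and pulling back to $C\times B$, it produces a commutative diagram of central extensions of smooth group schemes on $C\times B$ with matching kernel $\Pi_G$.

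Once this diagram is in place, the conclusion is formal: for a $J$-torsor $Q$ on $C\times S$, the associated $G$-bundle $\pr\circ\epsilon_\kappa(Q)$ is the contraction $Q\times^J E_\kappa$, and the diagram of extensions yields a canonical equivalence of $\Pi_G$-gerbes $\tilde c_J(Q)\simeq \tilde c_G(Q\times^J E_\kappa)\otimes \tilde c_G(E_\kappa)^{-1}$. The second factor on the right is canonically trivial by the first step, producing the desired equivalence $\tilde c_J\simeq(\pr\circ\epsilon_\kappa)^*\tilde c_G$ of Picard functors. Pairing both sides with $K$ then gives $\mL_{J,K}\simeq \epsilon_\kappa^*\pr^*\mL_{G,K}$, with multiplicative structures matching because the equivalence of Chern classes is one of Picard functors.
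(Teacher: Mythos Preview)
Your proposal is correct and follows essentially the same route as the paper: reduce to an equivalence $\tilde c_J\simeq(\pr\circ\epsilon_\kappa)^*\tilde c_G$ of Chern-class functors, use that the Kostant section $E_\kappa$ lifts canonically to a $G_{sc}$-torsor $\tilde E_\kappa$ (since $2\rho$ factors through $G_{sc}$), and then send a $J_{sc}$-lift $\tilde P$ of $P$ to the $G_{sc}$-lift $\tilde P\times^{J_{sc}}\tilde E_\kappa$ of $E=P\times^J E_\kappa$. The paper's proof is just this, stated in a few lines; what you flag as the ``main technical step'' (lifting the map $a:J\to\Aut(E_\kappa)$ to $J_{sc}\to\Aut(\tilde E_\kappa)$ compatibly with the $\Pi_G$-kernels) is exactly what is needed to make sense of the contracted product $\tilde P\times^{J_{sc}}\tilde E_\kappa$, and the paper simply takes this for granted rather than spelling it out.
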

\begin{proof}
It is enough to
show that the composition 
\[\sP\stackrel{\epsilon_\kappa}\ra\on{Higgs}\stackrel{\pr}\ra\Bun_G\stackrel{\tilde c_G}\ra\Pi_G(1)\on{-gerbes}(C)\]
is isomorphic to 
\[\sP\stackrel{\tilde c_J}\ra\Pi_G\on{-gerbes}(C)\times B\ra\Pi_G(1)\on{-gerbes}(C).\]
Let $P\in\sP$ and $(E,\phi):=\epsilon_\kappa(P)$. We need to construct a functorial isomorphism between
$\tilde c_J(P)$ and $\tilde c_G(E)$
where $\tilde c_J(P)$ (resp. $\tilde c_G(E)$) is the $\Pi_G(1)$-gerbe
of liftings of $P$ to $J_{sc}$-torsors (resp. $G_{sc}$-torsors).

Note that the $G$-torsor $E_\kappa$ given by the Kostant section has a natural lifting 
$\tilde E_\kappa\in\Bun_{G_{sc}
}$, since the cocharacter $2\rho:\bG_m\ra G$ has a natural lifting to $G_{sc}$. Thus any lifting $\tilde P\in\tilde c_J(P)$ defines a lifting 
$\tilde E:=\tilde P\times^{J_{sc}}\tilde E_\kappa\in\Bun_{G_{sc}}$ 
of $E=P\times^{J}E_\kappa$ and the assignment $\tilde P\ra\tilde E$ defines a functorial
isomorphism between $\tilde c_J(P)$ and $\tilde c_G(E)$. The lemma follows.
\end{proof}

\medskip
 
We write $l_G,l_T, l_J$ for the induced maps between the corresponding coarse moduli spaces. The following lemma is a specialization of our construction of the duality given in Lemma \ref{pairing and duality}.
\begin{lemma}\label{l_T}
Let 
$n$ be a positive integer such that $p\nmid n$.
Let 
$$\fl:\breve T[n]\on{-tors}(C)\ra(\Bun_T)^\vee[n]$$ be the
tensor functor given by the extension 
$0\ra T[n]\ra T\stackrel{n}\ra T\ra 0$.\footnote{Recall that we have a canonical isomorphism 
$\breve T[n]\is (T[n])^\vee$.} Then the induced map
$l:H^1(C,\breve T[n])\ra H^1(C, T[n])^\vee$  
between the coarse moduli spaces is the same the as map given by the Poincare duality.
\end{lemma}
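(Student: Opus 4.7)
The plan is to trace through the definitions and identify the map $l$ with Verdier duality on $X$. By the construction preceding the lemma, $\fl$ factors as
\[\breve T[n]\on{-tors}(X) \stackrel{\sim}{\longrightarrow} \bigl(T[n]\on{-gerbes}(X)\bigr)^\vee \stackrel{\tilde c_T^\vee}{\longrightarrow} (\Bun_T)^\vee[n],\]
where the first arrow is the duality of Lemma \ref{pairing and duality} and $\tilde c_T$ is the Chern class functor sending a $T$-bundle to the gerbe of its $n$-th roots. At the level of the complex incarnations of Picard stacks, $\tilde c_T$ corresponds to the connecting morphism in the Kummer distinguished triangle $R\Gamma(X, T[n]) \to R\Gamma(X, T) \xrightarrow{n} R\Gamma(X, T)$ arising from the short exact sequence $0 \to T[n] \to T \xrightarrow{n} T \to 0$. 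In particular, the induced map on $\pi_{-1}$'s is the connecting map $H^0(X, T) \to H^1(X, T[n])$, and on $\pi_0$'s it is the connecting map $H^1(X, T) \to H^2(X, T[n])$ surjecting onto the obstruction.

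Next, I observe that the duality of Lemma \ref{pairing and duality} is the categorical reformulation of Verdier duality for \'etale sheaves on $X$. Concretely, the key input is the quasi-isomorphism
\[R\Hom_{\Spec k}\bigl(R\Gamma(X, T[n])[2], \mu_n\bigr) \simeq R\Gamma(X, R\Hom(T[n], \mu_n[2])) = R\Gamma(X, \breve T[n])\]
(after absorbing the appropriate Tate twist encoded in the notation $\Pi(1)$ of Lemma \ref{pairing and duality}), using $\pi^!\mu_n \simeq \mu_n[2]$ for $\pi : X \to \Spec k$ and the natural Cartier pairing $\breve T[n] \otimes T[n] \to \mu_n$. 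On cohomology in the appropriate degree, this quasi-isomorphism specializes precisely to the Poincar\'e duality pairing $H^1(X, \breve T[n]) \times H^1(X, T[n]) \to H^2(X, \mu_n) = \bZ/n$.

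Combining the two steps, and passing to coarse moduli (i.e., to $\pi_0$ of the relevant torsion components), the composition defining $\fl$ sends $K \in H^1(X, \breve T[n])$ to the functional $\alpha \mapsto K \cup \alpha$ on $H^1(X, T[n])$; this is the Poincar\'e duality map. The main subtlety and the bulk of the verification is bookkeeping: one must carefully handle truncations, shifts, and the implicit Tate twist, together with the fact that $T[n]\on{-gerbes}(X)$ is a $2$-Picard-stack while the other two Picard stacks are $1$-Picard-stacks, and confirm that $\tilde c_T$ indeed incarnates the Kummer boundary under the conventions of the previous paragraphs. Once these identifications are fixed, the identification of $l$ with Poincar\'e duality is a formal consequence of Verdier duality.
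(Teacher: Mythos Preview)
Your proposal is correct and follows essentially the same approach as the paper. The paper does not actually give a proof of this lemma: it is stated with the preface ``The following lemma is a specialization of our construction of the duality given in Lemma \ref{pairing and duality},'' and your argument is precisely the unpacking of that specialization---factoring $\fl$ through $\tilde c_T$ and the Verdier-duality isomorphism of Lemma \ref{pairing and duality}, then reading off the cup-product pairing on $H^1$.
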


Now we are ready to state the result in this subsection.

\begin{prop}\label{Aut line bundle 1}
There is a natural isomorphism of functors 
$\mathfrak D_{cl}\circ\mathfrak l_J\is\breve\fl_J$. In particular, we have 
$\mathfrak D_{cl}(\mL_{J,K})\is K_J$.
\end{prop}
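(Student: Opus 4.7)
The plan is to work through the Galois description to identify both $\breve j\circ \mathfrak D_{cl}\circ \fl_J$ and $\breve j\circ \breve\fl_J$ as functors into $\Bun_{\breve T}^{\rW,+}$, and then check they agree. By Lemma \ref{T=T^+} (applied to the trivial gerbe), $\breve j$ is an isomorphism, so this reduces the problem to matching strongly $\rW$-equivariant $\breve T$-torsors on $\tC$ together with their $+$-structures, functorially in $K\in Z(\breve G)\on{-tors}(C)$. The right-hand side is immediate: $K_J=K\times^{Z(\breve G)}\breve J$, and since the composition $Z(\breve G)\hookrightarrow \pi^*\breve J\to \breve T$ is the tautological inclusion, we have
\[
\breve j(K_J)=\bigl(\pi^*K\times^{Z(\breve G)}\breve T,\ \gamma_{\rm can},\ \{t_\alpha\}\bigr),
\]
where $\gamma_{\rm can}$ is the obvious $\rW$-equivariance and $t_\alpha$ is the $+$-structure induced by the canonical triviality of $\breve\alpha|_{Z(\breve G)}$ for each $\alpha\in\Phi$.

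For the left-hand side, unwinding diagram \eqref{diagram}, $\breve j(\mathfrak D_{cl}(\mL_{J,K}))$ corresponds (via $\Bun_{\breve T}^\rW\simeq \Pic^m(\tC\times\xcoch(T))^\rW$) to $(\AJ^\sP)^*\mL_{J,K}$, where $\AJ^\sP=\Nm^\sP\circ\AJ$. The main step is to exhibit a canonical isomorphism
\[
(\Nm^\sP)^*\mL_{J,K}\;\simeq\;\mL_{T,\pi^*K}\qquad \text{on }\Bun_T(\tC/B),
\]
where $\mL_{T,\pi^*K}$ is built from the extension $0\to \Pi_G\to T_{sc}\to T\to 0$ on $\tC$. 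This reduces to two facts: first, the square of Chern class functors
\[
\xymatrix{
\Bun_T(\tC/B)\ar[r]^-{\tilde c_T}\ar[d]_{\Nm^\sP}&\Pi_G(1)\on{-gerbe}(\tC/B)\ar[d]^{\pi_*}\\
\sP\ar[r]^-{\tilde c_J}&\Pi_G(1)\on{-gerbe}(C\times B/B)
}
\]
commutes, which follows from the fact that the extension $0\to\Pi_G\to J_{sc}\to J\to 0$ on $C\times B$ is induced from $0\to\Pi_G\to T_{sc}\to T\to 0$ on $\tC$ by the same $(\pi_*{-})^{\rW,+}$ construction (see \S\ref{Gal des}, \S\ref{+}); second, the projection formula $\langle \pi_*\mathcal G,K\rangle_C=\langle\mathcal G,\pi^*K\rangle_{\tC}$ for the pairing of Lemma \ref{pairing and duality}.

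Once this identification is made, the $\tC$-analog of Lemma \ref{l_T} identifies the further pullback $\AJ^*\mL_{T,\pi^*K}$ on $\tC\times\xcoch(T)$ as the multiplicative line bundle whose fiber at $(x,\breve\lambda)$ equals the Cartier pairing $\langle\pi^*K|_x,\breve\lambda\bmod\xcoch(T_{sc})\rangle$. Under $\Bun_{\breve T}^\rW\simeq \Pic^m(\tC\times\xcoch(T))^\rW$ this is precisely the $\breve T$-torsor $\pi^*K\times^{Z(\breve G)}\breve T$ with its canonical $\rW$-equivariance, matching the torsor of Step 1. For the $+$-structure: $\AJ^\sP(x,\breve\alpha)=e\in\sP$ for $x\in\tC_\alpha$ (as recorded in \S\ref{AJ map}), so multiplicativity of $\mL_{J,K}$ provides a canonical trivialization over $\tC_\alpha\times\{\breve\alpha\}$, which corresponds under the above identification to the canonical trivialization $\breve\alpha|_{Z(\breve G)}=1$ --- i.e.\ exactly the $+$-structure on $\breve j(K_J)$. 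The main obstacle is verifying the commutativity of the Chern-class square above, which ultimately amounts to tracing how the Galois/cameral description of $\sP$ and the extension defining $J_{sc}\to J$ interact with pushforward of gerbes along the ramified cover $\pi:\tC\to C$; everything else is then a formal unraveling.
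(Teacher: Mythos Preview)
Your approach is sound in outline but takes a genuinely different route from the paper. The paper does \emph{not} attempt to match both sides directly in $\Bun_{\breve T}^{\rW,+}$. Instead, it first shows that the composite
\[
Z(\breve G)\on{-tors}(C)\times B\xrightarrow{\fl_J}\sP^\vee\xrightarrow{\frakD_{cl}}\breve\sP\to\breve\sP_{ad}
\]
vanishes (by passing to the dual diagram $\sP^\vee\to(\sP_{sc})^\vee$, where the composite $\sP_{sc}\to\sP\xrightarrow{\tilde c_J}\Pi_G(1)\on{-gerbe}(C)$ is trivially zero). Since $\breve\fl_J$ identifies $Z(\breve G)\on{-tors}(C)\times B$ with $\ker(\breve\sP\to\breve\sP_{ad})$, this produces an endomorphism $\mathfrak i$ of $Z(\breve G)\on{-tors}(C)\times B$ with $\frakD_{cl}\circ\fl_J\simeq\breve\fl_J\circ\mathfrak i$. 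The paper then proves $\mathfrak i=\id$ on coarse moduli, fiber by fiber, by embedding both sides into $H^1(\tC,\breve T[n])$ via $\breve j^1\circ\breve l_J$ (which is injective) and invoking Lemma~\ref{l_T}: the point is that both $\frakD_{cl}$ restricted to $n$-torsion and the map $l$ of Lemma~\ref{l_T} implement Poincar\'e duality on $H^1(\tC,\breve T[n])$.

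Your direct matching via the Galois description is more concrete and in some sense more illuminating, but the weight falls on the commutativity of your Chern-class square
\[
\pi_*\circ\tilde c_T\;\simeq\;\tilde c_J\circ\Nm^\sP,
\]
which you only sketch. This is not for free: it requires knowing that the Galois description $\sP\simeq\Bun_T^{\rW,+}$ is compatible with the simply-connected version $\sP_{sc}\simeq\Bun_{T_{sc}}^{\rW,+}$ in a way that makes the two Chern-class maps correspond under the norm, including compatibility of the $+$-structures (note that for $G_{sc}$ all coroots are primitive, so the $+$-structure on that side is vacuous, while on the $G$-side it is not). The paper's route bypasses this entirely by using the kernel/quotient structure $0\to Z(\breve G)\on{-tors}\to\breve\sP\to\breve\sP_{ad}$, which reduces everything to a single check on torsion cohomology where both constructions are literally Poincar\'e duality.
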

\begin{proof}
Let $\breve G_{ad}$ denote the adjoint group of $\breve G$. Note that it is the Langlands dual group of $G_{sc}$. Let $\breve J_{ad}$ be the universal centralizer for $\breve G_{ad}$, and $\breve\s_{ad}$ denote the Picard stack of $\breve J_{ad}$-torsors.
We first claim that the composition 
\[Z({\breve G})\on{-tors}(C)\times B\stackrel{\fl_J}\ra \sP^\vee\stackrel{\mathfrak D_{cl}}\is\breve\sP\ra\breve\sP_{ad}\]
is trivial. From the construction of $\frakD_{cl}$, we have the following commutative diagram 
\[\xymatrix{\sP^\vee\ar[r]^{\mathfrak D_{cl}}\ar[d]&\breve\sP\ar[d]\\
(\sP_{sc})^\vee\ar[r]^{\mathfrak D_{cl}}&\breve\sP_{ad}.}\]
Thus above composition can be identified with
\[Z({\breve G})\on{-tors}(C)\times B\stackrel{\fl_J}\ra (\sP)^\vee\ra(\sP_{sc})^\vee\stackrel{\mathfrak D_{cl}}\is\breve\sP_{ad}.\]
This is trivial since the composition of the first two maps is the dual of 
\[\sP_{sc}\ra\sP\stackrel{\tilde c_J}\ra\Pi_G(1)\on{-gerbes}(C)\times B,\]
which is trivial by the construction of $\tilde c_J$.

On the other hand, the short exact sequence $0\ra Z({\breve G})\times B\ra\breve J\ra\breve J_{ad}\ra 0$
induces a left exact sequence of Picard stacks 
\[0\ra Z({\breve G})\on{-tors}(C)\times B\stackrel{\breve\fl_J}\ra\breve\sP\ra\breve\sP_{ad}.\]
I.e., $Z({\breve G})\on{-tors}(C)\times B$ is identified as the kernel of $\breve\sP\ra\breve\sP_{ad}$.
Therefore, there is a morphism
$$\mathfrak i:Z({\breve G})\on{-tors}(C)\times B\ra Z({\breve G})\on{-tors}(C)\times B$$
such that $\mathfrak D_{cl}\circ\mathfrak l_J\is\breve{\mathfrak l}_J\circ\mathfrak i$. We now show that 
$\mathfrak i$ is isomorphic to the identity morphism.  
As argued in \S \ref{first reduction}, we reduce to show that $\mathfrak i$ induced 
the identity map on the coarse moduli space $H^1(C,Z({\breve G}))\times B$. 

Let $i:H^1(C,Z({\breve G}))\times B\ra H^1(C,Z({\breve G}))\times B$, 
$l_{J}:H^1(C,Z({\breve G}))\times B\ra P^\vee$ and 
$\breve l_J:H^1(C,Z({\breve G}))\times B\ra \breve P$ be the induced maps 
on the corresponding coarse moduli spaces. 
Our goal is to show that $i=\id$. Since $\Gamma(C\times B,\breve J_{ad})=0$, $\breve l_J$ is injective. Therefore, 
 it suffices to show that 
$$\breve l_J\circ (i-\id):H^1(C,Z({\breve G}))\times B\ra\breve P$$ is zero.
As in \S \ref{first reduction}, we can
prove this fiberwise, and therefore we fix $b\in B^0(k)$. Again, to
simplify notations, in the following discussion we write $\tC, J,P, \breve P$ instead of 
$\tC_b, J_b,P_b, \breve P_b$, etc. 

Let $\breve j^1:\breve P\ra H^1(\tC,\breve T)$ be the map induced by the morphism 
$\breve j^1:\pi^*\breve J\ra \breve T$.  
Then the composition $\breve j^1\circ\breve l_J:H^1(C,Z({\breve G}))\ra 
H^1(\tC,\breve T)$ is also injective (note that $\breve j^1\circ\breve l_J$ is induced by
the natural map $Z({\breve G})\ra\breve T$). Thus it is enough 
to show that $\breve j^1\circ\breve l_J\circ(i-\id)=0$. Since
$D_{cl}\circ l_J=\breve l_J\circ i$, it is equivalent to show that
\begin{equation}\label{equ}
\breve j^1\circ D_{cl}\circ l_J-\breve j^1\circ\breve l_J=0.
\end{equation}
Let us consider the following diagram 
\[
\xymatrix{H^1(C,Z({\breve G}))\ar[r]^{ l_J}\ar[d]^{\id}&H^1(C,J)^\vee\ar^{D_{cl}}[d]\ar[r]^{\Nm^\vee}&H^1(\tC, T)^\vee\ar[d]\\
H^1(C,Z({\breve G}))\ar[r]^{\breve l_J}&H^1(C,\breve J)\ar[r]^{\breve j^1}&H^1(\tC,\breve T),
}\]
where $\Nm^\vee$ is the dual of \eqref{Pnorm}, and the right vertical map is $(\on{Jac}\otimes\xcoch)^\vee\simeq \on{Jac}\otimes\xch$.
The right rectangle in the above diagram is commutative by the construction of $D_{cl}$ in \S \ref{classical duality}. Therefore it is enough to show that the outer diagram is also commutative.

Let $n$ be the order of $Z({\breve G})$. Then $\breve j^1\circ\breve l_J$
and $\on{Nm}^\vee\circ l_J$ will factor through 
$H^1(\tC,\breve T)[n]\is H^1(\tC,\breve T[n])$ and $H^1(\tC,T)^\vee[n]\is
H^1(\tC,T[n])^\vee$\footnote{Note that $p\nmid n$.}. Thus the outer diagram factors as
\[
\xymatrix{H^1(C,Z({\breve G}))\ar[r]^{\on{Nm}^\vee\circ\fl_J}\ar[d]^{\id}&H^1(\tC,T[n])^\vee\ar[d]\\
H^1(C,Z({\breve G}))\ar[r]^{\breve j^1\circ\breve l_J}&H^1(\tC,\breve T[n]),
}\]
where the right vertical arrow is now given by the Poincare duality.
Unraveling the definition of $\mathfrak l_J$, one sees that 
$\Nm^\vee\circ l_{J}$ can be identified with 
\[H^1(C,Z({\breve G}))\stackrel{}\ra H^1(\tC,\breve T[n])\stackrel{}\ra H^1(\tC,T[n])^\vee\]
where the first map is induced by the natural morphism $Z({\breve G})\ra\breve T[n]$ and the second map
is the map $l$ in Lemma \ref{l_T}. Then the commutativity of above diagram 
follows from Lemma \ref{l_T}.
\end{proof}


\section{Multiplicative 1-forms}\label{mult one forms}
In this section, we establish a technical result. Namely, we show that the pullback of the canonical 1-form $\theta_{can}$ on $T^*\Bun_G$ along $\sP\to T^*\Bun_G$ induced by a Kostant section $\kappa$ is multiplicative in the sense of \S \ref{appen:var}.

\subsection{Lie algebra valued 1-forms}\label{Lie algebra valued one forms}
In this subsection, we restrict everything to $B^0$ and therefore omit the subscript $^0$ from the notation.  
Recall that there is a group scheme $\calT=\tC\times^{\rW}T$
 over $[\tC/\rW]$ and Proposition \ref{J} says that there is a homomorphism 
 $[j^1]:[\pi]^*J\ra\calT$
 where $[\pi]: [\tC/\rW]\to C\times B$ is the projection.
It induces the following commutative diagram
\[\begin{CD}
{[\pi]}^*(\Omega_{C\times B}\otimes \Lie J)@>>> {[\pi]}^*(\Omega_{C\times B/B}\otimes \Lie J)\\
@VVV@VVV\\
\Omega_{[\tC/W]}\otimes \Lie \calT @>>> \Omega_{[\tC/W]/B}\otimes \Lie\calT\\
\end{CD}\]
Note that, due to the product structure on $C\times B$, the arrow in the upper row admits a canonical splitting. Therefore, the 
tautological section in (\ref{taut sect II})
\[(\tau:B\to B_{J})\in \Gamma(C\times B, \Omega_{C\times B/B}\otimes \Lie J)\] can be regarded as a section of ${[\pi]}^*(\Omega_{C\times B}\otimes \Lie J)$, which in turn gives 
\begin{equation}\label{thetatC}
\theta_{\tC}\in
\Gamma([\tC/\rW],\Omega_{[\tC/\rW]}\otimes\Lie\calT)
=\Gamma(\tC,\Omega_{\tC}\otimes\ft)^\rW.
\end{equation}
We denote by $\breve\theta_{\tC}\in\Gamma(\tC,\Omega_{\tC}\otimes\breve\ft)^\rW$ the corresponding section for the dual group.
 
 We shall give an alternative description of $\theta_{\tC}$.
We denote by \[\delta_\omega:\ft_\omega\ra\ft_\omega\times_{C}\ft_\omega\]
the $\bG_m$-twist by $\omega$ of the map $\delta$ as in Lemma \ref{iota}.
We regard $\ft_\omega$ and $\ft_\omega\times_C\ft_\omega$ (via the first projection) as schemes over $\fc_\omega$ and define
\[\delta_{\tC}:\tC=e^*\ft_\omega\ra e^*(\ft_\omega\times_C\ft_\omega)=
\tC\times_C(T^*C\otimes\ft)\]
to be the base change of $\delta_\omega$ via the evaluation map 
$e:C\times B\ra \fc_\omega$. By Lemma \ref{iota}, $\delta_{\tC}$ is just the pull back of the 
diagonal map $\ft_\omega\to \ft_\omega\times_C\ft_\omega$ along 
$e:C\times B\ra\fc_\omega$.

By  construction, the section $\theta_{\tC}\in\Gamma(\tC,\Omega_{\tC}\otimes\ft)^\rW$ is equal to
the following composition
\begin{equation}\label{nu2}
\tC\stackrel{\delta_{\tC}}\ra\tC\times_C(T^*C\otimes\ft)\ra T^*\tC\otimes\ft
\end{equation}
where the last map is the cotangent map for the projection  $\tC\ra C$.

The description of $\theta_{\tC}$ in 
(\ref{nu2}) implies the following relation between $\theta_{\tC}$ and $\breve\theta_{\tC}$:

\begin{lemma}\label{theta and dual theta}
Let $\sigma:\Gamma(\tC,\Omega_{\tC}\otimes\ft)^\rW\is\Gamma(\tC,\Omega_{\tC}\otimes\breve\ft)^\rW$ be the canonical isomorphism induced by 
the non-degenerate invariant form $(,)$ on $\ft$.
We have $\sigma(\theta_{\tC})=\breve\theta_{\tC}$.
\end{lemma}

\remark{}\label{relation with omega}
The 1-form $\theta_{\tC}$ is related to the canonical 1-form $\omega_C$ of $C$ in 
the following way. Let $\tilde e:\tC\ra T^*C\otimes\ft\ (=\ft_\omega)$
be the natural $\rW$-equivariant map (see \S\ref{cameral}). The natural  $\rW$-equivariant pairing 
$\xch\times\ft\ra k$ induces a $\rW$-equivariant map 
\begin{equation}\label{nu}
\nu:\tC\times\xch\stackrel{\tilde e\times id}\ra (T^*C\otimes\ft)\times\xch\ra T^*C,
\end{equation}
where $\rW$ acts diagonally on  $\tC\times\xch$.
Now the pull back of 
the canonical 1-form $\omega_C$ on $T^*C$ along $\nu$
defines a section $\nu^*\omega_C\in\Gamma(\tC,\Omega_{\tC}\otimes\ft)^\rW
$, and using the description of $\theta_{\tC}$ in (\ref{nu2}) one can check that
$\theta_{\tC}=\nu^*\omega_C$. 

\subsection{Canonical 1-form}\label{canonical one forms}
Let us denote by $T^*\Bun_G^0$ the maximal smooth open substack of $T^*\Bun_G$. Then there is a tautological section
\[\theta_{can}: T^*\Bun_G^0\to T^*(T^*\Bun_G^0).\]
Note that $T^*\Bun_G\times_BB^0\subset T^*\Bun_G^0$. From now on, we restriction everything to the open part $B^0$ and therefore will omit ${^0}$ from the subscript.
Note that for a choice of the Kostant section $\kappa$, we have an isomorphism $\epsilon_\kappa:\sP\simeq T^*\Bun_G$, and therefore we may regard $\theta_{can}$ as a section $\sP\to T^*\sP$, denoted by $\theta_\kappa$.

Let $\AJ^\sP:\tC\times\xcoch\ra\sP$ be the Abel-Jacobi map. Write 
pull back \[(\AJ^\sP)^*\theta_\kappa=\{\theta_{\kappa,\lambda}\}_{\lambda\in\xcoch}
\in\Gamma(\tC\times\xcoch,\Omega_{\tC})^\rW,\]
where $\theta_{\kappa,\lambda}\in\Gamma(\tC,\Omega_{\tC})$ 
is the restriction of $(\AJ^\sP)^*\theta_\kappa$
to $\tC\times\{\lambda\}$. A 
section $\{\alpha_\lambda\}_{\lambda\in\xcoch}\in\Gamma(\tC\times\xcoch,\Omega_{\tC})$ (resp. $\Gamma(\tC\times\xcoch,\Omega_{\tC/B})$) is called 
$\xcoch$-multiplicative if it satisfies $\alpha_{\lambda+\mu}=\alpha_\lambda+\alpha_\mu$,
for any
$\lambda,\mu\in\xcoch$.
Clearly, any $\xcoch$-multiplicative section 
$\{\alpha_\lambda\}_{\lambda\in\xcoch}$ 
corresponds to a $\breve\ft$-valued section  
$\alpha\in\Gamma(\tC,\Omega_{\tC}\otimes\breve\ft)$ (resp. $\Gamma(\tC,\Omega_{\tC/B}\otimes\breve\ft)$). 
The rest of the section is mainly devoted to the proof of the 
following result.
\begin{prop}\label{kappa=tC}
The 1-form $(\AJ^\sP)^*\theta_\kappa$ is $\xcoch$-multiplicative. 
Moreover, if we regard
$(\AJ^\sP)^*\theta_\kappa$ as a section of $\Gamma(\tC,\Omega_{\tC}\otimes\breve\ft)^{\rW}$ 
we have \[(\AJ^\sP)^*\theta_\kappa=\breve\theta_{\tC}.\]
Where $\breve\theta_{\tC}$ is the section defined in \S\ref{Lie algebra valued one forms}.
\end{prop}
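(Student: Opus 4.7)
The plan is to prove both assertions in one stroke: I would compute $(\AJ^\sP)^*\theta_\kappa$ pointwise and match it with $\theta_{\tC}$ viewed as a $\breve\ft$-valued one-form. Since $\theta_{\tC}$ is by construction linear in $\lambda\in\xcoch$, identifying the pullback with it immediately gives the $\xcoch$-multiplicativity claim. The two key inputs are Lemma \ref{iden with tau}, which describes the vertical part of $\theta_\kappa$ as $\tau^*$ constantly translated along the fibers of $\sP\to B$, and the factorization $\AJ^\sP=\Nm^\sP\circ \AJ$ through $\Bun_T(\tC/B)$, which will let me translate the computation into an expression involving the map $\breve j^1$ on the dual side.

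First I would reduce the computation to the vertical part of $\theta_\kappa$. The horizontal part contributes trivially after pullback along $\AJ^\sP$: indeed $\pi\circ\kappa:B\to\Bun_G$ is constant (the underlying $G$-bundle of the Kostant Higgs section does not depend on $b$), so $\kappa^*\theta_{can}=0$ on $B$; and this vanishing propagates from the image of $e=\kappa$ in $\sP$ to the whole image of $\AJ^\sP$ by using the $\sP$-equivariance of $\theta_{can}$ under the Picard action of $\sP$ on $\on{Higgs}^{reg}$. Having eliminated the horizontal contribution, for $x\in \tC_b$ and $v\in T_x\tC$, Lemma \ref{iden with tau} gives
\[
((\AJ^\sP)^*\theta_\kappa)_\lambda(v)\;=\;\langle \tau^*(b),\,(d\AJ^\sP(v))^{\mathrm{vert}}\rangle,
\]
where the vertical component of $d\AJ^\sP(v)$ lies in $\on{H}^1(C,\Lie J_b)=\bbT_e\sP_b$ and the pairing is the Serre duality pairing on $C$ with $\tau^*(b)\in\Gamma(C,(\Lie J_b)^*\otimes \omega_C)=\bbT^*_e\sP_b$.

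Next I would compute this vertical component using $\AJ^\sP=\Nm^\sP\circ\AJ$ and identify the result with $\theta_{\tC}$. The Lie-algebra differential of $\Nm^\sP$ is induced by the composition $\pi_*\ft\to(\pi_*\ft)^\rW=\Lie J^1\supset \Lie J$. Its transpose under the invariant bilinear form identifying $\ft\simeq\breve\ft^*$ is exactly the map $\breve j^1:\pi^*\Lie\breve J\to\Lie\breve\calT$ that enters the definition of $\theta_{\tC}$. Unwinding the Serre-duality pairing above via this factorization and the identification $\tau\leftrightarrow\breve\tau$, the pullback $((\AJ^\sP)^*\theta_\kappa)_\lambda(v)$ becomes the pairing of $\lambda\in\xcoch\subset\breve\ft^*$ with $\breve j^1(\pi^*\breve\tau)(v)\in\breve\ft$, which by definition is $\langle\lambda,\theta_{\tC}(x)(v)\rangle$.

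The main obstacle will be the justification of the horizontal-vanishing step: although $\kappa^*\theta_{can}=0$ on $B$ is immediate, propagating this vanishing to the entire image of $\AJ^\sP$ requires care because $T^*\sP$ does not split canonically over $B$, so the ``horizontal part'' of $\theta_\kappa$ at a general point $p\in\sP_b$ is not intrinsically defined. I anticipate handling this by using the description of $\theta_{can}$ as ``Higgs field composed with $d\pi$'' on $T^*\Bun_G$ and checking directly that tangent vectors in the image of $d\AJ^\sP$ project under $d\pi$ into a subspace of $T\Bun_G$ on which only the vertical contribution survives.
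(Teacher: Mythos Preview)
Your treatment of the vertical (relative to $B$) part is close to the paper's: both of you use Lemma~\ref{iden with tau} to identify the projection of $\theta_\kappa$ to $T^*(\sP/B)$ with $\tau^*\times\id$, and then translate through the duality $\ft\simeq\breve\ft$ and the map $\breve j^1$ to match with $\tilde\theta_{\tC}$. That part is fine, and indeed the paper's \S4.3 handles it by the same mechanism (though via the commutative square involving $\iota:\Lie J\to(\Lie J)^*$ rather than the factorization $\AJ^\sP=\Nm^\sP\circ\AJ$).

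The real problem is your argument for eliminating the ``horizontal'' contribution. Your claim that $\kappa^*\theta_{can}=0$ on $B$ (since the underlying $G$-bundle of the Kostant section is constant) is correct, but the propagation step via ``$\sP$-equivariance of $\theta_{can}$'' is not justified and is essentially circular. The $\sP$-action on $\on{Higgs}^{reg}$ is not a cotangent lift of an action on $\Bun_G$, so there is no reason for $\theta_{can}$ to be $\sP$-invariant; only the symplectic form $d\theta_{can}$ is preserved. In fact, $\sP$-invariance of $\theta_\kappa$ is equivalent to its multiplicativity, which is the content of Corollary~\ref{multiplicative} --- and that is \emph{deduced from} the proposition you are trying to prove. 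Your fallback plan (``checking directly that tangent vectors in the image of $d\AJ^\sP$ project under $d\pi$ into a subspace on which only the vertical contribution survives'') is not a subspace statement at all: the image ${a_\lambda}_*v\in H^1(C,\ad E_{\tilde x})$ is a genuine cohomology class, and one must show the Serre duality pairing $\langle\phi_{\tilde x},{a_\lambda}_*v\rangle$ vanishes for $v\in T_bB$.

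This is exactly what the paper does in \S4.4, and it is the technical heart of the argument. The paper lifts $a_\lambda$ to a map $\tilde a_\lambda:\tC\to\Gr_{\leq\mu}(E_\kappa)$ into the Beilinson--Drinfeld Grassmannian, verifies that for $\tilde x\in\tU$ the image lies in the open orbit $\Gr_\mu(E_\kappa)$ and that $(\tilde a_\lambda)_*v$ is tangent to the fiber $\Gr_{x,\mu}(E_\kappa)$, and then computes the pairing $\langle\phi_{\tilde x},(pr_1)_*u\rangle$ as a residue $\Res(\phi_{\tilde x}(\gamma),\Ad_g^{-1}\tilde u)$. The vanishing comes from the key observation $\Ad_g\phi_{\tilde x}(\gamma)=\phi_\kappa(\gamma_\kappa)\in\fg(\omega_{\mO_x})$ (Lemma~\ref{phi}). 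None of this is captured by an equivariance argument; you will need to engage with this residue computation or find a genuine substitute.
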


We have the following corollary.
Recall the notion of multiplicative sections $\sP\to T^*\sP$ as defined In \S \ref{appen:var}. 
\begin{corollary}\label{multiplicative}
The section $\theta_\kappa$ is multiplicative in the sense of \S \ref{appen:var}. In addition, it is independent of the choice of 
Kostant section $\kappa$.
\end{corollary}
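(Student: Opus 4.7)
The plan is to deduce the corollary from Proposition \ref{kappa=tC} by producing a canonical multiplicative one-form $\theta_m$ on $\sP$ purely from the intrinsic data entering $\theta_{\tC}$, and then identifying $\theta_\kappa$ with $\theta_m$. The starting observation is that $\theta_{\tC}$ in \eqref{thetatC} is constructed only from the cameral cover and the tautological section $\breve\tau: B\to B_{\breve J}$, with no reference to any Kostant section. Multiplicative relative one-forms on the commutative group stack $\sP/B$ are classified by their value along the identity section, i.e.\ by sections of $\bbT_e^*\sP = B_J^*$; using the canonical section $\tau^*_\omega: B\to B_J^*$ from \eqref{transpose II}, I would construct $\theta_m$ by extending translation-invariantly along the fibers of $\sP\to B$ and then adjusting the horizontal component so that the resulting object lies in $\Omega^1_\sP$.

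Next I would verify that $(\AJ^\sP)^*\theta_m = \theta_{\tC}$. Since $\theta_m$ is multiplicative and $\AJ^\sP$ is a homomorphism in the $\xcoch$-variable, the pullback is automatically $\xcoch$-multiplicative. Its precise value comes from unwinding the definition: the differential of $\AJ^\sP$ at the origin in the $\xcoch$-direction furnishes an identification of $\rW$-equivariant $\breve{\ft}$-valued forms on $\tC$ with multiplicative relative 1-forms on $\sP/B$, and under this identification $\tau^*_\omega$ corresponds to $\theta_{\tC}$. Combining this with Proposition \ref{kappa=tC} yields $(\AJ^\sP)^*(\theta_\kappa - \theta_m) = 0$.

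To conclude $\theta_\kappa = \theta_m$ on all of $\sP$, I would pass to the iterated Abel-Jacobi map $\AJ^\sP_n$ obtained by summing $n$ copies of $\AJ^\sP$, viewed as a morphism from the $n$-fold fibered product of $\tC\times\xcoch$ over $B$ into $\sP$. Since $\sP|_{B^0}$ is a Beilinson 1-motive by Proposition \ref{reg torsor}(3), for $n$ sufficiently large this map is smooth and surjective onto each connected component, so pullback along $\AJ^\sP_n$ is injective on one-forms. Using the multiplicativity of $\theta_m$ together with the analogue of Proposition \ref{kappa=tC} for $\AJ^\sP_n$ (which is established by the same cameral computation), one finds
$$(\AJ^\sP_n)^*\theta_\kappa \;=\; \sum_{i=1}^n \pr_i^*\theta_{\tC} \;=\; (\AJ^\sP_n)^*\theta_m,$$
and hence $\theta_\kappa = \theta_m$. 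Since $\theta_m$ is manifestly multiplicative and independent of $\kappa$, the corollary follows.

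The main obstacle is Step 2, the identification $(\AJ^\sP)^*\theta_m = \theta_{\tC}$, which requires matching the Galois/cameral description of $\bbT_e^*\sP = B_J^*$ with the $\rW$-equivariant $\breve{\ft}$-forms on $\tC$ in a way compatible with the transpose induced by the bilinear form from \S\ref{bilinear form}; this is essentially a global version of Lemma \ref{iden with tau}. The $n$-fold analogue of Proposition \ref{kappa=tC} used in Step 3 is then routine given the case $n=1$.
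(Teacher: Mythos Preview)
Your outline has a genuine gap in Step~1 and an overcomplication in Step~3 that the paper's argument sidesteps entirely.

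\textbf{Step~1 is not well-defined.} Multiplicative \emph{relative} one-forms on $\sP/B$ are indeed classified by $\bbT_e^*\sP=B_J^*$, so $\tau^*_\omega$ gives a canonical multiplicative section of $\Omega_{\sP/B}$. But to promote this to a section of $\Omega_\sP$ you must choose a horizontal part in $\Gamma(\sP,\pr^*\Omega_B)$, and the multiplicative lifts form a torsor under $\Hom(\pi_0(\sP),\Gamma(B,\Omega_B))$, which is nonzero in general. Your phrase ``adjusting the horizontal component so that the resulting object lies in $\Omega^1_\sP$'' does not single one out: any choice already lies in $\Omega^1_\sP$. You could try to pin it down by the condition $(\AJ^\sP)^*\theta_m=\theta_{\tC}$, but then \emph{existence} of such a multiplicative $\theta_m$ is exactly the content of the corollary, so you are going in a circle.

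\textbf{Step~3 is not routine.} The ``$n$-fold analogue of Proposition~\ref{kappa=tC}'' does not follow from the $n=1$ case without already knowing multiplicativity of $\theta_\kappa$, since $\AJ^\sP_n$ factors through the iterated multiplication $m^{(n)}$. To prove it directly you would have to rerun the residue computation of \S4.3--4.4 for Higgs bundles modified at $n$ distinct points; this is plausible but is genuine additional work, not a formality.

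The paper's argument avoids both issues with a single observation you are missing: once one knows (from Lemma~\ref{iden with tau}) that the projection of $\theta_\kappa$ to $T^*(\sP/B)$ is $\tau^*\times\id$, the obstruction
\[
m^*\theta_\kappa-(\theta_\kappa,\theta_\kappa)\in\Gamma(\sP\times_B\sP,\Omega_{\sP\times_B\sP})
\]
already has vanishing image in $\Omega_{\sP\times_B\sP/B}$, hence lies in $\Gamma(\sP\times_B\sP,\pr^*\Omega_B)$. But this last space is \emph{locally constant along the fibers}: it identifies with functions $\pi_0(\sP)\times\pi_0(\sP)\to\Gamma(B,\Omega_B)$. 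Since $\AJ^\sP$ is known to surject onto $\pi_0(\sP)$, pullback along the single map $\AJ^{\sP,2}:\tC\times\xcoch\times\xcoch\to\sP\times_B\sP$ is injective on this space, and Proposition~\ref{kappa=tC} (just the $n=1$ case, applied twice via $\xcoch$-multiplicativity) gives the vanishing. No smooth surjectivity, no iterated Abel--Jacobi, and no a priori construction of $\theta_m$ is needed; $\theta_m$ is \emph{defined} to be $\theta_\kappa$ once multiplicativity is established.
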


\begin{proof}
We first show that $\theta_\kappa$ is multiplicative.
Consider the section 
$$m^*\theta_\kappa:  \sP\times_B\sP\to T^*\sP\times_\sP(\sP\times_B\sP)\to T^*(\sP\times_B\sP),$$
where the first map is the base change of $\theta_\kappa$ along the multiplication $m:\sP\times_B\sP\to \sP$, and the second map is the differential $m_d$ of $m$. On the other hand, consider
\[(\theta_\kappa,\theta_\kappa): \sP\times_B\sP\to (T^*\sP\times T^*\sP)|_{\sP\times_B\sP}\to T^*(\sP\times_B\sP).\]
We need to show that $(\theta_\kappa,\theta_\kappa)=m^*\theta_\kappa$.

We first have the following lemma, whose proof is independent of Proposition \ref{kappa=tC}.
\begin{lem}\label{proj mult}
The projection of $\theta_\kappa$ along $T^*\sP\to T^*(\sP/B)$ is multiplicative. More precisely, the images of $m^*\theta_\kappa$ and $(\theta_\kappa,\theta_\kappa)$ in $T^*(\sP\times_B\sP/B)$ are the same.
\end{lem}
\begin{proof}
Consider the following short exact sequence of vector bundles on $\sP\times_B\sP$ 
\[0\to T^*B\times_B(\sP\times_B\sP)\to T^*(\sP\times_B\sP)\to T^*(\sP\times_B\sP/B)\to 0.\]
As the projection of $\theta_\kappa$ along $T^*\sP\to T^*(\sP/B)$ is identified with $\tau^*\times\id$ (cf. Lemma \ref{iden with tau}), the restriction of $\theta_\kappa$ to 
each fiber $\sP_b$ is given by the "constant" 1-form $\tau^*|_b\in
\Gamma(C,(\Lie J_b)^*\otimes\omega)$. Therefore, $(\theta_\kappa,\theta_\kappa)=m^*\theta_\kappa$ in $T^*(\sP\times_B\sP/B)$. 
\end{proof}

Therefore, their difference can be regarded as a section $$m^*\theta_\kappa-(\theta_\kappa,\theta_\kappa)\in\Gamma(\sP\times_B\sP,\pr^*\Omega_B)=(\pi_0(\sP)\times\pi_0(\sP))\times\Gamma(B,\Omega_B).$$

The Abel-Jacobi map $\AJ^\sP:\tC\times\xcoch\to \sP$ induces a map $$\AJ^{\sP,2}:\tC\times\xcoch\times\xcoch\ra\sP\times_B\sP.$$
It is enough to show that the pullback of $m^*\theta_\kappa-(\theta_\kappa,\theta_\kappa)$ in
$$\Gamma((\tC\times\xcoch\times\xcoch),\pr^*\Omega_B)=(\xcoch\times\xcoch)\times\Gamma(B,\Omega_B)$$
vanishes.  
By Proposition \ref{kappa=tC}, the one form $(\AJ^\sP)^*\theta_{\kappa}=\{\theta_{\kappa,\lambda}\}_{\lambda\in\xcoch}$ is  
$\xcoch$-multiplicative, 
thus for any $\lambda,\mu\in\xcoch$
we have 
\[(\AJ^{\sP,2})^*(m^*\theta_\kappa-(\theta_\kappa,\theta_\kappa))|_{\tC\times\{\lambda\}\times\{\mu\}}=\theta_{\kappa,\lambda+\mu}-
(\theta_{\kappa,\lambda}+\theta_{\kappa,\mu})=0.\] This finishes the proof of multiplicative property of $\theta_\kappa$.
The independence of $\kappa$ follows from $(\AJ^\sP)^*\theta_\kappa=\breve\theta_{\tC}$.

\end{proof}

\medskip

\noindent\bf Notation. \rm In what follows, we denote the multiplicative 1-form $\theta_\kappa$ on $\sP$ by $\theta_m$.

\begin{remark}
Let $a:=m\circ (\AJ^\sP\times\on{id}):(\tC\times\xcoch)\times_B\sP\ra\sP\times_B\sP\ra\sP$
be the action map. Then Remark \ref{relation with omega} together with Corollary \ref{multiplicative} implies that
\[a^*\theta_m=\breve\nu^*(\omega_C)\boxtimes\theta_m.\]
Here $\breve\nu$ is the map in (\ref{nu}) for the dual group.
In the case of $G=\GL_n$, (a variant of) this identity was proved in \cite[Theorem 4.12]{BB}. 
\end{remark}
\subsection{Proof of Proposition \ref{kappa=tC}: first reductions}
Let $\underline\theta_\kappa$ and $\underline{\breve\theta}_{\tC}$ be the projections of 
$(\AJ^\sP)^*\theta_\kappa$ and $\breve\theta_{\tC}$ along 
$$\Gamma(\tC\times\xcoch,\Omega_{\tC})\to \Gamma(\tC\times\xcoch,\Omega_{\tC/B}).$$
Lemma \ref{proj mult} implies that $\underline\theta_\kappa$ is $\xcoch$-multiplicative and can
be regarded as an element in $\Gamma(\tC,\Omega_{\tC/B}\otimes\breve\ft)^\rW$. Let us first show that $\underline\theta_\kappa=\underline{\breve\theta}_{\tC}$.

Recall in (\ref{transpose III}) we have introduced a morphism 
$\iota:\Lie J\ra(\Lie J)^*$. It follows
from the definition of $\iota$ in \emph{loc. cit.} that the following diagram 
is commutative
\[\xymatrix{[\pi]^*\Lie J\ar[r]^{[\pi]^*\iota\ \ }\ar[d]^{}&[\pi]^*(\Lie J)^*
\\\Lie\calT\ar[r]&\Lie\breve\calT\ar[u]_{},}\]
where the arrow in the bottom row is the morphism $\Lie\calT\ra\Lie\breve\calT$ 
induced by the invariant from $(,)$ on $\ft$. (Recall (cf. \S\ref{Lie algebra valued one forms}) 
that $\calT:=\tC\times^W T$ is a group scheme over 
$[\tC/W]$ and $[\pi]:[\tC/W]\ra C\times B$.)
It induces the following commutative diagram

\[\xymatrix{\Gamma(C\times B,\Omega_{C\times B/B}\otimes\Lie J)\ar[r]^{\iota_*\ }\ar[d]&
\Gamma(C\times B,\Omega_{C\times B/B}\otimes\Lie J^*)
\\\Gamma(\tC,\Omega_{\tC/B}\otimes\ft)^\rW\ar[r]^{\sigma}&
\Gamma(\tC,\Omega_{\tC/B}\otimes\breve\ft)^\rW\ar[u]_\upsilon
}\]
Recall the sections $\tau\in\Gamma(C\times B,\Omega_{C\times B/B}\otimes\Lie J)$ and 
$\tau^*\in\Gamma(C\times B,\Omega_{C\times B/B}\otimes\Lie J^*)$ in \S\ref{tau and c}. Note 
the map $\upsilon$ in the diagram above is an isomorphism\footnote{It is the relative cotangent map of the isogeny 
$\sP\ra\Bun_T^\rW(\tC/B)$.} and it identifies $\underline{\theta}_\kappa$ 
with the section $\tau^*$.
On the other hand, 
Lemma \ref{theta and dual theta}
implies the section 
$\underline{\breve\theta}_{\tC}\in\Gamma(\tC,\Omega_{\tC/B}\otimes\breve\ft)^\rW$ is 
equal to the image of $\tau$ under the composition of the morphisms in the lower left corner of the above diagram. Thus, 
$\upsilon(\underline{\breve\theta}_{\tC})=\iota_*(\tau)=\tau^*$. 
Therefore both $\underline{\breve\theta}_{\tC}$ and $\underline{\theta}_\kappa$
map to $\tau^*$ under the isomorphism $\upsilon$, which implies 
$\underline{\breve\theta}_{\tC}=\underline\theta_\kappa$.

As a consequence, difference $\breve\theta_{\tC}-(\AJ^\sP)^*\theta_\kappa$ can
be regarded as a section
\begin{equation}\label{vertical}
\breve\theta_{\tC}-(\AJ^\sP)^*\theta_\kappa\in
\Gamma(\tC\times\xcoch,\pr^*\Omega_B).
\end{equation}
We need to show that it
is zero. Let $\tU\subset\tC$ be the largest open subset such that 
$\tU\ra C\times B$ is \'etale. 
It is enough to show that 
$\breve\theta_{\tC}-(\AJ^\sP)^*\theta_\kappa|_{\tU\times\xcoch}=0.$
Note that for $\tilde x\in\tU$ we have a canonical decomposition
$T_{\tilde x}\tC=T_xC\oplus T_bB$ and by \eqref{vertical} it 
suffices to show that 
$(\breve\theta_{\tC}-(\AJ^\sP)^*\theta_\kappa)|_{T_bB}=0$.
As the section $\breve\theta_{\tC}$ is induced by the canonical 
splitting $\Omega_{C\times B/B}\otimes\Lie\breve 
J\ra\Omega_{C\times B}\otimes\Lie\breve J$,
the restriction of  $\breve\theta_{\tC}$ to $T_bB$ is zero, so we reduce to show that
$(\AJ^\sP)^*\theta_\kappa|_{T_bB}=0$, i.e. for any $\lambda\in\xcoch$ and
$v\in T_bB$ we have 
\begin{equation}\label{main equ}
\langle\theta_{\kappa,\lambda},v\rangle=\langle(\AJ^\sP)^*\theta_\kappa|_{\tC\times\{\lambda\}},v\rangle=0.
\end{equation}
For the later purpose, we introduce some notations.
Let $(E_\kappa,\phi_\kappa)$ be the Higgs field on $C\times B$ obtained by the pullback along the Kostant section $\kappa$. For every $\lambda\in\xcoch$ 
let $\AJ^{\sP,\lambda}:\tC\ra\sP$ denote the corresponding component of the Abel-Jacobi map and 
let
$$(E_{\tilde{x}},\phi_{\tilde{x}}):=\AJ^{\sP,\lambda}(\tilde x)\times^{J_b}(E_\kappa,\phi_\kappa)|_{C\times\{b\}},$$
be the image of $\tilde x$ under the map $\tC\stackrel{\AJ^{\sP,\lambda}}\ra\sP\stackrel{\epsilon_\kappa}\is T^*\Bun_G$.
We also define 
\[a_\lambda:\tC\stackrel{\AJ^{\sP,\lambda}}\ra\sP\stackrel{\epsilon_\kappa}\is T^*\Bun_G\ra\Bun_G.\]
Since $\theta_\kappa=\epsilon_\kappa^*\theta_{can}$, we have 
\[\langle\theta_{\kappa,\lambda},v\rangle=\langle(\AJ^{\sP,\lambda})^*\epsilon_\kappa^*\theta_{can},v\rangle=
\langle\theta_{can},(\epsilon_\kappa)_*(\AJ^{\sP,\lambda})_*v\rangle=
\langle\phi_{\tilde x},{a_\lambda}_*v\rangle,\]
where 
${a_\lambda}_*:T_{\tilde x}\tC\ra T_{E_{\tilde x}}\Bun_G\is H^1(C,\ad E_{\tilde x})$ is the
differential of $a_\lambda$ and 
the last pairing is induced by the Serre duality 
$H^0(C,\ad E_{\tilde x}\otimes\Omega_C)\is H^1(C,\ad E_{\tilde x})^*$.

Therefore we reduce
to show the following:
\begin{proposition}\label{phi=0}
For any $v\in T_bB\subset T_{\tilde x}\tC=T_xC\oplus T_bB$,
the pairing $\langle\phi_{\tilde x},{a_\lambda}_*v\rangle$ is zero. 
\end{proposition}
\subsection{Proof of Proposition \ref{phi=0}: calculations of differentials}
We
shall need several preliminary 
steps. 
Recall that there is the $E_\kappa$-twist global Grassmannian $\Gr(E_\kappa)$ which classify the 
triples $(x,E,\beta)$ where $x\in C$, $E$ is a $G$-torsor and $\beta: E_\kappa|_{C-\{x\}}\simeq E|_{C-\{x\}}$ is an isomorphism. Given a dominant coweight $\mu$ (with respect to the set of simple roots we choose), it makes sense to talk about the closed substack $\on{Gr}_{\leq \mu}(E_\kappa)$, consisting of those $\beta: E_\kappa|_{C-\{x\}}\simeq E|_{C-\{x\}}$ having relative position $\leq\mu$ at $x$ (cf. \cite[\S 5.2.2]{BD}). Let $\on{Gr}_{\mu}
(E_\kappa)=\on{Gr}_{\leq\mu}(E_\kappa)-\bigcup_{\lambda<\mu}\on{Gr}_{\leq\lambda}(E_\kappa)$.  
We have natural projection maps 
\[\Bun_G\stackrel{pr_1}{\leftarrow} \Gr_{\leq\mu}(E_\kappa)\stackrel{pr_2}{\to} C.\]
For any $x\in C$, let
$$\Gr_{x}(E_\kappa):=\Gr(E_\kappa)\times_C\{x\},$$ 
and similarly we have
$\Gr_{x,\leq\mu}(E_\kappa), \ \Gr_{x,\mu}(E_\kappa)$.

Note that for any $\tilde x\in\tC$ the $J$-torsor $\AJ^{\sP,\lambda}(\tilde x)\in\sP$ has a canonical 
trivialization $s$ over $C-x$ (here $x$ is the image of $\tilde x$ in $C$), thus it induces a 
canonial isomorphism $\beta:E_\kappa|_{C-x}\is E_{\tilde x}|_{C-x}$ 
(recall that $E_{\tilde x}:=\AJ^{\sP,\lambda}(\tilde x)\times^JE_\kappa$).
The assignment $\tilde x\ra (x,E_{\tilde x},\beta)$ defines 
a morphism $\tilde a_\lambda:\tC\ra\Gr(E_\kappa)$.
We have the following key lemma:
\begin{lemma}\label{Hk}
Let $\mu$ be a dominant coweight and $\lambda\in\rW\cdot\mu$. The morphism $\tilde a_\lambda$ factors through $\Gr_{\leq\mu}(E_\kappa)$
and the following diagram 
\[\xymatrix{\tC\ar[r]^{\tilde a_\lambda\ \ \ \ }\ar[dr]_{a_\lambda}&
\Gr_{\leq\mu}(E_\kappa)
\ar[d]^{pr_1}
\\&\Bun_G}\]
is commutative.
Moreover, for any $k$-point $\tilde x\in\tU(k)$,
$\tilde a_{\lambda}(\tilde x)\in\Gr_{\mu}(E_\kappa)(k)$.
\end{lemma}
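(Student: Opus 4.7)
The commutativity of the diagram $pr_1 \circ \tilde a_\lambda = a_\lambda$ is immediate from the construction: both morphisms send $\tilde x$ to the $G$-bundle $E_{\tilde x} = \AJ^{\sP,\lambda}(\tilde x) \times^J E_\kappa$, while $\tilde a_\lambda$ merely records in addition the trivialization $\beta$ of $E_{\tilde x}$ away from $x = \pi(\tilde x)$ coming from the tautological trivialization of $\calO(\lambda\tilde x)$ on $\tC\setminus\{\tilde x\}$ and the norm construction $\Nm^\sP$ of \eqref{Pnorm}. For the position estimate, my plan is to first prove the ``moreover'' statement at $\tilde x \in \tU(k)$; the global containment $\tilde a_\lambda(\tC) \subseteq \Gr_{\leq\mu}(E_\kappa)$ will then follow automatically, since $\Gr_{\leq\mu}(E_\kappa)$ is closed in $\Gr(E_\kappa)$, $\tU$ is open and dense in the smooth scheme $\tC$, and $\tC$ is reduced.

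For $\tilde x \in \tU(k)$, I work in the completed local picture at $x=\pi(\tilde x)$. Since $\pi:\tC\to C\times B$ is \'etale at $\tilde x$, the preimage $\pi^{-1}(x)$ splits as $|\rW|$ distinct points on which $\rW$ acts simply transitively, and the chosen lift $\tilde x$ singles out an isomorphism $J_{b,x}\simeq T$ via the identification $\pi^*J^1|_{\tU}\simeq T\times\tU$. Because $\tU$ maps into $(\fg^{reg})_\omega$ under $\chi_\omega$, the homomorphism $a_{E_\kappa,\phi_\kappa}:J_b\to\Aut(E_\kappa,\phi_\kappa)$ of \eqref{actEphi} exhibits $J_{b,x}$ as a maximal torus of $\Aut(E_\kappa|_x)$. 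Under these identifications, the $J_b$-torsor $\AJ^{\sP,\lambda}(\tilde x)=\Nm^{\sP}(\calO(\lambda\tilde x))$, when twisted against $E_\kappa$, produces a modification of $E_\kappa$ at $x$ of type $\lambda\in\xcoch(T)\hookrightarrow\xcoch(G)$; its relative position in the affine Grassmannian is then precisely the dominant representative $\mu$ of $\rW\cdot\lambda$, so that $\tilde a_\lambda(\tilde x)\in\Gr_\mu(E_\kappa)(k)$.

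The main obstacle is the local computation in the previous paragraph: identifying $\Nm^\sP(\calO(\lambda\tilde x))$, viewed as a local modification at $x$ of the trivial $J_b$-torsor, with the cocharacter $\lambda$ of $J_{b,x}\simeq T$, and then checking compatibility of this intrinsic identification with the Kostant-section embedding $a_{E_\kappa,\phi_\kappa}$. Concretely, one must compute the norm map $\pi_*T\to J^1$ on the formal disk around $x$, where $\tC$ decomposes as a disjoint union of $|\rW|$ formal disks and $\calO(\lambda\tilde x)$ has local type $\lambda$ on the disk containing $\tilde x$ and is trivial on the others. The Weyl-orbit ambiguity coming from the choice of $\tilde x$ among $\pi^{-1}(x)$ matches exactly the conjugacy ambiguity of the embedding $T\hookrightarrow\Aut(E_\kappa|_x)$, so only the dominant representative $\mu$ of $\rW\cdot\lambda$ is intrinsically well-defined; this is the conceptual reason why $\mu$ (rather than $\lambda$ itself) governs the relative position.
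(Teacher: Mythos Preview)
Your proposal is correct and follows essentially the same approach as the paper: reduce the closed containment in $\Gr_{\leq\mu}(E_\kappa)$ to checking the relative position on the open dense $\tU$, then work in the formal disk at $x=\pi(\tilde x)$ using the identification of $J_{b,x}$ with $T$ afforded by $j^1$ and the compatibility of $a_{E_\kappa,\phi_\kappa}$ with this identification up to $G$-conjugacy. The paper packages the local computation slightly more explicitly, via a commutative square relating $\Gr_{x,J_b}$, $\Gr_{x,T}\simeq\xcoch$, $\Gr_x(E_\kappa)$, and $\xcoch^+$, and invokes an isomorphism $\Gr_J\times_{C\times B}\tU\simeq\Gr_T\times_C\tU$ from \cite[Lemma 3.2.5]{Yun} to identify $\AJ^{\sP,\lambda}(\tilde x)$ with $\lambda$; your norm computation on the $|\rW|$ disjoint formal disks is exactly this identification unwound, and your ``compatibility with the Kostant-section embedding'' is the paper's conjugacy of $j^1|_{\Spec\mO_x}$ with $J_b\simeq H\hookrightarrow G$ by an element of $G$.
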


The proof is given at the end of this subsection.
We also need the following lemma about the differential of $\tilde a_\lambda$.
\begin{lemma}
Let $\tilde x\in\tU(k)$, and let $\tilde a_\lambda(\tilde x)=(x,E_{\tilde x},\beta)\in\Gr_\mu(E_\kappa)(k)$ (by Lemma \ref{Hk}).
For every $v\in T_bB\subset T_{\tilde x}\tC=T_xC\oplus T_bB$, we have 
$$u:=(\tilde a_\lambda)_*v\in T_{(E_{\tilde x},\beta)}\Gr_{x,\mu}(E_\kappa)\subset 
T_{(x,E_{\tilde x},\beta)}\Gr_\mu(E_\kappa).$$
\end{lemma}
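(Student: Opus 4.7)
The plan is to deduce the claim from the fact that the composition $pr_2\circ\tilde a_\lambda:\tC\to C$ factors through the structural projection of the universal cameral curve. Indeed, by the very construction of $\tilde a_\lambda$, a point $\tilde x\in\tC$ with image $x\in C$ is sent to $(x,E_{\tilde x},\beta)$, so $pr_2\circ\tilde a_\lambda=p\circ\pi$, where $\pi:\tC\to C\times B$ is the projection and $p:C\times B\to C$ is the first projection. Differentiating this identity at $\tilde x$ and applying it to $v$, the claim $(pr_2)_*u=0$ will reduce to the fact that $(p\circ\pi)_*v=0$.

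The verification of the latter vanishing is where the hypothesis $\tilde x\in\tU$ enters. By the definition of $\tU$, the restriction $\pi|_{\tU}:\tU\to C\times B$ is \'etale at $\tilde x$, so $\pi_*$ identifies $T_{\tilde x}\tC$ with $T_{(x,b)}(C\times B)=T_xC\oplus T_bB$, and this is precisely the identification used to write $T_{\tilde x}\tC=T_xC\oplus T_bB$ in the statement. Under this identification, $(p\circ\pi)_*$ is the projection onto the first summand, and hence it annihilates $T_bB$. Therefore $(pr_2)_*u=0$.

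To complete the proof, I need to identify $\ker(pr_2)_*$ at $(x,E_{\tilde x},\beta)$ with $T_{(E_{\tilde x},\beta)}\Gr_{x,\mu}(E_\kappa)$. By Lemma~\ref{Hk}, the point $\tilde a_\lambda(\tilde x)$ lies in the open stratum $\Gr_\mu(E_\kappa)$, and it is standard that this stratum is smooth over $C$ via $pr_2$ (the relative affine Grassmannian $\Gr_\mu(E_\kappa)\to C$ is smooth along the open stratum; cf.\ \cite[\S 5.3]{BD}). Consequently the short exact sequence
\[
0\to T_{(E_{\tilde x},\beta)}\Gr_{x,\mu}(E_\kappa)\to T_{(x,E_{\tilde x},\beta)}\Gr_\mu(E_\kappa)\stackrel{(pr_2)_*}{\longrightarrow} T_xC\to 0
\]
is exact, and $u\in\ker(pr_2)_*$ lies in the fiber tangent space as asserted.

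The argument is essentially a routine computation with tangent spaces once the factorization $pr_2\circ\tilde a_\lambda=p\circ\pi$ is observed; the only non-formal input is the smoothness of the open stratum $\Gr_\mu(E_\kappa)\to C$, and this is the step where I would be most careful to cite a clean reference.
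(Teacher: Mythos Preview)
Your proof is correct and follows essentially the same argument as the paper: both identify $T_{(E_{\tilde x},\beta)}\Gr_{x,\mu}(E_\kappa)$ with $\ker(pr_2)_*$, use the factorization $pr_2\circ\tilde a_\lambda=pr_C\circ\pi$, and conclude by observing that $pr_C$ kills the $T_bB$ summand of $T_{\tilde x}\tC=T_xC\oplus T_bB$. You are slightly more explicit than the paper in justifying the kernel identification via smoothness of the open stratum $\Gr_\mu(E_\kappa)\to C$, which the paper simply asserts.
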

\begin{proof}
The subspace $T_{(E_{\tilde x},\beta)}\Gr_{x,\mu}(E_\kappa)\subset 
T_{(x,E_{\tilde x},\beta)}\Gr_\mu(E_\kappa)$ 
is equal to 
$$\on{Ker}((pr_2)_*:T_{(x,E_{\tilde x},\beta)}\Gr_{\mu}(E_\kappa)
\ra T_xC).$$ Therefore
it is enough to show $(pr_2)_*(\tilde a_\lambda)_*v=0$. 
Recall that we have 
the 
following 
commutative diagram (not cartesian)
\[\xymatrix{\tC\ar[r]^{\tilde a_\lambda\ \ \ }\ar[d]^\pi&\Gr_{\leq\mu}(E_\kappa)\ar[d]^{pr_2}
\\C\times B\ar[r]^{pr_C}&C}.\]
Thus we have $(pr_2)_*(\tilde a_\lambda)_*v=
(pr_C)_*(\pi_*v)=(pr_C)_*v=0$. This finishes the proof.
\end{proof}

Combining the above two lemmas  we obtain that
\begin{equation}\label{main equ 2}
\langle\theta_{\kappa,\lambda},v\rangle=\langle\phi_{\tilde x},{a_\lambda}_*v\rangle=\langle\phi_{\tilde x},(pr_1)_*u\rangle
\end{equation} 
where $u:=
(\tilde a_\lambda)_*v\in T_{(E_{\tilde x},\beta)}\Gr_{x,\mu}(E_\kappa)$.
So
we need show that the last pairing is zero. To calculate it, 
we need a few more notations. For any $x\in C$
we denote by $\mO_x$ the 
completion of the local ring of $C$ at $x$ and $F_x$ its fractional field. Let $\omega_{\mO_x}$ (resp. $\omega_{F_x}$) denote the completed regular (resp. rational) differentials on $\Spec \mO_x$. 
We denote by 
$$\Res(,):\fg(\omega_{F_x})\times
\fg(F_x)\ra k$$ the residue pairing induced by the $G$-invariant form
$(,)$ on $\fg$.  

Let us fix a trivialization  $\gamma_\kappa:E_\kappa\simeq E^0$ on $\Spec \mO_x$. Then, for every trivialization
$\gamma$ of $E$ on $\Spec \mO_x$, we obtain 
\[g= \gamma_\kappa^{-1} \beta \gamma \in G(F_x).\]
In this way, $\gamma_\kappa$ induces an isomorphism 
$$\Gr_{x,\mu}(E_\kappa)\is\on{Orb}_\mu,\quad (E,\beta)\mapsto \gamma_\kappa^{-1}\beta \gamma G(\mO_x),$$ where $\on{Orb}_\mu$
is the $G(\mO_x)$-orbit of 
$\mu\cdot G(\mO_x)\in G(F_x)/G(\mO_x)$. 
Under the isomorphism, we have the identification of  
the tangent 
spaces 
\[T_{(E,\beta)}\Gr_{x,\mu}(E_\kappa)\simeq\fg(\mO_x)/(\Ad_{g}\fg(\mO_x)\cap\fg(\mO_x)).\] 
For any $u\in T_{(E,\beta)}\Gr_{x,\mu}(E_\kappa)$ and $\phi\in T_E\Bun_G$
the pairing $\langle\phi,(pr_1)_*u\rangle$ can be calculated as follows. Let $\tilde u\in\fg(\mO_x)$ be a lifting of 
$u$ under the above isomorphism. Let $\phi(\gamma)$ denote the $\phi:\Spec \mO_x\to \ad E\otimes \omega_C\stackrel{\gamma}{\to} \fg(\omega_{F_x})$.
Now we have
\[\langle\phi,(pr_1)_*u\rangle=\Res (\phi(\gamma),\Ad_{g}^{-1}\tilde u),\]
In our case $\phi=\phi_{\tilde x}=\AJ^{\sP,\lambda}(\tilde x)\times^J\phi_\kappa$,
the following lemma  will imply the vanishing of 
$\langle\phi_{\tilde x},(pr_1)_*u\rangle$, and  
therefore will finish the proof of Proposition \ref{phi=0}.
\begin{lemma}\label{phi}
We have $\Ad_g\phi_{\tilde x}(\gamma)\in \fg(\omega_{\mO_x})$.
\end{lemma}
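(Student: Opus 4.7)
The plan is to show that the isomorphism $\beta$ carries $\phi_\kappa$ to $\phi_{\tilde x}$ on the punctured formal disk around $x$; the lemma then follows by a change-of-trivialization calculation combined with the regularity of the Kostant Higgs field.

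First I recall that $\AJ^{\sP,\lambda}(\tilde x)=\Nm^{\sP}(\mO(\lambda\tilde x))$ is a $J_b$-torsor on $C$ equipped with a canonical trivialization over $C\setminus\{x\}$, inherited from the canonical trivialization of $\mO(\lambda\tilde x)$ away from $\tilde x$. By the definition $(E_{\tilde x},\phi_{\tilde x}) = \AJ^{\sP,\lambda}(\tilde x)\times^{J_b}(E_\kappa,\phi_\kappa)$, this trivialization is precisely what produces the isomorphism $\beta\colon E_\kappa|_{C\setminus x}\simeq E_{\tilde x}|_{C\setminus x}$ appearing in the definition of $g$.

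Next I claim that under $\beta$, the Higgs field $\phi_\kappa$ corresponds to $\phi_{\tilde x}$ on $C\setminus\{x\}$. This is a formal consequence of the fact that the $J_b$-action on $(E_\kappa,\phi_\kappa)$ via the homomorphism $a_{E_\kappa,\phi_\kappa}$ of \eqref{actEphi} preserves $\phi_\kappa$. Indeed, by Proposition \ref{reg torsor}(1) the Kostant section factors through $\Hg^{reg}$, so by Proposition \ref{J}(1) the map $a_{E_\kappa,\phi_\kappa}$ is actually the canonical isomorphism $J_b\simeq I|_{(E_\kappa,\phi_\kappa)}=\on{Aut}(E_\kappa,\phi_\kappa)$, which in particular fixes $\phi_\kappa$. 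Thus any trivialization of the $J_b$-torsor $\AJ^{\sP,\lambda}(\tilde x)$ over an open $U\subset C$ yields an isomorphism of Higgs pairs $(E_\kappa,\phi_\kappa)|_U\simeq(E_{\tilde x},\phi_{\tilde x})|_U$; taking $U=C\setminus\{x\}$ with the canonical trivialization gives the claim for $\beta$.

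Finally, translating the intertwining $\beta^{\ast}\phi_{\tilde x}=\phi_\kappa$ on $C\setminus\{x\}$ into the local trivializations $\gamma_\kappa$ and $\gamma$ and unwinding the definition $g=\gamma_\kappa^{-1}\beta\gamma$, one obtains an equality
\[
\Ad_g\,\phi_{\tilde x}(\gamma) \;=\; \phi_\kappa(\gamma_\kappa) \qquad \text{in } \fg(\omega_{F_x}).
\]
But $\phi_\kappa$ is a global section of $\ad E_\kappa\otimes\omega_C$, taking values in $f+\fg^e\subset\fg^{reg}$ by the Kostant construction, so its local expression $\phi_\kappa(\gamma_\kappa)$ already lies in $\fg(\omega_{\mO_x})$, proving the lemma. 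The only non-formal step is the intertwining property of $\beta$; it rests on the regularity of the Kostant Higgs field and the identification of $J$ with the centralizer group scheme $I$ on the regular locus, after which the passage to local trivializations is routine.
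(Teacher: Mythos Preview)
Your proof is correct and follows essentially the same approach as the paper: you arrive at the identity $\Ad_g\phi_{\tilde x}(\gamma)=\phi_\kappa(\gamma_\kappa)$, which the paper states as the result of ``unraveling the definitions,'' and then conclude by regularity of $\phi_\kappa$. One small remark: you invoke regularity of the Kostant section to argue that $a_{E_\kappa,\phi_\kappa}$ is an isomorphism, but for the intertwining property $\beta^*\phi_{\tilde x}=\phi_\kappa$ you only need that $a_{E_\kappa,\phi_\kappa}$ factors through $\Aut(E_\kappa,\phi_\kappa)$, which holds for any Higgs pair by \eqref{actEphi}.
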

\begin{proof}
Indeed, unraveling the definitions, we have
$\Ad_{g}\phi(\gamma)=\phi_\kappa(\gamma_\kappa)$, which is regular.
\end{proof}

It remains to prove Lemma \ref{Hk}.
Let $\tilde a_\lambda:\tC\ra\Gr(E_\kappa)$ be the morphism 
constructed as in the lemma. 
Since $\tC$ is smooth and $\tU\subset\tC$ 
is open dense, it is enough to show that
 $\tilde a_\lambda(\tU(k))\subset\Gr_{\mu}(E_\kappa)(k)$.  Let $\tilde x\in\tU(k)$ and 
 $\tilde a_\lambda(\tilde x)=(x,E_{\tilde x},\beta)\in\Gr(E_\kappa)(k)$
 be its image, where $E_{\tilde x}:=\AJ^{\sP,\lambda}(\tilde x)\times^JE_\kappa$ and 
$\beta:E_\kappa|_{C-x}\is E_{\tilde x}|_{C-x}$ is the isomorphism 
induced by the canonical section $s\in\AJ^{\sP,\lambda}(\tilde x)(C-x)$.
Let $$rel:\Gr_x(E_\kappa)\ra\xcoch/\rW$$ be the map sending an element $(E,\beta)$ to the relative position of $\beta$  
(cf. \cite[\S 5.2.2]{BD}).
We have $(E_{\tilde x},\beta)\in\Gr_x(E_\kappa)$ and we need to show that $rel(E_{\tilde x},\beta)=\mu$.  
For simplicity, we will denote $\mP:=\AJ^{\sP,\lambda}(\tilde x)$.

Let $\Gr_{J}$ (resp. $\Gr_T$) be the global Grassmannian for the
group scheme $J$ (resp. $T$). By \cite[Lemma 3.2.5]{Yun}, the 
morphism $j^1:\pi^*J\ra T\times\tC$ induces a $\rW$-equivariant isomorphism 
$$j_{\Gr}:\Gr_J\times_{(C\times B)}\tU\is\Gr_T\times_C\tU$$
of group ind-schemes
over $\tU$.  We denote by $j_{\tilde x,\Gr}:\Gr_{x,J_b}\is\Gr_{x,T}$ the restriction of
$j_\Gr$ to $\tilde x$. We have 
$(\mP,s)\in\Gr_{x,J_b}(k)$ (here $s\in\mP(C-x)$ is the canonical section) and
one can check that 
$j_{\tilde x,\Gr}(\mP,s)=\lambda\in\Gr_{x,T}(k)\is\xcoch$.
The action of $\Gr_{x,J_b}$ on $(E_\kappa,\phi_\kappa)$
defines a map $a_\kappa:\Gr_{x,J_b}\ra\Gr_x(E_\kappa)$. 
We claim that the following 
diagram is commutative
\begin{equation}\label{claim}
\xymatrix{\Gr_{x,J_b}(k)\ar[r]^{a_\kappa}\ar[d]^{j_{\tilde x,\Gr}}&\Gr_{x}(E_\kappa)(k)
\ar[d]^{rel}
\\\Gr_{x,T}(k)\is\xcoch\ar[r]^{}&\xcoch/\rW}.
\end{equation}
Assuming the claim we see that $rel(E,\beta)=rel(a_\kappa(\mP,s))$ is equal to the image of 
$j_{\tilde x,\Gr}(\mP,s)=\lambda\in\xcoch$
in $\xcoch/\rW$. But by assumption $\lambda\in\rW\cdot\mu$.
This finishes the proof of Lemma \ref{Hk}.

To prove the claim, recall that a trivialization $\gamma_\kappa$ of $E_\kappa$ on $\Spec \mO_x$
defines an isomorphism $\Gr_{x}(E_\kappa)\is G(F_x)/G(\mO_x)$. Moreover,
under the canonical isomorphism $\Gr_{x,J_b}(k)\is J_b(F_x)/J_b(\mO_x)$,
$\Gr_{x,T}(k)\is T(F_x)/T(\mO_x)$ and 
$G(\mO_x)\backslash G(F_x)/G(\mO_x)=\xcoch/\rW$, the diagram (\ref{claim})
can be identified with
\[
\xymatrix{J_b(F_x)/J_b(\mO_x)\ar[r]^{}\ar[d]^{}&
G(F_x)/G(\mO_x)
\ar[d]^{pr}
\\T(F_x)/T(\mO_x)\ar[r]^{}&G(\mO_x)\backslash G(F_x)/G(\mO_x),}\]
where the the upper arrow is induced by the homomorphism
\begin{equation}\label{upper}
J_b\stackrel{a_{E_\kappa,\phi_\kappa}}\is\on{Aut}(E_\kappa,\phi_\kappa)\ra\on{Aut}
(E_\kappa)\stackrel{\gamma_\kappa}\is G
\end{equation}
and the arrow in the left column 
is induced by the homomorphism $j^1:\pi^*J\ra T\times\tC$. 
Let $b_x\in\fc^{rs}(\mO_x)$ be the restriction of $b$ to $\Spec\mO_x$.
Using the definition of $a_{E_\kappa,\phi_\kappa}$ in (\ref{actEphi}),
it is not hard to see that the restriction of (\ref{upper}) to $\Spec\mO_x$ is
equal to 
\begin{equation}\label{equ 1}
J_{b_x}\is I_{kos(b_x)}\hookrightarrow G\times\Spec\mO_x,
\end{equation}
up to conjugation by an element in $G(\mO_x)$. Here $kos(b_x):\Spec\mO_x\stackrel{b_x}\ra\fc\stackrel{kos}\ra\fg\in\fg^{reg}(\mO_x)$
and the first isomorphism is induced by the canonical isomorphism $\chi^*J|_{\fg^{reg}}\is I|_{\fg^{reg}}$ in Proposition \ref{J}.
Therefore, to prove the claim, it is enough to show that the restriction of $j^1$ to $\Spec\mO_x$\footnote{Here we 
identify $\Spec\mO_{\tilde x}\is\Spec\mO_x$ and 
regard $j^1$ as a map of group schemes over $\Spec\mO_x$.} is 
is equal to the map (\ref{equ 1}) up to left and right multiplication 
by elements in $G(\mO_x)$. 
To see this, we 
first observe that the point $\tilde x$ defines a lifting 
$\tilde b_x\in\ft^{rs}(\mO_{x})$ of $b_x\in\fc^{rs}(\mO_x)$.
Since the map $G\times\fg^{rs}\ra\fg^{rs}\times_{\fc}\fg^{rs},\ \ (y,v)\ra(\Ad y(v),v)$ 
is smooth, there exists $g\in G(\mO_x)$ such that $\Ad g(kos(b_x))=\tilde b_x$.
The map $\Ad g$ induced an isomorphism $\iota_g:I_{kos(b_x)}\is I_{\tilde b_x}=T\times\Spec\mO_x$, which is independent of the choice of $g$, and 
according to \cite[Proposition 2.4.2]{N2}  
the restriction of $j^1$ to $\Spec\mO_x$ is given by
\begin{equation}\label{equ 2}
J_{b_x}\is I_{kos(b_x)}\stackrel{\iota_g}\is T\times\Spec\mO_x,
\end{equation}
where the first map is the canonical isomorphism mentioned before.
The above description implies the map (\ref{equ 1}) is equal to the map (\ref{equ 2}) up to left and right multiplication 
by elements in $G(\mO_x)$. This finishes the proof of the claim.



\section{Main result}\label{Main}
We assume that $G$ is semi-simple over $k$ whose characteristic $p$ is positive and does not divide the order of the Weyl group of $G$. Let $C$ be a smooth projective curve over $k$, of genus at least two.
In this case, $\Bun_G$ is a ``good" stack 
in the sense of \cite[\S 1.1.1]{BD} (see also \ref{azumaya property}). Let $D_{\Bun_G}$ be the sheaf of algebras on $\on{Higgs}'_G$ in
Proposition \ref{Azumaya}. Denote by $D_{\Bun_G}^0:=D_{\Bun_G}|_{\on{Higgs}_G'\times_{B'}B^{'0}}$ the restriction of $D_{\Bun_G}$ to the smooth part of the 
Hitchin fibration.
We define $\mD\on{-mod}(\Bun_G)^0$ as the category of 
$D_{\Bun_G}^0$-modules. As explained in \ref{azumaya property}, the category  
$\mD\on{-mod}(\Bun_G)^0$ 
is a localization of the category of $\mD$-modules on $\Bun_G$ and
is canonical equivalent to the 
category of twisted sheaves $\on{QCoh}(\sD_{\Bun_G}^0)_1$,
where $\sD_{\Bun_G}^0=\sD_{\Bun_G}\times_{B'}{B^{'0}}$ and $\sD_{\Bun_G}$ is the gerbe 
of crystalline differential operators on $\on{Higgs}_G'$.
On the dual side, let $\Loc_{\breve G}$ be the stack of de Rham $\breve G$-local systems on $C$.
Recall that in \cite{CZ}, we constructed a fibration \[h_p:\Loc_{\breve G}\ra B'\]
from $\Loc_{\breve G}$ to the Hitchin base $B'$, which can be regraded as a 
deformation of the usual Hitchin fibration.
We define \[\Loc_{\breve G}^0:=\Loc_{\breve G}\times_{B'}B^{'0}.\]
Our goal is to prove the following Theorem:

\begin{thm}\label{main}\label{D_kappa}
Assume $G$ is semi-simple and the genus of $C$ is at least two.
For a choice of a square root $\kappa$ of $\omega_C$, we have a 
canonical equivalence of bounded derived categories
$$\frakD_\kappa:D^b(\mD\on{-mod}(\Bun_G)^0)\is D^b(\on{QCoh}(\Loc_{\breve G}^{0}))$$
\end{thm}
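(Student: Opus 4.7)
The plan is to chain together the four ingredients the paper has already accumulated: the automorphic abelianization, Fourier--Mukai duality for Beilinson $1$-motives, the twisted classical duality, and the spectral abelianization. Throughout we restrict everything to $B^{\prime 0}$, on which $\sP'$ is a family of Beilinson $1$-motives (Proposition \ref{reg torsor}) and the relevant geometry behaves well.

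\textbf{Step 1 (automorphic abelianization).} The Azumaya property of $D_{\Bun_G}$ on $\Hg'_G$ recalled in \S\ref{azumaya property} identifies $D^b(\mD\on{-mod}(\Bun_G)^0)$ with $D^b$ of $\sD_{\Bun_G}^0$-twisted quasi-coherent sheaves on $\Hg'_G|_{B^{\prime 0}}$. Over $B^{\prime 0}$, the Kostant section $\epsilon_{\kappa'}:\sP'\ra\Hg'_G$ is an isomorphism by Proposition \ref{reg torsor}(2), and the first abelianization theorem gives $\epsilon_{\kappa'}^*\sD_{\Bun_G}\simeq\sD(\theta_m)$. Hence the automorphic side becomes the derived category of $\sD(\theta_m)$-twisted quasi-coherent sheaves on $\sP'|_{B^{\prime 0}}$.

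\textbf{Step 2 (Fourier--Mukai).} By Corollary \ref{multiplicative}, the one-form $\theta_m$ is multiplicative, so $\sD(\theta_m)$ is a multiplicative $\bG_m$-gerbe on the Beilinson $1$-motive $\sP'|_{B^{\prime 0}}$. The general Fourier--Mukai transform for Beilinson $1$-motives developed in Appendix \ref{B} then produces an equivalence between $D^b$ of $\sD(\theta_m)$-twisted sheaves on $\sP'|_{B^{\prime 0}}$ and $D^b(\on{QCoh}(\sT_{\sD(\theta_m)}|_{B^{\prime 0}}))$, where $\sT_{\sD(\theta_m)}$ is the $(\sP')^\vee$-torsor of splittings of $\sD(\theta_m)$.

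\textbf{Step 3 (twisted duality and spectral abelianization).} The twisted duality Theorem \ref{intro:twist} provides a canonical isomorphism of torsors $\frakD:\sT_{\sD(\theta_m)}|_{B^{\prime 0}}\is\breve\sH|_{B^{\prime 0}}$, converting the target into $D^b(\on{QCoh}(\breve\sH|_{B^{\prime 0}}))$. Finally, the isomorphism $\fC_\kappa:\breve\sH\is\Loc_{\breve G}^{reg}$ from \cite{CZ} yields $D^b(\on{QCoh}(\Loc_{\breve G}^{reg}|_{B^{\prime 0}}))$; the analog of Proposition \ref{reg torsor}(2) for the fibration $h_p:\Loc_{\breve G}\to B'$ gives $\Loc_{\breve G}^{reg}|_{B^{\prime 0}}=\Loc_{\breve G}^{0}$, so the composition of Steps~1--3 defines $\frakD_\kappa$.

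\textbf{Main obstacle.} The most delicate step is Step~2: one has to ensure that the abstract Fourier--Mukai equivalence of Appendix \ref{B} outputs exactly the torsor $\sT_{\sD(\theta_m)}$ appearing in Theorem \ref{intro:twist}, with matching $(\sP')^\vee\simeq\breve\sP'$-torsor structure. This uses the multiplicativity of $\theta_m$ (Corollary \ref{multiplicative}) in an essential way, and it also requires the extension of Fourier--Mukai duality from abelian varieties to Beilinson $1$-motives, where the universal extension by vector groups (Appendix \ref{C}) enters to compensate for the unipotent and discrete parts of $\sP'$. Once these compatibilities are in place, all remaining identifications are canonical and the dependence on $\kappa$ enters only through the two Kostant sections $\epsilon_{\kappa'}$ and $\fC_\kappa$.
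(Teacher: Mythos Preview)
Your proposal is correct and follows essentially the same route as the paper: the composition $\frakD_\kappa=(\frakC_\kappa^*)^{-1}\circ\frakD\circ\frakA_\kappa$, where $\frakA_\kappa$ is the automorphic abelianization (Theorem~\ref{Ab1}), $\frakD$ is the twisted Fourier--Mukai transform for the multiplicative gerbe $\sD(\theta_m')$ combined with the torsor isomorphism $\sT_{\sD(\theta_m')}\simeq\breve\sH$ (Theorems~\ref{tw duality} and~\ref{twist FM}), and $\frakC_\kappa$ is the spectral abelianization (Theorem~\ref{Ab2}).

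One small clarification on your ``main obstacle'' paragraph: the relevant input in Step~2 is precisely Theorem~\ref{twist FM} (duality for torsors over Beilinson $1$-motives), which is formulated so that the output is \emph{by construction} the torsor $\sT_{\sD}$ of multiplicative splittings; there is no further compatibility to check beyond the multiplicativity of $\theta_m'$ already supplied by Corollary~\ref{multiplicative}. The material on universal extensions in Appendix~\ref{C} is a separate abelian-duality statement and is not invoked in the proof of Theorem~\ref{main}, so you can safely drop that remark.
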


The proof of above theorem is divided into two steps. The first step, 
which involves the Langlands duality, 
is to establish a twisted version of 
the classical duality (see \S\ref{tw}). The second step, 
which does not involve the Langlands duality,
is to establish two abelianization theorems (see \S\ref{ab thm}) for which
we need a choice of square root $\kappa$ of $\omega_C$.
Combining above two steps, our main theorem follows from a general version of the Fourier-Mukai
transform (see \S\ref{final}).

\subsection{The $\breve\sP'$-torsor $\breve\sH$.}
We first recall that in \cite{CZ}, we constructed a $\breve\sP'$-torsor $\breve\sH$. It is defined via the following Cartesian diagram
\begin{equation}\label{Har}
\begin{CD}
\breve\sH @>>>\Loc_{\breve J^p}\\
@VVV@VVV\\
B'@>\breve \tau'>>B'_{\breve J'}
\end{CD}
\end{equation}
Here $\breve J^p$ is the pullback of the universal centralizer $\breve J'$ over $C'\times B'$ along 
the relative frobenius map $F_{C'\times B'/B'}:C\times B'\to C'\times B'$. This is a group scheme with a canonical connection along $C$, and therefore it makes sense to talk about the stack $\Loc_{\breve J^p}$ of $\breve J^p$-torsors with flat connections. In addition, it admits a map $\Loc_{\breve J^p}\to B'_{\breve J'}$. We refer to \cite[Appendix]{CZ} for generalities.

Recall that there is a description of $\sP$ in terms of  $\Bun_T^\rW(\tC/B)$.  We give a similar description of the $\breve\sP'$-torsor $\breve\sH$ in terms of a $\Bun_{\breve T}^\rW(\tC/B)'$-torsor. Recall that $\breve\tau'$ is regarded as a section of $\Omega_{C'\times B'/B'}\otimes\Lie\breve J'$, which defines a $\breve J'$-gerbe $\sD(\breve\tau')$ on $C'\times B'$ (see \ref{one forms}) and according to \cite[Proposition A.9]{CZ},
$\breve\sH$ is isomorphic to $\sT_{\sD(\breve\tau')}$, the stack of splittings of $\sD(\breve\tau')$ over 
$B'$. Therefore by Lemma \ref{T=T^+} we have 
\begin{equation}\label{H=T^+}
\breve\sH|_{B^{'0}}\is\sT_{\sD(\breve\tau')_{\breve T}}^{\rW,+}|_{B^{'0}},
\end{equation}
where $\sD(\breve\tau')_{\breve T}:=(\pi^*\sD(\breve\tau'))^{\breve j^1}$ is the 
$\breve T$-gerbe on $\tC'$ induced from $\sD(\breve\tau')$ using maps
$\pi:\tC'\ra C'\times B'$ and $\breve j^{1}:\pi^*\breve J'\ra\breve T'\times\tC'$
(see \ref{induction functor} for the induction functor of gerbes).

On the other hand, using the definition of 
$\theta_{\tC'}\in\Gamma(\tC',\Omega_{\tC'}\otimes\breve\ft')^\rW$ in \S\ref{Lie algebra valued one forms} one can check that 
$\breve j^1_*\pi^*(\breve\tau')=\theta_{\tC'}$, where $\breve j^1_*\pi^*(\breve\tau')$ is the $\breve\ft'$-valued 1-form 
induced from $\breve\tau'$ using maps $\pi$ and $\breve j^1$.
Therefore, by
Lemma \ref{theta} we see that over $B^{'0}$ we have
\begin{equation}\label{ind=theta}
\sD({\breve\tau'})_{\breve T}:=(\pi^*\sD(\breve\tau'))^{\breve j^1}\is\sD(\breve j^1_*\pi^*(\breve\tau'))\is\sD(\theta_{\tC'}).\end{equation}
Hence combining (\ref{H=T^+}) and (\ref{ind=theta}) we get the following Galois description of $\breve\sH$.
\begin{corollary}\label{Galois description of Loc_J}
There is a canonical isomorphism of $\breve\sP'$-torsors 
$\breve\sH|_{B^{'0}}\is\sT^{\rW,+}_{\sD(\theta_{\tC'})}|_{B^{'0}}$.
\end{corollary}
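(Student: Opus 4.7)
The plan is to combine three ingredients already present in the paper: the realization of $\breve\sH$ as a stack of splittings of a specific $\breve J'$-gerbe, the Galois-type description of splitting stacks via their induced $\breve T$-gerbes on the cameral cover (Lemma~\ref{T=T^+}), and the compatibility of the gerbe construction $\tau \mapsto \sD(\tau)$ with induction along the homomorphism $\breve j^1$ (Lemma~\ref{theta}).

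First I would invoke \cite[A.10]{CZ}, which identifies $\breve\sH$ with the stack $\sT_{\sD(\breve\tau')}$ of splittings of the $\breve J'$-gerbe $\sD(\breve\tau')$ on $C' \times B'$ attached to the tautological section $\breve\tau'$. Then Lemma~\ref{T=T^+}, applied over $B^{'0}$, produces a canonical isomorphism $\sT_{\sD(\breve\tau')}|_{B^{'0}} \is \sT^{\rW,+}_{\sD(\breve\tau')_{\breve T}}|_{B^{'0}}$ of $\breve\sP'$-torsors, where $\sD(\breve\tau')_{\breve T} := (\pi^*\sD(\breve\tau'))^{\breve j^1}$ is the $\breve T$-gerbe on $\tC'$ induced from $\sD(\breve\tau')$ via $\pi: \tC' \to C' \times B'$ and $\breve j^1: \pi^*\breve J' \to \breve T' \times \tC'$.

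The remaining---and main---step is to identify $\sD(\breve\tau')_{\breve T}$ with $\sD(\theta_{\tC'})$ as $\breve T$-gerbes on $\tC'$. For this I would argue at the level of the defining Lie-algebra-valued one-forms: unwinding the definition of $\theta_{\tC'}$ in \S\ref{Lie algebra valued one forms} (namely, that $\theta_{\tC'}$ is obtained from $\breve\tau'$ by pulling back along $\pi$ and then pushing forward under $\breve j^1$), one verifies directly that $\breve j^1_{*}\pi^{*}(\breve\tau') = \theta_{\tC'}$ in $\Gamma(\tC', \Omega_{\tC'} \otimes \breve\ft')^{\rW}$. Lemma~\ref{theta} then converts this equality of one-forms into the desired canonical isomorphism $\sD(\breve\tau')_{\breve T} \is \sD(\theta_{\tC'})$.

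The hard content is packaged entirely into Lemmas~\ref{T=T^+} and~\ref{theta}; granting those, the corollary is a direct concatenation $\breve\sH|_{B^{'0}} \is \sT_{\sD(\breve\tau')}|_{B^{'0}} \is \sT^{\rW,+}_{\sD(\breve\tau')_{\breve T}}|_{B^{'0}} \is \sT^{\rW,+}_{\sD(\theta_{\tC'})}|_{B^{'0}}$, each arrow being canonical and each being compatible with the $\breve\sP'$-torsor structures (the torsor structure on the right-hand sides being transported along Lemma~\ref{T=T^+}, so no further compatibility check is required beyond what that lemma already supplies).
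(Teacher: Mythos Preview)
Your proposal is correct and follows essentially the same argument as the paper: invoke \cite[A.10]{CZ} to realize $\breve\sH$ as $\sT_{\sD(\breve\tau')}$, apply Lemma~\ref{T=T^+} to pass to $\sT^{\rW,+}_{\sD(\breve\tau')_{\breve T}}$, check $\breve j^1_*\pi^*(\breve\tau')=\theta_{\tC'}$ from the definition in \S\ref{Lie algebra valued one forms}, and then use Lemma~\ref{theta} to identify $\sD(\breve\tau')_{\breve T}$ with $\sD(\theta_{\tC'})$. The paper records these same two intermediate identifications as equations \eqref{H=T^+} and \eqref{ind=theta} and combines them in exactly the order you do.
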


\subsection{Twisted duality}\label{tw}
Let us construct the twisted duality.
Let $\theta'_m: \sP'\to T^*\sP'$ denote the canonical multiplicative one form constructed in \S\ref{canonical one forms}. Let
$\sD(\theta'_m)$ denote the corresponding $\bG_m$-gerbe on $\sP'$ obtained by pullback of $\sD_{\sP}$ on $T^*\sP'$ by $\theta_m'$ (see \ref{one forms}). According to \ref{appen:var}, the gerbe $\sD(\theta_m')$ is canonically 
multiplicative. Moreover,
according to \ref{DFT}, the stack of multiplicative splittings of $\sD(\theta'_m)$ over $B'$ is a $(\sP')^\vee$-torsor $\sT_{\sD(\theta'_m)}$. 
Our goal is to prove the following theorem.
\begin{thm}\label{tw duality}
There is a canonical isomorphism of 
$\sP'^\vee\is\breve\sP'$-torsors
\[\frakD:\sT_{\sD(\theta'_m)}|_{B^{'0}}\is\breve\sH|_{B^{'0}}.\]
\end{thm}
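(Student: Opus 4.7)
The plan is to parallel the construction of the classical duality $\frakD_{cl}$ from \S\ref{AJ map}, with Picard stacks replaced by multiplicative gerbes and their multiplicative splittings. Concretely, I will construct $\frakD$ as the pullback along the Frobenius-twisted Abel--Jacobi map $\AJ^{\sP'}: \tC'\times\xcoch \to \sP'$, then use the Galois description $\breve\sH|_{B^{'0}}\simeq \sT^{\rW,+}_{\sD(\theta_{\tC'})}|_{B^{'0}}$ of Corollary \ref{Galois description of Loc_J} to compare the two torsors.

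First I would analyze the pullback $(\AJ^{\sP'})^*\sD(\theta_m')$ on $\tC'\times\xcoch$. The map $\AJ^{\sP'}$ is $\rW$-invariant (since $\rW$ acts trivially on $\sP'$ while diagonally on the source) and is a homomorphism of Picard stacks in the $\xcoch$ factor, so this pullback gerbe carries a canonical strongly $\rW$-equivariant and $\xcoch$-multiplicative structure. By Proposition \ref{kappa=tC} applied to the Frobenius twist, its underlying one-form is $\theta_{\tC'}\in\Gamma(\tC',\Omega_{\tC'}\otimes\breve\ft')^{\rW}$, and Cartier duality between $\xcoch$ and $\breve T$ promotes the $\xcoch$-multiplicative $\bG_m$-gerbe to the $\breve T$-gerbe $\sD(\theta_{\tC'})_{\breve T}$ appearing in Corollary \ref{Galois description of Loc_J}. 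A multiplicative splitting of $\sD(\theta_m')$ therefore pulls back to a strongly $\rW$-equivariant splitting of $\sD(\theta_{\tC'})_{\breve T}$. The required $+$-structure arises, exactly as in the construction of $\frakD_{cl}$ in \S\ref{AJ map}, from the fact that $\AJ^{\sP'}(\tC'_\alpha\times\{\breve\alpha\})$ is the unit section of $\sP'$: a multiplicative splitting of a multiplicative gerbe is canonically trivialized over the unit, and this canonical trivialization provides the trivializations $t_\alpha$ of \S\ref{Galois des for torsors} in a way automatically compatible with the strongly $\rW$-equivariant structure. Composing with the isomorphism of Corollary \ref{Galois description of Loc_J} yields the morphism $\frakD$.

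To conclude, I would check that $\frakD$ is equivariant with respect to the $(\sP')^\vee$- and $\breve\sP'$-torsor structures under the classical duality $\frakD_{cl}:(\sP')^\vee\simeq\breve\sP'$; since $\frakD_{cl}$ is itself constructed by pullback of multiplicative line bundles along $\AJ^{\sP}$, this reduces to functoriality of pullback. Once equivariance is established, both sides being torsors under isomorphic groups forces $\frakD$ to be an isomorphism, given non-emptiness of $\sT_{\sD(\theta_m')}$, which follows in reverse from non-emptiness of $\breve\sH$. The main obstacle I anticipate lies in the first step: although Proposition \ref{kappa=tC} gives the key equality at the level of one-forms, upgrading it to a canonical equivalence of multiplicative, strongly $\rW$-equivariant $\breve T$-gerbes requires carefully comparing two induction chains---one going from the $\breve J$-gerbe $\sD(\breve\tau')$ via the identity $\breve j^1_*\pi^*(\breve\tau')=\theta_{\tC'}$ to $\sD(\theta_{\tC'})_{\breve T}$ (as used in Corollary \ref{Galois description of Loc_J}), and the other from the $\bG_m$-gerbe $\sD(\theta_m')$ via $\xcoch$-multiplicativity to the same $\breve T$-gerbe. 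Ensuring that these two routes produce canonically, not merely abstractly, isomorphic $\breve T$-gerbes---and that multiplicative splittings on one side really do correspond to $+$-structured strongly $\rW$-equivariant splittings on the other---is the most delicate point of the argument.
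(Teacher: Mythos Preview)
Your proposal is correct and follows essentially the same approach as the paper: pull back multiplicative splittings of $\sD(\theta_m')$ along $\AJ^{\sP'}$, use Proposition \ref{kappa=tC} to identify the resulting gerbe with $\sD(\theta_{\tC'})$, obtain the $+$-structure from the fact that $\AJ^{\sP'}(x,\breve\alpha)$ is the unit for $x\in\tC'_\alpha$, and conclude by compatibility with the $\sP'^\vee\simeq\breve\sP'$-torsor structures. The concern you flag at the end is handled in the paper more briskly than you anticipate: Lemma \ref{theta} directly gives $(\AJ^{\sP'})^*\sD(\theta_m')\simeq\sD(\theta_{\tC'})$ from the equality of one-forms, and the passage from an $\xcoch$-multiplicative $\bG_m$-gerbe on $\tC'\times\xcoch$ to a $\breve T$-gerbe on $\tC'$ is the same tautological identification already used for line bundles in \S\ref{AJ map}, so no separate comparison of induction chains is needed.
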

For the rest of this subsection we will restrict everything to $B^{'0}$.
Recall the Abel-Jacobi map $\AJ^{\sP'}:\tC'\times\xcoch\ra\sP'$.
By Proposition \ref{kappa=tC} we have $(\AJ^{\sP'})^*\theta_m'=\theta_{\tC'}$. Therefore,
Lemma \ref{theta} implies that
\[(\AJ^{\sP'})^*\sD(\theta'_m)=\sD(\theta_{\tC'}).\] 
Since the 
Abel-Jacobi map $\AJ^{\sP'}$ is $\rW$-equivariant, pullback via $\AJ^{\sP'}$
defines a functor \[\tilde\frakD:\sT_{\sD(\theta'_m)}\ra\sT^\rW_{\sD(\theta_{\tC'})}.\]
We claim that $\tilde\frakD$ canonically lifts to a morphism 
$$\frakD:
\sT_{\sD(\theta_m')}\ra
\sT^{\rW,+}_{\sD(\theta_{\tC'})}\is\breve\sH,
$$
where the second isomorphism is Corollary \ref{Galois description of Loc_J}.
Let $E\in\sT_{\sD(\theta_m')}$ be a tensor splitting of $\sD(\theta_m')$.
We need to show that the splitting
\[(\tilde\frakD(E))^\alpha|_{\tC'_{\alpha}}=(\AJ^{\sP'})^*E|_{(\tC'_{\alpha},\breve\alpha)}\]
admits a canonical isomorphism compatible with the canonical splitting $E^0_\alpha$ of 
$\sD(\theta_{\tC'})^\alpha|_{\tC'_\alpha}=(\AJ^{\sP'})^*\sD(\theta'_m)|_{(\tC'_{\alpha},\breve\alpha)}.$
However, this follows from the fact that $\AJ^{\sP'}((x,\breve\alpha))$ is the unit of $\sP'$ for $x\in\tC'_\alpha$ and a tensor splitting $E$ of a multiplicative $\bG_m$-gerbe 
$\sD(\theta_m')$ is canonically isomorphic to the canonical splitting $E^0_\alpha$
of $\sD(\theta_m')$ over the unit. To summarize, we have 
constructed the following 
commutative diagram
\begin{equation}\xymatrix{
\sT_{\sD(\theta_m')}\ar^{\frakD}[rr]\ar_{\tilde\frakD}[dr]&&
\breve\sH\ar^{For}[dl]\\
& \sT^{\rW}_{\sD(\theta_{\tC'})}}\end{equation}
By construction, the morphism $\frakD$ is compatible with the 
$\sP'^\vee\is\breve\sP'$-action, hence 
is an equivalence. This finishes the proof of Theorem \ref{tw duality}.

\subsection{Abelianization Theorems}\label{ab thm}
We need to fix a square root $\kappa$ of $\omega_C$. Then the Kostant 
section for $\on{Higgs}_G'\ra B'$ induces a map 
$$\epsilon_{\kappa'}:\sP'\is{\on{Higgs}_G'}^{reg}\subset\on{Higgs}_G',$$
where ${\on{Higgs}_G'}^{reg}$ is the smooth sub-stack consisting of regular Higgs fields. 
The first abelianization theorem is the following.
\begin{thm}\label{Ab1}
We have a canonical isomorphism 
$\epsilon_{\kappa'}^*\sD_{\Bun_G}\is\sD(\theta_m')$, where $\sD_{\Bun_G}$ is the 
$\bG_m$-gerbe on $\on{Higgs}_G'$ of crystalline differential operators.
Moreover, the pullback along the map $\epsilon_{\kappa'}$ defines an equivalence of categories
of twisted sheaves 
\[\frakA_\kappa:D^b(\mD\on{-mod}(\Bun_G^0))\is D^b(\on{QCoh}(\sD_{\Bun_G}^0))_1
\stackrel{\epsilon_{\kappa'}^*}\is 
D^b(\on{QCoh}(\sD(\theta_m')|_{B^{'0}}))_1.\]
\end{thm}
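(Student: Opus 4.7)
The plan is to establish the gerbe isomorphism via a chain-rule identity for gerbes of crystalline differential operators, and then to deduce the equivalence of categories from the fact that $\epsilon_{\kappa'}$ is an isomorphism onto its image, which equals the full space of Higgs bundles over $B^{'0}$.

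The first step is a general functoriality property for gerbes of crystalline differential operators, which is developed in Appendix \ref{B} following \cite{BMR,OV,Trav}. Namely, for a morphism $f:Y\to X$ of smooth algebraic stacks in characteristic $p$, write $f_p:f^*T^*X\to T^*X$ and $f_d:f^*T^*X\to T^*Y$ for the natural projection and derivative maps. Then there is a canonical isomorphism of $\bG_m$-gerbes on $(f^*T^*X)'$:
$$f_p^*\sD_X\simeq f_d^*\sD_Y.$$
This reflects the universal property that both gerbes classify line bundles on $Y$ equipped with a flat connection whose $p$-curvature is the section prescribed by pulling back the tautological one-form.

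The second step is to apply this chain rule to $f=v_{\kappa'}:\sP'\to\Bun_G'$ over $B^{'0}$. Since $\epsilon_{\kappa'}:\sP'\to\on{Higgs}_G'=T^*\Bun_G'$ projects to $v_{\kappa'}$, it factors uniquely as $\epsilon_{\kappa'}=v_{\kappa',p}\circ\alpha_{\kappa'}$ for a canonical section $\alpha_{\kappa'}:\sP'\to v_{\kappa'}^*T^*\Bun_G'$. Then
$$\epsilon_{\kappa'}^*\sD_{\Bun_G}=\alpha_{\kappa'}^*v_{\kappa',p}^*\sD_{\Bun_G}\simeq\alpha_{\kappa'}^*v_{\kappa',d}^*\sD_\sP=(v_{\kappa',d}\circ\alpha_{\kappa'})^*\sD_\sP.$$
By the tautological-section identity for the canonical one-form on $T^*\Bun_G$, the composite $v_{\kappa',d}\circ\alpha_{\kappa'}:\sP'\to T^*\sP'$ is $\epsilon_{\kappa'}^*\theta_{can}$, which is by construction the Frobenius twist of the one-form $\theta_\kappa$. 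Corollary \ref{multiplicative} gives $\theta_\kappa=\theta_m$, and hence
$$\epsilon_{\kappa'}^*\sD_{\Bun_G}\simeq\sD(\theta_m')$$
(up to the Frobenius-twist convention in force). Note that Lemma \ref{iden with tau} already carries the content of this identification on the relative cotangent piece, so the hard combinatorial input was spent in proving multiplicativity.

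For the equivalence of categories, Proposition \ref{reg torsor}(2) yields $\on{Higgs}_G^{'reg}|_{B^{'0}}=\on{Higgs}_G'|_{B^{'0}}$, so $\epsilon_{\kappa'}$ restricts to an isomorphism $\sP'|_{B^{'0}}\simeq\on{Higgs}_G'|_{B^{'0}}$; pullback of twisted sheaves along an isomorphism induces an equivalence of the associated derived categories of twisted coherent sheaves, giving the second arrow of $\frakA_\kappa$. Composing with the identification $D^b(\mD\on{-mod}(\Bun_G^0))\simeq D^b(\on{QCoh}(\sD_{\Bun_G}^0))_1$ from the Azumaya-gerbe description of crystalline differential operators recalled in Appendix \ref{B} yields the full equivalence $\frakA_\kappa$. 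The main technical obstacle is verifying the stacky version of the chain rule for the not-quite-smooth stacks $\Bun_G$ and $\sP$; this requires restricting to $B^{'0}$ where Proposition \ref{reg torsor}(3) provides the smoothness needed for the BMR-type Azumaya algebra formalism to apply, together with the observation that the cotangent maps that appear are well-defined on the appropriate open loci.
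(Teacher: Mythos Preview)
Your proof is correct and follows essentially the same route as the paper. The only organizational difference is that the paper invokes part (3) of Lemma \ref{BB lemma} directly---namely $\sD_{\Bun_G}|_{\on{Higgs}_G^{'reg}}\simeq\sD(\theta_{can}')$---and then pulls back along $\epsilon_{\kappa'}$ using Lemma \ref{theta}(1) together with $\epsilon_{\kappa'}^*\theta_{can}'=\theta_m'$; you instead invoke part (1) of Lemma \ref{BB lemma} (the chain rule $f_p^*\sD_X\simeq f_d^*\sD_Y$) applied to $v_{\kappa'}$ and identify $v_{\kappa',d}\circ\alpha_{\kappa'}$ with the pulled-back canonical form. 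These are the same argument, since part (3) is precisely what one obtains by applying part (1) to the projection $T^*\sX\to\sX$ and the tautological section. Your concluding paragraph on the equivalence of categories matches the paper's use of Proposition \ref{reg torsor} verbatim.
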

\begin{proof}
By Proposition \ref{BB lemma}, the restriction of $\sD_{\Bun_G}$ to ${\on{Higgs}_G'}^{reg}$ is isomorphic to the gerbe
$\sD(\theta_{can}')$ defined by the canonical 1-form $\theta_{can}'$ on ${\on{Higgs}_G'}^{reg}$.
On the other hand, it follows from the construction of $\theta_m$  in \S\ref{canonical one forms} that we have $\epsilon_{\kappa'}^*\theta_{can}'=\theta_m'$. Hence
\[\epsilon_{\kappa'}^*\sD_{\Bun_G}\is\epsilon_{\kappa'}^*\sD(\theta_{can}')\is\sD(\epsilon_{\kappa'}^*\theta_{can})\is\sD(\theta_m').\]
The last statement follows from the fact that the base change of $\epsilon_{\kappa'}:\sP'\ra\on{Higgs}_G'$ to 
$B^{'0}$
is an isomorphism (see Proposition \ref{reg torsor}).
\end{proof}
To state the second abelianization theorem, recall that in \cite{CZ} we constructed a canonical isomorphism
\[\frakC:\breve\sH\times^{\breve\sP'}\on{Higgs}_{\breve G}'\is\Loc_{\breve G}.\]
Moreover, by \cite[Remark 3.14]{CZ} the choice of the theta characteristic
$\kappa$ defines an isomorphism $\epsilon_{\kappa'}:\sP'\is{\on{Higgs}_G'}^{reg}$,
and hence induces 
an isomorphism 
\[\frak C_\kappa:\breve\sH\is\Loc_{\breve G}^{reg}\subset\Loc_{\breve G}.\]
Here $\Loc_{\breve G}^{reg}$ is the open substack consisting of 
$\breve G$-local systems with regular $p$-curvature, and we have
$\Loc_{\breve G}^{reg}|_{B^{'0}}=\Loc_{\breve G}^0$.
It implies:

\begin{thm}\label{Ab2}
For each choice of a square root $\kappa$  of $\omega_C$, we have 
a canonical isomorphism of $\breve\sP'$-torsors
 $\frak C_\kappa|_{B^{'0}}:\breve\sH|_{B^{'0}}\is\Loc_{\breve G}^{0}$ and it induces
an equivalence of categories
\[\frakC_{\kappa}^*:D^b(\on{QCoh}(\Loc_{\breve G}^0))\is
D^b(\on{QCoh}(\breve\sH|_{B^{'0}})).\]
\end{thm}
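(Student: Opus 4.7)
My plan is to derive this theorem by combining the two results recalled from \cite{CZ} in the paragraph immediately preceding it: (i) the canonical isomorphism $\frakC:\breve\sH\times^{\breve\sP'}\on{Higgs}_{\breve G}'\is\Loc_{\breve G}$, and (ii) the existence, for a choice of $\kappa$, of an isomorphism $\frakC_\kappa:\breve\sH\is\Loc_{\breve G}^{reg}$ with $\Loc_{\breve G}^{reg}$ the open substack of local systems with regular $p$-curvature. The role of Proposition \ref{reg torsor} of the present paper is to convert this regular-locus isomorphism into an isomorphism onto all of $\Loc_{\breve G}^0$ after restricting to the smooth locus $B^{'0}$ of the Hitchin base.

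The first step is to exhibit $\frakC_\kappa$ explicitly in terms of $\frakC$ and check its $\breve\sP'$-equivariance. The theta characteristic $\kappa$ induces, after Frobenius twist, a theta characteristic $\breve\kappa'$ on the dual side, and the construction of \S \ref{kostant section} produces a Kostant section $\epsilon_{\breve\kappa'}:B'\ra\on{Higgs}_{\breve G}^{'reg}\subset\on{Higgs}_{\breve G}'$. By Proposition \ref{reg torsor}(1) applied to $\breve G$, the substack $\on{Higgs}_{\breve G}^{'reg}$ is a $\breve\sP'$-torsor over $B'$ trivialized by $\epsilon_{\breve\kappa'}$. Using this trivialization, the contracted-product inclusion $\breve\sH\times^{\breve\sP'}\on{Higgs}_{\breve G}^{'reg}\hookrightarrow \breve\sH\times^{\breve\sP'}\on{Higgs}_{\breve G}'$ becomes $\breve\sP'$-equivariantly identified with $\breve\sH\hookrightarrow \breve\sH\times^{\breve\sP'}\on{Higgs}_{\breve G}'$, $h\mapsto(h,\epsilon_{\breve\kappa'})$. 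Composing with $\frakC$ then realizes $\frakC_\kappa$ as a $\breve\sP'$-equivariant open immersion $\breve\sH\hookrightarrow \Loc_{\breve G}$ whose image is by definition $\Loc_{\breve G}^{reg}$.

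The second step is the passage to $B^{'0}$. By Proposition \ref{reg torsor}(2) applied to $\breve G$, one has $\on{Higgs}_{\breve G}^{'reg}\times_{B'}B^{'0} = \on{Higgs}_{\breve G}'\times_{B'}B^{'0}$. Pulling back the identification $\Loc_{\breve G}^{reg}\simeq \breve\sH\times^{\breve\sP'}\on{Higgs}_{\breve G}^{'reg}$ along $B^{'0}\hookrightarrow B'$ and combining with $\frakC$ then gives $\Loc_{\breve G}^{reg}|_{B^{'0}} = \Loc_{\breve G}^0$, so $\frakC_\kappa|_{B^{'0}}$ becomes an isomorphism $\breve\sH|_{B^{'0}}\is\Loc_{\breve G}^0$ of $\breve\sP'$-torsors as required.

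The derived-category statement is then formal: any isomorphism of algebraic stacks induces, by pullback, an equivalence of the abelian categories of quasi-coherent sheaves, and hence an equivalence on the level of bounded derived categories. The only genuine content of the argument thus lies in the results imported from \cite{CZ}; the main potential obstacle here is merely notational, namely to match the $\breve\sP'$-action on $\Loc_{\breve G}^{reg}$ used in \emph{loc.\ cit.} with the one induced from Hitchin symmetries on $\on{Higgs}_{\breve G}'$ through $\frakC$, but this compatibility is built into the construction of $\frakC$ and should require no further work.
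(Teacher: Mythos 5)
Your proof is correct and takes the same approach as the paper, whose proof is literally the phrase ``It implies'' following the two cited facts from \cite{CZ}; you usefully unwind the citation, invoking Proposition \ref{reg torsor}(1) to trivialize $\on{Higgs}_{\breve G}^{'reg}$ by a Kostant section so that $\breve\sH\times^{\breve\sP'}\on{Higgs}_{\breve G}^{'reg}\simeq\breve\sH$, and Proposition \ref{reg torsor}(2) to obtain $\Loc_{\breve G}^{reg}|_{B^{'0}}=\Loc_{\breve G}^{0}$. The one small imprecision is the claim that the image of $\frakC_\kappa$ equals $\Loc_{\breve G}^{reg}$ ``by definition''---that $\frakC$ matches regular Higgs fields with regular $p$-curvatures is a theorem proved in \cite{CZ}, not a definition---but since the paper itself treats this as a cited black box, this is not a gap.
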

\subsection{Proof of Theorem \ref{main}}\label{final}
We deduce our main theorem from the twisted duality and above two abelianization theorems.
By the twisted duality we have an isomorphism of $\sP'^\vee\is\breve\sP'$-torsors
$\sT_{\sD(\theta_m')}|_{B^{'0}}\is\breve\sH|_{B^{'0}}$. Therefore  
the twisted Fourier-Mukai transform (Theorem \ref{twist FM}) 
implies an equivalence of categories
\[\frakD:D^b(\on{QCoh}(\sD(\theta_m')|_{B^{'0}}))_1\is D^b(\on{QCoh}(\breve\sH|_{B^{'0}})).\]
Now combining Theorem \ref{Ab1} and Theorem \ref{Ab2} we get the desired equivalence 
\[\frakD_\kappa=(\frakC_\kappa^*)^{-1}\circ\frakD\circ\frakA_\kappa:
D^b(\mD\on{-mod}(\Bun_G^0))\is D^b(\on{QCoh}(\Loc_{\breve G}^0)).\]

\subsection{A $\mu_2$-gerbe of equivalences}\label{a_chi and b_chi}
In this subsection we study how those equivalences $\frak D_\kappa:D^b(\mD\on{-mod}(\Bun_G)^0)\is D^b(\on{QCoh}(\Loc_{\breve G}^{0}))$ in Theorem \ref{main} depend on the 
choices of the theta characteristics $\kappa$. Our discussion is very similar to \cite{FW} and can be regarded as a verification of the predictions of \cite{FW} in our setting.

Let $\omega^{1/2}(C)$ be the groupoid of square roots of $\omega_C$. The groupoid
$\omega^{1/2}(C)$ is a torsor over the Picard category $\mu_2\on{-tors}(C)$ of $\mu_2$-torsors on $C$.
Let $\mathbf{GLC}$ be the groupoid of equivalences between 
$D^b(\mD\on{-mod}(\Bun_G)^0)$ and $D^b(\on{QCoh}(\Loc_{\breve G}^{0}))$, i.e.
objects in $\mathbf{GLC}$ are equivalences 
$\mathbf E:D^b(\mD\on{-mod}(\Bun_G)^0)\is D^b(\on{QCoh}(\Loc_{\breve G}^{0}))$ and 
morphisms are isomorphisms between equivalences.
We first construct an action of $\mu_2\on{-tors}(C)$ on $\mathbf{GLC}$.

\medskip

Let $Z=Z(G)$ be the center of $G$. 
We have a map $\alpha:\mu_2\ra Z(G)$ 
by restricting the cocharacter $2\rho:\bG_m\ra G$ to $\mu_2$
(see \cite[\S 3.4.2]{BD}). Thus for each 
$\chi\in\mu_2\on{-tors}(C)$ and 
$(E,\nabla)\in\Loc_{ G}$ we can
twist $(E,\nabla)$ by $\chi$ using the map 
\[\mu_2\ra\ Z\ra\Aut(E,\nabla)\] 
to
get a new $G$-local system $(E\otimes\chi,\nabla_{E\otimes\chi})
\in\Loc_{G}$. 
The assignment $(\chi,E,\nabla)\ra (E\otimes\chi,\nabla_{E\otimes\chi})$
defines a geometric action 
\[act_G:\mu_2\on{-tors}(C)\times\Loc_G\ra\Loc_G.\]
Likewise, there is $act_G:\mu_2\on{-tors}(C)\times\Bun_G\to\Bun_G$. For $\chi\in\mu_2\on{-tors}(C)$,
let $a_{\chi,G}:\Bun_G\is\Bun_G$ (resp, $b_{\chi,G}:\Loc_{G}\is\Loc_G$ )
be the automorphisms 
of $\Bun_G$ (resp. $\Loc_G$) given by $a_{\chi,G}(E):=E\otimes\chi$, (resp.
$b_{\chi,G}(E,\nabla)=act_G(\chi,E,\nabla)$).
They induce auto-equivalences $a_{\chi,G}^*$ and $b_{\chi,G}^*$ 
 of $D^b(\mD\on{-mod}(\Bun_G))$ 
and 
$D^b(\on{QCoh}(\Loc_G))$ respectively. Note that for the definition of $a_{\chi,G}^*$ and $b_{\chi,G}^*$, there is no restriction of the characteristic of $k$. However, if $\cha k=p\nmid |\rW|$, we have

\begin{lemma}
1)
The equivalence $a_{\chi,G}^*$ 
preserves the full subcategory $D^b(\mD\on{-mod}(\Bun_G)^0)$.
\
2) The equivalence $b_{\chi,G}^*$ preserves
the full subcategory $D^b(\on{QCoh}(\Loc_G^0))$.
\end{lemma}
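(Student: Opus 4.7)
The plan is to verify both claims at the level of the underlying stacks by showing that each of the automorphisms $a_{\chi,G}$ and $b_{\chi,G}$ covers the identity on the Hitchin base $B'$ via the Hitchin (resp. $p$-Hitchin) fibration. Since both subcategories are cut out by pullback along the open inclusion $B^{'0}\hookrightarrow B'$, preservation of these fibrations over $B'$ will immediately yield the result.

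For (1), the decisive observation is that the cocharacter $\alpha:\mu_2\to Z(G)$ factors through the center, which acts trivially on $\fg$ via the adjoint representation. Hence for every $E\in\Bun_G$ and $\chi\in\mu_2\on{-tors}(C)$ there is a canonical isomorphism $\on{ad}(E\otimes\chi)\simeq \on{ad}(E)$. Using the identification $\on{Higgs}_G\simeq T^*\Bun_G$, the automorphism $a_{\chi,G}$ lifts canonically to an automorphism $\tilde a_{\chi,G}:\on{Higgs}_G\to\on{Higgs}_G$ sending $(E,\phi)$ to $(E\otimes\chi,\phi)$, where $\phi$ is viewed as a section of $\on{ad}(E\otimes\chi)\otimes\omega$ via the identification above. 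Since the Hitchin map $h:\on{Higgs}_G\to B$ is computed from the $G$-invariant polynomials of $\phi$, we have $h\circ\tilde a_{\chi,G}=h$. Taking Frobenius twists, the induced automorphism of $\on{Higgs}'_G$ covers the identity on $B'$ and thus preserves the open substack $\on{Higgs}'_G\times_{B'}B^{'0}$. Finally, because the crystalline differential operator sheaf $D_{\Bun_G}$ is intrinsic to the stack $\Bun_G$, pullback along $a_{\chi,G}$ sends $D_{\Bun_G}$-modules to $D_{\Bun_G}$-modules in a canonical way; combined with the previous step we conclude that $a_{\chi,G}^*$ preserves $D^b(\mD\on{-mod}(\Bun_G)^0)$.

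For (2), we use the analogous fact that the $p$-Hitchin map $h_p:\Loc_G\to B'$ from \cite{CZ} depends only on the $p$-curvature of the adjoint local system $(\on{ad}(E),\nabla_{\on{ad}})$ of $(E,\nabla)$. Twisting a $G$-local system by $\chi\in\mu_2\on{-tors}(C)$ (via $\alpha:\mu_2\to Z$) induces, by the same centrality argument as above, a canonical isomorphism of flat bundles $(\on{ad}(E\otimes\chi),\nabla_{\on{ad}})\simeq(\on{ad}(E),\nabla_{\on{ad}})$. Consequently the $p$-curvature is invariant under $b_{\chi,G}$, so $h_p\circ b_{\chi,G}=h_p$. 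This shows that $b_{\chi,G}$ preserves the open substack $\Loc_G^0=\Loc_G\times_{B'}B^{'0}$, and therefore $b_{\chi,G}^*$ preserves the corresponding subcategory of $D^b(\on{QCoh}(\Loc_G))$.

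The main point to be careful about is bookkeeping across the Frobenius twist in (1), namely that the lift $\tilde a_{\chi,G}$ on $\on{Higgs}_G$ is compatible with the gerbe $\sD_{\Bun_G}$ on $\on{Higgs}'_G$; however, since $\sD_{\Bun_G}$ is functorially attached to $\Bun_G$, this compatibility is automatic. No serious calculation is required in either part.
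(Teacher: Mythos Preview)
Your proof is correct. For Part~(1) it is essentially the paper's argument: both come down to the observation that the automorphism of $T^*\Bun_G'$ induced by $a_{\chi,G}$ lies over the identity of $B'$ (because twisting by a central torsor leaves $\ad(E)$ and hence the Higgs field unchanged), so the localization over $B^{'0}$ is preserved. The paper phrases this via the gerbe $\sD_{\Bun_G}$ and the cotangent lift $f=da'_{\chi,G}$, whereas you phrase it via the Hitchin map directly; the content is identical.

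For Part~(2) the routes genuinely diverge. You argue directly that the adjoint flat bundle, and hence the $p$-curvature and the $p$-Hitchin image, are unchanged under central twisting. The paper instead rewrites the action $act_G$ as the composite $\mu_2\on{-tors}(C)\times\Loc_G\xrightarrow{\fl_{\mu_2}\times\id}\sP'\times_{B'}\Loc_G\to\Loc_G$, using the $\sP'$-action on $\Loc_G$ from \cite{CZ}; since this action is over $B'$, invariance of $\Loc_G^0$ is immediate. Your argument is more elementary and perfectly sufficient for the lemma as stated. The paper's reformulation, however, is not just a proof device: the identification of $b_{\chi,G}$ with translation by $\fl_{\mu_2}(\{\chi\}\times B')\in\sP'(B')$ is exactly what is reused in the proofs of Proposition~\ref{mu_2 gerbe} and Theorem~\ref{FW}. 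So the paper's detour buys compatibility with the later Fourier--Mukai arguments, while your approach buys a self-contained verification of the lemma without invoking the $\sP'$-action machinery.
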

\begin{proof}
This lemma will be clear after we give alternative descriptions of $a_{\chi,G}^*$ and $b_{\chi,G}^*$. 

First, recall that in \S \ref{gerbe} we introduce a $\bG_m$-gerbe $\sD_{\Bun_G}$
over $T^*\Bun_G'$ and the category $\on{QCoh}(\sD_{\Bun_G})_1$ of twisted sheaves on
$\sD_{\Bun_G}$ such that there is an equivalence of categories between
$\mD\on{-mod}(\Bun_G)$ and $\on{QCoh}(\sD_{\Bun_G})_1$.
Let 
$f:=da_{\chi,G}':T^*\Bun_G'\is T^*\Bun_G'$
be the differential of $a_{\chi,G}'$.
The map $f$ preserves the canonical one form $\theta_{can}'$, and thus 
by Lemma \ref{theta}, there is 
a canonical 1-morphism $M:f^*\sD_{\Bun_G}\sim\sD_{\Bun_G}$ 
of gerbes on $T^*\Bun_G'$. The 1-morphism $M$ induces an equivalence 
$M:\on{QCoh}(f^*\sD_{\Bun_G})_1\is\on{QCoh}(\sD_{\Bun_G})_1$ and it follows from definitions
that the functor $a_{\chi,G}^*$ is isomorphic to the composition 
\begin{equation}\label{morita}
D^b(\on{QCoh}(\sD_{\Bun_G})_1)\stackrel{\ f^*}\is
D^b(\on{QCoh}(f^*\sD_{\Bun_G})_1)\stackrel{M}\is D^b(\on{QCoh}(\sD_{\Bun_G})_1).
\end{equation}
Recall that the category $\mD\on{-mod}(\Bun_G)^0$ is by definition the category of 
twisted sheaves on $\sD_{\Bun_G}^0=\sD_{\Bun_G}|_{B^{'0}}$. Therefore, Part 1) follows.

To prove Part 2), note that the map $act_G:\mu_2\on{-tors}(C)\times\Loc_G\ra\Loc_G$ can be also described as follows.
There is a map of group schemes 
$(\mu_2)_{C'\times B'}\ra Z(G)_{C'\times B'}\ra J'$
over $C'\times B'$, which induces a morphism of Picard stacks 
\begin{equation}\label{l_mu}
\fl_{\mu_2}:\mu_2\on{-tors}(C)\times B'\ra\sP',
\end{equation}
and the action map $act_G$ can be identified with
\begin{equation}\label{act loc}
act_G:\mu_2\on{-tors}(C)\times\Loc_G\stackrel{\fl_{\mu_2}\times id}\ra\sP'\times_{B'}\Loc_G\ra\Loc_G
\end{equation}
where the last map is the action of $\sP'$ on $\Loc_G$ defined in 
\cite[Proposition 3.5]{CZ}. In particular, if we endow $B'$ with the trivial $:\mu_2\on{-tors}(C)$ action, the $p$-Hitchin map $\Loc_G\to B'$ is $:\mu_2\on{-tors}(C)$-equivariant. Therefore $\Loc_G^0$ is invariant under the action of $b_{\chi,G}$, and Part 2) follows.
\end{proof}

From now on we regard $a_{\chi,G}^*$ and $b_{\chi,G}^*$ as automorphisms of 
the category $D^b(\mD\on{-mod}(\Bun_G)^0)$ and $D^b(\on{Qcoh}(\Loc_G^0))$.

For each $\chi\in\mu_2\on{tors}(C)$ and 
$\mathbf E\in\mathbf{GLC}$ we define
$$\chi\cdot\mathbf E:=b_{\chi,\breve G}^*\circ\mathbf E\circ a_{\chi,G}^*\in\mathbf{GLC}.$$ The following 
lemma follows from the construction of 
$b_{\chi,\breve G}^*$ and 
$a_{\chi,G}^*$ .
\begin{lemma} 
The functor $\mu_2\on{-tors}(C)\times\mathbf{GLC}\ra\mathbf{GLC}$
given by $(\chi,\mathbf E)\ra\chi\cdot\mathbf E$
defines an action of the Picard category $\mu_2\on{-tors}(C)$ on 
 $\mathbf{GLC}$.
\end{lemma}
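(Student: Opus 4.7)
The plan is to deduce the claim from two formal observations: first, that $\chi\mapsto a_{\chi,G}^*$ and $\chi\mapsto b_{\chi,\breve G}^*$ each lift to (symmetric) monoidal functors from $\mu_2\on{-tors}(C)$ into the category of auto-equivalences of the corresponding derived category; second, that given two such monoidal functors, the conjugation rule defines an action of the source Picard category on the groupoid of equivalences between their two target categories.

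For the first observation, I would verify that $act_G:\mu_2\on{-tors}(C)\times\Bun_G\to\Bun_G$ is an honest action of the Picard stack $\mu_2\on{-tors}(C)$, i.e.\ that the natural isomorphisms $(E\otimes\chi_2)\otimes\chi_1\simeq E\otimes(\chi_1\otimes\chi_2)$ and $E\otimes\mathcal{O}_C\simeq E$ satisfy the pentagon and triangle axioms; this is standard for torsor actions of Picard stacks. Passing to pullback yields a monoidal functor $\chi\mapsto a_{\chi,G}^*$, and the preceding lemma guarantees that it restricts to auto-equivalences of $D^b(\mD\on{-mod}(\Bun_G)^0)$. The same argument applies to $b_{\chi,\breve G}^*$ via the description \eqref{act loc}: since $\fl_{\mu_2}$ is a morphism of Picard stacks and the $\sP'$-action on $\Loc_{\breve G}$ is itself an action of Picard stacks, pullback yields the required monoidal functor.

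Next, I will invoke the following formal fact: given two monoidal functors $\Phi_i:\mathcal P\to\Aut(\mathcal C_i)$ ($i=1,2$) from a symmetric monoidal category $\mathcal P$, the rule $\chi\cdot F:=\Phi_2(\chi)\circ F\circ\Phi_1(\chi)$ canonically assembles into an action of $\mathcal P$ on the groupoid of equivalences $\mathcal C_1\stackrel{\sim}{\to}\mathcal C_2$. The associator $(\chi_1\otimes\chi_2)\cdot F\simeq\chi_1\cdot(\chi_2\cdot F)$ is the horizontal composition of the monoidal constraint of $\Phi_2$ (applied on the left of $F$) with that of $\Phi_1$ (applied on the right, read in reverse order, which is harmless by symmetry); the pentagon and unit coherences reduce to those of $\Phi_1$ and $\Phi_2$ together with those of $\mathcal P$.

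The main obstacle, such as it is, is purely bookkeeping. Pullback is contravariant, so $\chi\mapsto a_{\chi,G}^*$ is \emph{a priori} monoidal with respect to the opposite monoidal structure, but since $\mu_2\on{-tors}(C)$ is Picard hence symmetric monoidal this causes no substantive issue. One should also check that the Morita description \eqref{morita} used in the preceding lemma is compatible with composition in $\chi$; this follows because the $1$-morphism $M:f^*\sD_{\Bun_G}\simeq\sD_{\Bun_G}$ of Lemma~\ref{theta} is constructed functorially from the preservation of $\theta_{can}'$ by $f=da_{\chi,G}'$, a property manifestly compatible with composing the automorphisms $a_{\chi,G}'$.
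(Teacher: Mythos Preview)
Your proposal is correct and is essentially a detailed elaboration of what the paper leaves implicit: the paper gives no proof beyond the sentence ``The following lemma follows from the construction of $b_{\chi,\breve G}^*$ and $a_{\chi,G}^*$.'' Your two-step argument (monoidality of $\chi\mapsto a_{\chi,G}^*$ and $\chi\mapsto b_{\chi,\breve G}^*$, then the formal conjugation action) is exactly the content this sentence is pointing to, and your treatment of the contravariance bookkeeping is sound---indeed here it is even simpler than you indicate, since every object of $\mu_2\on{-tors}(C)$ is canonically its own inverse.
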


\medskip

Now
let $\mathscr C_1$ and $\mathscr C_2$ be two categories acted by a 
Picard category $\sG$.
A $\sG$-module functor from $\mathscr C_1$ to $\mathscr C_2$ is a 
functor 
$N:\mathscr C_1\ra\mathscr C_2$ 
equipped with functorial isomorphisms $N(a\cdot c)\is a\cdot N(c)$
satisfying the natural compatibility condition.
Here is the main result of this subsection
\begin{prop}\label{mu_2 gerbe}
The assignment $\kappa\ra\frak D_\kappa$
defines a $\mu_2\on{-tors}(C)$-module functor \[\Phi:\omega^{1/2}(C)\ra\mathbf{GLC}.\]
\end{prop}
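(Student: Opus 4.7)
The plan is to exhibit, for each $\chi \in \mu_2\on{-tors}(C)$ and $\kappa \in \omega^{1/2}(C)$, a canonical natural isomorphism
\[
\frakD_{\kappa\otimes\chi} \simeq b_{\chi,\breve G}^* \circ \frakD_\kappa \circ a_{\chi,G}^*,
\]
together with the cocycle identities in $\chi$ that make these into a $\mu_2\on{-tors}(C)$-module functor structure on $\Phi$. Using the factorization $\frakD_\kappa = (\frakC_\kappa^*)^{-1} \circ \frakD \circ \frakA_\kappa$ from \S\ref{final} and the fact that the twisted Fourier--Mukai transform $\frakD$ does not depend on $\kappa$, the claim reduces to establishing the two natural isomorphisms
\[
\frakA_{\kappa\otimes\chi} \simeq \frakA_\kappa \circ a_{\chi,G}^*, \qquad \frakC_{\kappa\otimes\chi} \simeq b_{\chi,\breve G} \circ \frakC_\kappa,
\]
which immediately combine to give the desired equality.

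For the first identity, I would unravel the construction of the Kostant section in \S\ref{kostant section}. The Kostant bundle $E_\kappa$ is obtained by pushing the $\bG_m^{[2]}$-torsor attached to $\kappa$ along $2\rho\colon \bG_m\to G$, so replacing $\kappa$ by $\kappa\otimes\chi$ tensors $E_\kappa$ with $\alpha_*\chi$, where $\alpha\colon \mu_2\to Z(G)$ is the restriction of $2\rho$ as in \cite[\S 3.4.2]{BD}. Because $Z(G)$ acts trivially on $\on{ad} E_\kappa$, the Higgs field $\phi_\kappa$ is preserved, so one obtains the key identity
\[
\epsilon_{(\kappa\otimes\chi)'} \simeq a_{\chi,G}' \circ \epsilon_{\kappa'}\colon \sP' \to \Hg_G^{'reg}.
\]
Combined with the Morita description \eqref{morita} of $a_{\chi,G}^*$ on the twisted category $D^b(\on{QCoh}(\sD_{\Bun_G}^0))_1$ and Theorem \ref{Ab1}, this yields $\frakA_{\kappa\otimes\chi} \simeq \frakA_\kappa \circ a_{\chi,G}^*$. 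The analogous argument on the dual side, applied to the $\breve G$-Kostant bundle and the restriction $\breve\alpha\colon \mu_2 \to Z(\breve G)$ of $2\breve\rho$, shows that the $\breve G$-Kostant section $B' \to \Hg_{\breve G}^{'reg}$ changes by the $\breve\sP'$-action of $\breve\fl_{J'}(\chi')$ with $\chi' = F^*\chi$. Combined with the $\breve\sP'$-equivariance of $\frakC$ and with the description \eqref{act loc} of $b_{\chi,\breve G}$ as the action of $\breve\fl_{J'}(\chi') \in \breve\sP'$ on $\Loc_{\breve G}$, one concludes $\frakC_{\kappa\otimes\chi} \simeq b_{\chi,\breve G} \circ \frakC_\kappa$.

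Equivalently, the two identities can be rewritten as $\frakA_{\kappa\otimes\chi} \simeq t_{\fl_{J'}(\chi')}^* \circ \frakA_\kappa$ and $\frakC_{\kappa\otimes\chi} \simeq \frakC_\kappa \circ t_{\breve\fl_{J'}(\chi')}$, where $\fl_{J'}$ is the map \eqref{l_mu}. This reformulation gives a useful consistency check: the translation $t_{\fl_{J'}(\chi')}^*$ on $\sT_{\sD(\theta_m')}|_{B^{'0}}$ must match the translation $t_{\breve\fl_{J'}(\chi')}$ on $\breve\sH|_{B^{'0}}$ under $\frakD$, and this is exactly the content of Proposition \ref{Aut line bundle 1} ($\frakD_{cl}\circ\fl_{J'}\simeq\breve\fl_{J'}$) combined with the construction of $\frakD$ in Theorem \ref{tw duality} as a torsor-theoretic lift of the classical duality.

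The cocycle conditions in $\chi$ then follow from the additivity of $\fl_{J'}$ and $\breve\fl_{J'}$ in $\chi$, together with the functoriality of the Kostant sections and of the Morita identifications in $\chi$. The main obstacle will be a careful bookkeeping of Frobenius twists (distinguishing $\chi$ from $\chi' = F^*\chi$) and of the various gerbe identifications, and ensuring that every natural isomorphism invoked is genuinely canonical, since any non-canonical choice would destroy the $\mu_2\on{-tors}(C)$-module functor structure on $\Phi$.
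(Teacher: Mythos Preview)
Your proposal is correct and follows essentially the same approach as the paper: both reduce the statement to the two identities $\frakA_{\chi\cdot\kappa}\simeq\frakA_\kappa\circ a^*_{\chi,G}$ and $\frakC_{\kappa}^*\circ b^*_{\chi,\breve G}\simeq\frakC_{\chi\cdot\kappa}^*$, and both derive these from the observation that $\epsilon_{\kappa'}$ and $\epsilon_{(\chi\cdot\kappa)'}$ differ by translation by $\fl_{\mu_2}(\{\chi\}\times B')\in\sP'(B')$. Your additional consistency check via Proposition~\ref{Aut line bundle 1} is not needed for the proof but is a reasonable sanity check.
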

\begin{proof}
Given $\chi\in\mu_2\on{-tors}(C)$ and
 $\kappa\in\omega^{1/2}(C)$ we need to specify a functorial isomorphism 
 $\frak D_{\chi\cdot\kappa}\is\chi\cdot \frak D_\kappa$ 
 satisfying the natural compatibility condition. First, observe that the maps $\epsilon_{\kappa'},\epsilon_{\kappa'_1}:\sP'\ra\on{Higgs}'_G$
induced by $\kappa,\kappa_1:=\chi\cdot\kappa\in\mu_2\on{-tors}(C)$ 
differ by a translation of the section $\fl_{\mu_2}(\{\chi\}\times B')\in\sP'(B')$, where $\fl_{\mu_2}$ is the map in (\ref{l_mu}). Then it follows from the construction of $\frakA_\kappa$ and $\frakC_\kappa$ in \S\ref{ab thm}
 that there are canonical functorial isomorphisms
$\frakA_{\chi\cdot\kappa}\is\frakA_\kappa\circ a^*_{\chi,G}$ and
$\frakC_{\kappa}^*\circ b^*_{\chi,\breve G}\is\frakC_{\chi\cdot\kappa}^*$.
Therefore we get a functorial isomorphism 
\[\frak D_{\chi\cdot\kappa}=(\frakC_{\chi\cdot\kappa}^*)^{-1}\circ\frak D\circ\frakA_{\chi\cdot\kappa}\is b^*_{\chi,\breve G}\circ(\frakC_{\kappa}^*)^{-1}\circ\frak D\circ\frakA_{\kappa}\circ a^*_{\chi,G}=\chi\cdot\frakD_\kappa,\]
and one can check that it satisfies the natural compatibility condition. 
\end{proof}
\begin{remark}
The above construction suggests  that the geometric Langlands correspondence should be a $\mu_2\on{-gerbe}$ of equivalences between 
$D^b(\mD\on{-mod}(\Bun_G))$ and  $D^b(\on{QCoh}(\Loc_{\breve G}))$.
This gerbe is trivial, but is not canonically trivialized. One obtains a particular trivialization of this gerbe, and hence a particular equivalence $\frak D_\kappa$, for each choice of a square root of the canonical line bundle of $C$.  A similar $\mu_2$-gerbe also appears in the work of Frenkel and Witten \cite[\S 10]{FW},
where the geometric Langlands correspondence is interpreted as gauge theory duality between the twisted $A$-model of $\on{Higgs}_G$ and
the twisted $B$-model of $\on{Higgs}_{\breve G}$.
\end{remark}

\subsection{The actions $a_{\chi,G}^*$ and 
$b_{\chi,G}^*$ as tensoring of line bundles}\label{tensoring action}
In this subsection we show that, under the equivalence 
$\mathfrak D_\kappa$, 
the geometric actions $a_{\chi,G}^*$ and 
$b_{\chi,G}^*$ constructed in previous subsection become 
functors of tensoring with certain line bundles. 

Recall that in \S \ref{l_J} we associated to every $Z(\breve G)$-torsor $K$ on $C$ a line bundle $\mL_{G,K}$ on $\Bun_G$. 
For any $\chi\in\mu_2\on{-tors}(C)$ 
let $K_{G,\chi}:=\chi\times^{\mu_2}Z_{G}\in Z(G)\on{-tors}(C)$
be the induced $Z(G)$-torsor  via the canonical map $2\rho:\mu_2\ra Z(G)$.
We denote by 
$\mL_{G,\chi}$ and $\mL_{\breve G,\chi}$ 
be the line bundles on $\Bun_G$ and $\Bun_{\breve G}$
corresponding to $K_{\breve G,\chi}$ and $K_{G,\chi}$. Since the line bundle $\mL_{G,\chi}$ carries a 
canonical connection with zero $p$-curvature, 
 tensoring with $\mL_{G,\chi}$
defines an auto-equivalence $\mL_{G,\chi}\otimes ?$ of $D^b(\mD\on{-mod}(\Bun_G)^0)$.

For any $\kappa\in\omega^{1/2}(C)$ let $\mathfrak D_\kappa:
D^b(\mD\on{-mod}(\Bun_G)^0)\is D^b(\on{QCoh}(\Loc_{\breve G}^{0}))$
be the  equivalence in Theorem \ref{D_kappa}.

\begin{thm}\label{FW}
1) The equivalence $\mathfrak D_\kappa$ intertwines the auto-equivalence $\mL_{G,\chi}\otimes ?$ of $D^b(\mD\on{-mod}(\Bun_G)^0)$
and the auto-equivalence $b_{\chi,\breve G}^*$ on $D^b(\on{QCoh}(\Loc_{\breve G}^{0}))$ constructed in \S\ref{a_chi and b_chi}.
\\
2)
The equivalence $\mathfrak D_\kappa$ intertwines the auto-equivalence $a_{\chi,G}^*$ of $D^b(\mD\on{-mod}(\Bun_G)^0)$ as in \S\ref{a_chi and b_chi}
and the auto-equivalence $\mL_{\breve G,\chi}\otimes ?$ on $D^b(\on{QCoh}(\Loc_{\breve G}^{0}))$ (here we regard $\mL_{\breve G,\chi}$ as a line bundle on $\Loc_{\breve G}^0$ via the projection $\Loc_{\breve G}\to \Bun_{\breve G}$).
\end{thm}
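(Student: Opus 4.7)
The plan is to trace each autoequivalence through the three factors of $\frakD_\kappa = (\frakC_\kappa^*)^{-1}\circ\frakD\circ\frakA_\kappa$, matching translations with tensor-by-line-bundle operations via the twisted Fourier--Mukai formalism (Theorem~\ref{twist FM}). The two categorical inputs doing all the work are Lemma~\ref{Aut line bundle 2}, $\epsilon_{\kappa'}^{*}\pr^{*}\mL_{G,K}\is \mL_{J,K}$, and Proposition~\ref{Aut line bundle 1}, $\frakD_{cl}(\mL_{J,K})\is K_J=\breve\fl_J(K)\in\breve\sP$.

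For part (1), tensoring by $\mL_{G,\chi}=\mL_{G,K_{\breve G,\chi}}$ commutes with $\frakA_\kappa$ up to pulling back along $\pr\circ\epsilon_{\kappa'}$; by Lemma~\ref{Aut line bundle 2} this pullback is $\mL_{J,K_{\breve G,\chi}}$ on $\sP'$. Under the twisted Fourier--Mukai $\frakD$, tensoring by a line bundle $\mL\in(\sP')^{\vee}$ is intertwined with translation on $\breve\sH$ by $\frakD_{cl}(\mL)\in\breve\sP'$; applied to $\mL=\mL_{J,K_{\breve G,\chi}}$ and combined with Proposition~\ref{Aut line bundle 1}, this yields translation by $(K_{\breve G,\chi})_J=\breve\fl_J(K_{\breve G,\chi})\in\breve\sP'$. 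The isomorphism $\frakC_\kappa:\breve\sH\is\Loc_{\breve G}^{reg}$ is $\breve\sP'$-equivariant, and by the $\breve G$-analogue of formula~(\ref{act loc}) translation on $\Loc_{\breve G}^0$ by $\breve\fl_J(K_{\breve G,\chi})$ is precisely $b_{\chi,\breve G}^*$.

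Part (2) is entirely symmetric. The action $a_{\chi,G}^*$ on $\Bun_G$ lifts through the Kostant identification $\epsilon_{\kappa'}:\sP'\is\on{Higgs}_G^{',reg}$ to translation on $\sP'$ by $\fl_{\mu_2}(\chi)=(K_{G,\chi})_J\in\sP'$ (this is exactly the content of the map $\fl_{\mu_2}$ in (\ref{l_mu})). Under Fourier--Mukai this translation is intertwined with tensoring on $\breve\sH$ by $\breve\frakD_{cl}^{-1}(\fl_{\mu_2}(\chi))\in(\breve\sP')^{\vee}$; invoking the $\breve G$-version of Proposition~\ref{Aut line bundle 1} identifies this line bundle as $\mL_{\breve J,K_{G,\chi}}$. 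Finally, transporting via $(\frakC_\kappa^*)^{-1}$ and applying Lemma~\ref{Aut line bundle 2} to $\breve G$, this becomes tensoring by the pullback to $\Loc_{\breve G}^0$ of $\mL_{\breve G,K_{G,\chi}}=\mL_{\breve G,\chi}$, as required.

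The technical crux is the ``translation $\leftrightarrow$ tensor by line bundle'' dictionary under $\frakD$: since $\sT_{\sD(\theta_m')}$ is a $(\sP')^\vee$-torsor supporting a multiplicative gerbe and $\breve\sH$ is a $\breve\sP'$-torsor, one needs the twisted, torsorified version of the standard Fourier--Mukai equivariance. This is implicit in Theorem~\ref{twist FM} but has to be extracted explicitly, with proper bookkeeping of the Frobenius twist and of the multiplicative splittings that incarnate $\sT_{\sD(\theta_m')}$. A secondary verification is that $a_{\chi,G}^*$ indeed lifts through $\epsilon_{\kappa'}$ to translation by $\fl_{\mu_2}(\chi)$ on $\sP'$; this boils down to checking that the Kostant section is equivariant for the natural $Z(G)$-action, which follows from its construction via the $2\rho$ cocharacter (so that the central $Z(G)$-twists act by the $(K_{G,\chi})_J$-translation on $\Hg_G^{',reg}$). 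Once these two compatibilities are installed, the two chains above produce the theorem.
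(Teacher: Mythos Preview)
Your Part 1 is essentially the paper's argument, just run in the opposite direction (you start from $\mL_{G,\chi}\otimes ?$ and push forward to $b_{\chi,\breve G}^*$, the paper starts from $b_{\chi,\breve G}^*$ and pulls back). The key inputs are the same: Lemma~\ref{Aut line bundle 2}, Proposition~\ref{Aut line bundle 1}, and Theorem~\ref{twist FM}.

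Part 2 has a genuine gap. After Fourier--Mukai, the line bundle you obtain lives on the $\breve\sP'$-torsor $\breve\sH$, not on $\breve\sP'$ itself; an element of $(\breve\sP')^\vee$ is a multiplicative line bundle on $\breve\sP'$, and there is no canonical way to regard it as a line bundle on $\breve\sH$ without further work. Likewise, the map you want to pull back along is $\breve\sH\stackrel{\frakC_\kappa}{\to}\Loc_{\breve G}\to\Bun_{\breve G}$, which factors through $\Loc_{\breve G}$, whereas Lemma~\ref{Aut line bundle 2} for $\breve G$ concerns $\breve\sP\stackrel{\epsilon_\kappa}{\to}\on{Higgs}_{\breve G}\to\Bun_{\breve G}$. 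These are different maps between different spaces, so you cannot simply invoke the $\breve G$-version of Lemma~\ref{Aut line bundle 2}.

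The paper fills this gap with two dedicated lemmas. Lemma~\ref{Spectral line-bundle 1} identifies the Fourier--Mukai image $\tilde\mL_K:=\frakD(\delta_{K'_{J'}})\in\Pic(\breve\sH)$ with $\mL_{\breve J^p,K}$, a line bundle on $\breve\sH$ constructed via the generalized Chern class map for $\breve J^p$; the proof passes through the extension $0\to\breve\sP'\to\breve\mG\to\bbZ\to 0$ incarnating $\sD(\theta_m')^\vee$ and uses that a multiplicative line bundle on $\breve\mG$ is determined by its restriction to $\breve\sP'$. Lemma~\ref{Spectral line-bundle 2} then shows that the pullback of $\mL_{\breve G,K}$ along $\breve\sH\stackrel{\frakC_\kappa}{\to}\Loc_{\breve G}\to\Bun_{\breve G}$ is $\tilde\mL_K$; the proof is parallel to Lemma~\ref{Aut line bundle 2} but uses the explicit formula $\frakC_\kappa(P,\nabla)=(P\times^{\breve J^p}F_C^*E_{\kappa'},\nabla_{P\otimes F_C^*E_{\kappa'}})$ and the canonical $\breve G_{sc}$-lift of the Kostant section. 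Your outline implicitly assumes both of these, but neither follows directly from the un-twisted statements you cite.
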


\begin{remark}
Similar actions by tensoring line bundles on $\Loc_G$ 
and on $\on{Higgs}_G$ also 
appear in the work of Frenkel and Witten \cite[\S 10.4]{FW}. Moreover, the authors also 
predict that the geometric Langlands correspondence should interchange these actions.
\end{remark}

Combining Theorem \ref{mu_2 gerbe} and Theorem \ref{FW} we have the following: 
\begin{corollary}
Let $\kappa_1,\kappa_2\in\omega^{1/2}(C)$. Then there is a natural isomorphism 
of equivalences
 \[\frak D_{\kappa_1}\is (\mL_{\breve G,\chi}\otimes\ ?)\circ\frak D_{\kappa_2}
 \circ (\mL_{G,\chi}\otimes\ ?).\]
Here $\chi\in\mu_2\on{-tors}(C)$ such that $\kappa_1=\chi\cdot\kappa_2$
and $\mL_{G,\chi}\otimes\ ?$ (resp. $\mL_{\breve G,\chi}\otimes\ ?$)
is the functor of tensoring with the line bundle $\mL_{G,\chi}$ (resp. $\mL_{\breve G,\chi}$).

\end{corollary}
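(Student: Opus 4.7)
The plan is a short chain of formal manipulations that combines Proposition \ref{mu_2 gerbe} with both parts of Theorem \ref{FW}, using only the fact that $\chi\in\mu_2\on{-tors}(C)$ is $2$-torsion (i.e.\ $\chi\otimes\chi\is\mO_C$). To begin, since $\kappa_1=\chi\cdot\kappa_2$, Proposition \ref{mu_2 gerbe} provides a natural isomorphism
\[\frak D_{\kappa_1}\is \chi\cdot\frak D_{\kappa_2}\;=\; b^*_{\chi,\breve G}\circ\frak D_{\kappa_2}\circ a^*_{\chi,G}.\]

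Next, I apply Theorem \ref{FW}(1) to $\kappa_2$ to rewrite the left-hand factor: it gives $b^*_{\chi,\breve G}\circ\frak D_{\kappa_2}\is \frak D_{\kappa_2}\circ(\mL_{G,\chi}\otimes\ ?)$, hence
\[\frak D_{\kappa_1}\is \frak D_{\kappa_2}\circ(\mL_{G,\chi}\otimes\ ?)\circ a^*_{\chi,G}.\]
Because $\chi^{\otimes 2}\is\mO_C$ in $\mu_2\on{-tors}(C)$, the translation $a_{\chi,G}:\Bun_G\to\Bun_G$ is an involution ($a_{\chi,G}\circ a_{\chi,G}=a_{\chi^{\otimes 2},G}=\id$), so $a^*_{\chi,G}$ is a self-inverse autoequivalence; composing the previous display with $a^*_{\chi,G}$ on the right yields
\[\frak D_{\kappa_1}\circ a^*_{\chi,G}\is\frak D_{\kappa_2}\circ(\mL_{G,\chi}\otimes\ ?).\]

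Now I invoke Theorem \ref{FW}(2) applied to $\kappa_1$, which produces $\frak D_{\kappa_1}\circ a^*_{\chi,G}\is(\mL_{\breve G,\chi}\otimes\ ?)\circ\frak D_{\kappa_1}$. Comparing with the previous display gives
\[(\mL_{\breve G,\chi}\otimes\ ?)\circ\frak D_{\kappa_1}\is\frak D_{\kappa_2}\circ(\mL_{G,\chi}\otimes\ ?).\]
Finally, since $\fl_{\breve G}$ is a tensor (Picard) functor and $\chi^{\otimes 2}$ is trivial, $\mL_{\breve G,\chi}^{\otimes 2}\is\mL_{\breve G,\chi^{\otimes 2}}\is\mO_{\Bun_{\breve G}}$, so tensoring by $\mL_{\breve G,\chi}$ is an involution on $D^b(\on{QCoh}(\Loc_{\breve G}^{0}))$. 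Applying it to both sides of the last display yields the required isomorphism
\[\frak D_{\kappa_1}\is (\mL_{\breve G,\chi}\otimes\ ?)\circ\frak D_{\kappa_2}\circ(\mL_{G,\chi}\otimes\ ?).\]

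The only real subtlety is bookkeeping: each invocation of Proposition \ref{mu_2 gerbe} or Theorem \ref{FW} must be understood as supplying a natural isomorphism of equivalences (a morphism in the groupoid $\mathbf{GLC}$), not merely a pointwise isomorphism, and the two uses of involutivity must be coherent with these natural isomorphisms. This is automatic from the way $\Phi:\omega^{1/2}(C)\to\mathbf{GLC}$ is shown to be a $\mu_2\on{-tors}(C)$-module functor and from the tensor functoriality of $\fl_G,\fl_{\breve G}$ built into their construction in \S\ref{l_J}; consequently I do not anticipate a substantive obstacle beyond careful tracking of the coherence data.
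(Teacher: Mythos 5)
Your chain of isomorphisms is correct and is exactly what the paper's one‑line remark ``Combining Theorem \ref{mu_2 gerbe} and Theorem \ref{FW} we have the following'' is asking the reader to carry out; you have simply written out the combination explicitly. One feature worth noting: the obvious way to ``combine'' would be to start from $\frak D_{\kappa_1}\is b^*_{\chi,\breve G}\circ\frak D_{\kappa_2}\circ a^*_{\chi,G}$ and replace each geometric autoequivalence by the corresponding tensoring functor via Theorem~\ref{FW}(1) and (2), but doing this naively requires knowing that $(\mL_{G,\chi}\otimes ?)$ commutes with $a^*_{\chi,G}$ (equivalently, that $a^*_{\chi,G}\mL_{G,\chi}\is\mL_{G,\chi}$), or on the spectral side that $b^*_{\chi,\breve G}$ commutes with $(\mL_{\breve G,\chi}\otimes ?)$. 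Your route sidesteps both commutativity questions by composing instead with the canonically self‑inverse autoequivalences $a^*_{\chi,G}$ and $(\mL_{\breve G,\chi}\otimes ?)$, using only the $2$-torsion of $\mu_2\on{-tors}(C)$ and the tensor functoriality of $\fl_G,\fl_{\breve G}$; this is arguably the cleanest way to assemble the ingredients. Your closing remark about coherence of the natural $2$-isomorphisms supplied by Proposition~\ref{mu_2 gerbe} and Theorem~\ref{FW} is the right thing to flag, and it is indeed handled by the fact that $\Phi$ is a $\mu_2\on{-tors}(C)$-module functor and $\fl_G,\fl_{\breve G}$ are tensor functors.
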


The rest of this subsection is devoted to the proof of this theorem.

We first introduce a morphism of Picard stack 
\[\tilde\fl:Z(G)\on{-tors}(C)\times B'\ra\on{Pic}(\breve\sH)\]
and prove a 
twisted version of Proposition \ref{Aut line bundle 1}. 
We begin with the construction of $\tilde\fl$.
Let $\Bun_{J^p}$ be the Picard stack of $J^p$-torsors over $C$. 
We have the generalized Chern class map
 $\tilde c_{\breve J^p}:\Bun_{\breve J^p}\ra\Pi_{\breve G}(1)\on{-gerbes}(X)\times B'$ 
and a Picard functor $\fl_{\breve J^p}:Z(G)\on{-tors}(C)\times B'\ra\on{Pic}(\Bun_{\breve J^p})$.
We define 
\[\tilde\fl:Z(G)\on{-tors}(C)\times B'\stackrel{\fl_{\breve J^p}}\ra\on{Pic}(\Bun_{\breve J^p})
\ra\on{Pic}(\breve\sH)\]
where the last map is induced by the restriction map
$\breve\sH=\Loc_{\breve J^p}(\tau')\ra\Bun_{\breve J^p}$.

Recall the morphism $\breve\fl_{\breve J}:Z_{G}\on{-tors}(C)\times B'\ra\sP'$ constructed in \S \ref{l_J}.
For any $Z(G)$-torsor $K$ over $C$, 
we define $$\mL_{\breve J^p,K}:=\tilde\fl(\{K\}\times B')\in\on{Pic}(\breve\sH).$$
Let $K'$ denote the Frobenius descendent of $K$ (as $C_{et}\simeq C'_{et}$), and let
$$K'_{J'}=\breve\fl_{\breve J}(\{K'\}\times B')\in\sP'(B').$$ 
We will relate $\mL_{\breve J^p,K}$ with $K'_{J'}$ via the twisted duality.
From the definition of $\theta'_m$ in \S\ref{canonical one forms}, one can easily check that 
the restriction of $\theta'_m$ to $K'_{J'}$ is zero. Thus the restriction of 
the $\bG_m$-gerbe $\sD(\theta'_m)$ to $K'_{J'}$ is canonical trivial and we can regard the structure sheaf  $\delta_{K'_{J'}}\in\on{QCoh}(\sP')$
as an object in $\on{QCoh}(\sD(\theta'_m))_1$.
Let $\tilde\mL_{K}=\mathfrak D(\delta_{K'_{J'}})\in\on{Pic}(\breve\sH)$ be 
the image of $\delta_{K'_{J'}}$ under the twisted duality 
$\mathfrak D:D^b(\on{QCoh}(\sD(\theta')))_1\is D^b(\on{QCoh}(\breve\sH))$.

\begin{lemma}\label{Spectral line-bundle 1}
We have $\tilde\mL_{K}\is\mL_{\breve J^p,K}$.
\end{lemma}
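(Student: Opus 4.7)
The plan is to identify both line bundles via a common Picard functor construction in $K$, reducing the comparison to the classical duality already established in Proposition \ref{Aut line bundle 1}.

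First, I would make the action of $\mathfrak D$ on point-supported sheaves explicit. The restriction of $\theta'_m$ to any section $B'\to \sP'$ factoring through the discrete center (in particular through $\breve\fl_{\breve J'}$) is forced to vanish, so the gerbe $\sD(\theta'_m)$ is canonically trivialized along $K'_{J'}$, and $\delta_{K'_{J'}}$ is an unambiguous object of $\on{QCoh}(\sD(\theta'_m))_1$. Under the identification $\breve\sH\simeq \sT_{\sD(\theta'_m)}$ given by Theorem \ref{tw duality}, the standard behavior of Fourier--Mukai on skyscrapers—adapted to the twisted setting—tells us that $\mathfrak D(\delta_{K'_{J'}})$ is the line bundle on $\breve\sH$ whose fiber at a multiplicative splitting $s$ of $\sD(\theta'_m)$ is the evaluation $s(K'_{J'})$; equivalently, it is the restriction to $\{K'_{J'}\}\times_{B'}\sT_{\sD(\theta'_m)}$ of the tautological multiplicative line bundle on $\sP'\times_{B'}\sT_{\sD(\theta'_m)}$.

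Second, I observe that both $K\mapsto \tilde\mL_K$ and $K\mapsto \mL_{\breve J^p,K}$ are symmetric monoidal Picard functors $Z(G)\on{-tors}(C)\times B'\to \on{Pic}(\breve\sH)$. Consequently, it suffices to construct a natural isomorphism of Picard functors. The first factors as $K\mapsto K'_{J'}\mapsto \delta_{K'_{J'}}\mapsto \mathfrak D(\delta_{K'_{J'}})$ and by Step~1 is the ``evaluation at $K'_{J'}$'' line bundle on $\breve\sH$; the second is obtained from the generalized Chern class pairing on $\Bun_{\breve J^p}$ pulled back along $\breve\sH\to \Bun_{\breve J^p}$.

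Third, for the comparison I would invoke the Frobenius twist of Proposition \ref{Aut line bundle 1}: under $\mathfrak D_{cl}:(\sP')^\vee\simeq \breve\sP'$ one has $\mathfrak D_{cl}(\fl_{J'}(K'))\simeq \breve\fl_{\breve J'}(K')=K'_{J'}$, so $K'_{J'}$ is the counterpart of the multiplicative line bundle $\fl_{J'}(K')$ on $\sP'$. Since the twisted duality $\mathfrak D$ is constructed to lie over the classical duality (the torsor identification $\sT_{\sD(\theta'_m)}\simeq \breve\sH$ refines $(\sP')^\vee\simeq \breve\sP'$), the evaluation-at-$K'_{J'}$ line bundle on $\breve\sH$ is canonically the pullback along $\breve\sH\to \breve\sP'$ of the line bundle dual to $\fl_{J'}(K')$. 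I would then identify this pullback with $\mL_{\breve J^p,K}$ by unwinding the pairing of Lemma \ref{pairing and duality} for the extension $0\to \Pi_{\breve G}(1)\to \breve J^p_{sc}\to \breve J^p\to 0$, and using Cartier descent along $C$ relating $\breve J^p$-torsors with $\tau'$-flat connection to $\breve J'$-torsors.

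The main obstacle will be the last reconciliation: matching the Chern class pairing defining $\mL_{\breve J^p,K}$ on $\Bun_{\breve J^p}$, after restriction to $\breve\sH$, with the classical duality pairing between $\fl_{J'}(K')\in(\sP')^\vee$ and the $\sP'$-torsor structure on $\breve\sH$. I expect this to follow from naturality of the generalized Chern class map under Frobenius descent together with the multiplicativity in $K$, reducing the check to the universal extension; but the compatibility of conventions across the classical/twisted FM, the Frobenius twist, and the central extension defining $\fl_{\breve J^p}$ requires careful bookkeeping.
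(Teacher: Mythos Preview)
Your overall strategy---reduce to the classical compatibility of Proposition \ref{Aut line bundle 1}---is the same as the paper's, but there is a genuine gap in your Step 3. You write that $\tilde\mL_K$ is ``the pullback along $\breve\sH\to\breve\sP'$ of the line bundle dual to $\fl_{J'}(K')$,'' but no such map exists: $\breve\sH$ is a $\breve\sP'$-\emph{torsor}, not a space over $\breve\sP'$. Your final paragraph correctly flags this reconciliation as the main obstacle, and the Cartier descent you propose does not apply directly, since points of $\breve\sH$ have $p$-curvature $\breve\tau'$ rather than zero.

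The paper resolves exactly this difficulty by passing to the Beilinson $1$-motive $\breve\mG:=\sD(\theta'_m)^\vee$, which sits in $0\to\breve\sP'\to\breve\mG\to\bZ\to 0$ with $\breve\sH=p^{-1}(1)$. Both line bundles in question extend to \emph{multiplicative} line bundles on $\breve\mG$: the first, $\tilde\mL_{\breve\mG,K}$, comes from the duality construction in \S\ref{DFT} and is characterized by $\tilde\mL_{\breve\mG,K}|_{\breve\sP'}\simeq\breve{\mathfrak D}_{cl}^{-1}(K'_{J'})$; the second, $\mL_{\breve\mG,K}$, arises because the forgetful map $\breve\sH\to\Bun_{\breve J^p}$ extends to a Picard morphism $\breve\mG\to\Bun_{\breve J^p}$ (sending $\breve\sH^{\otimes n}\simeq\Loc_{\breve J^p}(n\cdot\tau')$ to $\Bun_{\breve J^p}$), so one may pull $\fl_{\breve J^p}$ back to $\breve\mG^\vee$. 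A multiplicative line bundle on $\breve\mG$ is determined (up to isomorphism) by its restriction to $\breve\sP'$, and on $\breve\sP'$ one has $\mL_{\breve\mG,K}|_{\breve\sP'}\simeq\mL_{\breve J',K'}$, which matches $\breve{\mathfrak D}_{cl}^{-1}(K'_{J'})$ by Proposition \ref{Aut line bundle 1}. This extension to $\breve\mG$ is precisely the device that makes your intended ``restriction to $\breve\sP'$'' meaningful and completes the argument.
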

\begin{proof}
Let $\breve\mG:=\sD(\theta_m')^\vee$. We have a short exact sequence of Beilinson's 1-motives 
$0\ra\breve\sP'\ra\breve\mG\stackrel{p}\ra\bZ\ra 0$ and $\breve\sH=p^{-1}(1)$.
The construction of duality for torsors in \ref{DFT} implies that
there is a multiplicative line bundle $\tilde\mL_{\breve\mG,K}$ on $\breve\mG$ such that 
$\tilde\mL_{\breve\mG,K}|_{\breve\sH}\is\tilde\mL_K$. Moreover, this line bundle is characterized by the property that  
$\tilde\mL_{\breve\mG,K}|_{\breve\sP'}\is\breve{\mathfrak D}_{cl}^{-1}(K_{J'}')$.
Observe that we have a natural map $\breve\mG\ra\Bun_{\breve J^p}$ of 
Picard stacks\footnote{We have $\breve\mG=\{(n,t)|n\in\bZ,\ t\in\breve\sH^{\otimes n}\}$ 
and $\breve\sH^{\otimes n}$ is isomorphic to $\Loc_{\breve J^p}(n\cdot\tau')$, the base change of 
$\Loc_{\breve J^p}\ra B_{\breve J'}$ along the section $n\cdot\tau':B'\ra B_{\breve J'}$.
Thus there is a natural map $\breve\sH^{\otimes n}\ra\Bun_{\breve J^p}$ and 
the map $\breve\mG\ra\Bun_{\breve J^p}$ is given by
$\breve\mG\ra\breve\sH^{\otimes n}\ra\Bun_{\breve J^p}$.
}
such that 
the composition $\breve\sH\ra\breve\mG\ra\Bun_{\breve J^p}$ is the natural inclusion.
Thus the morphism $\tilde\fl:Z_{G}\on{-tors}(C)\times B'\ra\on{Pic}(\breve\sH)$
factors through a morphism $\tilde\fl_{\breve\mG}:
Z_{G}\on{-tors}(C)\times B'\ra\breve\mG^\vee$,
and the corresponding multiplicative line bundle $
\mL_{\breve\mG,K}:=\tilde\fl_{\breve\mG}(\{K\}\times B')\in\breve\mP^\vee(B')$ 
satisfies $\mL_{\breve\mG,K}|_{\breve\sH}\is\mL_{\breve J^p,K}$.
It is enough to show that  
$\tilde\mL_{\breve\mG,K}\is\mL_{\breve\mG,K}$.
From the characterization of $\tilde\mL_{\breve\mG,K}$,  it is enough to show 
that $\mL_{\breve\mG,K}|_{\breve\sP'}\is\breve{\mathfrak D}_{cl}^{-1}(K_{J'}')$.
But this follows from Proposition \ref{Aut line bundle 1} and the fact that $\mL_{\breve\mG,K}|_{\breve\sP'}$
is isomorphic to $\mL_{\breve J',K'}$.
\end{proof}

Recall that a choice of $\kappa\in\omega^{1/2}(C)$ defines 
an isomorphism $\frak C_\kappa:\breve\sH\is\Loc_{\breve G}^{reg}$.
More precisely, we have $\frak C_\kappa(P,\nabla)=(P\otimes F_C^*E_{\kappa'},\nabla_{P\otimes F_C^*E_{\kappa'}})$ where $P\otimes F_C^*E_{\kappa'}:=P\times^{J^p}F_C^*E_{\kappa'}$
and $\nabla_{P\otimes F_C^*E_{\kappa'}}$ is the product connection.

\begin{lemma}\label{Spectral line-bundle 2}
The pull back of the line bundle $\mL_{\breve G,K}$ along the map
$\breve\sH\stackrel{\frak C_\kappa}\ra\Loc_{\breve G}\stackrel{\pr}\ra\Bun_{\breve G}$
is isomorphic to $\tilde\mL_{K}$. I.e. we have 
$\tilde\mL_{K}\is \frak C_\kappa^*\circ\pr^*\mL_{\breve G,K}$.
\end{lemma}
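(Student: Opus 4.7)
The plan is to adapt the argument of Lemma \ref{Aut line bundle 2} to the Langlands dual side, incorporating Frobenius twists. First I would observe that the composition $\pr \circ \frak C_\kappa: \breve\sH \ra \Bun_{\breve G}$ factors as the restriction map $\breve\sH \ra \Bun_{\breve J^p}$ (forgetting the flat connection) followed by the induction map
\[
f: \Bun_{\breve J^p} \longrightarrow \Bun_{\breve G}, \qquad P \longmapsto P \times^{\breve J^p} F_C^* E_{\kappa'}.
\]
Since $\tilde\mL_K \is \mL_{\breve J^p,K}$ by Lemma \ref{Spectral line-bundle 1}, and $\mL_{\breve J^p,K}$ is defined as the restriction to $\breve\sH$ of a line bundle on $\Bun_{\breve J^p}$ via the Picard functor $\tilde\fl$, it suffices to produce a canonical isomorphism $f^*\mL_{\breve G,K} \is \mL_{\breve J^p,K}$ on all of $\Bun_{\breve J^p}$.

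By the construction of both line bundles via the generalized Chern class maps $\tilde c_{\breve G}$ and $\tilde c_{\breve J^p}$, paired against $K$ using Lemma \ref{pairing and duality}, this further reduces to constructing a canonical isomorphism of $\Pi_{\breve G}(1)$-gerbes, functorial in $P$:
\[
\tilde c_{\breve J^p}(P) \is \tilde c_{\breve G}\bigl(P \times^{\breve J^p} F_C^* E_{\kappa'}\bigr).
\]
The key ingredient, as in Lemma \ref{Aut line bundle 2}, is that the $\breve G$-torsor $E_{\kappa'}$ admits a canonical lift $\tilde E_{\kappa'}$ to a $\breve G_{sc}$-torsor, because the cocharacter $2\rho:\bG_m \ra \breve G$ lifts canonically through $\breve G_{sc}$. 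Frobenius pullback then provides a canonical lift $F_C^*\tilde E_{\kappa'}$ of $F_C^*E_{\kappa'}$.

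Next, I would observe that the action $\breve J \ra \Aut(E_{\kappa'})$ arising from the Kostant section lifts canonically to $\breve J_{sc} \ra \Aut(\tilde E_{\kappa'})$, essentially because the extension $0 \ra \Pi_{\breve G}(1) \ra \breve J_{sc} \ra \breve J \ra 0$ is obtained by pullback from $0 \ra \Pi_{\breve G}(1) \ra \breve G_{sc} \ra \breve G \ra 0$ along the homomorphism $\breve J \ra \breve G$ determined by $\kappa$. Applying $F_C^*$ yields the compatible lifted action $\breve J^p_{sc} \ra \Aut(F_C^*\tilde E_{\kappa'})$. Given any lift $\tilde P$ of $P$ to a $\breve J^p_{sc}$-torsor, the induced torsor $\tilde P \times^{\breve J^p_{sc}} F_C^*\tilde E_{\kappa'}$ is a $\breve G_{sc}$-lift of $P \times^{\breve J^p} F_C^*E_{\kappa'}$. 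The assignment $\tilde P \mapsto \tilde P \times^{\breve J^p_{sc}} F_C^*\tilde E_{\kappa'}$ is compatible with the $\Pi_{\breve G}(1)$-action on both sides, hence defines the desired isomorphism of gerbes; pairing with $K$ then produces the required isomorphism of line bundles.

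The main obstacle is largely bookkeeping: one must check that the lift $\breve J_{sc} \ra \Aut(\tilde E_{\kappa'})$ really is canonical, that it is compatible with Frobenius pullback, and that the induced isomorphism of gerbes intertwines the pairing with $Z(G)$-torsors on the two sides. This is a diagram chase of exactly the same flavor as in the proof of Lemma \ref{Aut line bundle 2}, and no genuinely new conceptual input beyond that lemma is required.
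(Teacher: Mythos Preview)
Your proposal is correct and follows essentially the same approach as the paper: both reduce the statement, via Lemma \ref{Spectral line-bundle 1} and the generalized Chern class maps, to constructing a functorial isomorphism of $\Pi_{\breve G}(1)$-gerbes $\tilde c_{\breve J^p}(P)\is\tilde c_{\breve G}(P\times^{\breve J^p}F_C^*E_{\kappa'})$, and both do so by sending a lift $\tilde P$ to $\tilde P\times^{\breve J^p_{sc}}F_C^*\tilde E_{\kappa'}$ using the canonical $\breve G_{sc}$-lift $\tilde E_{\kappa'}$ of the Kostant section from Lemma \ref{Aut line bundle 2}. Your version is slightly more explicit about the factorization through $\Bun_{\breve J^p}$ and the lift of the $\breve J$-action, but no new idea is involved.
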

\begin{proof}
The proof is similar to the proof of Lemma \ref{Aut line bundle 2}.
Recall that the line bundles $\mL_{\breve G,K}$ and 
$\tilde\mL_{K}\is\mL_{\breve J^p,K}$ are induced by 
the generalized Chern class map $\tilde c_{\breve G}$, $\tilde c_{\breve J^p}$. 
Therefore it is enough to show that for any $(P,\nabla)\in\breve\sH$
there is a canonical isomorphism 
$\tilde c_{\breve J^p}(P)\is\tilde c_{\breve G}(\frak C_\kappa(P))$
of $\Pi_{\breve G}\on{-gerbes}$, where $\frak C_\kappa(P)=P\times^{J^p}F_C^*E_{\kappa'}$.
Let $\tilde P\in\tilde c_{\breve J^p}(P)$
and $\tilde E_{\kappa'}$ be the canonical lifting of 
the Kostant section appearing in Lemma \ref{Aut line bundle 2}.
The $G_{sc}$-torsor $\tilde P\times^{(J^p_{sc})}F_C^*\tilde E_{\kappa'}$ is a lifting of 
$\frak C_\kappa(P)$ and the assignment 
$\tilde P\ra \tilde P\times^{(J^p_{sc})}F_C^*\tilde E_{\kappa'}$ defines an isomorphism 
between $\tilde c_{J^p} (P )$ and $\tilde c_{\breve G}(\frak C_\kappa(P))$. This finishes the proof.
\end{proof}

\medskip

Now we prove the theorem.
Recall that we have 
$\mathfrak D_\kappa=(\mathfrak C_\kappa^*)^{-1}\circ\mathfrak D\circ\mathfrak A_\kappa $
where $\mathfrak A_\kappa$ and $\mathfrak C_\kappa^*$ are 
equivalences constructed in \S\ref{ab thm}.
It follows from the definition 
that under the equivalence $\mathfrak C_\kappa^*$ 
the functor $b_{\breve G,\chi}^*$ becomes the functor induced by
the geometric action of $K'_{\breve G,\chi}\in Z({\breve G})\on{-tors}(C')$ on $\breve\sH$\footnote{Recall that $K_{\breve G,\chi}$ carries a canonical connection with 
zero $p$-curvature and $K_{\breve G,\chi}'$ is its Frobenius descent.}. 
Now Theorem \ref{twist FM} implies, under the equivalence 
\[\frakD^{}:D^b(\on{QCoh}(\sD(\theta_m')|_{B^{'0}}))_1\is D^b(\on{QCoh}(\breve\sH|_{B^{'0}})),\]
above
geometric action 
becomes the functor of tensoring with the line bundle 
$\mL_{J,\chi}':=\mathfrak D_{cl}^{-1}(K_{\breve G,\chi}')\in (\Bun_{J'})^\vee$\footnote{
Here we use the fact that $\omega_{\sP^\vee/B}\cong pr_B^*(e^*\omega_{\sP^\vee/B})$ is trivial. Indeed, since $B$ is isomorphic to an affine space
we have $e^*\omega_{\sP^\vee/B}\in\Pic(B)=0$.}.
By Lemma \ref{Aut line bundle 2} and 
Lemma \ref{Aut line bundle 1}, the line bundle 
$\mL_{J,\chi}'$
is equal to the pull back of $\mL_{G,\chi}'$ under the 
map 
$\sP'\stackrel{\epsilon_\kappa'}\ra\on{Higgs}'_G\ra\Bun_G'$.
On the other hand, since the equivalence $\mathfrak A_\kappa:
D^b(\mD\on{-mod}(\Bun_G^0))\is 
D^b(\on{QCoh}(\sD(\theta_m')|_{B^{'0}}))_1$ is induced 
by pullback along the morphism $\epsilon_\kappa:\sP\ra\on{Higgs}_G$,
an easy exercise shows that under the equivalence $\mathfrak A_\kappa$
the functor of tensoring with $\mL_{J,\chi}'$
becomes the functor of tensoring with $\mL_{G,\chi}$.
This implies Part 1).

The proof of part 2) is similar to part 1).
Unraveling the definition of $a_{G,\chi}^*$ and 
the construction of $\mathfrak A_\kappa$, one sees that $\mathfrak A_\kappa$ interchanges the 
functor $a_{G,\chi}^*$ with the functor of convolution product
with $\delta_{K_{G,\chi}'}\in\on{QCoh}(\sP')$.
Now Theorem \ref{twist FM} implies that, under the equivalence $\mathfrak D$, the above
convolution action 
becomes the functor of tensoring with the line bundle 
$\tilde\mL_{K_{G,\chi}}:=\mathfrak D(K_{G,\chi}')\in\on{Pic}\breve\sH$.
By Lemma \ref{Spectral line-bundle 1} and Lemma \ref{Spectral line-bundle 2}, the line bundle 
$\tilde\mL_{K_{G,\chi}}$
is isomorphic to the pull back of $\mL_{\breve G,\chi}$ under the 
map 
$\breve\sH\stackrel{\frak C_\kappa}\ra\Loc_{\breve G}\stackrel{\pr}\ra\Bun_{\breve G}$.
It implies that
$\frak C_\kappa^*\circ (\pr^*\mL_{\breve G,\chi}\otimes ?)\is(\tilde\mL_{K_{G,\chi}}\otimes ?)\circ\frak C^*_\kappa$.



\appendix

\pagestyle{empty}

\section{Beilinson's 1-motive}\label{A}
In this section, we review the duality theory of Beilinson's
1-motives. The main references are \cite{A,DP,DP2,Lau}.
\subsection{Picard Stack}\label{Picard}
Let us first review the theory of Picard stacks. The standard
reference is \cite[\S 1.4]{Del}. Let $\mT$ be a given site. Recall
that a Picard Stack is a stack $\sP$ over $\mT$ together with a
bi-functor
\[\otimes:\sP\times\sP\to\sP,\]
and the associativity and commutative constraints
$$
a:\otimes\circ(\otimes\times 1)\simeq
\otimes\circ(1\times\otimes),\quad\quad
c:\otimes\simeq\otimes\circ\rm flip,
$$
such that for every $U\in\mT$, $\sP(U)$ form a Picard groupoid (i.e.
symmetrical monoidal groupoid such that every object has a monoidal
inverse). The Picard stack is called strictly commutative if
$c_{x,x}=\id_x$ for every $x\in \sP$. In the paper, Picard stacks
will always mean strictly commutative ones.

Let us denote by $\mP\mS/\mT$ the 2-category of Picard stacks
over $\mT$. This means that if $\sP_1,\sP_2$ are two Picard stacks
over $\calT$, $\Hom_{\mP\mS/\mT}(\sP_1,\sP_2)$ form a category.
Indeed, $\mP\mS/\mT$ is canonically enriched over itself. For
$\sP_1,\sP_2\in\mP\mS/\mT$, we use $\underline{\Hom}(\sP_1,\sP_2)$
to denote the Picard stack of 1-homomorphisms from $\sP_1$ to $\sP_2$
over $\calT$ (cf. \cite{Del} \S 1.4.7). On the other hand, let
$C^{[-1,0]}$ be the 2-category of 2-term complexes of sheaves of
abelian groups $d:\calK^{-1}\to\calK^0$ with $\calK^{-1}$ injective
and 1-morphisms are morphisms of chain complexes (and 2-morphisms
are homotopy of chain complexes)\footnote{The 2-category $C^{[-1,0]}$ is an enhancement of the subcategory $D^{[-1,0]}\subset D$ of the derived category consisting of complexes concentrated in 
cohomological degrees $[-1,0]$. That is, the homotopy category of $C^{[-1,0]}$ is equivalent to $D^{[-1,0]}$.}.  Let $\calK\in C^{[-1,0]}$. We
associate to it a Picard prestack $\rm pch(\calK)$ whose $U$ point
is the following Picard category
\begin{enumerate}
\item Objects of $\rm pch(\calK)(U)$ are equal to $\calK^0(U)$.\\
\item If $x,y\in\calK^0(U)$, a morphism from $x$ to $y$ is an element
$f\in\calK^{-1}(U)$ such that $df=y-x$.
\end{enumerate}
Let $\rm ch(\calK)$ be the stackification of $\rm pch(\calK)$. Then
a theorem of Deligne (cf. \cite[Corollaire 1.4.17]{Del}) says that the functor
\[\rm ch:C^{[-1,0]}\to\mP\mS/\mT\]
is an equivalence of 2-categories.

Let us fix an inverse functor $()^\flat$ of the above equivalence.
So for $\sP$ a Picard stack, we have a 2-term complex of sheaves of
abelian groups $\sP^\flat:=\calK^{-1}\to\calK^0$. For example, if
$A$ is an abelian group in $\mT$, then its classifying stack $BA$ is
a natural Picard stack and $(BA)^\flat$ can be represented by a
2-term complex quasi-isomorphic to $A[1]$. The following result of
Deligne (cf. \cite[Construction 1.4.18]{Del}) is convenient for computations.
\begin{equation}\label{compare Hom}
(\underline{\Hom}(\sP_1,\sP_2))^\flat\simeq\tau_{\leq
0}\on{R}\underline{\Hom}(\sP_1^\flat,\sP_2^\flat).
\end{equation}

\subsection{Short exact sequences of Picard stacks}\label{short exact seq}
Let $a: \sP_1\to \sP_2$ be a homomorphism of Picard stacks. We define $\ker(a)$ as the fiber $\sP_1\times_{\sP_2}\{e\}$, where $e\in\sP_2$ is the unit. Then $\ker(a)$ acquires a natural Picard stack structure. It follows from the construction of $\rm ch$ that
\begin{lem}
There is a natural isomorphism $\ker(a)^\flat\simeq \tau_{\leq 0}C(a^\flat)[-1]$, where $C(a^\flat)$ is the cone of the morphism of complexes
\[a^\flat:\sP_1^\flat\to \sP_2^\flat.\]
\end{lem}

A left exact sequence of Picard stacks, usually denoted by 
\[1\ra\sP_1\stackrel{a}\ra\sP_2\stackrel{b}\ra\sP_3,\]
is a sequence of homomorphisms of Picard stacks that exhibits $\sP_1$ as $\ker(b)$.
If, in addition locally on $\mT$, $b$ is essentially surjective, we call such a sequence exact and denote it by
\[1\ra\sP_1\stackrel{a}\ra\sP_2\stackrel{b}\ra\sP_3\to 1.\]
Sometimes, we also call $\sP_2$ an \emph{extension} of 
$\sP_3$ by $\sP_1$. The following lemma is used in several places in the paper.
\begin{lem}The sequence of homomorphisms $\sP_1\to \sP_2\to \sP_3$ is exact if and only if 
\[\sP_1^\flat\to \sP_2^\flat\to \sP_3^\flat\to\]
is a distinguished triangle.
\end{lem}

\subsection{Duality of Picard stacks}\label{duality}
Let $S$ be a noetherian scheme. We consider the category ${\rm
Sch}/S$ of schemes over $S$. We will endow ${\rm Sch}/S$ with
\emph{fpqc} topology in the following discussion.

\begin{definition}\label{appen:dual}
For a Picard stack $\sP$, we define the dual Picard stack as
$$\sP^\vee:=\underline{\rm Hom}(\sP,B\bG_m).$$
\end{definition}

\begin{example}\label{abscheme}
Let $A\to S$ be an abelian scheme over $S$. Then by definition
$A^\vee:=\underline{\rm Hom}(A,B\bG_m)=\underline{\rm
Ext}^1(A,\bbG_m)$ classifies the multiplicative line bundles on $A$,
is represented by an abelian scheme over $S$, called the dual
abelian scheme of $A$.
\end{example}

\begin{example}\label{fg group}
Let $\Gamma$ be a finitely generated abelian group over $S$. By
definition, this means locally on $S$, $\Gamma$ is isomorphic to the
constant sheaf $M_S$, where $M$ is a finitely generate abelian group
(in the naive sense). Recall that the Cartier dual of $\Gamma$,
denoted by $\on{D}(\Gamma)$ is the sheaf which assigns every scheme
$U$ over $S$ the group $\Hom(\Gamma\times_SU,\bbG_{m})$, which is
represented by an affine group scheme over $S$. We claim that
$\Gamma^\vee\simeq B\on{D}(\Gamma)$. By \eqref{compare Hom}, it is
enough to show that $\on{R}^i\underline{\rm Hom}(\Gamma,\bbG_m)=0$
if  $i>0$. This is clear since locally on $S$, $\Gamma$ is
represented by a 2-term complex $\bbZ_S^m\to\bbZ_S^n$.
\end{example}

\begin{example}\label{mult}
Let $G$ be a group of multiplicative type over $S$, i.e.
$G=D(\Gamma)$ for some finitely generated abelian group $\Gamma$
over $S$. Let $\sP=BG$, the classifying stack of $G$. We have
\[\sP^\vee\simeq \tau_{\leq 0}\on{R}\underline{\rm Hom}(BG,B\bG_m)\simeq \underline{\rm Hom}(G,\bbG_m)\simeq\Gamma.\]
\end{example}

\begin{definition}
Let $\sP$ be a Picard stack. We say that $\sP$ is dualizable if the
canonical 1-morphism $\sP\to\sP^{\vee\vee}$ is an isomorphism.
\end{definition}
By the above examples, abelian schemes, finitely generated abelian
groups, and the classifying stacks of groups of multiplicative type are
dualizable.

Let $\sP$ be a dualizable Picard stack. There is the Poincare line
bundle $\mL_{\sP}$ over $\sP\times_S\sP^\vee$. Let
$D^b(\on{QCoh}(\sP))$ denote the bounded derived category of
quasi coherent sheaves on $\sP$. We define the Fourier-Mukai functor
\[\Phi_{\sP}:D^b(\on{QCoh}(\sP))\ra D^b(\on{QCoh}(\sP^\vee)),\quad \Phi_{\sP}(F)=(\on{R}p_2)_*(\on{L}p_1^*F\otimes^{}\mL_{\sP}).\]
Here $p_1:\sP\times_S\sP^\vee\ra\sP$ and
$p_2:\sP\times_S\sP^\vee\ra\sP^\vee$ denote the natural projections.
It is easy to see in the case when $\sP$ is of the form given in the
above examples, $\Phi_\sP$ is an equivalence of categories. Indeed,
the case when $\sP=A$ follows from the results of Mukai; the case
when $\sP=\Gamma$ or $BG$ is clear.

It is not clear to us whether $\Phi_\sP$ is an equivalence for all dualizable Picard stacks. In the following subsection, we select out a particular class of
Picard stacks, called the Beilinson's 1-motive (following \cite{DP} and Arinkin's appendix to \cite{DP2}), for which the
Fourier-Mukai transforms are equivalences.

\subsection{Beilinson's 1-motives}
Let $\sP_1,\sP_2$ be two Picard stacks. We say that
$\sP_1\subset\sP_2$ if there is a 1-morphism $\phi:\sP_1\to\sP_2$,
which is a faithful embedding.

\begin{definition}
We called a Picard stack $\sP$ a Beilinson's 1-motive if it admits a
two step filtration $W_\bullet\sP$:
\[W_{-1}=0\subset W_{0}\subset W_1\subset W_2=\sP\]
such that (i) $\Gr^W_0\simeq BG$ is the classifying stack of a group
$G$ of multiplicative type; (ii) $\Gr^W_1\simeq A$ is an abelian
scheme; and (iii) $\Gr^W_2\simeq\Gamma$ is a finitely generated
abelian group.
\end{definition}

\begin{lemma}The dual of a Beilinson's 1-motive is a Beilinson's
1-motive and Beilinson's 1-motives are dualizable.
\end{lemma}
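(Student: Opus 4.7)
The plan is to prove both assertions simultaneously by dévissage along the weight filtration $W_\bullet \sP$. The strategy rests on the observation that the functor $(-)^\vee = \underline{\Hom}(-,B\bG_m)$ permutes the three classes of building blocks cyclically, as computed in Examples \ref{abscheme}, \ref{fg group}, \ref{mult}: an abelian scheme $A$ goes to its dual abelian scheme $A^\vee$, a finitely generated abelian group $\Gamma$ goes to $BD(\Gamma)$, and the classifying stack $BG$ of a group of multiplicative type goes to the character group $\xch(G)$, which is finitely generated abelian. Thus on general grounds one expects the dual of a Beilinson 1-motive to again be Beilinson with graded pieces $BD(\Gamma)$, $A^\vee$, $\xch(G)$, in weights $0,1,2$ respectively.

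First I would establish the relevant $\underline{\Ext}$-vanishings computed in the \emph{fpqc} site: $\underline{\Ext}^i(A,\bG_m)=0$ for $i\neq 1$ with $\underline{\Ext}^1(A,\bG_m)=A^\vee$; $\underline{\Ext}^i(\Gamma,\bG_m)=0$ for $i>0$; and $\underline{\Ext}^i(G,\bG_m)=0$ for $i>0$. The last two follow by locally resolving $\Gamma$ (respectively, the character lattice of $G$) by a two-term complex of free sheaves $\bbZ_S^m\to\bbZ_S^n$, as used in Example \ref{fg group}; the first is the classical Breen--Artin--Oort computation for abelian schemes, and the relevant sheaves being fppf, the fpqc and fppf sheafifications agree.

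Next I would apply $\tau_{\leq 0}R\underline{\Hom}(-,\bG_m[1])$, which by Deligne's equivalence \eqref{compare Hom} computes $(-)^\vee$, to the two short exact sequences of Picard stacks
$$0\to W_0\to W_1\to A\to 0,\qquad 0\to W_1\to \sP\to \Gamma\to 0.$$
The long exact sequences on cohomology sheaves, combined with the vanishings above, decouple the contributions of the three graded pieces: the $\Ext^1$-term from $A$ yields $A^\vee$; the $\Ext^0$-term from $\Gamma$ vanishes while the connecting map contributes $BD(\Gamma)$ in weight $0$; and the character lattice of $G$ contributes $\xch(G)$ in weight $2$. This produces an exhaustive two-step filtration $W^\vee_\bullet$ on $\sP^\vee$ with the graded pieces $BD(\Gamma)$, $A^\vee$, $\xch(G)$, exhibiting $\sP^\vee$ as a Beilinson 1-motive.

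Finally, dualizability follows by running the same construction once more on $\sP^\vee$ to produce a filtration on $\sP^{\vee\vee}$ whose graded pieces reassemble as $BG$, $A$, $\Gamma$, and then applying a five-lemma argument to the natural biduality map $\sP\to\sP^{\vee\vee}$, which is an isomorphism on each graded piece by the Examples. The main technical obstacle is the careful bookkeeping of connecting homomorphisms in the dévissage, to ensure that the filtration constructed on $\sP^\vee$ is genuinely exhaustive and that no extensions between the graded pieces get lost or duplicated; in effect one must check that the truncations $\tau_{\leq 0}$ do not interact badly with the two successive long exact sequences, which amounts to the vanishings $\underline{\Ext}^{\geq 2}(W_j/W_{j-1},\bG_m)=0$ already listed above.
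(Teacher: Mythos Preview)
Your proposal is correct and follows essentially the same d\'evissage along the weight filtration as the paper's proof: both arguments dualize the two short exact sequences $0\to BG\to W_1\to A\to 0$ and $0\to W_1\to\sP\to\Gamma\to 0$, using the vanishing of $\underline{\Ext}^2(A,\bG_m)$ (Breen's theorem) and $\underline{\Ext}^2(\Gamma,\bG_m)$ to ensure surjectivity on the right, then iterate for dualizability. One minor remark: your claimed vanishing $\underline{\Ext}^i(G,\bG_m)=0$ for $i>0$ is not actually needed anywhere in the argument (neither for computing $(BG)^\vee$, where $\tau_{\leq 0}$ already truncates, nor in either d\'evissage step), and your justification via ``resolving the character lattice'' does not directly yield it---so you may simply drop that claim.
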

\begin{proof}This is proved via the induction on the length of the
filtration. We use the following fact. Let
\[0\to\sP'\to\sP\to\sP''\to 0\]
be a short exact sequence of Picard stacks. Then
\[0\to(\sP'')^\vee\to\sP^\vee\to(\sP')^\vee\]
with the right arrow surjective if $\on{R}^2\underline{\rm
Hom}((\sP'')^\flat,\bbG_m)=0$.

If $\sP=W_0\sP$, this is given by Example \ref{mult}. If
$\sP=W_1\sP$, we have the following exact sequence
\[0\to BG\to \sP\to A\to 0.\]
Using the fact that $\underline{\rm Ext}^2(A,\bbG_m)=0$ (See
\cite[Remark 6]{LB1}), we know that $\sP$ is also a Beilinson's
1-motive. In general, we have
\[0\to W_1\sP\to \sP\to \Gamma\to 0,\]
and the lemma follows from the fact $\underline{\rm
Ext}^2(\Gamma,\bbG_m)=0$ (see Example \ref{fg group}).
\end{proof}

\begin{corollary}\label{aut-pi0 dual}
Let $\sP$ be a Beilinson 1-motive, and $\sP^\vee$ be its dual. Then
$\on{D}(\Aut_{\sP}(e))=\pi_0(\sP^\vee)$, where $e$ denotes the unit
of $\sP$ and $\pi_0$ denotes the group of connected components of
$\sP^\vee$.
\end{corollary}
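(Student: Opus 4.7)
The plan is to establish the corollary by decomposing both sides via the defining filtration $W_\bullet\sP$ and its dual filtration on $\sP^\vee$, and matching the pieces. Concretely, the key observation is that only the "extreme" graded pieces contribute: $\Aut_\sP(e)$ sees only $W_0\sP \simeq BG$, while $\pi_0(\sP^\vee)$ sees only the top graded piece of the dual filtration.

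First, I would compute $\Aut_\sP(e)$. From the filtration $0\subset W_0\sP\subset W_1\sP\subset \sP$, the quotient $\Gr^W_1 \simeq A$ is an abelian scheme and $\Gr^W_2\simeq\Gamma$ is a finitely generated abelian group—both are honest sheaves, so the automorphism group of any section is trivial. Thus the embedding $W_0\sP\hookrightarrow\sP$ induces an isomorphism $\Aut_{W_0\sP}(e)\simeq\Aut_\sP(e)$, and since $W_0\sP\simeq BG$ for $G$ of multiplicative type we obtain $\Aut_\sP(e)\simeq G$. Hence $\on{D}(\Aut_\sP(e))\simeq \on{D}(G)$, the character group of $G$, which is a finitely generated abelian group.

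Next, I would determine the dual filtration on $\sP^\vee$. Dualizing the short exact sequences
\[0\to W_1\sP\to\sP\to\Gamma\to 0,\qquad 0\to BG\to W_1\sP\to A\to 0,\]
and using the vanishing $\underline{\on{Ext}}^2(\Gamma,\bG_m)=0$ and $\underline{\on{Ext}}^2(A,\bG_m)=0$ already exploited in the preceding lemma, together with Examples \ref{abscheme}, \ref{fg group}, \ref{mult}, yields a three-step filtration $W_\bullet'\sP^\vee$ with graded pieces
\[\Gr^{W'}_0 \simeq B\on{D}(\Gamma),\qquad \Gr^{W'}_1\simeq A^\vee,\qquad \Gr^{W'}_2\simeq \on{D}(G).\]
In other words, duality reverses the roles of the "stacky" piece $BG$ and the "discrete" piece $\Gamma$.

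Finally, I would compute $\pi_0(\sP^\vee)$ from this filtration. The subobject $W_1'\sP^\vee$ is an extension of the abelian scheme $A^\vee$ by the connected gerbe $B\on{D}(\Gamma)$, so it is connected, i.e. $\pi_0(W_1'\sP^\vee)=0$. Therefore the projection $\sP^\vee\twoheadrightarrow \Gr^{W'}_2\simeq\on{D}(G)$ induces an isomorphism on $\pi_0$, giving $\pi_0(\sP^\vee)\simeq\on{D}(G)\simeq\on{D}(\Aut_\sP(e))$, as desired. The only step that requires care is the identification of the dual filtration, and in particular the vanishing of the relevant higher $\underline{\on{Ext}}$'s that guarantees short exactness after dualization—but this has already been done in the course of proving that Beilinson 1-motives are dualizable, so no new difficulty arises.
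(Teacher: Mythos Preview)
Your proposal is correct and is precisely the argument the paper has in mind: the corollary is stated without proof, immediately after the lemma establishing that the dual of a Beilinson 1-motive is again a Beilinson 1-motive with the graded pieces $BG,\,A,\,\Gamma$ replaced by $B\on{D}(\Gamma),\,A^\vee,\,\on{D}(G)$. Your write-up simply makes explicit the two observations left implicit there---that $\Aut_\sP(e)$ is detected entirely by $W_0\sP\simeq BG$ and that $\pi_0(\sP^\vee)$ is detected entirely by the top graded piece $\on{D}(G)$ of the dual filtration.
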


\begin{lemma}\label{local splitting}
Let $\sP$ be a Beilinson's 1-motive. Then locally on $S$,
\[\sP\simeq A\times BG\times \Gamma.\]
\end{lemma}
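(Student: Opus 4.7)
The plan is to split the two short exact sequences of Picard stacks arising from the weight filtration $W_\bullet\sP$, namely
\[
0 \to BG \to W_1 \to A \to 0 \qquad \text{and} \qquad 0 \to W_1 \to \sP \to \Gamma \to 0,
\]
after passing to a suitable cover of $S$ — étale where possible, fppf where needed. By the Deligne dictionary $()^\flat$ recalled in \S\ref{Picard}, any extension of Picard stacks $0 \to \sP' \to \sP \to \sP'' \to 0$ is classified by an element of $\on{Ext}^1(\sP''^\flat, \sP'^\flat)$ in the derived category of fppf sheaves on $S$, and the extension splits precisely when this class vanishes. Thus the whole argument reduces to showing that two obstruction classes vanish as fppf sheaves on $S$.

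For the outer extension, I would apply $R\underline{\on{Hom}}(\Gamma, -)$ to the distinguished triangle $G[1] \to W_1^\flat \to A$ coming from the inner filtration, which sandwiches the obstruction $\underline{\on{Ext}}^1(\Gamma, W_1^\flat)$ between $\underline{\on{Ext}}^2(\Gamma, G)$ and $\underline{\on{Ext}}^1(\Gamma, A)$. Étale-locally on $S$, $\Gamma$ becomes a constant finitely generated abelian group and hence admits a projective resolution of length one, which kills $\underline{\on{Ext}}^2(\Gamma, G)$. The remaining group splits according to the decomposition $\Gamma \simeq \bbZ^r \oplus \bigoplus_i \bbZ/n_i$: the free part contributes nothing, and each cyclic summand $\bbZ/n$ contributes $A/nA$, which vanishes fppf-locally because multiplication by $n$ on the abelian scheme $A$ is fppf-surjective.

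For the inner extension, the obstruction lies in $\underline{\on{Ext}}^1(A, G[1]) = \underline{\on{Ext}}^2(A, G)$. Étale-locally, $G \simeq \bbG_m^r \times \prod_i \mu_{n_i}$; the torus factors are handled by $\underline{\on{Ext}}^2(A, \bbG_m) = 0$, which is Remark 6 of \cite{LB1} and was already used in this appendix to prove dualizability. For each $\mu_n$ factor, the Kummer sequence $0 \to \mu_n \to \bbG_m \to \bbG_m \to 0$ combined with $\underline{\on{Ext}}^1(A, \bbG_m) \simeq A^\vee$ identifies $\underline{\on{Ext}}^2(A, \mu_n)$ with the cokernel of multiplication by $n$ on $A^\vee$, which vanishes fppf-locally since $A^\vee$ is itself an abelian scheme. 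The main technical point is the bookkeeping of topologies: étale refinements suffice to trivialize $\Gamma$ and the character lattice of $G$, but in positive characteristic, whenever $p \mid n$ for one of the relevant $n$'s, the residual $A/nA$ and $A^\vee/nA^\vee$ obstructions force a genuine fppf refinement; once the Ext vanishings are in place, the two splittings combine to give the desired local isomorphism $\sP \simeq A \times BG \times \Gamma$.
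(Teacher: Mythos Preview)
Your argument is correct and follows essentially the same route as the paper: reduce to the vanishing of $\underline{\Ext}^1(\Gamma,BG)$, $\underline{\Ext}^1(\Gamma,A)$, and $\underline{\Ext}^1(A,BG)$, then handle each piece separately (with the same arguments for the first two).

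The one genuine difference is in the treatment of $\underline{\Ext}^1(A,BG)=\underline{\Ext}^2(A,G)$. You compute directly: decompose $G$ \'{e}tale-locally into tori and $\mu_n$'s, invoke Breen's $\underline{\Ext}^2(A,\bbG_m)=0$ for the tori, and use the Kummer sequence to identify $\underline{\Ext}^2(A,\mu_n)$ with $A^\vee/nA^\vee$, which vanishes fppf-locally. The paper instead dualizes: given an extension $0\to BG\to\sP\to A\to 0$, the dual reads $0\to A^\vee\to\sP^\vee\to \on{D}(G)\to 0$, an extension of a finitely generated abelian group by an abelian scheme, and this splits locally by the already-established case $\underline{\Ext}^1(\Gamma,A)=0$; double-dualizing recovers the local splitting of the original. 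The paper's trick is slicker in that it recycles the previous case rather than re-invoking Breen and Kummer, but your direct computation is perfectly valid and arguably more transparent about where the fppf refinement is forced.
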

\begin{proof}It is enough to prove that 
$$\underline{\rm Ext}^1(\Gamma,
BG)=\underline{\rm Ext}^1(\Gamma,A)=\underline{\rm Ext}^1(A,BG)=0.$$
Clearly, $\underline{\rm Ext}^1(\Gamma, BG)=\underline{\rm
Ext}^2(\Gamma, G)=0$. To see that $\underline{\rm Ext}^1(\Gamma,
A)=0$, we can assume that $\Gamma=\bbZ/n\bbZ$. Then it follows that
$A\stackrel{n}{\to}A$ is surjective in the flat topology that
$\underline{\rm Ext}^1(\Gamma, A)=0$.

To see that $\underline{\rm Ext}^1(A, BG)=0$, let $\sP$ to the
Beilinson's 1-motive corresponding to a class in $\underline{\rm
Ext}^1(A, BG)$. Taking the dual, we have $0\to A^\vee\to\sP^\vee\to
\on{D}(G)\to 0$. Therefore, locally on $S$, $\sP^\vee\simeq
A^\vee\times \on{D}(G)$, and therefore locally on $S$,
$\sP^{\vee\vee}\simeq A\times BG$.
\end{proof}

\begin{definition}[cf. \cite{A}]\label{good}
We say that a Picard stack $\sP$ is \emph{good} if it satisfies the following two conditions
\begin{enumerate}
\item $\sP$ is dualizable, i.e., the map $r:\sP\ra\sP^{\vee\vee}$ is an isomorphism of Picard stacks.
\item The functor $\Phi_\sP:D^b(\on{QCoh}(\sP))\ra D^b(\on{QCoh}(\sP^\vee))$ is an equivalence of categories.
\end{enumerate}
\end{definition}

As explained in \S\ref{duality} (see also \cite{BB}), examples of good Picard stacks include 
$BG$, $\Gamma$ and abelian schemes over $S$, as well as fiber products over $S$ of such. More generally
\begin{thm}[\cite{A}, Proposition A.6]\label{gen FM} Let $\sP$ be a Beilinson's 1-motive. Then 
$\sP$ is ``good" in the sense of Definition \ref{good}. In particular, 
the functor $\Phi_{\sP}$ is an equivalence of categories.
\end{thm}
\begin{proof}Indeed,  the property of being 
good is fpqc-local on $S$. This can be seen by lifting $\Phi_{\sP}$ to a functor between stable $\infty$-categories of quasi-coherent sheaves and then applying a descent argument. Therefore, the theorem follows from Lemma \ref{local splitting} and the above examples.

Alternatively, similar to the usual Fourier-Mukai transform, one can show directly that in our generality, there is still an isomorphism of functors $\Phi_{\sP^\vee}\circ \Phi_{\sP}\simeq \omega_{\sP/S}^{-1}\otimes (-1)^*[-g]$, where $\omega_{\sP/S}$ is the canonical sheaf and $g$ is the relative dimension of 
$\sP/S$\footnote{This argument was suggested to us by the referees.}. By the argument as in \cite{Mu2}
(see also \cite{Lau}), one reduces to show that 
the kernel complex
\[\on{R}p_{12*}(\on{L}p_{13}^*\mL_\sP\otimes^{} \on{L}p_{23}^*\mL_\sP)\is m^*\on{R}p_{1*}\mL_\sP\]
for 
the functor $\Phi_{\sP^\vee}\circ \Phi_{\sP}$ is isomorphic to 
the kernel complex 
\[\sigma_*(\omega_{\sP/S}^{-1})[-g]\simeq m^*e_*(e^*\omega_{\sP/S}^{-1})[-g]\]
for $\omega_{\sP/S}^{-1}\otimes (-1)^*[-g]$.
Here $p_{ij}$ are the projections of $\sP\times_S\sP\times_S\sP^\vee$ on the $(i,j)$-factors,
$\sigma:\sP\ra\sP\times_S\sP, x\ra (x,x^{-1})$ and $e:S\ra\sP$ is the unit morphism. 

To prove this, we first observe that there is a natural map 
\[\on{R}p_{1*}\mL_\sP\simeq\on{R}^gp_{1*}\mL_\sP[-g]\ra e_*\on{R}^gpr_{S*}\mO_{\sP^\vee}[-g]\ra
e_*(e^*\omega_{\sP/S}^{-1})[-g].\]
We claim that the map above is an isomorphism, and hence induces $m^*\on{R}p_{1*}\mL_\sP\simeq m^*e_*(e^*\omega_{\sP/S}^{-1})[-g]$.
To prove the claim, we observe that, 
 using Lemma \ref{local splitting} and fpqc base change, we can assume $\sP\simeq A\times BG\times\Gamma$ 
and the claim follows from the results in \cite[p.519]{Mu2} or
\cite[Lemma 1.2.5]{Lau}.
\end{proof}

Entirely similar arguments as in \cite[p.160]{Mu1} and \cite[Corollary 1.3.3]{Lau} give us 
\begin{thm}\label{property of FM}
Let $\sP$ be a Beilinson 1-motive.
Let \[*:D^b(\on{QCoh}(\sP))\times D^b(\on{QCoh}(\sP))\ra D^b(\on{QCoh}(\sP))\] 
be the functor 
defined by 
$\mF_1*\mF_2:=\on{R}m_*(\mF_1\boxtimes\mF_2)$. We called $*$ the convolution product.
Then there are canonical isomorphisms
$$\Phi_\sP(\mF_1*\mF_2)\simeq\Phi_\sP(\mF_1)\otimes^{}\Phi_\sP(\mF_2)$$
and
$$ \Phi_\sP(\mF_1\otimes^{}\mF_2)\simeq(\Phi_\sP(\mF_1)*\Phi_\sP(\mF_2))
\otimes^{}\omega_{\sP^\vee/S}[g].$$
\end{thm}

\subsection{Multiplicative torsors and extensions of Beilinson 1-motives}\label{mult torsor}
Let us return to the general set-up. Let $\mT$ be a fixed site and
let $\sP$ be a Picard stack over $\mT$. 
A torsor of $\sP$ is a stack
$\sQ$ over $\mT$, together with a bi-functor
\[\on{Action}: \sP\times\sQ\to \sQ,\]
satisfying the following properties:
\\(i) the bi-functor $\on{Action}$  defines a monoidal action of
$\sP$ on $\sQ$;
\\(ii) For every $V\in \mT$, there exists a covering $U\to V$, such that $\sQ(U)$ is non-empty.
\\(iii) For every $U\in \mT$ such that $\sQ(U)$ is non-empty and let $D\in\sQ(U)$, the functor
\[\sP(U)\to \sQ(U),\quad C\mapsto \on{Action}(C,D)\]
is an equivalence.

In the case when $\sP$ is the Picard stack of $G$-torsors for some
sheaf of abelian groups $G$, people usually call a $\sP$-torsor
$\sQ$ a $G$-gerbe.

All $\sP$-torsors form a 2-category, denoted by $B\sP$, is
canonically enriched over itself (\cite[\S 2.3]{OZ}). I.e., given
two $\sP$-torsors $\sQ_1,\sQ_2$, $\underline\Hom_\sP(\sQ_1,\sQ_2)$
is a natural $\sP$-torsor. An object in
$\underline\Hom_\sP(\sQ_1,\sQ_2)$ induces an equivalence between
$\sQ_1$ and $\sQ_2$. In addition, there is a monoidal structure on
$B\sP$ making $B\sP$ a Picard 2-stack.

\begin{remark}
Let $1\ra \sP_1\ra\sP\ra\bZ_S\ra 1$ be an exact sequence  of Picard stacks.
Then $\sT:=\sP\times_{\bZ_S}\{1\}$ is naturally a $\sP_1$-torsor. As explained in \cite{A} and \cite[\S 3.1]{Trav}, the 
correspondence $\sP\ra\sT$ induces an equivalence of 2-categories between 
extensions of $\bZ_S$ by $\sP_1$ and $\sP_1$-torsors.
\end{remark}

Now, let $\sP$  and 
$\sP_1$ be two Picard stacks and
let $\sG$ be a $\sP_1$-torsor
over $\sP$. Let $m: \sP\times \sP\ra \sP$, $e:\mT\ra \sP$ be
the multiplication morphism and the unit morphism respectively, and let
$\sigma:\sP\times \sP\ra\sP\times \sP$ be the flip map
$\sigma(x,y)=(y,x)$.
\begin{definition}\label{app:comm}
 A commutative group structure on $\sG$ consists of the following data:
\begin{enumerate}
\item An equivalence $M:\sG\boxtimes\sG\simeq m^*\sG$ of $\sP_1$-torsors over $\sP
\times \sP$;
\item A $2$-morphism $\gamma$ between the resulting two $1$-morphisms between
$\sG\boxtimes\sG\boxtimes\sG$ and $m^*\sG$ over
$\sP\times\sP\times\sP$, which satisfies the cocycle condition.
\item A $2$-morphism
$i:\sigma^*M\is M$ such that $i^2=id$. (Note that $\sigma^*(M)$ is another $1$-morphism between $m^*\sG$ and $\sG\boxtimes\sG$.)
\end{enumerate}
\end{definition}
Clearly, all $\sP_1$-torsors over $\sP$ with a  commutative group structure also form a $2$-category.

We have the following Lemma.
\begin{lem}\label{ext}
A commutative group structure on $\sG$ makes $\sG$ into a Picard
stack which fits into the following short exact sequence:
$$0\ra \sP_1\ra\sG\ra \sP\ra 0.$$
In particular, if $\sP$ is a Beilinson's 1-motive, and $\sP_1=B\bG_m$,
then $\sG$ is a Beilinson's 1-motive. In this case, we also call
$\sG$ a multiplicative $\bG_m$-gerbe over $\sP$.
\end{lem}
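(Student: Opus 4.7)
The plan is to first unpack the data $(M,\gamma,i)$ of Definition \ref{app:comm} into an honest strictly commutative Picard structure on $\sG$, then produce the short exact sequence, and finally verify the Beilinson 1-motive property.

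Define the bifunctor $\otimes_\sG:\sG\times\sG\to\sG$ as the composition
\[\sG\times\sG\to \sG\boxtimes\sG\stackrel{M}{\simeq}m^*\sG\to\sG,\]
where the last arrow is the canonical morphism from the pullback. The 2-morphism $\gamma$ in (2) provides the associativity constraint, and its cocycle condition on $\sP\times\sP\times\sP$ is precisely the pentagon axiom for a monoidal structure. The 2-morphism $i$ in (3), together with $i^2=\id$, gives the strictly commutative constraint. To show $\sG$ is a Picard stack it remains only to exhibit inverses: given $x\in\sG(U)$ over $p\in\sP(U)$, pick an inverse $p^{-1}$ in $\sP(U)$; then the fiber of $m^*\sG$ over $(p,p^{-1})$ is canonically equivalent, via $M$, to $\sG|_p\boxtimes\sG|_{p^{-1}}$, and its identification with $\sG|_{e}$ together with the unit of $\sG$ produces $x^{-1}\in\sG|_{p^{-1}}$ with $x\otimes_\sG x^{-1}\simeq e$.

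For the short exact sequence, the projection $\sG\to\sP$ is the structural morphism of the torsor, which by construction of $\otimes_\sG$ is a homomorphism of Picard stacks and is essentially surjective (after refinement by (ii) in the definition of a torsor). The categorical kernel is the fiber $\sG|_e$, which is a $\sP_1$-torsor over a point and hence canonically equivalent to $\sP_1$ once we trivialize it using the unit object of $\sG$. This gives the extension
\[0\to\sP_1\to\sG\to\sP\to 0.\]

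For the final claim, specialize to $\sP_1=B\bG_m$ and $\sP$ a Beilinson 1-motive with filtration $0\subset W_0\sP=BG\subset W_1\sP\subset W_2\sP=\sP$, where $W_1\sP/W_0\sP\simeq A$ and $W_2\sP/W_1\sP\simeq\Gamma$. Define $W_i\sG:=\sG\times_{\sP}W_i\sP$. Then $W_0\sG$ is an extension of $BG$ by $B\bG_m$, hence of the form $B\widetilde G$ for $\widetilde G$ an extension of $G$ by $\bG_m$; since extensions of a group of multiplicative type by $\bG_m$ remain of multiplicative type, $W_0\sG$ has the required form. The successive quotients $W_1\sG/W_0\sG\simeq A$ and $\sG/W_1\sG\simeq \Gamma$ follow since the $B\bG_m$-part is already absorbed in $W_0\sG$. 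Thus $\sG$ inherits the structure of a Beilinson 1-motive.

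The only nontrivial input is the coherence of the monoidal structure coming from $(M,\gamma,i)$, but this is essentially a tautology: the pentagon and hexagon axioms translate directly into the cocycle condition on $\gamma$ and the involutivity of $i$ imposed in Definition \ref{app:comm}. The remaining verifications are routine manipulations with pseudofunctors of Picard stacks.
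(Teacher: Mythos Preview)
The paper states this lemma without proof, treating it as a routine unpacking of Definition~\ref{app:comm}. Your argument correctly supplies the details the authors omit, and the overall strategy is sound: extract the monoidal structure from $(M,\gamma,i)$, read off the short exact sequence from the torsor projection, and pull back the weight filtration to verify the Beilinson 1-motive property.

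One small point deserves care. In your construction of inverses you invoke ``the unit of $\sG$'' to trivialize $\sG|_e$, but the existence of a unit object is itself part of what you are establishing. The fix is easy: restricting $M$ to $e\times e$ gives an isomorphism of $\sP_1$-torsors $\sG|_e\otimes_{\sP_1}\sG|_e\simeq\sG|_e$, and tensoring both sides with the inverse torsor $(\sG|_e)^{-1}$ shows $\sG|_e$ is canonically trivial; the image of the identity under this trivialization is the unit. Once this is in place your inverse construction goes through as written.

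Your verification of the Beilinson 1-motive structure via $W_i\sG:=\sG\times_\sP W_i\sP$ is correct; the key claim that an extension of $BG$ by $B\bG_m$ has the form $B\widetilde G$ with $\widetilde G$ of multiplicative type follows from \eqref{compare Hom} and the fact that extensions of diagonalizable groups by $\bG_m$ are diagonalizable.
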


\begin{definition}\label{mult splitting}
A multiplicative splitting of a $\sP_1$-torsor $\sG$ over $\sP$ with a commutative group structure 
is a $1$-morphism (in the category of all $\sP_1$-torsors over $\sP$ with a commutative group structure): $\sP\ra\sG$.
\end{definition}

\subsection{Induction functor }\label{induction functor}
Let $\phi:\sP\ra\sP_1$ be a morphism of Picard stacks. Then 
to each $\sP$-torsor $\sQ$ we may associate a $\sP_1$-torsor
$\sQ^\phi:=\underline\Hom_{\sP}(\sQ^{-1},\sP_1)$ whose sections 
are $\sP$-equivariant functors from $\sQ^{-1}:=
\underline\Hom_{\sP}(\sQ,\sP)$ to $\sP_1$ (here 
$\sP$ acts on $\sP_1$ via $\phi$) and whose morphisms are natural transformations of such functors.

We have a canonical functor
$\sQ\ra\sQ^\phi$, compatible with their $\sP$ and $\sP_1$-structure 
via $\phi$. 
For any section $E$ of $\sQ$ we denote by $E^\phi$
the section of $\sQ^\phi$ induced by the canonical map $\sQ\ra\sQ^\phi$.

\subsection{Duality for torsors}\label{DFT}
Let $\sY$ be an algebraic stack. Let $\widetilde\sY$ be a
$\bG_m$-gerbe over $\sY$, i.e. $\widetilde\sY$ is a $B\bG_m$-torsor 
over $\sY$. We say
$\widetilde\sY$ is split if it is isomorphic to $\sY\times B\bG_m$. Let
$D^b(\on{QCoh}(\widetilde\sY))$ be the bounded derived category of
quasi coherent sheaves on $\widetilde\sY$.  If $\widetilde\sY$ is split,
there is a decomposition
\begin{equation}\label{decomposition}
D^b(\on{QCoh}(\widetilde\sY))=\oplus_{n\in\bZ}
D^b(\on{QCoh}(\widetilde\sY))_n
\end{equation}
 according to the character of
$\bG_m$\footnote{The direct sum in (\ref{decomposition}) means that every object in $D^b(\on{QCoh}(\widetilde\sY))$
decomposes as a direct sum of objects in the subcategories $D^b(\on{QCoh}(\widetilde\sY))_n$.
}.  In general we still have such a
decomposition given as follows: 
$\mM\in D^b(\on{QCoh}(\widetilde\sY))_n$ if only if
$a^*(\mM)\in D^b(\on{QCoh}(\widetilde\sY))_n$, where 
$a:B\bG_m\times\widetilde\sY\ra\widetilde\sY$ is the action map.
\begin{definition}
The direct summand $D^b(\on{QCoh}(\widetilde\sY))_1$ is called the
category of twisted sheaves on $\widetilde\sY$.
\end{definition}

Now we further assume $\sY=\sP$ is a Beilinson's 1-motive over $S$ and
$\widetilde\sY=\sD$ is a multiplicative $\bG_m$-gerbe over $\sP$. 
Let $\sP$ and $\sD$ as above.
Then by Lemma \ref{ext} we have the following short
exact sequence

\begin{equation}\label{appen:ext}
0\to B\bG_m\stackrel{i}{\to} \sD\stackrel{p}{\to} \sP\to 0
\end{equation}
as Picard stacks.
Note that  in this case $\sD$ is also a Beilinson's 1-motive.
Let $\sD^\vee$ be the dual Beilinson's 1-motive. It fits into
the short exact sequence
$$0\ra \sP^\vee\ra\sD^\vee\stackrel{\pi}{\ra}\bZ_S\ra 0.$$
Let 
\begin{equation}\label{mult split}
\sT_{\sD}=\pi^{-1}(1)
\end{equation}
be the $\sP^\vee$-torsor associated to
above extension. We call $\sT_{\sD}$ the stack of multiplicative splitting of $\sD$. To justify the name, let us
give an alternative description of $\sT_{\sD}$. By definition the  dual of
$\sD$ is
$$\sD^\vee=\underline\Hom(\sD,B\bG_m).$$
An element  $s\in \sD^\vee$ belongs to
$\sT_{\sD}$ if and only if the composition
$$B\bG_m\stackrel{i}{\to}\sD\stackrel{s}{\to}B\bG_m$$ is equal to the identity. Equivalently,
$s\in\sT_{\sD}$ gives a splitting of the exact
sequence \eqref{appen:ext} and according to \ref{mult splitting} it is a multiplicative 
splitting of $\sD$.


\

The following theorem follows immediately from Theorem \ref{property of FM}:
\begin{thm}[\cite{A}, \cite{Trav} \S 3.2]\label{twist FM}
1)
The Fourier-Mukai functor $\Phi_{\sD}$ restricts to an equivalence
$$\Phi_{\sD}:D^b(\on{QCoh}(\sD))_1\is D^b(\on{QCoh}(\sT_{\sD})).$$

2) 
There is an action of $D^b(\on{Qcoh}(\sP))$ on $D^b(\on{QCoh}(\sD))_1$ by tensoring
and an action of $D^b(\on{QCoh}(\sP^\vee))$ on
$D^b(\on{QCoh}(\sT_{\sD}))$ by convolution.
Those two actions are compatible with the above equivalence in the following sense:
There is a canonical isomorphism
\[\Phi_{\sD}(\mF_1\otimes^{}\mF_2)\simeq(\Phi_{\sD}(\mF_1)*\Phi_{\sD}(\mF_2))\otimes^{}
\omega_{\sP^\vee/S}[g]\]
for $\mF_1\in D^b(\on{Qcoh}(\sP))$ and $\mF_2\in D^b(\on{QCoh}(\sD))_1$.
Here $\omega_{\sP^\vee/S}$ is the canonical sheaf and 
$g$ is the relative dimension of $\sP/S$.

3) The convolution product $*$ on $D^b(\on{QCoh}(\sD))$ induces a
convolution product  
on $D^b(\on{QCoh}(\sD))_1$ (by abuse of notation we still denote it by 
$*$). On the other hand, the category 
$D^b(\on{QCoh}(\sT_{\sD}))$ has the usual monoidal structure by tensoring. 
The equivalence $\Phi_{\sD}$ is compatible with those monoidal structures: There is a canonical isomorphism
\[\Phi_{\sD}(\mF_1*\mF_2)\simeq\Phi_{\sD}(\mF_1)\otimes\Phi_{\sD}(\mF_2)\]
for $\mF_1,\mF_2\in D^b(\on{QCoh}(\sD))_1$.
\end{thm}

\section{$\mD$-modules on stacks and Azumaya property}\label{B}
In this section we review some basic facts about $\mD$-modules on
algebraic stacks and the Azumaya property of the sheaf of differential
operators. Standard references are \cite{BD} and
\cite{BB}.

\subsection{Azumaya algebras and twisted sheaves}
Let us begin with a review of the basic definition of Azumaya
algebras and the category of twisted sheaves. Let $S$ be a
Noetherian scheme. Let $\sX$ be an algebraic stack over $S$. Recall
that an Azumaya algebra $\calA$ over $\sX$ is a quasi-coherent sheaf
of $\calO_\sX$-algebras, which is locally in smooth topology
isomorphic to $\calE nd(\calV)$ for some vector bundle $\calV$ on
$\sX$. Such an isomorphism between $\calA$ and the matrix algebra is
called a splitting of $\calA$. Given an Azumaya algebra $\mA$ on
$\sX$, one can associate to it the $\bG_m$-gerbe $\sD_{\calA}$ of
splittings over $\sX$, i.e., for any $U\ra S$ we have
\begin{equation}\label{ass azumaya}\sD_{\calA}(U)=\{(x,\calV,i)|x\in\sX(U),i:\calE nd(\calV)\is x^*(\calA)\}.\end{equation}

We will use the following proposition in the sequel:
\begin{proposition}[\cite{DP2},\ \S 2.1.2]\label{S_F}
Let $\calA$ be a sheaf of Azumaya algebras on $\sX$. There is the
following equivalence of categories
$$\on{QCoh}(\sD_{\calA})_1\is \mA\on{-mod}(\on{QCoh}(\sX))$$
where $\mA\on{-mod}(\on{Qcoh}(\sX))$ is the category of
$\mA$-modules which is quasi-coherent as $\mO_{\sX}$-modules.
\end{proposition}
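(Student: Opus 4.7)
The plan is to implement Morita equivalence relative to the gerbe $\sD_\calA$, realizing Proposition \ref{S_F} as a geometric incarnation of the classical Morita theorem for matrix algebras.

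First I would identify a tautological object on the gerbe. Unwinding the definition \eqref{ass azumaya}, the structure morphism $p:\sD_\calA \to \sX$ carries a tautological locally free $\mO_{\sD_\calA}$-module $\calV_{\on{univ}}$ of finite rank together with a tautological isomorphism $i_{\on{univ}}:\calE nd_{\mO_{\sD_\calA}}(\calV_{\on{univ}}) \is p^*\calA$. Since the inertia $\bG_m \subset \underline{\Aut}(x,\calV,i)$ acts on $\calV$ by the standard character while fixing $i$, the sheaf $\calV_{\on{univ}}$ lies in the weight-$1$ component $\on{QCoh}(\sD_\calA)_1$, and $i_{\on{univ}}$ is $\bG_m$-equivariant for the weight-$0$ conjugation action on $\calE nd$.

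Using $\calV_{\on{univ}}$ I would then define
\begin{equation*}
\Phi:\calA\on{-mod}(\on{QCoh}(\sX)) \to \on{QCoh}(\sD_\calA)_1,\qquad \Phi(M) = p^*M \otimes_{p^*\calA}\calV_{\on{univ}},
\end{equation*}
\begin{equation*}
\Psi: \on{QCoh}(\sD_\calA)_1 \to \calA\on{-mod}(\on{QCoh}(\sX)),\qquad \Psi(\calF) = p_*\calH om_{\mO_{\sD_\calA}}(\calV_{\on{univ}},\calF),
\end{equation*}
where the $p^*\calA$-action on $\calV_{\on{univ}}$ comes from $i_{\on{univ}}$, and the $\calA$-module structure on $\Psi(\calF)$ is inherited from the induced action on the first argument of $\calH om$. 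Weight bookkeeping is immediate: $\Phi(M)$ inherits weight $1$ from $\calV_{\on{univ}}$, whereas $\calH om(\calV_{\on{univ}},\calF)$ sits in weight $-1+1=0$ and hence descends along $p$.

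To show $\Phi\circ\Psi \simeq \id$ and $\Psi\circ\Phi \simeq \id$ I would reduce to a smooth-local computation on $\sX$. Pick a smooth cover $U\to \sX$ on which $\calA|_U \simeq \calE nd(\calV)$ splits for some vector bundle $\calV$; then $\sD_\calA|_U \simeq U\times B\bG_m$, and the equivalence $\on{QCoh}(U\times B\bG_m)_1 \simeq \on{QCoh}(U)$ obtained by tensoring with the standard weight-$(-1)$ character identifies $\Phi|_U$ and $\Psi|_U$ with the classical Morita functors $M \mapsto \calV\otimes_{\calE nd(\calV)}M$ and $N\mapsto \calH om_{\mO_U}(\calV,N)$. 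These are mutually inverse by Morita's theorem for matrix algebras. Because the local equivalences are canonical and commute with further smooth base change, they glue along $U\times_\sX U$ and descend to the global functors. The only genuine bookkeeping is keeping track of the $\bG_m$-weights and the two-sided $\calA$-module structure on $\calV_{\on{univ}}$ so that the unit and counit isomorphisms constructed smooth-locally are compatible on overlaps; no serious obstacle is expected, as the proposition is essentially classical Morita theory in stack-theoretic dress.
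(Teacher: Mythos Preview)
The paper does not supply a proof of this proposition; it is stated with a citation to \cite{DP2}, \S 2.1.2, and used as a black box. Your argument is the standard Morita-theoretic proof and is essentially correct: the tautological bundle $\calV_{\on{univ}}$ on $\sD_\calA$ is exactly the bimodule that implements the equivalence, the weight bookkeeping is right, and since $\Phi$ and $\Psi$ are defined globally it suffices to check the unit and counit are isomorphisms smooth-locally, where the computation reduces to classical Morita for matrix algebras.

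One small point of hygiene: in $\Phi(M)=p^*M\otimes_{p^*\calA}\calV_{\on{univ}}$ you are tensoring two left $p^*\calA$-modules over $p^*\calA$, which is not well-formed as written. The clean fix is to use $\calV_{\on{univ}}^\vee$ (a right $p^*\calA$-module of weight $-1$) and land in $\on{QCoh}(\sD_\calA)_{-1}$, or equivalently keep $\calV_{\on{univ}}$ and note that $\Psi$ already gives the correct direction $\on{QCoh}(\sD_\calA)_1\to\calA\on{-mod}$ with the $\calA$-action by precomposition, and take $\Phi$ to be its formal inverse $N\mapsto \calV_{\on{univ}}\otimes_{\mO_{\sD_\calA}} p^*N$ restricted to those $N$ with the appropriate module structure. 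This is purely a left/right convention issue and does not affect the substance of your argument.
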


\subsection{$\mD$-module on scheme}
Let $X$ be a scheme smooth over $S$. Let $\mD_{X/S}$ be the sheaf of
crystalline differential operators on $X$, i.e., $\mD_{X/S}$ is the
universal enveloping $\mD$-algebra associated to the relative
tangent Lie algebroid $T_{X/S}$. By definition, the category of
$\mD$-modules on $X$ is the category of modules over $\mD_{X/S}$ that
are quasi-coherent as $\mO_X$-modules. We denote by
$\mD\on{-mod}(X)$ the category of $\mD$-modules on $X$. In the case
$p\mO_S=0$, we have the following  fundamental observation:
\begin{thm}[ \cite{BMR},\ \S1.3.2,\ \S2.2.3]\label{BMR}
The center of $(F_{X/S})_*\mD_{X/S}$ is isomorphic to
$\mO_{T^*(X'/S)}$ and there is an Azumaya algebra $D_{X/S}$ on
$T^*(X'/S)$ such that
\[(F_{X/S})_*\mD_{X/S}\is(\tau_{X'})_*D_{X/S}.\]
where $\tau_{X'}:T^*(X'/S)\ra X'$ is the natural projection.
\end{thm}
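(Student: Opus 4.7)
The plan is to construct a central embedding $\iota:\mO_{T^*(X'/S)}\hookrightarrow (F_{X/S})_*\mD_{X/S}$ using the restricted Lie algebra structure on $T_{X/S}$, show that it identifies $\mO_{T^*(X'/S)}$ with the full center, and verify the Azumaya property by a local calculation in \'etale coordinates.

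First I would exploit the characteristic-$p$ fact that for any vector field $\xi\in T_{X/S}$, the element $\xi^p-\xi^{[p]}\in \mD_{X/S}$ is central. Here $\xi^{[p]}$ denotes the $p$-th power in the restricted structure on $T_{X/S}$, well-defined because in characteristic $p$ the $p$-fold composition of a derivation is again a derivation. Centrality is checked in two steps: $\xi^p-\xi^{[p]}$ acts trivially on $\mO_X$ since $\xi^p$ and $\xi^{[p]}$ agree as derivations, and it commutes with $T_{X/S}$ via the identities $\on{ad}(\xi^p)=(\on{ad}\xi)^p$ and $(\on{ad}\xi)^p=\on{ad}(\xi^{[p]})$. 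Jacobson's formula for $p$-th powers in an associative algebra then implies that $\xi\mapsto\xi^p-\xi^{[p]}$ is additive, and the identity $(f\xi)^p-(f\xi)^{[p]}=f^p(\xi^p-\xi^{[p]})$ shows it is $p$-linear over $\mO_X$. Combined with the inclusion $\mO_{X'}\hookrightarrow (F_{X/S})_*\mO_X$ (central because local sections of $\mO_{X'}$ are $p$-th powers of local functions on $X$), this assembles to the desired algebra map $\iota:\on{Sym}_{\mO_{X'}}T_{X'/S}\to Z((F_{X/S})_*\mD_{X/S})$.

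Next I would verify that $\iota$ is an isomorphism onto the center and that $(F_{X/S})_*\mD_{X/S}$ is locally free of rank $p^{2n}$ over it, where $n=\dim(X/S)$. This is local in the \'etale topology, so I assume \'etale coordinates $x_1,\dots,x_n$ on $X$ with dual derivations $\partial_1,\dots,\partial_n$. Then $\partial_i^{[p]}=0$, and PBW exhibits $\mD_{X/S}$ as a free $\mO_{X'}[\partial_1^p,\dots,\partial_n^p]$-module with basis $\{x^\alpha\partial^\beta:0\le\alpha_i,\beta_i<p\}$. A direct commutator computation shows that the center is precisely $\mO_{X'}[\partial_1^p,\dots,\partial_n^p]\simeq \mO_{T^*(X'/S)}$, so $\iota$ is an isomorphism onto $Z$ and the rank assertion holds. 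For the Azumaya property, I would then show that the natural multiplication map
\[
\mu:(F_{X/S})_*\mD_{X/S}\otimes_{\mO_{T^*(X'/S)}}(F_{X/S})_*\mD_{X/S}^{op}\longrightarrow \underline{\on{End}}_{\mO_{T^*(X'/S)}}((F_{X/S})_*\mD_{X/S})
\]
is an isomorphism. Since both sides are locally free of rank $p^{4n}$, it suffices to check this at each geometric point, which in \'etale coordinates reduces to the standard fact that the finite-dimensional algebra generated by $\bar x_i,\bar\partial_i$ with $[\bar\partial_i,\bar x_j]=\delta_{ij}$ and with prescribed central scalars $\bar x_i^p,\bar\partial_i^p$ is a central simple algebra of degree $p^n$ over the residue field.

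The main obstacle is the clean global construction of $\iota$: while the formula $\xi\mapsto \xi^p-\xi^{[p]}$ is transparently local, descending it through Frobenius to an $\mO_{X'}$-linear map $T_{X'/S}\to (F_{X/S})_*\mD_{X/S}$ forces one to verify the $p$-linearity identity $(f\xi)^p-(f\xi)^{[p]}=f^p(\xi^p-\xi^{[p]})$ inside $\mD_{X/S}$, which hinges on a careful comparison between Jacobson's correction term $\Lambda_p$ appearing in $(a+b)^p=a^p+b^p+\Lambda_p(a,b)$ and the corresponding correction in the restriction formula for $f\xi$ in the restricted Lie algebra $T_{X/S}$. Once this compatibility is in place, the remaining steps are essentially PBW bookkeeping in \'etale coordinates.
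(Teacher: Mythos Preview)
The paper does not give a proof of this theorem at all; it simply records the statement and cites \cite{BMR}, \S1.3.2 and \S2.2.3, as background for the subsequent discussion of $\mD$-modules on stacks. Your proposal is correct and is essentially the argument of \cite{BMR}: construct the $p$-center via $\xi\mapsto\xi^p-\xi^{[p]}$, check centrality and $p$-linearity (your identification of the Jacobson correction terms as the delicate point is accurate, and the cancellation works precisely because those terms are Lie polynomials, hence identical whether computed in $\mD_{X/S}$ or in the restricted Lie algebra $T_{X/S}$), and then reduce the Azumaya claim to a fiberwise computation in \'etale coordinates showing the reduced Weyl algebra is central simple.
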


In particular, we have the following:
\begin{corollary}\label{Azumaya-gerbe}
There is a canonical equivalence of categories
\[\mD\on{-mod}(X)\is\on{QCoh}(\sD_{D_{X/S}})_1\]
where $\sD_{D_{X/S}}$ is the gerbe of splittings of $D_{X/S}$.
\end{corollary}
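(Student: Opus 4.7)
The plan is to directly combine Theorem \ref{BMR} with Proposition \ref{S_F} by interpreting a $\mD_{X/S}$-module quasi-coherent on $X$ as a $D_{X/S}$-module quasi-coherent on $T^*(X'/S)$. First, since the relative Frobenius $F_{X/S}:X\to X'$ is a finite (hence affine) morphism, pushforward induces an equivalence between $\on{QCoh}(X)$ and the category of quasi-coherent $(F_{X/S})_*\mO_X$-modules on $X'$. Under this equivalence, a $\mD_{X/S}$-module structure on $\calF\in \on{QCoh}(X)$ corresponds to a $(F_{X/S})_*\mD_{X/S}$-module structure on $(F_{X/S})_*\calF$ compatible with its $(F_{X/S})_*\mO_X$-structure.

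Next, by Theorem \ref{BMR}, $(F_{X/S})_*\mD_{X/S}\simeq (\tau_{X'})_*D_{X/S}$ as sheaves of $\mO_{X'}$-algebras, and the central $\mO_{T^*(X'/S)}$ inside sits naturally inside it. Since $\tau_{X'}:T^*(X'/S)\to X'$ is also affine, its pushforward is again an equivalence onto its image; hence a $(\tau_{X'})_*D_{X/S}$-module structure on $(F_{X/S})_*\calF$ is the same data as a $D_{X/S}$-module structure on some quasi-coherent sheaf on $T^*(X'/S)$ whose further pushforward to $X'$ recovers $(F_{X/S})_*\calF$. Composing these identifications yields
\[
\mD\on{-mod}(X)\simeq D_{X/S}\on{-mod}(\on{QCoh}(T^*(X'/S))).
\]

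Finally, applying Proposition \ref{S_F} to the Azumaya algebra $D_{X/S}$ on the smooth scheme $T^*(X'/S)$ gives $D_{X/S}\on{-mod}(\on{QCoh}(T^*(X'/S)))\simeq \on{QCoh}(\sD_{D_{X/S}})_1$, which is the desired equivalence. I do not expect any real obstacle; the only subtlety is to track carefully that the central $\mO_{T^*(X'/S)}$-action coming from the Azumaya structure agrees with the $\mO_{T^*(X'/S)}$-action obtained from regarding a $\mD_{X/S}$-module as a module over the center of $(F_{X/S})_*\mD_{X/S}$, but this is exactly the statement of Theorem \ref{BMR} and so is built into the input.
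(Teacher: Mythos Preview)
Your proposal is correct and follows exactly the route the paper intends: the paper states the corollary with no explicit proof, writing only ``In particular, we have the following,'' leaving the reader to combine Theorem \ref{BMR} with Proposition \ref{S_F}. Your argument spells out precisely that combination, tracking the equivalences through the affine pushforwards along $F_{X/S}$ and $\tau_{X'}$, which is the standard and intended unpacking.
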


In what follows, the gerbe $\sD_{D_{X/S}}$ will be denoted by
$\sD_{X/S}$ for simplicity.

\subsection{$\mD$-module on stack}\label{gerbe}
Let $S$ be a Noetherian scheme and $p\mO_S=0$. Let $\sX$ be a smooth
algebraic stack over $S$. A $\mD$-module $M$ on $\sX$ is an
assignment for each $U\ra \sX$ in $\sX_{sm}$, a $\mD_{U/S}$-module
$M_U$ and for each morphism $f:V\ra U$ in $\sX_{sm}$ an isomorphism
$\phi_f:f^*M_U\is M_V$ which satisfies the cocycle conditions. We denote
the category of $\mD$-modules  on $\sX$ by $\mD\on{-mod}(\sX)$.

Unlike the case of schemes, in general there does not exist a sheaf
of algebras $\mD_{\sX/S}$ on $\sX$ such that the category of
$\mD$-modules on $\sX$ is equivalent to the category of modules over
$\mD_{\sX/S}$, and therefore the naive stacky generalization of
Theorem \ref{BMR} is wrong. On the other hand, it is shown in \cite{Trav} that
the obvious stacky version of Corollary \ref{Azumaya-gerbe}  is
correct:

\begin{prop}\label{stacky-Azumaya-gerbe}
There exists a $\bG_m$-gerbe $\sD_{\sX/S}$ on $T^*(\sX'/S)$ such
that the category of twisted sheaves on $\sD_{\sX/S}$ is equivalent
to the category of $\mD$-modules on $\sX$, i.e., we have
\[\mD\on{-mod}(\sX)\is\on{QCoh}(\sD_{\sX/S})_1.\]
\end{prop}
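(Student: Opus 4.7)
The plan is to construct $\sD_{\sX/S}$ by smooth descent from the Azumaya gerbes $\sD_{U/S}$ on $T^*(U'/S)$ given by Theorem~\ref{BMR}, for $U$ ranging over a smooth presentation of $\sX$. Fix a smooth atlas $\pi\colon X\to\sX$ and form the simplicial scheme $X_\bullet=\{X_n=X\times_\sX\cdots\times_\sX X\}$, all of whose terms are smooth over $S$. On each $X_n$ we have the Azumaya algebra $D_{X_n/S}$ on $T^*(X_n'/S)$ from Theorem~\ref{BMR}, and hence its gerbe of splittings $\sD_{X_n/S}$ as in \eqref{ass azumaya}. What must be produced is a coherent system of identifications between these gerbes allowing descent to a gerbe on $T^*(\sX'/S)$.

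The central technical ingredient is the following smooth-pullback compatibility, which I would establish first: for any smooth morphism $g\colon V\to U$ of smooth $S$-schemes with $p\mO_S=0$, the cotangent correspondence
$$
T^*(V'/S)\ \xleftarrow{\ g_d\ }\ V'\times_{U'}T^*(U'/S)\ \xrightarrow{\ g_p\ }\ T^*(U'/S),
$$
where $g_d$ is a closed immersion and $g_p$ the projection, carries a canonical equivalence
$$
g_d^{\,*}\sD_{V/S}\ \simeq\ g_p^{\,*}\sD_{U/S}
$$
of $\bG_m$-gerbes. This is the incarnation on the Azumaya side of the fact that the $\mO$-module pullback functor $g^*\colon \mD\on{-mod}(U)\to \mD\on{-mod}(V)$ is well defined for $g$ smooth, and reduces to a statement about Azumaya algebras: the restriction of $D_{V/S}$ to $V'\times_{U'}T^*(U'/S)$ is Morita equivalent, via the relative de~Rham complex of $V/U$ (which is an explicit tilting object), to the pullback of $D_{U/S}$. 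I would prove this by working locally in étale coordinates, where both sides can be written down explicitly in terms of crystalline differential operators.

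Granted this compatibility, the construction of $\sD_{\sX/S}$ is essentially formal. Apply the compatibility to both face maps $d_0,d_1\colon X_1\rightrightarrows X_0$: one obtains two identifications, coming from $d_0$ and $d_1$, between the pullbacks of $\sD_{X/S}$ to $X_1'\times_{X'}T^*(X'/S)$ and the gerbe naturally attached to $X_1'$, and combining these yields descent data. The cocycle identity on $X_2$ follows from the same compatibility applied there, by functoriality of the Azumaya construction. Since $T^*(\sX'/S)$ is defined as the smooth-local spectrum of $\Sym H^0(T^\bullet_{\sX'/S})$ and the formation of this spectrum commutes with the simplicial resolution, one obtains a $\bG_m$-gerbe $\sD_{\sX/S}$ on $T^*(\sX'/S)$ whose pullback under each $X_n'\times_{\sX'}T^*(\sX'/S)\hookrightarrow T^*(X_n'/S)$ recovers the corresponding $\sD_{X_n/S}$.

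Finally, to verify the equivalence $\mD\on{-mod}(\sX)\simeq\on{QCoh}(\sD_{\sX/S})_1$, one just checks that both sides are computed as the totalization (equalizer of pullbacks along $d_0,d_1$) of their values on the simplicial scheme $X_\bullet$. The scheme case is Proposition~\ref{S_F} combined with Corollary~\ref{Azumaya-gerbe}, and the smooth-pullback compatibility above is precisely what identifies the two descent problems. The main obstacle in this program is the compatibility statement for $g_d^{\,*}\sD_{V/S}$ and $g_p^{\,*}\sD_{U/S}$: this is not a formal statement and requires a concrete local model for $D_{X/S}$ and the structure of its center under smooth base change, together with care about the nontrivial difference between the two sides of the cotangent correspondence in characteristic $p$.
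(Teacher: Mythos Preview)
Your proposal is correct and follows essentially the same approach as the paper: construct $\sD_{\sX/S}$ by smooth descent from the gerbes $\sD_{U/S}$, using the cotangent correspondence to pull back along $(f_U')_d$. The compatibility statement you isolate as ``the main obstacle'' --- that $g_d^*\sD_{V/S}\simeq g_p^*\sD_{U/S}$ for smooth $g\colon V\to U$ --- is exactly what the paper invokes (and later records as the scheme-theoretic version underlying Lemma~\ref{BB lemma}), though the paper treats it more tersely (``one can check that these gerbes $(\sD_{\sX/S})_U$ are compatible under pullbacks'') and attributes the full statement to \cite{Trav}. Your suggestion to realize the Morita equivalence via an explicit bimodule is the right idea; the standard implementation uses the transfer bimodule $\mD_{V\to U}$ rather than the relative de~Rham complex per se, but the local computation you envision is the correct one.
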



\begin{remark}\label{derived}
It is a theorem of Gabber that on a quasi-projective scheme $X$,
every torsion element in $\on{H}^2_{et}(X,\bbG_m)$ can be
constructed from an Azumaya algebra via \eqref{ass azumaya}.
However, this fails for non-separated schemes. A theorem of T\"{o}en
\cite{Toen} shows that in a very general situation, every
$\bG_m$-gerbe arises from a derived Azumaya algebra. Although T\"{o}en's theorem does not directly apply to $T^*(\sX'/S)$, it suggests that the
derived category of $\mD$-modules on $\sX$ (which is not the derived
category of $\mD\on{-mod}(\sX)$ in general) probably should be equivalent to the
category of modules over some derived Azumaya algebra $D^{dr}_{\sX/S}$
on $T^*(\sX'/S)$. 
\end{remark}


Let us sketch the construction of the $\bG_m$-gerbe $\sD_{\sX/S}$ on
$T^*(\sX'/S)$. As gerbes satisfy smooth descent, it is enough to
supply a $\bG_m$-gerbe $(\sD_{\sX/S})_U$ on
$T^*(\sX/S)\times_{\sX'}U'$ for every $U\ra\sX$ in $\sX_{sm}$ and
compatible isomorphisms for any $\beta:U\ra V$ in $\sX_{sm}$. But
for any $f:U\ra\sX$ in $\sX_{sm}$ we have
\[ (f_U')_d:T^*(\sX/S)\times_{\sX'}U'\ra T^*(U'/S).\]
We have a $\bG_m$-gerbe $\sD_{U/S}$ on $T^*(U'/S)$ corresponding to
the sheaf of relative differential operators $\mD_{U/S}$. We define
a $\bG_m$-gerbe $(\sD_{\sX/S})_U$ on $T^*(\sX/S)\times_{\sX'}U'$ as the pull back of $\sD_{U/S}$ along $(f_U')_d$.  One can check
that these gerbes $(\sD_{\sX/S})_U$ are compatible under pullbacks,
and therefore define a $\bG_m$-gerbe $\sD_{\sX/S}$ on $\sX$.

Let $f:\sX\ra\sY$ be a schematic morphism between two smooth
algebraic stacks. From the above construction, the following lemma clearly follows from its scheme theoretic version. 
\begin{lemma}[\cite{Trav}]\label{BB lemma}
(1) There is a canonical $1$-morphism of $\bG_m$-gerbe on
$T^*(\sY'/S)\times_{\sY'}\sX'$
$$M_f:(f_p')^*\sD_{\sY/S}\is (f'_d)^*\sD_{\sX/S}.$$

(2) For a pair of morphisms $\sX\stackrel{g}\ra\sZ\stackrel{h}\ra\sY$
and their composition $f=h\circ g:\sX\ra\sY$
, there is a canonical $1$-morphisms
of $\bG_m$-gerbe on $T^*(\sY'/S)\times_{\sY'}\sX'$
\[M_{g,h}:(f_p')^*\sD_{\sY/S}\is (f'_d)^*\sD_{\sX/S},\]
together with a canonical $2$-morphism between $M_{h\circ g}$ and $M_{g,h}$.

(3) We have a canonical $1$-morphism of $\bG_m$-gerbe on $T^*(\sX'/S)^{sm}$:
\[\sD_{\sX/S}|_{T^*(\sX'/S)^{sm}}\is
\sD_{T^*(\sX'/S)^{sm}/S}(\theta_{can}):=\theta_{can}^*(\sD_{T^*(\sX'/S)^{sm}/S}),\]
where $T^*(\sX'/S)^{sm}$ is the maximal smooth open substack of $T^*(\sX'/S)$
and $\theta_{can}:T^*(\sX'/S)^{sm}\ra T^*(T^*(\sX'/S)^{sm})$ is the 
canonical one form. 
\end{lemma}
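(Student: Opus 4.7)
The plan is to obtain all three statements by smooth descent from the corresponding scheme-theoretic assertions, exploiting the very definition of $\sD_{\sX/S}$ as a gerbe assembled from the gerbes $(\sD_{\sX/S})_U$ on $T^*(\sX'/S)\times_{\sX'}U'$ attached to smooth atlases $U\to\sX$. Since morphisms of $\bG_m$-gerbes and 2-morphisms between them both satisfy smooth descent, one only needs to produce the data locally on atlases and verify that they are compatible with refinements; this will allow one to replace $\sX$ and $\sY$ by smooth schemes and invoke the known Azumaya-algebra statements for $D_{X/S}$.

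For part (1), first I would choose smooth atlases $V\to \sY$ and $U\to V\times_\sY\sX$, and form the induced morphism of smooth $S$-schemes $f_U:U\to V$. The scheme-theoretic input is the canonical isomorphism of Azumaya algebras on $T^*(V'/S)\times_{V'}U'$,
\[
((f_U)_p')^*D_{V/S}\;\simeq\;((f_U)_d')^*D_{U/S},
\]
which comes from Proposition~2.2.5 of \cite{BMR} (or rather, from the compatibility of the sheaves of crystalline differential operators under pullback: the filtered ring map $f_U^{-1}\mD_{V/S}\to \mD_{U/S}$ induces, after taking centers and twisting by $F_{U/S}$, the matching of Azumaya algebras on the relevant fiber product). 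Translating this into splittings of the associated $\bG_m$-gerbes and using that both sides in the lemma are pulled back from these local gerbes yields $M_f$ locally; naturality of the construction under further smooth refinements $U_1\to U_2$ and $V_1\to V_2$ gives the descent data and thus a globally defined $M_f$.

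For part (2), the required 2-isomorphism between $M_{h\circ g}$ and the composition $M_{g,h}$ is likewise tautological in the scheme case: for smooth $S$-schemes $U\stackrel{g_0}{\to}W\stackrel{h_0}{\to}V$ the Azumaya-algebra identifications come from composing the natural filtered maps $f_0^{-1}\mD_{V/S}\to g_0^{-1}\mD_{W/S}\to\mD_{U/S}$, and the resulting 2-morphism is the identity at the level of algebras. Picking compatible smooth atlases of $\sX$, $\sZ$ and $\sY$, one gets this 2-morphism locally, and its functoriality in refinements supplies the descent datum. The main obstacle here—and the only piece that is not formal manipulation—is a careful bookkeeping of the $2$-categorical coherences, i.e.\ verifying that the local 2-morphisms satisfy the cocycle condition on triple overlaps of atlases.

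For part (3), the scheme-theoretic input is the Beilinson--Bernstein identification $D_{V/S}|_{T^*(V'/S)}\simeq \theta_{can}^{*}\sD_{T^*(V'/S)/S}$ for a smooth $S$-scheme $V$, following from the fact that the tautological 1-form on $T^*(V'/S)$ splits the $p$-curvature filtration of $D_{V/S}$ (as used e.g.\ in \cite{BB,BMR}). Restricting $\sD_{\sX/S}$ to the smooth locus $T^*(\sX'/S)^{sm}$ and pulling back to a smooth atlas $U\to\sX$, one reduces to this assertion on $T^*(U'/S)$, observing that the canonical one-forms are compatible under the cotangent pullback $((\id)_U')_d$. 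Combining this with the transition data from (1) on double overlaps, one assembles the required global 1-morphism on $T^*(\sX'/S)^{sm}$.
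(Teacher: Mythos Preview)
Your proposal is correct and follows essentially the same approach as the paper: the paper simply states that the lemma ``clearly follows from its scheme theoretic version'' given the construction of $\sD_{\sX/S}$ via smooth descent, and your argument is a careful unpacking of exactly this reduction. Your treatment is in fact more detailed than what the paper provides, but the underlying idea---descend the known Azumaya-algebra identifications for schemes through the atlas defining $\sD_{\sX/S}$---is identical.
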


Let us discuss a stacky version of \cite[\S 4.3]{OV}. Let $\sX/S$ be
a smooth algebraic stack as above and let $\sP
ic^\natural(\sX/S)$ be the Picard stack of invertible sheaves on
$\sX$ equipped with a connection (i.e. objects in $\sP
ic^\natural(\sX/S)$ are $\mD$-modules on $\sX$ whose underlying
quasi-coherent sheaves are invertible). 
Let $B'_S=\Sect_S(\sX',T^*(\sX'/S))$. Note that the following proposition does not need the representability of $\sP ic^\natural(\sX/S)$.

\begin{prop}\label{OV stack}
(1) There is a natural morphism $\psi: \sP ic^\natural(\sX/S)\to
B'_S$.

(2) The pullback of the gerbe $\sD_{\sX/S}$ along $$\sX'\times_S
\sP ic^\natural(\sX/S)\stackrel{\id\times\psi}{\to}
\sX'\times_SB'_S\to T^*(\sX'/S)$$ is canonically trivialized.
\end{prop}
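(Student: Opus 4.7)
The plan is to work by smooth descent and reduce to the scheme-theoretic case, for which part (1) is standard (the map $\psi$ sends a line bundle with connection to its $p$-curvature) and part (2) is essentially the Bezrukavnikov-Mirković-Rumynin/Ogus-Vologodsky construction of a splitting of the Azumaya algebra of crystalline differential operators over the graph of the $p$-curvature.

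For part (1), pick a smooth atlas $U\to\sX$ by a scheme. An object $(\calL,\nabla)\in\sP ic^\natural(\sX/S)$ restricts to a line bundle with connection $(\calL_U,\nabla_U)$ on $U$. Its $p$-curvature is an $\mO_U$-linear map $\calL_U\to\calL_U\otimes F_U^*\Omega^1_{U'/S}$; since $\calL_U$ is invertible, this amounts to a canonical section of $F_U^*\Omega^1_{U'/S}$, and by the usual Cartier-type argument it lies in the image of $\Omega^1_{U'/S}\to F_{U*}F_U^*\Omega^1_{U'/S}$, hence descends to a section $\psi_U(\calL_U,\nabla_U):U'\to T^*(U'/S)$. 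Functoriality of the $p$-curvature under pullback by smooth morphisms in $\sX_{sm}$ shows these sections glue to a section of $T^*(\sX'/S)\to\sX'$, yielding the map $\psi$.

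For part (2), fix $(\calL,\nabla)\in\sP ic^\natural(\sX/S)$ and a smooth atlas $U\to\sX$. As an $\mO_U$-module of rank one, $\calL_U$ together with $\nabla_U$ gives a $\mD_{U/S}$-module, which under Corollary \ref{Azumaya-gerbe} corresponds to a twisted coherent sheaf on $\sD_{U/S}$. Its support on $T^*(U'/S)$ is exactly the graph of $\psi_U(\calL_U,\nabla_U)$, and its $\mO_{U'}$-rank is $p^{\dim U}$, the correct value to give a splitting of the Azumaya algebra $D_{U/S}$ on the graph. Hence $\calL_U$ itself provides a canonical trivialization of the pullback of $\sD_{U/S}$ along $(\id_{U'}\times\psi)$ to $U'\times_S\sP ic^\natural(\sX/S)$. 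By functoriality of the $p$-curvature and the splitting construction, these trivializations are compatible with the canonical $1$-morphisms $M_f$ of Lemma \ref{BB lemma} that define $\sD_{\sX/S}$ by descent, so they glue to a trivialization of the pullback of $\sD_{\sX/S}$.

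The main obstacle is verifying the compatibility in the last step: for a smooth morphism $f:V\to U$ in $\sX_{sm}$, one must check that $M_f:(f'_p)^*\sD_{U/S}\simeq (f'_d)^*\sD_{V/S}$ intertwines the splitting coming from $\calL_U$ with that coming from $\calL_V=f^*\calL_U$, and similarly for the $2$-morphisms of compositions. This is routine but technical: both splittings are manifestly described by the same object $\calL$, viewed as a $\mD$-module of rank one, and the naturality of the BMR equivalence $\mD_{(-)/S}\on{-mod}\simeq\on{QCoh}(\sD_{(-)/S})_1$ under smooth pullback provides the required compatibility.
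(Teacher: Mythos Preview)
Your proposal is correct and follows essentially the same approach as the paper: both argue by smooth descent, defining $\psi$ via the $p$-curvature on a smooth atlas and trivializing the gerbe using the line bundle with connection as a rank-one $\mD$-module (the paper phrases this as $F_*(\calL,\nabla)$ giving the splitting, which is exactly your ``$\calL_U$ viewed as a $\mD$-module of correct rank over $\mO_{U'}$''), then invoking Lemma~\ref{BB lemma} for the gluing compatibility. One small cosmetic point: the splitting module is really $F_*\calL_U$ on $U'$ rather than $\calL_U$ on $U$, which you implicitly acknowledge when you compute its $\mO_{U'}$-rank.
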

\begin{proof}
For (1), recall
that if $\sX$ is a scheme, the morphism $\psi$ is given by the
$p$-curvature map (see \cite[\S 4.3]{OV}). We explain how to generalize
this map to stacks. Let $U\to \sX$ be a smooth morphism. Then via
pullback, we obtain a morphism $\sP ic^\natural(\sX/S)\to \sP
ic^\natural(U/S)\to \Sect_S(U',T^*(U'/S))$. By considering further
pullbacks to $V=U\times_{\sX}U$, we find that the above maps fit
into the following commutative diagram
\[\begin{CD}
\sP ic^\natural(\sX/S)@>>> \sP ic^\natural(U/S)\\
@V\psi_U VV@VVV\\
\Sect_S(U',T^*(\sX'/S)\times_{\sX'}U')@>>> \Sect_S(U',T^*(U'/S)).
\end{CD}\]
These $\psi_U$'s are compatible under pullbacks and define the
$\pi:\sP ic^\natural(\sX/S)\to B'_S$.

For (2), again let $U\to \sX$ be a smooth morphism. Note that the
pullback of the gerbe $\sD_{U/S}$ along $U'\times_S \sP
ic^\natural(U/S)\to T^*(U'/S)$ is canonically trivialized by the
object $F_*(\calL,\nabla)$, where $(\calL,\nabla)$ is the universal
object on $U\times_S\sP ic^\natural(U/S)$. Combining this with Lemma
\ref{BB lemma} and the proof of part (i), this shows that the
pullback of $\sD_{\sX/S}$ along $U'\times_S\sP ic^\natural(\sX/S)\to
\sX'\times_S \sP ic^\natural(\sX/S)$ is canonically trivialized.
These trivializations glue together and give a canonical
trivialization of $\sD_{\sX/S}$ on $\sX'\times_S \sP
ic^\natural(\sX/S)$.
\end{proof}

\subsection{1-forms}\label{one forms}
In this subsection we make a digression into the construction of gerbes using 1-forms. We refer to \cite[Appendix A.8]{CZ} for more 
details.
Recall that for any smooth algebraic stack $\sX/S$ we can associate to it a 
$\bG_m$-gerbe $\sD_{\sX/S}$ on $T^*(\sX'/S)$. Thus giving a 1-form 
$\theta:\sX'\ra T^*(\sX'/S)$ we can construct a $\bG_m$-gerbe $\sD(\theta):=
\theta^*\sD_{\sX/S}$ on $\sX'$ by pulling back $\sD_{\sX/S}$ along $\theta$.

When $\sX=X$ is a smooth Noetherian scheme,
above construction can be generalized as follows. Let $\mG$ be a smooth affine commutative 
group scheme over $X$. For any section $\theta$ of $\Lie\mG'\otimes\Omega_{X'/S}$ we 
can associate to it a $\mG$-gerbe $\sD(\theta)$ on $X'$
using the four term exact sequence constructed in \emph{loc. cit.}. In the case $\mG=\bG_m$, the 
$\bG_m$-gerbe $\sD(\theta)$ is isomorphic to $\theta^*\sD_{X,S}$
the pull back of $\sD_{X/S}$ along $\theta:\sX'\ra T^*(X'/S)$.
We have the following functorial properties:

\begin{lemma}\label{theta}
\
\begin{enumerate}
\item
Let $\sY$ be another smooth algebraic stack over $S$ and let $f:\sY\ra\sX$ be 
a morphism. Let $\theta$ be a 1-form on $\sX$.
There is a canonical equivalence 
of $\bG_m$-gerbes on $\sY'$
\[f'^*\sD(\theta)\is\sD(f'^*\theta).\]
\item 
Let $X$ be a smooth Noetherian scheme and 
let $\phi:\mG\ra\mH$ be a morphism of smooth commutative affine group schemes over $X$. 
For any section $\theta$ of $\Lie\mG'\otimes\Omega_{X'}$ let 
$\phi'_*\theta$ denote its image $\Lie\mH'\otimes\Omega_{X'/S}$ under the map induced by $\phi$.
There is a canonical equivalence 
of $\mH'$-gerbes on $X'$
\[\sD(\theta)^{\phi'}\is\sD(\phi'_*\theta),\]
where $\sD(\theta)^{\phi'}$ is the $\mH'$-gerbe induced form $\sD(\theta)$ 
using the map $\phi'$ (see \ref{induction functor}).
\end{enumerate}
\end{lemma}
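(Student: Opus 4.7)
Both parts are instances of the functoriality already encapsulated in Lemma \ref{BB lemma} and in the construction of $\sD(\theta)$ from \cite[Appendix A.10]{CZ}, so the plan is to unwind the definitions and invoke those functorialities.

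For Part (1), I would first unwind the definition $\sD(\theta)=\theta^*\sD_{\sX/S}$. Recall that the pullback one-form $f'^*\theta\colon \sY'\to T^*(\sY'/S)$ factors canonically as
\[
\sY'\xrightarrow{(\theta\circ f',\,\mathrm{id})}T^*(\sX'/S)\times_{\sX'}\sY'\xrightarrow{f'_d}T^*(\sY'/S),
\]
where the first arrow is the section determined by $\theta\circ f'$ and the identity of $\sY'$, and $f'_d$ is the cotangent morphism of \eqref{cotangent morphism} (applied to $f'$). Therefore
\[
(f'^*\theta)^*\sD_{\sY/S}\ \simeq\ (\theta\circ f',\,\mathrm{id})^*(f'_d)^*\sD_{\sY/S}.
\]
Now apply Lemma \ref{BB lemma}(1), which supplies a canonical $1$-morphism $M_{f'}\colon(f'_p)^*\sD_{\sX/S}\simeq(f'_d)^*\sD_{\sY/S}$ of $\bG_m$-gerbes on $T^*(\sX'/S)\times_{\sX'}\sY'$. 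Pulling $M_{f'}$ back along $(\theta\circ f',\mathrm{id})$ and noting that $f'_p\circ(\theta\circ f',\mathrm{id})=\theta\circ f'$, we get
\[
(f'^*\theta)^*\sD_{\sY/S}\ \simeq\ (\theta\circ f')^*\sD_{\sX/S}\ =\ f'^*\theta^*\sD_{\sX/S}\ =\ f'^*\sD(\theta),
\]
which is the desired equivalence. The cocycle (for iterated pullbacks) coming from Lemma \ref{BB lemma}(2) guarantees this identification is canonical.

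For Part (2), I would recall the construction of $\sD(\theta)$ for a general smooth affine commutative $\mG$ from \cite[Appendix A.10]{CZ}: it is produced from a four term exact sequence of sheaves on $X'$ whose terms depend functorially on the group scheme $\mG$ (in particular $\Lie\mG'\otimes\Omega_{X'/S}$ appears as one of the middle terms, and $\theta$ is used to extract the gerbe). A morphism $\phi\colon\mG\to\mH$ of smooth commutative affine group schemes induces a morphism of the corresponding four term sequences, compatibly with the central term $\Lie\phi'\otimes\mathrm{id}\colon\Lie\mG'\otimes\Omega_{X'/S}\to\Lie\mH'\otimes\Omega_{X'/S}$, which sends $\theta$ to $\phi'_*\theta$. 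Under the formation of the gerbe associated to such an extension, this functoriality becomes exactly the induction $\sD(\theta)\rightsquigarrow\sD(\theta)^{\phi'}$ (see \S\ref{induction functor}), and the induced gerbe is $\sD(\phi'_*\theta)$ by construction. Putting these together yields the canonical equivalence $\sD(\theta)^{\phi'}\simeq\sD(\phi'_*\theta)$ of $\mH'$-gerbes on $X'$.

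The only real content is in Part (2), where one must spell out the four term sequence of \cite{CZ} and verify its functoriality in the group scheme; Part (1) is then a diagram chase once the base change behavior in Lemma \ref{BB lemma} is in place. The main obstacle is bookkeeping: matching the canonical $1$- and $2$-morphisms coming from Lemma \ref{BB lemma}(1)--(2) with those arising from the four term sequence, so that the equivalences constructed here are not merely existence statements but canonical and compatible with further pullback and with composition in $\phi$. This will ultimately be used when $\mG=\breve J'$ and $\phi=\breve j^1$ to produce the identification \eqref{ind=theta}.
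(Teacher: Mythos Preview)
The paper does not supply a proof of this lemma; it is stated as a ``basic functorial property'' immediately following the definition of $\sD(\theta)$ in \S\ref{one forms}, with the construction itself deferred to \cite[Appendix A.10]{CZ}. Your proposal is exactly the natural way to fill in what the paper leaves implicit, and it is correct: Part~(1) is the expected diagram chase using the factorization of $f'^*\theta$ through $f'_d$ together with the canonical $1$-morphism $M_f$ of Lemma~\ref{BB lemma}(1), and Part~(2) is precisely the functoriality in $\mG$ of the four-term exact sequence of \cite{CZ}. One small remark: Lemma~\ref{BB lemma} as stated assumes $f$ is schematic, so strictly speaking your argument for Part~(1) should either note this hypothesis or observe that, since $\sD_{\sX/S}$ is constructed by smooth descent from the scheme case, the identification $(f'_p)^*\sD_{\sX/S}\simeq(f'_d)^*\sD_{\sY/S}$ holds for general $f$ by reducing to smooth atlases.
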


\subsection{Azumaya property of differential operators on good stacks}\label{azumaya property}

Recall that a smooth algebraic stack $\sX$ over $S$ of relative
dimension $d$ is called relatively good if it satisfies the following
equivalent properties:
\begin{enumerate}
\item
$\on{dim}(T^*(\sX/S))=2d$.
\item
$\on{codim}\{x\in\sX|\on{dim}Aut(x)=n\}\geqslant n$ for all $n>0$.
\item
For any $U\ra\sX$ in $\sX_{sm}$, the complex
$$\on{Sym}(T_{U/\sX}\ra T_{U/S})$$
has cohomology concentrated in degree $0$ and
$$H^0(\on{Sym}(T_{U/\sX}\ra T_{U/S}))
\is \on{Sym}(T_{U/S})/T_{U/\sX}\on{Sym}(T_{U/S}).$$
\end{enumerate}

The following proposition is proved in \cite{BB} (see also
\cite{Trav}).
\begin{prop}\label{Azumaya}
Let $\sX$ be a relatively good stack. Let $\pi_{\sX}:T^*(\sX/S)\ra\sX$
be the natural projection and $\pi_{\sX'}$ be its Frobenius twist.
Let $T^*(\sX'/S)^0$ be the maximal smooth open substack of
$T^*(\sX'/S)$. Then
\begin{enumerate}
\item There is a natural coherent sheaf of algebras $D_{\sX/S}$
on $T^*(\sX'/S)$ such that the restriction of $D_{\sX/S}$ to
$T^*(\sX'/S)^0$ is an Azumaya algebra on $T^*(\sX'/S)^0$ of rank
$p^{2\on{dim}(\sX/S)}$.
\item The $\bG_m$-gerbe $\sD_{\sX/S}^0:=\sD_{\sX/S}|_{T^*(\sX'/S)^0}$ is isomorphic to
$\sD_{D_{\sX/S}^0}$, the gerbe of splittings of $D_{\sX/S}^0$. In
particular, we have
\[D_{\sX/S}^0\on{-mod}\is\on{QCoh}(\sD_{\sX/S}^0)_1.\]
\end{enumerate}
\end{prop}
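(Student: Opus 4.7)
Part (2) will follow from Part (1) with little extra work: once $D^0_{\sX/S}$ is constructed as an Azumaya algebra, Proposition \ref{S_F} identifies its module category with twisted sheaves on its gerbe of splittings. Since both this gerbe and $\sD^0_{\sX/S}$ implement the category of $\mD$-modules on $\sX$ restricted to the smooth locus via Proposition \ref{stacky-Azumaya-gerbe}, the uniqueness of the gerbe implementing this category forces them to be canonically isomorphic.

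For Part (1), my plan is to construct $D_{\sX/S}$ by quantum Hamiltonian reduction, following the strategy of \cite{BB}. Choose a smooth atlas $f\colon U\to\sX$; Theorem \ref{BMR} supplies the Azumaya algebra $D_{U/S}$ on $T^*(U'/S)$ of rank $p^{2\dim U}$. The cotangent diagram
\[
T^*(\sX'/S)\xleftarrow{f'_p}T^*(\sX'/S)\times_{\sX'}U'\xrightarrow{f'_d}T^*(U'/S)
\]
has $f'_d$ a closed embedding whose image is cut out by (the Frobenius twist of) the classical moment map $T^*(U/S)\to T^*_{U/\sX}$, while $f'_p$ is smooth. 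The candidate algebra on $T^*(\sX'/S)\times_{\sX'}U'$ is the invariants $((f'_d)^{-1}D_{U/S}/\mI)^{T_{U/\sX}}$, where $\mI$ is generated by the quantum moment map coming from the Lie algebroid map $T_{U/\sX}\hookrightarrow T_{U/S}\to D_{U/S}$. The descent along $f'_p$ to a sheaf on $T^*(\sX'/S)$ is furnished by the coherence isomorphisms $M_f$ and $M_{g,h}$ of Lemma \ref{BB lemma} applied to the smooth groupoid $U\times_\sX U\rightrightarrows U$, and this produces the desired coherent sheaf of algebras $D_{\sX/S}$.

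The rank and Azumaya assertions then need to be verified on the smooth open locus $T^*(\sX'/S)^0$. Here I would work locally in smooth topology, reducing to the model $\sX=[X/G]$ with $G$ smooth acting on smooth $X$; then $D_{\sX/S}$ becomes $(D_X/D_X\cdot\frakg)^{G}$ on $T^*[X'/G']$, and the Azumaya property of $D_X$ together with the fact that the image of the moment map is Frobenius-central in $D_X$ upgrades the reduction to an Azumaya algebra of rank $p^{2(\dim X-\dim G)}=p^{2\dim\sX}$ over the locus where $G$ acts with trivial infinitesimal stabilizer. The goodness hypothesis, in the equivalent form $\on{codim}\{x:\dim\Aut(x)=n\}\geq n$, is precisely what ensures that this free-action open set exhausts $T^*(\sX'/S)^0$.

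I expect the main obstacle to be this last verification: ensuring that the fiberwise rank of the reduced algebra remains $p^{2\dim\sX}$ on the entire smooth locus $T^*(\sX'/S)^0$ rather than only on a possibly smaller open subset of the free-action locus. Translating the dimensional form of goodness into a concrete statement in the local $[X/G]$-model, and deducing that smoothness of $T^*[X'/G']$ at a point forces the $G$-stabilizer to act trivially on the cotangent fiber, is the technical heart of the argument; once that is in place the Azumaya property and the identification of gerbes in (2) are formal.
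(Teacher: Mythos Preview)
The paper does not give its own proof of this proposition; it simply cites \cite{BB} and \cite{Trav}. Your outline for Part~(1) follows the \cite{BB} strategy (quantum Hamiltonian reduction of the Azumaya algebra $D_{U/S}$ on a smooth atlas, then descent), and your identification of the goodness hypothesis as the input controlling the rank on the smooth locus is correct. So on Part~(1) your plan matches the cited literature.

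Your argument for Part~(2), however, has a gap. You assert that since both $\sD^0_{\sX/S}$ and the splitting gerbe $\sD_{D^0_{\sX/S}}$ ``implement the category of $\mD$-modules restricted to the smooth locus,'' uniqueness forces them to be isomorphic. But a $\bG_m$-gerbe is \emph{not} determined up to isomorphism by its category of twisted sheaves: over a point, for example, every $\bG_m$-gerbe is neutral and all twisted-sheaf categories are equivalent to $\on{Vect}$. The correct argument is more direct and does not route through the module category. By the construction sketched after Proposition~\ref{stacky-Azumaya-gerbe}, the gerbe $\sD_{\sX/S}$ is \emph{defined} by descending $(f'_d)^*\sD_{U/S}$ along the smooth cover $T^*(\sX'/S)\times_{\sX'}U'\to T^*(\sX'/S)$, and $\sD_{U/S}$ is by definition the splitting gerbe of $D_{U/S}$. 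On the other hand, your Hamiltonian reduction in Part~(1) produces, on the smooth locus, a Morita equivalence between $(f'_d)^*D_{U/S}$ and $(f'_p)^*D^0_{\sX/S}$, which is precisely an isomorphism of their splitting gerbes. Both gerbes are thus assembled from the same local pieces with the same descent data, and hence agree. Deduce Part~(2) from this local compatibility, not from an abstract uniqueness principle.
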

\begin{remark}
By Proposition \ref{stacky-Azumaya-gerbe}, the category
$D_{\sX/S}^0\on{-mod}$ can be thought as a localization of the
category of $\mD$-modules on $\sX$.
\end{remark}


\section{Abelian Duality}\label{C}
\subsection{Abelian duality for Beilinson's 1-motives}\label{ab duality for Beilinson 1-motive}

Assume that $S$ is a scheme and $p\mO_S=0$. Let $\sA$ be a Picard stack over $S$. In this subsection, we denote the base change of $\sA$ along $Fr_S:S\to S$ by $\sA'$ instead of $\sA^{(S)}$.
Let
$\bbT^*_e\sA'$ be the vector bundle on $S$, which is the restriction of the relative (to $S$) cotangent
bundle of $\sA'$ along $e:S\to \sA'$. Then there is a
canonical isomorphism
$$\sA'\times_S\bbT^*_e\sA'\is T^*(\sA'/S).$$ Therefore, via the map $\pi_S: T^*(\sA'/S)\simeq \sA'\times_S\bbT^*_e\sA'\ra \bbT^*_e\sA'$, $T^*(\sA'/S)$ becomes a Picard stack over
$\bbT^*_e\sA'$ and we denote by $m_S$ the multiplication map:
$$m_S:T^*(\sA'/S)\times_{\bbT^*_e\sA'}T^*(\sA'/S)\ra T^*(\sA'/S).$$
Recall that it makes sense to talk about a gerbe on a Picard stack
with a commutative group structure (cf.  \ref{app:comm}).

\begin{lem}\label{comm grp}
The gerbe $\sD_{\sA/S}$ on $T^*(\sA'/S)$ admits a canonical
commutative group structure.
\end{lem}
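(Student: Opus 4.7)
The plan is to obtain the three pieces of data in Definition \ref{app:comm}---the equivalence $M$, the associator $\gamma$, and the involutive braiding $i$---from the functoriality of the crystalline differential operator gerbe under the group operations on the Picard stack $\sA$, applying Lemma \ref{BB lemma} to the multiplication map $m:\sA\times_S\sA\to\sA$ and its iterates.

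First I would construct $M$. Write $m':\sA'\times_S\sA'\to\sA'$ for the Frobenius twist of $m$. Applying Lemma \ref{BB lemma}(1) to $m'$ yields a canonical equivalence of $\bG_m$-gerbes on $T^*(\sA'/S)\times_{\sA'}(\sA'\times_S\sA')$,
\[
M_m:(m'_p)^*\sD_{\sA/S}\simeq (m'_d)^*\sD_{\sA\times_S\sA/S}.
\]
Applying the same lemma to the two projections $\pr_i:\sA\times_S\sA\to\sA$ identifies $\sD_{\sA\times_S\sA/S}$ with the external product $\sD_{\sA/S}\boxtimes\sD_{\sA/S}$ on $T^*(\sA'/S)\times_S T^*(\sA'/S)$. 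Using the Picard-stack identification $T^*(\sA'\times_S\sA'/S)\simeq T^*(\sA'/S)\times_{\bbT_e^*\sA'}T^*(\sA'/S)$, the map $m'_p$ becomes composition with $m_S$ while $m'_d$ becomes the evident embedding, and $M_m$ thereby descends to the sought equivalence
\[
M:\sD_{\sA/S}\boxtimes\sD_{\sA/S}\simeq m_S^*\sD_{\sA/S}
\]
on $T^*(\sA'/S)\times_{\bbT^*_e\sA'}T^*(\sA'/S)$.

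Next I would construct $\gamma$. Consider the two factorizations of the triple multiplication $\sA\times_S\sA\times_S\sA\to\sA$ through $m\circ(m\times\id)$ and $m\circ(\id\times m)$; by strict associativity of the Picard structure these two composites agree on the nose. Applying Lemma \ref{BB lemma}(2) to each factorization identifies the two iterated applications $M\circ(M\boxtimes\id)$ and $M\circ(\id\boxtimes M)$ as canonical $1$-morphisms between the same pair of gerbes, and the $2$-morphism supplied by the lemma produces the desired associator $\gamma$. The pentagon cocycle condition is then verified by applying the lemma once more to the two factorizations of the quadruple multiplication and using that the BB $2$-morphisms are themselves compatible with further composition.

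Finally, for the braiding $i$, let $\sigma:\sA\times_S\sA\to\sA\times_S\sA$ be the flip. Strict commutativity of $\sA$ gives $m\circ\sigma=m$ as morphisms of stacks, and applying Lemma \ref{BB lemma}(2) to this identity yields a canonical $2$-isomorphism $i:\sigma^*M\simeq M$. The main obstacle I expect is verifying $i^2=\id$: while $M$ and $\gamma$ fall out of the BB machinery essentially formally, checking that the braiding is genuinely an involution, rather than a self-equivalence of higher order, hinges on the strict commutativity convention for Picard stacks recalled in \S\ref{Picard}, which forces $\sigma$ to act as the identity on unit objects. Making this precise will require unwinding the definition of $\sD_{\sA/S}$ by smooth descent along a presentation of $\sA$ and checking directly that the descent datum over $\sA\times_S\sA$ is $\sigma$-equivariant in a strictly involutive manner; once this is done, all three pieces assemble into the required commutative group structure on $\sD_{\sA/S}$.
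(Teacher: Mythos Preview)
Your proposal is correct and follows essentially the same approach as the paper: both obtain $M$, $\gamma$, and $i$ by applying Lemma \ref{BB lemma} to the multiplication $m$, to the two factorizations of the triple multiplication, and to $m\circ\sigma$ respectively, after identifying $m_d$ and $m_p$ with the inclusion into $T^*(\sA'/S)\times_S T^*(\sA'/S)$ and the multiplication $m_S$. Your write-up is in fact more detailed than the paper's sketch (which does not spell out the pentagon or the verification of $i^2=\id$); one minor caution is that in a Picard stack $m\circ\sigma$ is only canonically isomorphic to $m$ via the commutativity constraint $c$, not literally equal, so the production of $i$ via Lemma \ref{BB lemma}(2) should be phrased through that isomorphism rather than through ``strict'' equality.
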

\begin{proof}
Let us sketch the construction of the multiplicative structure $M$ and
the $2$-morphisms $\gamma$ and $i$
 in \ref{app:comm}.
The multiplication $m:\sA\times_S\sA\to \sA$, which induces
$$
\xymatrix{T^*(\sA'/S)\times_{\sA'}(\sA'\times_S\sA')\ar[r]^{\ \ \ \ m_d}\ar[d]^{m_p}&T^*(\sA'\times_S\sA'/S)\\
T^*(\sA'/S)}.
$$
Observe that the map
$m_d:T^*(\sA'/S)\times_{\sA'}(\sA'\times_S\sA')\to
T^*(\sA'\times_S\sA'/S)\simeq T^*(\sA'/S)\times_ST^*(\sA'/S)$
induces an isomorphism
\[T^*(\sA'/S)\times_{\sA'}(\sA'\times_S\sA')\simeq T^*(\sA'/S)\times_{\bbT^*_e\sA'}T^*(\sA'/S)\to T^*(\sA'/S)\times_ST^*(\sA'/S).\]
Under this isomorphism $m_p$ becomes the multiplication map $m_S$.
Now the canonical $1$-morphism between $m_S^*\sD_{\sA/S}$ and
$\sD_{\sA/S}\boxtimes\sD_{\sA/S}$ comes from Lemma \ref{BB lemma}.
We have two different factorizations of the multiplicative morphism 
$\sA\times_S\sA\times_S\sA\ra\sA$ and 
the $2$-morphisms $\gamma$ comes from the $2$-morphisms for corresponding equivalences of Lemma \ref{BB lemma}. Finally, the $2$-morphism $i:\sigma^*M\is M$ can be 
constructed by applying Lemma \ref{BB lemma} to the morphism
 $\sA\times_S\sA\stackrel{\sigma}\ra\sA\times_S\sA\stackrel{m}\ra\sA$.
\end{proof}

Now we assume that $\sA$ is a Beilinson's 1-motive and is good when regarded as an algebraic stack.
Let $\sA^\natural:=\sP ic^\natural(\sA)$ be the Picard stack of
multiplicative invertible sheaves on $\sA$ with a connection (cf. \cite{Lau}), and
let $\psi_S:\sA^\natural\ra \bbT^*_e\sA'$ be the $p$-curvature
morphism as given in Proposition \ref{OV stack} (1). By \cite[\S 4.3]{OV},
there is a natural action of
$T^*(\sA'/S)^\vee\is(\sA')^\vee\times_S\bbT^*_e\sA'$ on
$\sA^\natural$. Concretely, for any $b:U\ra \bbT^*_e\sA'$
objects in $\sA^\natural\times_{\bbT^*_e\sA'}U$ consist of
multiplicative line bundles on $\sA\times_SU$ with a connection whose $p$-curvature is equal to $b$. Then for any
$\mL'\in(\sA')^\vee\times_SU\is T^*(\sA'/S)^\vee\times_SU$ and
$(\mL,\nabla)\in\sA^\natural\times_{B_S'}U$ we define
$$\mL'\cdot(\mL,\nabla)
:=(F_{\sA}^*\mL'\otimes\mL,\nabla_{F_\sA^*\mL'}\otimes\nabla),$$ where
$\nabla_{F_\sA^*\mL'}$ is the canonical connection on $F_\sA^*\mL'$ giving
by the Cartier descent. It also follows from the Cartier descent that
$\sA^\natural$ is a $T(\sA'/S)^\vee$-torsor under this action.

On the other hand, recall that for a $\bG_m$-gerbe $\sD$ with
commutative group structure on a Beilinson's 1-motive $\sP$, we
defined the $\sP^\vee$-torsor $\sT_\sD$ of multiplicative splittings of $\sD$ (cf. \ref{DFT}).

\begin{prop} There is a canonical $(T^*(\sA'/S))^\vee$-equivariant
isomorphism $\sA^\natural\to \sT_{\sD_{\sA/S}}$.
\end{prop}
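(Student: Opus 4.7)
The plan is to construct a canonical map $\Phi\colon\sA^\natural\to\sT_{\sD_{\sA/S}}$ directly on $U$-points, verify it is $(T^*(\sA'/S))^\vee$-equivariant, and then conclude it is an isomorphism since both sides are torsors under this group.

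First I would define $\Phi$ as follows. Given $(\mL,\nabla)\in\sA^\natural(U)$, the $p$-curvature (Proposition \ref{OV stack}(1)) is a section $\psi_S(\mL,\nabla)\in\bbT^*_e\sA'(U)$, which, by multiplicativity of $\mL$, determines a section $\sigma_\mL\colon \sA'\times_SU\to T^*(\sA'/S)\times_SU$ of the natural projection. By Proposition \ref{OV stack}(2), the pullback $\sigma_\mL^*\sD_{\sA/S}$ is canonically trivialized; the trivialization is realized by $(F_\sA)_*(\mL,\nabla)$, viewed as a twisted sheaf on $\sD_{\sA/S}$ via Proposition \ref{stacky-Azumaya-gerbe}. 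Invoking the commutative group structure on $\sD_{\sA/S}$ from Lemma \ref{comm grp}, the multiplicativity of $(\mL,\nabla)$ translates into compatibility of this trivialization with the Picard structure on the graph of $\sigma_\mL$, and using the product decomposition $T^*(\sA'/S)\simeq\sA'\times_S\bbT^*_e\sA'$ as Picard stacks over $S$, this compatibility extends the trivialization to a full multiplicative splitting of $\sD_{\sA/S}\times_SU\to T^*(\sA'/S)\times_SU$, yielding the desired $\Phi(\mL,\nabla)\in\sT_{\sD_{\sA/S}}(U)$.

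Next I will verify $(T^*(\sA'/S))^\vee$-equivariance. The action on $\sA^\natural$, as spelled out in the text preceding the proposition, sends $(\mL,\nabla)$ to $(F_\sA^*\mL'\otimes\mL,\nabla_{F_\sA^*\mL'}\otimes\nabla)$, while the action on $\sT_{\sD_{\sA/S}}$ is the canonical torsor action from \S \ref{DFT}. The compatibility reduces to the projection formula $(F_\sA)_*(F_\sA^*\mL'\otimes\mL)\simeq\mL'\otimes (F_\sA)_*\mL$ together with Cartier descent for the canonical connection on $F_\sA^*\mL'$, both applied inside the gerbe $\sD_{\sA/S}$. Since $\sA^\natural$ is non-empty (containing the trivial object $(\mO_\sA,d)$), the equivariant morphism $\Phi$ is then automatically an isomorphism of $(T^*(\sA'/S))^\vee$-torsors.

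The hard part will be the ``spreading'' step in the construction: making precise how the slice trivialization produced by Proposition \ref{OV stack}(2) extends to a multiplicative splitting over the full $T^*(\sA'/S)$, rather than only over the fiber of $\pi_S$ at $\psi_S(\mL,\nabla)$. One promising route, if the direct approach stalls, is to reduce via Lemma \ref{local splitting} to the three cases of an abelian scheme, the classifying stack of a multiplicative-type group, and a constant finitely-generated abelian group, and handle each separately: the abelian scheme case is the content of the Rothstein-Laumon duality in positive characteristic (see \cite{Lau}), and the remaining two cases are essentially formal.
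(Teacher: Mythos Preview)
Your overall strategy---produce a splitting from $(\mL,\nabla)$ via Proposition~\ref{OV stack}(2), check multiplicativity, verify $(T^*(\sA'/S))^\vee$-equivariance, and conclude by the torsor principle---is exactly the paper's approach. The discrepancy is that you have misread what $\sT_{\sD_{\sA/S}}$ is, and this misreading manufactures the ``hard part'' you worry about.

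The Picard stack in play is $T^*(\sA'/S)\to\bbT^*_e\sA'$, \emph{not} $T^*(\sA'/S)\to S$; the commutative group structure of Lemma~\ref{comm grp} is built from the multiplication $m_S$ over $\bbT^*_e\sA'$, and the construction of \S\ref{DFT} is applied relative to that base. Consequently $\sT_{\sD_{\sA/S}}$ is a stack over $\bbT^*_e\sA'$, and a $U$-point (for $U\to\bbT^*_e\sA'$) is a multiplicative splitting of $\sD_{\sA/S}$ restricted to the fibre
\[
T^*(\sA'/S)\times_{\bbT^*_e\sA'}U\;\simeq\;\sA'\times_S U,
\]
which is precisely your $\sigma_\mL^*\sD_{\sA/S}$. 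There is no ``spreading'' to the full $T^*(\sA'/S)\times_S U$ to be done: the splitting supplied by Proposition~\ref{OV stack}(2) already lives on the correct space, and the only thing left to check is that it is compatible with the commutative group structure (which follows from multiplicativity of $(\mL,\nabla)$ and Lemma~\ref{BB lemma}(2)). Once you drop the extension step and the fallback to Lemma~\ref{local splitting}, your argument collapses to the paper's proof verbatim.
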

\begin{proof}We sketch the proof.
Write $\sT_{\sD_{\sA/S}}$ by $\sT_\sD$ for simplicity. Recall that for $U\to \bbT^*_e\sA'$,
 $\sT_{D_A}(U)$ is the groupoid of splittings of
$\sD_{\sA/S}$ over $U\times_{\bbT^*_e \sA'}T^*(\sA'/S)$ which are compatible with the commutative group
structure of $\sD_{\sA/S}$. 
Note that
\[U\times_{\bbT^*_e\sA'}T^*(\sA'/S)\is U\times_{\bbT^*_e\sA'}(\bbT^*_e\sA'\times_S\sA')\is \sA'\times_SU,\] and under this isomorphism, the projection of left hand side to the second factor is identified with
\[\sA'\times_SU\ra\sA'\times_S\sA^\natural\ra T^*(\sA'/S).\]
Now by Lemma \ref{OV stack}, the pull back of $\sD_{\sA/S}$ to
$\sA'\times_SU$ has a canonical splitting $\mL_{U,\alpha}$.
Moreover, one can check that this canonical splitting is
compatible with the commutative group structure of $\sD_{\sA/S}$.
Thus the assignment $(U,\alpha)\ra\mL_{U,\alpha}$ defines a map from
$\sA^\natural$ to $\sT_{\sD}$ which is compatible with their
$T(\sA'/S)^\vee$-torsor structures hence an equivalence.
\end{proof}

As a corollary, we obtain the following theorem.
\begin{thm}Let $\sA$ be a good Beilinson's 1-motive. Then
there is a canonical equivalence of categories
\[D^b(\mD\on{-mod}(\sA))\simeq D^b(\on{QCoh}(\sA^\natural)).\]
\end{thm}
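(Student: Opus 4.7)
The plan is to chain together three equivalences that have already been established in the excerpt, with the good-Beilinson-1-motive hypothesis entering only to guarantee the Azumaya/gerbe description in the first step.

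First I would apply Proposition \ref{stacky-Azumaya-gerbe} to $\sA$, viewed as a smooth algebraic stack, to obtain a canonical equivalence
\[
\mD\on{-mod}(\sA)\;\simeq\;\on{QCoh}(\sD_{\sA/S})_1,
\]
and hence an equivalence of bounded derived categories $D^b(\mD\on{-mod}(\sA))\simeq D^b(\on{QCoh}(\sD_{\sA/S}))_1$. The goodness hypothesis enters here via Proposition \ref{Azumaya}, which ensures that $\sD_{\sA/S}$ (or at least the relevant open part given by $D_{\sA/S}$) really is an Azumaya-style gerbe on $T^*(\sA'/S)$ so that the category of twisted sheaves is well-behaved.

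Next I would invoke Lemma \ref{comm grp}, which supplies $\sD_{\sA/S}$ with a canonical commutative group structure relative to $\bbT^*_e\sA'$, making it a multiplicative $\bG_m$-gerbe over the Picard stack $T^*(\sA'/S)\to \bbT^*_e\sA'$. Because the fibers of this Picard stack over $\bbT^*_e\sA'$ are all canonically isomorphic to the good Beilinson 1-motive $\sA'$, the twisted Fourier--Mukai transform of Theorem \ref{twist FM} applies fiberwise over $\bbT^*_e\sA'$ and produces an equivalence
\[
D^b(\on{QCoh}(\sD_{\sA/S}))_1\;\simeq\;D^b(\on{QCoh}(\sT_{\sD_{\sA/S}})),
\]
where $\sT_{\sD_{\sA/S}}$ is the $(T^*(\sA'/S))^\vee$-torsor of multiplicative splittings.

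Finally, the preceding proposition identifies $\sT_{\sD_{\sA/S}}\simeq \sA^\natural$ as $(T^*(\sA'/S))^\vee$-torsors, yielding $D^b(\on{QCoh}(\sT_{\sD_{\sA/S}}))\simeq D^b(\on{QCoh}(\sA^\natural))$. Composing the three equivalences gives the desired $D^b(\mD\on{-mod}(\sA))\simeq D^b(\on{QCoh}(\sA^\natural))$. The one step that requires real care is the second: Theorem \ref{twist FM} is formulated for a multiplicative gerbe on a Beilinson 1-motive over a base scheme, whereas here the relevant ``base'' is the vector bundle $\bbT^*_e\sA'$ rather than $S$ itself. The main technical issue is therefore to relativize the twisted Fourier--Mukai statement over $\bbT^*_e\sA'$, for instance by reducing to the absolute case fiberwise via faithfully flat descent along $\bbT^*_e\sA'\to S$ and using that both the formation of $\sD_{\sA/S}$ and of $\sA^\natural\simeq \sT_{\sD_{\sA/S}}$ commute with this base change.
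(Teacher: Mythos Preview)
Your proposal is correct and follows essentially the same approach as the paper, which simply states that the theorem is the combination of Theorem \ref{twist FM} and Proposition \ref{Azumaya} (together with the immediately preceding proposition identifying $\sA^\natural\simeq \sT_{\sD_{\sA/S}}$). You have merely unpacked this one-line proof into its three constituent equivalences; your concern about relativizing Theorem \ref{twist FM} over $\bbT^*_e\sA'$ is valid but harmless, since $\bbT^*_e\sA'$ is itself a scheme and the theorem is stated over an arbitrary noetherian base.
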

\begin{proof}This is the combination of Theorem \ref{twist FM} and Proposition \ref{Azumaya}.
\end{proof}
\begin{remark}Note that in \cite{Lau}, this theorem is proved for abelian schemes over $S$ of characteristic zero. In fact, Laumon's construction applies to any ``good" Beilinson's 1-motive over a locally noetherian base. When $p\mO_S=0$, it is easy to see that Laumon's equivalence and the equivalence constructed above are the same. 
\end{remark}

In particular, let $\theta: \sA'\to \bbT^*\sA'$ be a section obtained by base change $\tau:S\to \bbT_e^*\sA'$. Let $\sD_{\sA/S}(\theta):=\theta^*\sD_{\sA/S}$. Then $\sD_{\sA/S,\theta}$ is a $\bbG_m$-gerbe on $\sA'$ equipped with a canonical commutative group structure, and the ${\sA'}^\vee$-torsor $\sT_{\sD_{\sA/S,\theta}}$ of multiplicative splittings can be identified with $\sA^\natural\times_{\bbT^*_e\sA',\tau}S$.

\subsection{A variant}\label{appen:var}
In the main body of the paper, however, we need a variant of the above construction. Let $k$ be an algebraically closed field of characteristic $p$. For a $k$-scheme $X$, we denote by $X'$ its Frobenius base change along $Fr:k\to k$.
Let $S$ be a smooth $k$-scheme. For an $S$-scheme $X\to S$, we denote by $X^{(S)}$ its base change along $Fr_S:S\to S$. 
Let $\sA\to S$ be a Picard stack with multiplication $m:\sA\times_S\sA\to \sA$. The goal of this subsection is to construct certain multiplicative gerbe $\sD_{\sA}(\theta)$ on $\sA'$ (rather than on $\sA^{(S)}$ as done at the end of the previous subsection).

Let $\theta: \sA'\to T^*\sA'$ be a section, where $T^*\sA'$ is the cotangent bundle of $\sA'$ relative to $k$. We say $\theta$ is multiplicative if the upper right corner of the following diagram is commutative
\[\xymatrix{
T^*\sA'\times T^*\sA'& T^*\sA'\times T^*\sA'|_{\sA'\times_{S'}\sA'}\ar[l]\ar[r]& T^*(\sA'\times_{S'}\sA')\\
\sA'\times\sA'\ar_{\theta\times\theta}[u]&\sA'\times_{S'}\sA'\ar[l]\ar_{\theta\times\theta}[u]\ar^{m^*\theta}[r]\ar^m[d]& T^*\sA'\times_{\sA'}(\sA'\times_{S'}\sA')\ar^{m_p}[d]\ar_{m_d}[u]\\
&\sA'\ar^{\theta}[r]&T^*\sA'.
}\]
Let $\sD_{\sA}(\theta)=\theta^*\sD_{\sA}$ be the pullback of $\sD_{\sA}$ to $\sA'$.  Then by the same argument as in Lemma \ref{comm grp}, we have
\begin{lem}\label{appen:mult} (See also \cite[Lemma 3.16]{BB})
Let $\theta:\sA'\to T^*\sA'$ be a multiplicative section. Then
$\sD_{\sA}(\theta)$ is a $\bbG_m$-gerbe on $\sA'$ with a commutative group structure. 
\end{lem}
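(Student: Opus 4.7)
The plan is to mimic, step by step, the construction used in Lemma \ref{comm grp}, with the multiplicativity hypothesis on $\theta$ being exactly what allows each piece of the commutative group structure to descend from $T^*\sA'$ to $\sA'$. Recall that the commutative group structure consists of three data (Definition \ref{app:comm}): a monoidal $1$-isomorphism $M$, an associator $2$-morphism $\gamma$, and a symmetric structure $i$; each will come from the corresponding piece of Lemma \ref{BB lemma} applied respectively to the multiplication $m:\sA\times_S\sA\to\sA$, to the two associativity factorizations of $\sA\times_S\sA\times_S\sA\to\sA$, and to the flip $\sigma$.

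To produce $M$, first apply Lemma \ref{BB lemma}(1) to $m$ to obtain a canonical $1$-isomorphism of $\bbG_m$-gerbes $m_p^*\sD_\sA \simeq m_d^*\sD_{\sA\times_S\sA}$ on $T^*\sA'\times_{\sA'}(\sA'\times_{S'}\sA')$. Next, using the two projections $\sA\times_S\sA\to\sA$ together with Lemma \ref{BB lemma}(1) again, identify the pullback of $\sD_{\sA\times_S\sA}$ along the natural map $T^*\sA'\times T^*\sA'|_{\sA'\times_{S'}\sA'}\to T^*(\sA'\times_{S'}\sA')$ with $(\sD_\sA\boxtimes\sD_\sA)|_{\sA'\times_{S'}\sA'}$. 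The hypothesis that $\theta$ is multiplicative is exactly what lets us now pull the gerbe isomorphism back along the section $m^*\theta:\sA'\times_{S'}\sA'\to T^*\sA'\times_{\sA'}(\sA'\times_{S'}\sA')$: using $m_p\circ m^*\theta=\theta\circ m$ the left-hand side becomes $m^*\sD_\sA(\theta)$, while using the commutative square in the definition of multiplicativity to reroute through $\theta\times\theta$, the right-hand side becomes $(\sD_\sA(\theta)\boxtimes\sD_\sA(\theta))|_{\sA'\times_{S'}\sA'}$. This produces $M$.

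The associator $\gamma$ is built by applying the same recipe to the two associative bracketings of the triple product and invoking the $2$-isomorphism part of Lemma \ref{BB lemma}(2); the cocycle/pentagon condition is automatic from the $2$-categorical compatibility recorded there. The symmetric structure $i$ comes from applying Lemma \ref{BB lemma} to the identity $m\circ\sigma=m$, and the involution equation $i^2=\mathrm{id}$ follows from $\sigma^2=\mathrm{id}$. All these $2$-morphisms respect the multiplicative descent via $m^*\theta$ in precisely the same way as $M$ did.

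The main technical point, and the only place where any non-bookkeeping checking is required, is the identification in the construction of $M$ between $(\theta\times\theta)^*\sD_{\sA\times_S\sA}|_{\sA'\times_{S'}\sA'}$ and the external box product $\sD_\sA(\theta)\boxtimes\sD_\sA(\theta)$ restricted to $\sA'\times_{S'}\sA'$; this rests on the functoriality of $\sD_{(-)}$ under the two smooth projections $\sA\times_S\sA\to\sA$ and on the hypothesis that $\theta$ is multiplicative in the precise sense recorded in the diagram preceding the lemma. Everything else is a routine $2$-categorical translation of the proof of Lemma \ref{comm grp}.
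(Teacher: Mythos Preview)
Your proposal is correct and follows exactly the route the paper indicates: the paper does not give a separate proof but simply states that the lemma follows ``by the same argument as in Lemma \ref{comm grp}''. Your write-up spells out in detail how that argument adapts---applying Lemma \ref{BB lemma} to $m$, to the two associativity factorizations, and to $m\circ\sigma$, and then using the multiplicativity of $\theta$ to pull everything back along $m^*\theta$---which is precisely the intended content of that reference.
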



\begin{thebibliography}{10}

\bibitem[Ar]{A} D. Arinkin: Appendix to \cite{DP2}.

\bibitem[AG]{AG} D. Arinkin, D. Gaitsgory: {\it Singular support of coherent sheaves, and the geometric Langlands conjecture}, arXiv:1201.6343.

\bibitem[BD]{BD} A. Beilinson, V. Drinfeld: {\it Quantization of Hitchin's integrable
system and Hecke eigensheaves}, preprint, available at
http://www.math.uchicago.edu/earinkin/langlands/

\bibitem[BB]{BB} R. Bezrukavnikov, A. Braverman: {\it Geometric Langlands
conjecture in characteristic p: The $GL_n$ case },
Pure Appl. Math. Q. 3 (2007), no. 1, Special Issue: In honor of Robert D. MacPherson. Part 3, 153-179.

\bibitem[BMR]{BMR} R. Bezrukavnikov, I. Mirkovi\'c, D. Rumynin: {\it Localization of modules for a semisimple
Lie algebra in prime characteristic
(with an appendix by R. Bezrukavnikov and S. Riche)}, Ann. of Math. (2)
167 (2008), no. 3, 945-991.

\bibitem[BT]{BT} R. Bezrukavnikov, R. Travkin: {\it Quantization of Hitchin integrable system via
positive characteristic}, in preparation. 

\bibitem[BLR]{BLR}S. Bosch, W. Lutkebohmert, M. Raynaud: {\it N\'eron models}, Ergebnisse der Mathematik und ihrer Grenzgebiete (3), 21. Springer-Verlag, Berlin, 1990.

\bibitem[Br]{SB} S. Brochard: {\it Foncteur de Picard d'un champ alg\'ebrique},
Mathematische Annalen
March 2009, Volume 343, Issue 3, 541-602.

\bibitem[CZ]{CZ}T.-H. Chen, X. Zhu: {\it Non-abelian Hodge theory for curves in characteristic $p$}, 
Geometric and Functional Analysis, Volume 25 (2015), pp 1706-1733.

\bibitem[DG]{DG} R. Donagi, D. Gaitsgory: {\it The gerbe of Higgs bundles},
Transformation groups Volume 7, Number 2, 109-153.

\bibitem[DP1]{DP} R. Donagi, T. Pantev: {\it Langlands duality for Hitchin systems},
Invent. Math. 189 (2012) 653-735.

\bibitem[DP2]{DP2} R. Donagi, T. Pantev: {\it 
Torus fibrations, gerbes, and duality (with an appendix by Dmitry Arinkin)}, 
Mem. Amer. Math. Soc. 193 (2008), no. 901.

\bibitem[Del]{Del}P. Deligne: {\it La formule de dualit\'{e} globale}, in SGA 4, tome 3, Expose
XVIII, Lecture Notes in Mathematics, vol. 305, 481-587.
Springer-Verlag, 1973.

\bibitem[FGV]{FGV}E. Frenkel, D. Gaitsgory, K. Vilonen: {\it Whittaker Patterns in the Geometry of Moduli Spaces of Bundles on Curves}, Annals of Mathematics
Second Series, Vol. 153, No. 3 (May, 2001), pp. 699-748.

\bibitem[FW]{FW} E. Frenkel, E. Witten: {\it Geometric Endoscopy and Mirror Symmetry},
Communications in Number Theory and Physics, 2 (2008) 113-283. 

\bibitem[Ga]{Ga}D. Gaitsgory: {\it Outline of the proof of the geometric Langlands conjecture for $\GL_2$}, arXiv: 1302.2506.

\bibitem[Groe]{Groe}  M. Groechenig: {\it Moduli of flat connections in positive characteristic},
arXiv:1201.0741.


\bibitem[H]{H} N. Hitchin: {\it Stable bundles and integrable systems}, 
Duke Math. J. 54 (1987), no. 1, 91-114. 
 
\bibitem[HT]{HT} T. Hausel and M. Thaddeus: {\it Mirror symmetry, Langlands duality, and the
Hitchin system}, Invent. Math., 153 (1):197-229, 2003.
 
 
\bibitem[LB]{LB}  G. Laumon, L. Moret-Bailly: {\it Champs alg\'ebriques}, Ergebnisse der Mathematik und ihrer Grenzgebiete. 3. Folge. A Series of Modern Surveys in Mathematics, 39. Springer-Verlag, Berlin, 2000.

\bibitem[Lau]{Lau} G. Laumon: {\it Transformation de Fourier generalisee },
 arXiv:alg-geom/9603004.
 
\bibitem[LB]{LB1} L. Breen: {\it
Un th\'eoreme d'annulation pour certains Ext de faisceaux ab\'eliens},
Ann. Sc. Ec. Norm. Sup. 5, 339-352 (1975).



\bibitem[Mu1]{Mu1}S. Mukai: {\it Duality between $D(X)$ and $D(\hat X)$ with its application to
   Picard sheaves},
Nagoya Math. J., 1981, 153-175.


\bibitem[Mu2]{Mu2}S. Mukai: {\it Fourier functor and its application to the moduli of bundles on an abelian variety},
Algebraic geometry, Sendai, 1985, Adv. Stud. Pure Math 3.6.1.

\bibitem[N1]{N1} B.C. Ng\^{o}: {\it Hitchin fibration and endoscopy },
Invent. Math. 164 (2006) 399-453.

\bibitem[N2]{N2} B.C. Ng\^{o}: {\it Le lemme fondamental pour les alg`ebres
de Lie}, Publ. Math., Inst. Hautes \'Etud. Sci., No. 111 (2010), 1-169.

\bibitem[OV]{OV} A. Ogus, V. Vologodsky: {\it Nonabelian Hodge theory in
characteristic p }, 
Publ. Math., Inst. Hautes \'Etud. Sci., No. 106 (2007), 1-138.



\bibitem[OZ]{OZ}D. Osipov, X. Zhu: {\it A categorical proof of the Parshin reciprocity laws on algebraic surfaces}, Algebra Number Theory 5 (2011), no. 3, 289-337.

 

\bibitem[Toen]{Toen} B. Toen: {\it Derived Azumaya algebras and generators for twisted derived categories}, Invent. Math. 189 (2012), 581-652.

\bibitem[Trav]{Trav} R. Travkin: {\it Quantum geometric Langlands in positive characteristic},
arXiv:1110.5707.




\bibitem[Yun]{Yun} Z. Yun:{\it Global Springer theory},
 Advances in Mathematics 228 (2011) 266-328. 


\end{thebibliography}
\end{document}